\theoremstyle{plain}
\newtheorem{theorem}{Theorem}[section]
\newtheorem{proposition}[theorem]{Proposition}
\newtheorem{lemma}[theorem]{Lemma}
\newtheorem{corollary}[theorem]{Corollary}
\newtheorem{algorithm}{Algorithm}
\newtheorem{question}{Question}
\theoremstyle{definition}
\newcommand{\appsection}[1]{\let\oldthesection\thesection
\renewcommand{\thesection}{Appendix \oldthesection}
\section{#1}\let\thesection\oldthesection}
\newtheorem{definition}[theorem]{Definition}
\newtheorem{notation}[theorem]{Notation}
\theoremstyle{remark}
\newtheorem{remark}[theorem]{Remark}
\newtheorem{example}[theorem]{Example}
\def\D{{\mathbb{D}}}
\def\R{{\mathbb{R}}}
\def\Z{{\mathbb{Z}}}
\def\F{{\mathbb{F}}}
\def\Q{{\mathbb{Q}}}
\def\C{{\mathbb{C}}}
\def\P{{\mathbb{P}}}
\def\PP{{\mathcal{P}}}
\def\O{{\mathcal{O}}}
\def\Y{{\mathcal{Y}}}
\def\W{{\mathcal{W}}}
\def\bG{{\overline{\Gamma}}}
\def\bE{{\overline{E}}}
\tikzset{every loop/.style={min distance=7mm,in=120,out=60,looseness=10}}
\begin{document}

\title{Classification of Horikawa surfaces with T-singularities}
\author[Vicente Monreal]{Vicente Monreal}
\email{vicente.monreal@gmail.com}
\address{Mathematisches Institut, Heinrich-Heine-Universit\"at D\"usseldorf, Germany.}

\author[Jaime Negrete]{Jaime Negrete}
\email{jaime.negrete@uga.edu}
\address{Department of Mathematics, University of Georgia, Athens, USA.}

\author[Giancarlo Urz\'ua]{Giancarlo Urz\'ua}
\email{gianurzua@gmail.com}
\address{Facultad de Matem\'aticas, Pontificia Universidad Cat\'olica de Chile, Santiago, Chile.}

\date{\today} 

\begin{abstract}
We classify all projective surfaces with only T-singularities, ample canonical class, and $K^2=2p_g-4$. In this way, we identify all surfaces, smoothable or not, with only T-singularities in the Koll\'ar--Shepherd-Barron--Alexeev (KSBA) moduli space of Horikawa surfaces. We also prove that they are not smoothable when $p_g \geq 10$, except for the Lee-Park (Fintushel-Stern) examples, which we show to have only one deformation type unless $p_g=6$ (in which case they have two). This demonstrates that the challenging Horikawa problem cannot be addressed through complex T-degenerations. We propose new questions regarding diffeomorphism types based on our classification. Furthermore, the techniques developed in this paper enable us to classify all KSBA surfaces with only T-singularities and $K^2\leq 2p_g-3$, for example, quintic surfaces and I-surfaces.
\end{abstract}

\maketitle

\section{Introduction} \label{s0}

A \textit{Horikawa surface} is a minimal nonsingular complex projective surface $Y$ of general type satisfying $$K_Y^2=2p_g(Y)-4,$$ where $K_Y^2$ is the self-intersection of the canonical class, and $p_g(Y)$ is the dimension of the global holomorphic 2-forms on $Y$. By Noether's inequality, this is the minimum possible value of $K^2$ among all minimal surfaces of general type. The name comes from Horikawa's classification of all such surfaces in a series of articles in the 70s, which also includes surfaces with invariants close to the Noether line $K^2=2p_g-4$. These surfaces are simply-connected, and their topological type as closed 4-manifolds can be determined using Freedman's theorem. There are spin and nonspin surfaces for $K^2=8t$ with $t$ odd; for all other cases, the surfaces are nonspin. Horikawa also described the connected components and dimensions of their moduli spaces once $K^2$ is fixed. If $K^2$ is divisible by $8$, then there are two connected components of the moduli space, each of the same dimension; otherwise, there is only one component. For $K^2=16t$ it is an open problem to decide whether \textit{surfaces in distinct components are diffeomorphic or not}. All known smooth invariants are equal, and this question has remained open since Horikawa raised it in \cite{H76}. Some places where this problem is discussed are \cite{C93}, \cite[4.101]{K2}, \cite{CM95}, \cite{FS97}, \cite{GS99}, \cite{A06}, \cite{FS09}, \cite{E18}, \cite{RR22}, \cite{CP25}. We call it the \textit{Horikawa problem}.  
\vspace{0.1cm}

One approach to attack this problem involves finding a common degeneration for the two components in question. That is, we would like to find complex deformations of Horikawa surfaces from each of these components that meet on a singular surface. In terms of moduli spaces, this means seeking common points within a certain compactification. A geometric way to do this was introduced by Koll\'ar--Shepherd-Barron in \cite{KSB88}, which, combined with the boundedness result of Alexeev \cite{A94}, gives a compactification. Consequently, there exists a Koll\'ar--Shepherd-Barron--Alexeev (KSBA) coarse moduli space of surfaces with fixed $K^2$ and $\chi$, which is a projective scheme that contains the Gieseker moduli space of canonical surfaces of general type \cite{Gi77}. The corresponding KSBA surfaces have semi-log-canonical singularities and ample canonical class. Unlike the Deligne-Mumford moduli space of curves, a KSBA surface may not be smoothable. Indeed, there are examples of entire components of nonsmoothable KSBA surfaces (see e.g. \cite{RR22}). 
\vspace{0.1cm}

For Horikawa surfaces, an explicit description of all KSBA surfaces remains unknown. Recently, Rana and Rollenske \cite{RR22} found the first example of a common degeneration for these components, but it is not clear whether this degenerated surface could answer the Horikawa problem as there is no control on the resulting surgery on the singularities. On the other hand, Manetti showed that two degenerations into a surface with only \textit{T-singularities}, to be recalled below, induces an oriented diffeomorphism between the nonsingular surfaces from both families \cite[Theorem 1.5]{M01}. Manetti used that result to prove that there are moduli spaces of surfaces of general type with an arbitrarily large number of connected components which parametrize diffeomorphic surfaces. T-singularities are the 2-dimensional quotient singularities which admit a $\Q$-Gorenstein smoothing \cite{KSB88}. They are Du Val singularities or cyclic quotient singularities of type $\frac{1}{dn^2}(1,dna-1)$ with gcd$(a,n)=1$. The most relevant are non-Du Val T-singularities with $d=1$. They are called \textit{Wahl singularities}, as Wahl was the first who studied them as singularities that admit smoothings that preserve the self-intersection of the canonical class \cite{W80,W81}. It is more convenient to think about T-singularities via their \textit{Hirzebruch-Jung continued fraction} $\frac{dn^2}{dna-1}=[e_1,\ldots,e_r]$; we call it \textit{T-chain}. This represents the chain of $\P^1$s which minimally resolves $\frac{1}{dn^2}(1,dna-1)$. If $[e_1, \ldots, e_r]$ is a T-chain, then $[2, e_1, \ldots, e_{r-1},e_r+1]$ and $[e_1+1,e_2, \ldots,e_r,2]$ are T-chains. Every T-chain is obtained by iterating this operation, starting with either $[4]$ (to get all \textit{Wahl chains}) or $[3,2, \ldots,2,3]$ with $(d-2)$ $2$s. Let us call a surface with only T-singularities (not all Du Val) a \textit{T-surface}. 
\vspace{0.1cm}

Therefore, there is a two-step approach to answer the Horikawa problem in the positive: 

\begin{itemize}
\item[\textbf{(I)}] Find all T-surfaces $W$ with $K_W^2=2p_g(W)-4$.
\item[\textbf{(II)}] Study the smoothability of the surfaces classified in \textbf{(I)}.
\end{itemize}

In \cite{ESU24} the authors achieved to answer \textbf{(II)} without knowing \textbf{(I)} for $p_g=3$, which is the smallest possible geometric genus. In general, a difficulty with this approach is that there are no examples of Horikawa T-surfaces for $p_g>3$ besides the Lee-Park examples \cite{LP11}, which were originally discovered by Fintushel-Stern as smooth 4-manifolds in \cite{FS97}. We will describe it here, as it will be relevant to the present work. 

\begin{example}
Let $p_g\geq 3$, and let $S \to \P^1$ be an elliptic fibration with sections of self-intersection $-(p_g+1)$. In this way, $p_g(S)=p_g$. Assume that we can find $2$ disjoint Wahl chains of $\P^1$s whose Hirzebruch-Jung continued fraction is $[p_g+1,2,\ldots,2]$, where the number of $2$s is $p_g-3$ and the first curve is a section. There are plenty of these situations. By Artin's contractibility criteria \cite{A62}, we can contract these two Wahl chains via a birational morphism $S \to W$, where $W$ is a projective surface with two Wahl singularities $\frac{1}{(p_g-1)^2}(1,p_g-2)$. The surface $W$ has $K_W$ big and nef, and $K_W^2=2p_g-4$. We have $p_g(W)=p_g$. We call them \textit{Lee-Park examples} (compare with \cite[Example 8.5.5(b)]{GS99}). In \cite{LP11}, Lee and Park constructed particular such surfaces as an ingenious double cover $W \to W_0$, where $W_0$ has only one $\frac{1}{(p_g-1)^2}(1,p_g-2)$ and the branch locus is disjoint to this singularity. The point is that $W_0$ has $\Q$-Gorenstein smoothings which lift the branch locus, and so this induces $\Q$-Gorenstein smoothings for $W$. The smoothings are Horikawa surfaces, and one can show that they live in the non-spin component when $K^2=8t$. This last part can be done via the birational geometry computation in Theorem \ref{LeePark}. We actually prove in that theorem that these are the only possible smoothings for $K^2>8$. In Example \ref{spinK^2=8} we show that there are spin and nonspin smoothings when $K^2=8$.
\label{leepark}
\end{example}

In this paper, we give a complete answer to \textbf{(I)}, and \textbf{(II)} for $p_g\geq 10$. This shows that the Horikawa problem cannot be solved via complex degenerations with only T-singularities. From our classification, it turns out that, apart from Lee-Park Example \ref{leepark}, all the rest of Horikawa T-surfaces belong to one big family of KSBA surfaces which are completely classified in this paper as well. Here is the definition.

\begin{definition}
Let $W$ be a surface with only non Du Val T-singularities, and $K_W$ big and nef. Let $\phi \colon X \to W$ be its minimal resolution with exceptional (reduced) divisor Exc$(\phi)$, and let $\pi \colon X \to S$ be the composition of blow-ups into the minimal model $S$ of $X$. Assume that the Kodaira dimension of $S$ is $1$. Hence it must admit an elliptic fibration $S \to \P^1$, the base must be indeed $\P^1$. We say that $W$ is a \textit{small surface} if $\pi(\text{Exc}(\phi))$ contains precisely one section of $S \to \P^1$ and components of its singular fibers.     
\end{definition}

\begin{remark}
We require $K_W$ big and nef because otherwise we can reduce to that via MMP. This is the MMP described in \cite{HTU17} but done only on $W$ as we do not have a smoothing necessarily. It could be seen as an MMP of the rational blowdown of the minimal resolution of the M-resolution of $W$ (see Subsection \ref{topo}). Also we do not require $K_W$ ample since the canonical model of $W$ (with $K_W$ big and nef) has only T-singularities. We are considering an M-resolution of that canonical model (see Remarks \ref{canonical} and \ref{notation}).  
\end{remark}

\begin{theorem} [Theorem \ref{classHorikawa}]
Let $W$ be a surface with only non Du Val T-singularities, $K_W$ big and nef, and $K_W^2=2p_g(W)-4$. Then $W$ is either a Lee-Park Example \ref{leepark} or a small surface. In the latter case we have exactly $3$ explicit families for $p_g(W)=3$, $9$ explicit families for $p_g(W)=4$, and $8$ explicit families for every $p_g(W)\geq 5$. 
\label{main1}
\end{theorem}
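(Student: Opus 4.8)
The plan is to start from $W$ with only non-Du-Val T-singularities, $K_W$ big and nef, and $K_W^2 = 2p_g(W)-4$, and to analyze the minimal resolution $\phi\colon X \to W$ together with the morphism $\pi\colon X \to S$ to the minimal model $S$. The first step is to pin down the Kodaira dimension of $S$. Since $W$ has non-Du-Val T-singularities, each such singularity contributes a T-chain $[e_1,\dots,e_r]$ of rational curves to $\operatorname{Exc}(\phi)$, and the adjunction/discrepancy data forces $K_X^2 = K_W^2 - \sum (\text{positive contributions}) < K_W^2 = 2p_g - 4 = 2p_g(X) - 4$; combined with Noether's inequality on the minimal model $S$ (using that blow-ups only decrease $K^2$ while keeping $\chi$, hence keeping $p_g$ since these are regular surfaces), one gets $K_S^2 \le 2p_g(S) - 4 + (\text{number of blow-ups})$ but also $K_S^2 \ge 0$ in the non-rational case. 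The key numerical point is that $K_X \cdot \operatorname{Exc}(\phi)$ and the self-intersections of the T-chains are tightly constrained, and I expect this to rule out $\operatorname{kod}(S) = 2$ entirely (a minimal surface of general type degenerating this way would violate Noether), leaving $\operatorname{kod}(S) \in \{0, 1\}$; a further estimate on how many exceptional curves are needed, versus how many $(-1)$-curves $\pi$ can contract while staying on the Noether line, should eliminate $\operatorname{kod}(S) = 0$ and the rational/ruled cases, forcing $\operatorname{kod}(S) = 1$ with $S \to \P^1$ elliptic.

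The second step, granting $\operatorname{kod}(S)=1$, is to control the image $\pi(\operatorname{Exc}(\phi)) \subset S$. Since $S \to \P^1$ is an elliptic fibration with $p_g(S) = p_g(W)$, the canonical bundle formula gives $K_S = \pi_{\text{ell}}^*(\text{divisor of degree } p_g - 1)$ plus multiple-fiber corrections, so sections have self-intersection $-(p_g+1)$ and $K_S$ is a positive rational multiple of a fiber. The condition $K_W^2 = 2p_g - 4$ means the blow-ups $\pi$ and the contractions $\phi$ must be "balanced" very precisely: each blow-up drops $K^2$ by $1$, each T-chain contraction raises it by a controlled amount, and the constraint forces $\pi(\operatorname{Exc}(\phi))$ to meet each fiber in a controlled way. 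The heart of the argument is a case analysis: either $\pi(\operatorname{Exc}(\phi))$ contains no section — in which case I expect to recover the Lee-Park configuration, where the T-chains are $[p_g+1,2,\dots,2]$ starting at a section and the relevant blow-ups resolve tangencies of a bisection or double-cover branch locus — or $\pi(\operatorname{Exc}(\phi))$ contains exactly one section plus fiber components, which is precisely the definition of a small surface.

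The third step is the finiteness and explicit enumeration for small surfaces. Here one exploits that the singular fibers of an elliptic surface come in Kodaira's finite list and that the T-chains, being contractible configurations of rational curves inside fibers plus one section, must embed into these fiber types; the genus bound $p_g \ge 5$ should stabilize the combinatorics so that only finitely many configurations survive, and a direct (if tedious) search through Kodaira fiber types $I_n$, $II$–$IV$, $I_n^*$, $II^*$, $III^*$, $IV^*$ intersected with the T-chain recursion $[4] \rightsquigarrow [2,e_1,\dots,e_{r-1},e_r+1]$, $[e_1+1,\dots,e_r,2]$ yields the claimed $3$ (for $p_g=3$), $9$ (for $p_g=4$), and $8$ (for $p_g \ge 5$) families, the drop at small $p_g$ coming from extra sporadic configurations that only fit when the section self-intersection $-(p_g+1)$ is small. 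I also need to check that these candidate configurations actually arise — that the required elliptic surface $S$ with the prescribed singular fibers and section exists — which is a matter of producing explicit Weierstrass or pencil models.

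The main obstacle I anticipate is the second and third steps together: namely, showing that the elliptic fibration case forces $\pi(\operatorname{Exc}(\phi))$ to contain \emph{at most one} section, and then carrying out the combinatorial classification of which T-chains fit inside Kodaira fibers plus a section while keeping $K_W^2 = 2p_g - 4$ \emph{and} keeping $K_W$ nef (so that no T-chain accidentally produces a curve $C$ with $K_W \cdot C < 0$ after contraction). Controlling nefness of $K_W$ across all the candidate configurations, and ruling out configurations that look numerically admissible but fail to be realized by an actual elliptic surface, is where I expect the real work to lie; the Kodaira-dimension dichotomy in step one, by contrast, should follow fairly mechanically from Noether's inequality and the discrepancy bookkeeping for T-chains.
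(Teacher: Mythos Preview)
Your proposal has the right overall architecture (reduce to an elliptic $S$, then classify by how the exceptional configuration sits with respect to the fibration), but there are two genuine gaps.

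First, your Step~2 dichotomy is wrong. The Lee--Park examples are \emph{not} the case where $\pi(\operatorname{Exc}(\phi))$ contains no section; they are the case where it contains \emph{two} sections, each starting a Wahl chain $[p_g+1,2,\ldots,2]$. The correct dichotomy (Corollary~\ref{hori} in the paper) is: $\pi(C)$ contains either exactly two sections (Lee--Park, and then $X=S$ automatically) or exactly one section (small surface). Ruling out three or more sections, or a double section, is the crux, and this requires the paper's new inequality $K_S \cdot \pi(C) \leq K_W^2 - K_S^2 + \min\{|J|,l\}$ (Theorem~\ref{ineq}/Corollary~\ref{ineqq}), proved via a graph-theoretic Euler-characteristic argument on an auxiliary graph $G$ built from the $E_i$'s and the T-chains. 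Your discrepancy bookkeeping and Noether-line estimates do not supply this bound; without it you cannot exclude, say, three sections or a double section when $K_W^2 = 2p_g-4$.

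Second, your Step~3 underestimates the work needed to classify small surfaces. The paper does not search Kodaira fiber types directly; instead it shows (Lemmas~\ref{allforsection}, \ref{allfiber}) that the T-chain containing the section, and each T-chain sitting in a fiber, must arise from a P-resolution of a specific cyclic quotient singularity of the form $[r,a,2,\ldots,2]$, $[r,a,2,\ldots,2,3,2,\ldots,2]$, etc. The classification then reduces to enumerating P-resolutions of these c.q.s.\ (done in the Appendix via the Christophersen--Stevens zero-continued-fraction bijection), producing the 15 building blocks S0F, S1F.i, S2F.i, FIB. The formula $K_W^2 = p_g - 2 + N$ (Theorem~\ref{formulaK^2}) then forces $N = p_g - 2$, and gluing building blocks subject to this gives the exact counts $3,9,8$. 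Your proposed direct search through Kodaira types and T-chain recursion would not terminate cleanly and would miss the rigidity coming from the P-resolution constraint.
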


Each of these families are described in Theorem \ref{classHorikawa}. A consequence of this classification is that we can study potential smoothings explicitly, by considering the particular sets of singularities in small surfaces. This breaks into: involutions of T-singularities \ref{s41}, extending canonical involution of Horikawa surfaces in the family and strong constraints on the number of singularities in the quotient family \ref{s42}, and some birational geometry of the quotient family \ref{s43}. We prove the following.   

\begin{theorem} [Theorem \ref{NOsmoothable}]
For $p_g\geq 10$, the only KSBA degenerations of Horikawa surfaces with only T-singularities (not all Du Val) are the Lee-Park examples, and only for the nonspin component.
\label{main1.1}
\end{theorem}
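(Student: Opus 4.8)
The plan is to combine the classification of Theorem \ref{main1} with an analysis of the canonical involution of Horikawa surfaces under $\Q$-Gorenstein degeneration. Suppose $W$ is a KSBA degeneration of Horikawa surfaces with only T-singularities, not all Du Val: there is a $\Q$-Gorenstein smoothing $\mathcal{W}\to(\Delta,0)$ with $\mathcal{W}_0=W$ and general fiber $Y$, which is then automatically a Horikawa surface with $p_g(Y)=p_g(W)=:p_g\ge 10$, $K_W^2=2p_g-4$, and $K_W$ big and nef. Resolving the Du Val singularities of $W$ simultaneously in the family (this changes neither $K^2$, $p_g$, nor the general fiber), we may assume $W$ has only non Du Val T-singularities, so by Theorem \ref{main1} it is a Lee-Park Example \ref{leepark} or a small surface. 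For Lee-Park examples, Theorem \ref{LeePark} determines their $\Q$-Gorenstein smoothings --- they are Horikawa surfaces, these are the only smoothings when $K^2>8$, and the birational geometry computed there places them in the nonspin component; since $p_g\ge 10$ forces $K^2=2p_g-4\ge 16$, this is exactly the ``only for the nonspin component'' clause. Hence the entire content is the statement that \emph{a small surface with $p_g\ge 10$ admits no $\Q$-Gorenstein smoothing}, which I would prove by contradiction, assuming such a smoothing exists.

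The first step is to promote the canonical involution $\iota_Y$ of the Horikawa surface $Y$ to an involution of $W$. Replacing $W$ by its canonical model --- which, by the Remark after the definition of small surface, still has only T-singularities, hence is klt and canonically polarized --- the family $\mathcal{W}^{\mathrm{can}}\to\Delta$ is a family of klt canonically polarized surfaces, so its relative automorphism scheme is finite over $\Delta$. Since $\iota_Y$ is a point of this scheme over the generic point of $\Delta$, the valuative criterion of properness gives, after a finite base change, an extension to an involution $\iota$ of $\mathcal{W}^{\mathrm{can}}/\Delta$; restricting over $0$ yields $\iota_W$ on $W$. Passing to quotients, $\mathcal{W}^{\mathrm{can}}/\iota\to\Delta$ is a $\Q$-Gorenstein degeneration whose general fiber is $Y/\iota_Y$ --- a surface of minimal degree, i.e.\ $\mathbb{P}^2$, a Hirzebruch surface $\mathbb{F}_e$, or a quadric cone, after resolving as needed --- and whose central fiber is $\overline{W}:=W/\iota_W$, which again has only quotient singularities; the double cover $\mathcal{W}^{\mathrm{can}}\to\mathcal{W}^{\mathrm{can}}/\iota$ degenerates the classical Horikawa branch curve into $\overline{W}$.

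Now comes the bookkeeping, organized as in Subsections \ref{s41}--\ref{s43}. The classification of involutions of T-singularities (\ref{s41}) tells us, for each T-chain of $W$, the type of quotient singularity it yields on $\overline{W}$ and whether the degenerating branch curve passes through it. Combining the downstairs identity $e(Y)=2\,e(Y/\iota_Y)-e(\text{branch})$ with the upstairs identities $e(W)=e(S)+b-\sum_i\ell_i$ and $e(Y)=e(W)+\sum_i(d_i-1)$ --- where $\ell_i$ and $d_i$ are the length and parameter of the $i$-th T-chain, $b$ is the number of blow-ups composing $\pi$, and $S\to\mathbb{P}^1$ is the $\kappa=1$ elliptic minimal model of the minimal resolution of $W$, so that $e(S)=12\chi(\mathcal{O}_S)=12(p_g+1)$ --- one pins down the class and the imposed singular points of the branch curve $\overline{B}\subset\overline{W}$ in terms of $p_g$. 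Smallness is essential here: $\pi(\mathrm{Exc}\,\phi)$ being exactly one section of $S\to\mathbb{P}^1$ together with components of singular fibers forces the set of T-chains, the way $\iota_W$ permutes them, and the fibration induced on $\overline{W}$ into a very rigid shape (\ref{s42}, \ref{s43}), so that $\overline{W}$ must be the central fiber of a $\Q$-Gorenstein degeneration of $\mathbb{P}^2$, $\mathbb{F}_e$, or the cone of one of a short list of types.

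The contradiction is then extracted by running the eight explicit families of small surfaces against these constraints: the quotient side forces $\overline{B}$ to lie in the Horikawa class (essentially $\sim 6C_0+\beta f$ on $\mathbb{F}_e$, and similarly for the other bases) with a prescribed number of singular points of prescribed type at the images of the T-singularities, while the smallness bookkeeping on $W$ forces an incompatible count --- equivalently a mismatch in $K^2$ or $e$ of the quotient family --- the incompatibility becoming unavoidable precisely once $p_g\ge 10$; for $p_g\le 9$ there is just enough slack, consistent with the Lee-Park quotient picture and the $p_g=3$ analysis of \cite{ESU24}. I expect the two genuinely delicate points to be (a) the extension of $\iota_Y$ across the degeneration, which is exactly where $K_W$ big and nef is used --- it guarantees the canonical model is klt with only T-singularities and the family canonically polarized --- and (b) making the count in \ref{s42} tight and uniform across all eight families, i.e.\ proving that the invariants of the quotient family obstruct \emph{every} small surface with $p_g\ge 10$ rather than merely most of them. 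Step (b) is the main obstacle.
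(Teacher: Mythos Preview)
Your high-level architecture matches the paper: reduce to small surfaces via Theorem \ref{main1}, handle Lee-Park via Theorem \ref{LeePark}, extend the canonical involution across the degeneration (this is Proposition \ref{extendInv}), and analyze the quotient family, whose general fiber is a Hirzebruch surface (Corollary \ref{quotDegHori}). So steps (a) and the set-up are fine.

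The gap is in step (b), and it is not just that you have not carried out the bookkeeping---you are proposing the wrong mechanism for the contradiction. Your plan is to control the branch curve $\overline B\subset \overline W$ via Euler-characteristic identities and then detect a mismatch in $K^2$ or $e$ of the quotient family against the eight small-surface families. The paper does none of this. The actual engine is Theorem \ref{JPZ}: if $\F_n\rightsquigarrow W_0$ is a $\Q$-Gorenstein smoothing with only T-singularities, then $W_0$ has \emph{at most four} T-singularities (and at most one of them non-Wahl). This is a hard constraint on the quotient $W_0'$, independent of any branch-curve analysis. On the other side, the classification of involutions on c.q.s.\ (Theorem \ref{invcqs} and its corollaries) shows that the quotient of a Wahl singularity by an involution is never smooth and never Wahl, so each orbit of Wahl singularities on $W$ contributes at least one genuine T-singularity to $W_0'$.

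Now the contradiction is immediate and uniform: in every small-surface family (ii)--(ix) of Theorem \ref{classHorikawa} for $p_g\ge 10$, the list of T-chains contains at least $p_g-4\ge 6$ copies of $[2,5]$ (or similar), together with at least one further distinguished Wahl or $d=2$ chain. Even pairing the $[2,5]$'s into $\iota$-orbits leaves $\ge 3$ images, and adding the image of the distinguished chain gives $\ge 4$; a short case check (and Corollary \ref{invWahl}/\ref{invT} to exclude smooth/Wahl quotients) pushes this to $\ge 5$, violating Theorem \ref{JPZ}. That is the whole argument---no branch-curve class, no $e$-matching, no case-by-case geometry of $\overline W$ beyond the singularity count. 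Your proposed route might in principle be made to work, but it is substantially harder, you have not indicated how to close it, and it misses the one lemma (Theorem \ref{JPZ}) that turns the problem into arithmetic.
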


For $p_g\leq 9$ there is only one moduli space with two components, namely $p_g=6$. In Example \ref{spinK^2=8} we construct Horikawa T-surfaces for both components, and this represents the only smoothing of a Horikawa T-surface in the spin component according to Theorem \ref{nospin}. A comprehensive study of all smoothable Horikawa T-surfaces for $4\leq p_g\leq 9$ will be addressed elsewhere. Additionally, the small surfaces described in Theorem \ref{main1} admit a rational blowdown into closed symplectic 4-manifolds; this is studied in \ref{topo}. Most of them are simply-connected and thus homeomorphic to each other by Freedman's theorem.

\begin{question}
Are rational blowdowns of small Horikawa surfaces diffeomorphic?
\label{newHori}
\end{question}

Since small surfaces turn out to be present for a wide range of $K^2$, we give a classification of all of them. 

\begin{theorem}[Theorems \ref{formulaK^2}, \ref{ClassSmallSurf}, \ref{geogrSmall}]
Let $W$ be a small surface with $l$ singularities, and let $N$ be the number of fibers of $S \to \P^1$ completely contained in $\pi(\text{Exc}(\phi))$.   Then:
\begin{itemize}
    \item[(1)] We have $l \geq \text{max} \{1,N-1\}$ and $K_W^2=p_g(S)-2+N$.

    \item[(2)] The surface $W$ is constructed from one of the $14$ building blocks in the list \ref{blocks} of type S0F, S1F.i, S2F.i, plus a suitable number of FIBs.
    
    \item[(3)] We have $p_g(S) -2\leq K^2_{W}\leq \Big(4+\frac{2}{3} \Big) p_g(S) +\frac{11}{3}$. The equality on the left holds if and only if $W$ is the contraction of one chain $[p_g(S)+1,2,\ldots,2]$ in $S$. The equality on the right holds if and only if $p_g \equiv 2$(mod $3$) and $W$ is obtained via S2F.7 gluing a suitable number of FIBs. Every $K^2$ is realizable by some $W$.
\end{itemize}
\label{main2}
\end{theorem}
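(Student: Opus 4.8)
The plan is to prove the statement as the conjunction of its three ingredients and in the order they are listed. Throughout, write $\phi\colon X\to W$ for the minimal resolution, $\pi\colon X\to S$ for the blow-down to the minimal (properly elliptic) model, $k$ for the number of blow-ups making up $\pi$, group $\mathrm{Exc}(\phi)$ into the $l$ T-chains of the $l$ singularities (the $j$-th of type $\frac{1}{d_jn_j^2}(1,d_jn_ja_j-1)$ with chain length $r_j$), let $\sigma$ be the unique section in $\pi(\mathrm{Exc}(\phi))$, and let $F$ denote a general fiber of $S\to\P^1$.

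\smallskip

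\textbf{The formula for $K_W^2$ and the bound $l\ge\max\{1,N-1\}$.} Writing $K_X=\phi^*K_W+\Delta$ with $\Delta=\sum a_iE_i$ the discrepancy divisor (which is anti-effective and non-zero, the singularities being non-canonical), and using $\phi^*K_W\cdot E_i=0$, one gets $K_W^2=K_X^2-\Delta^2$. The number $-\Delta^2$ is local and additive over the chains, and an induction on the recursion generating T-chains from $[4]$ and $[3,2,\dots,2,3]$ — each operation raising both $r$ and $-\Delta^2$ by one while leaving $d$ unchanged — shows $-\Delta^2=\sum_j(r_j-d_j+1)$. On the other hand $K_X^2=K_S^2-k=-k$ because $S$ is minimal of Kodaira dimension $1$. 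It then remains to establish the combinatorial identity $\sum_j(r_j-d_j+1)=k+p_g(S)-2+N$, and this is exactly where the small-surface hypothesis enters: one partitions $\mathrm{Exc}(\phi)$ into its $\pi$-exceptional curves and the strict transforms of $\sigma$ and of the fiber components met, and does the blow-up bookkeeping fiber by fiber, using that $\sigma$ has self-intersection $-(p_g(S)+1)$ in $S$ and that a cyclic fiber of $S$ can only be fitted into chains by cutting it with blow-ups; summing the local contributions over the $N$ fully contained fibers, the partially contained fibers, and the fiber carrying the tail of $\sigma$'s chain produces the identity, hence $K_W^2=p_g(S)-2+N$. The inequality $l\ge1$ is clear; for $l\ge N-1$, note that any chain disjoint from $\sigma$ is connected and lies over a single point of $\P^1$ (components of distinct fibers of $S\to\P^1$ are disjoint, and $\pi$-exceptional curves lie over points), so it meets at most one of the $N$ fibers, whereas the chain containing $\sigma$ meets at most two of them since $\sigma$ has at most two neighbours in it; covering all $N$ fibers therefore requires at least $1+(N-2)$ chains.

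\smallskip

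\textbf{The classification.} Here one runs a finite case analysis. The section $\sigma$ lies in exactly one T-chain, and its strict transform in $X$ has self-intersection $-(p_g(S)+1)$ lowered by the number of blow-ups on $\sigma$; since in a T-chain a large entry forces a very rigid pattern on its neighbours, the possible shapes of this chain near $\sigma$ form a short list. Likewise, a fully contained singular fiber contributes a union of chains assembled from its rational components and internal blow-ups, and running the Kodaira fiber types against the T-chain recursion leaves only finitely many possibilities; moreover, once three or more fully contained fibers are present, all but two of them must be of one fixed standard shape, the FIB. Organising the data according to how many fully contained fibers are "special'' (zero, one, or two) and how many are FIBs, and checking in each case that $K_W$ is big and nef — which discards the configurations in which a contracted $(-1)$-curve descends to a curve $C$ with $K_W\cdot C<0$ — one is left precisely with the $14$ building blocks S0F, S1F.i, S2F.i of the list \ref{blocks}, each optionally enlarged by an arbitrary number of FIBs.

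\smallskip

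\textbf{The geography.} By the previous steps $K_W^2=p_g(S)-2+N$, so one only has to bound $N$ and then realize all intermediate values. The lower bound $K_W^2\ge p_g(S)-2$ is the trivial $N\ge0$, with equality iff there is no fully contained fiber, which by the classification forces the block S0F with no FIBs, i.e.\ $W$ is the contraction of the single chain $[p_g(S)+1,2,\dots,2]$. For the upper bound, each fully contained fiber, together with the fiber carrying the tail of $\sigma$'s chain, consumes a controlled amount of topological Euler number, and $\sum_{F\text{ singular}}e(F)=e(S)=12\bigl(p_g(S)+1\bigr)$; optimizing this constraint over the allowed blocks gives $N\le\frac{11p_g(S)+17}{3}$, i.e.\ $K_W^2\le\frac{14}{3}p_g(S)+\frac{11}{3}$, and tracing the optimum back through the case analysis shows it is attained only by S2F.7 with the maximal number of FIBs and $p_g(S)\equiv2\pmod 3$ (the congruence being the integrality of the bound). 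Finally, for each $p_g(S)$ and each admissible $N$ one builds an elliptic surface $S\to\P^1$ with enough singular fibers of the required types together with a configuration realizing that $N$, which yields surjectivity of $K^2$ onto the whole interval.

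\smallskip

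I expect the classification step to be the main obstacle: turning the T-chain recursion and the list of Kodaira fibers into a provably complete finite list of $14$ blocks, and certifying the big-and-nef condition for exactly the right configurations, is the delicate part; the $K^2$ formula and the geography bounds are then essentially careful bookkeeping on top of that list.
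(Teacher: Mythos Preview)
Your overall architecture matches the paper's, and you correctly identify Part~(2) as the crux. But two specific technical ingredients are missing, and without them the argument does not go through.

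\medskip

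\textbf{Part (1), the formula $K_W^2=p_g-2+N$.} Your reduction to the identity $\sum_j(r_j-d_j+1)=k+p_g(S)-2+N$ is correct, but ``blow-up bookkeeping fiber by fiber'' is not a proof. The subtle point is that \emph{partially} contained fibers contribute both to $\sum(r_j-d_j+1)$ and to $k$, and you must show these contributions cancel exactly, regardless of the Kodaira type and of how many components are used. The paper does not do this directly: it computes the log Chern numbers $\bar c_1^2,\bar c_2$ of the pair $(Z,D)$ (where $Z\to S$ resolves the non-nodal points of types II and III) in two ways --- once geometrically from the configuration, once via \cite[Theorem~3.8]{RU22} in terms of $K_W^2$ and $l$ --- and equating them yields $K_W^2-N-p_g+2+N^*=t_3^0$. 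Two residual terms then have to be killed: $t_3^0=0$ (every $(-1)$-curve over a triple point is used) and $N^*=0$ (no $I_n^*,II^*,III^*,IV^*$ among the complete fibers). The latter is Lemma~\ref{no*}, and its proof already requires the P-resolution classification in the Appendix. So even for Part~(1) you cannot avoid P-resolutions.

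\medskip

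\textbf{Part (2), the classification.} ``Running the Kodaira fiber types against the T-chain recursion'' is not the mechanism the paper uses, and I do not see how to make it into one. The actual engine is this: the T-chain containing the section $\bG$, together with the portion of $\mathrm{Exc}(\phi)$ over each adjacent fiber, sits inside a \emph{P-resolution} of an explicit cyclic quotient singularity (e.g.\ $[r,a,2,\dots,2]$, $[r,a,2,\dots,2,3,2,\dots,2]$, $[2,\dots,2,r,a,\dots]$, etc.), and similarly for the T-chains in fibers attached by a $(-1)$-curve. Lemmas~\ref{allforsection} and~\ref{allfiber} enumerate the finitely many c.q.s.\ that can arise; the Appendix then classifies \emph{all} P-resolutions of each one via the Christophersen--Stevens zero-continued-fraction bijection $K(\Delta/(\Delta-\Omega))$, and Proposition~\ref{realizations} checks which of these survive the constraints (no hanging $A_x$ at the ends, discrepancies summing to more than $1$ across connecting $(-1)$-curves so that $K_W$ stays nef). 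This is what produces exactly the $14$ blocks S0F, S1F.i, S2F.i and the single FIB. Your sketch contains none of this machinery; in particular, the assertion that ``all but two of the fully contained fibers must be of one fixed standard shape'' is a \emph{consequence} of the P-resolution analysis (Lemma~\ref{allfiber} and its realizations), not something one can see directly from the T-chain recursion.

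\medskip

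\textbf{Part (3)} is essentially right in outline; the paper uses $\sum\chi_{\text{top}}(F_{\text{sing}})=12(p_g+1)$ and then runs the bound block by block (S2F.7 wins), and for realization invokes Shioda \cite{Sh05} and Miranda--Persson \cite{MP89} to produce elliptic surfaces with the required $I_n$ configurations. Your ``one builds an elliptic surface with enough singular fibers'' needs exactly those citations to become a proof.
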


Part (1) says that small surfaces tend to have lots of singularities. In particular, Horikawa small surfaces with $K^2=16t$ have at least $8t-1$ singularities. Part (2) precisely constructs all of them, and part (3) states that small surfaces have slopes $K^2/\chi(\O) \in [1,4.\bar{6}]$, and so they are abundant in geography.
\vspace{0.1cm}

\begin{remark}
We prove in Corollary \ref{hori} that any T-surface $W$ with $K_W^2 < 2p_g(W)-4$ must be a small surface. Note that there are no $\Q$-Gorenstein smoothings for them as $K^2$ stays constant, and so it would violate the Noether's inequality. Instead of complex smoothings we can consider the corresponding rational blowdowns (see \ref{topo}). Then we are proving in this paper that every rational blowdown $Y$ from an algebraic surface over algebraic Wahl chains which violates Noether's inequality comes from a small surface, and we classify all of them. In particular, in geographical terms, they are always between the Noether line and half the Noether line, and when they lie on the half Noether line the rational blowdown is over a unique Wahl chain of type $[p_g(S)+1,2,\ldots,2]$ in $S$. 
\label{halph-Noether}
\end{remark}

\vspace{0.1cm}

To prove that Horikawa T-surfaces are either Lee-Park examples or small surfaces, it is key the following general new inequality. It is based on some results from \cite{FRU23}. A baby version is \cite[Corollary 3.13]{FRU23}, but the full version requires new ideas to strongly bound the geometry of the rational configurations $\pi(\text{Exc}(\phi))$ (see Section \ref{s1}). Let us consider the diagram $$ \xymatrix{  & X  \ar[ld]_{\pi} \ar[rd]^{\phi} &  \\ S & & W}$$ where $\phi$ is the minimal resolution of $W$, and $\pi$ is a composition of blow-ups such that $S$ has no $(-1)$-curves. 

\begin{theorem} [Corollary \ref{ineqq}]
Let $W$ be a T-surface with $K_W$ big and nef, and $l$ singularities. Then, $$K_S \cdot \pi(\text{Exc}(\phi)) \leq K_W^2-K_S^2 + \text{min}\{|J|,l\},$$ where $J$ is the set of curves $\Gamma$ in $\pi(\text{Exc}(\phi))$ such that $\Gamma \cdot K_S\neq 0$. Equality holds if and only if $X=S$.
\label{main3}
\end{theorem}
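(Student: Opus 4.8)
The plan is to compare the canonical classes of $X$, $S$, and $W$ via the two birational morphisms $\pi$ and $\phi$. On the resolution side, since $W$ has only T-singularities, the minimal resolution $\phi\colon X\to W$ satisfies $K_X = \phi^*K_W - \sum a_i E_i$ with discrepancies $a_i \le 0$ (the singularities are log-terminal), and in fact $\phi^* K_W \cdot (\text{any exceptional curve}) = 0$. On the minimal-model side, writing $\pi$ as a composition of $t$ blow-ups, $K_X = \pi^*K_S + \sum_{j=1}^t \epsilon_j$, where the $\epsilon_j$ are the total transforms of the exceptional $(-1)$-curves, so $K_X^2 = K_S^2 - t$. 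The quantity I want to control, $K_S\cdot\pi(\mathrm{Exc}(\phi))$, should be rewritten as $K_X\cdot\pi^*(\text{something})$ or, more directly, related to $K_X\cdot\mathrm{Exc}(\phi)$ via the projection formula, keeping careful track of how many times the $(-1)$-curves of $\pi$ meet $\mathrm{Exc}(\phi)$.

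**The core inequality.** First I would express $K_S\cdot\pi(\mathrm{Exc}(\phi))$ in terms of data on $X$. For each curve $\Gamma\subset \mathrm{Exc}(\phi)$ with strict transform notation $\widetilde\Gamma$ on $S$ versus its copy on $X$, we have $K_S\cdot\pi_*\Gamma = \pi^*K_S\cdot\Gamma = (K_X - \sum\epsilon_j)\cdot\Gamma$. Summing over all components and using that $K_X\cdot\Gamma$ is computed by adjunction on $X$ (and is $\ge -1$ with equality only for a $(-1)$-curve, but no component of $\mathrm{Exc}(\phi)$ is a $(-1)$-curve since $\phi$ is a \emph{minimal} resolution), one gets a lower bound for the correction term $\sum_j \epsilon_j\cdot\mathrm{Exc}(\phi)$ and hence an upper bound for $K_S\cdot\pi(\mathrm{Exc}(\phi))$ in terms of $K_X\cdot\mathrm{Exc}(\phi)$. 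The latter in turn is governed by $K_X = \phi^*K_W - \sum a_iE_i$: by the projection formula $K_X\cdot\mathrm{Exc}(\phi) = -\sum a_i E_i\cdot\mathrm{Exc}(\phi)$, which is a sum of purely local contributions at the $l$ singular points, each bounded by the local T-singularity combinatorics (this is where the "baby version" \cite[Corollary 3.13]{FRU23} and the continued-fraction structure of T-chains enter). Assembling: $K_S\cdot\pi(\mathrm{Exc}(\phi)) + K_S^2 \le K_W^2 + (\text{blow-up count adjustments})$, and the adjustment is where $\min\{|J|,l\}$ appears — the curves in $J$ are exactly those that contribute nonzero discrepancy-type terms, but each singular point contributes at most one "excess" unit, giving the $\min$ with $l$.

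**The equality case and the role of $J$.** The appearance of $\min\{|J|, l\}$ is the delicate bookkeeping point. The set $J$ of curves meeting $K_S$ nontrivially is, roughly, the set of components of $\pi(\mathrm{Exc}(\phi))$ that are not $(-1)$- or $(-2)$-curves on $S$ after contraction — i.e., the "genuinely negative" pieces visible from the minimal model. One bounds the slack by $|J|$ by a termwise estimate over the components in $J$, and by $l$ by grouping the contributions singularity-by-singularity and using that each T-chain, being built from $[4]$ or $[3,2,\dots,2,3]$ by the two chain operations recalled in the introduction, contributes at most one unit of excess $K_X\cdot(\text{chain}) - (\text{ideal value})$. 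For the equality clause, $X = S$ means $\pi$ is an isomorphism: then $t = 0$, every correction term vanishes, and the inequality becomes an identity; conversely, if $X\ne S$ there is at least one $(-1)$-curve on $X$, and since $S$ has no $(-1)$-curves that curve must meet $\mathrm{Exc}(\phi)$ or get contracted through it, producing a strictly positive discrepancy in the chain of estimates, so the inequality is strict.

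**Main obstacle.** The genuinely hard step is \emph{not} the linear-algebra shuffling of canonical classes but the sharp local bound on $\sum a_i E_i\cdot\mathrm{Exc}(\phi)$ per T-singularity and its interaction with how the $(-1)$-curves of $\pi$ thread through the exceptional locus — equivalently, bounding the geometry of the rational configurations $\pi(\mathrm{Exc}(\phi))$ strongly enough that the global count collapses to $\min\{|J|,l\}$ rather than something weaker like $|J|$ alone. This is precisely the "new ideas" the excerpt advertises as needed beyond \cite[Corollary 3.13]{FRU23}, and I expect it to rely on a careful induction on the length of the T-chains via the $[2,e_1,\dots,e_{r-1},e_r+1]$ / $[e_1+1,\dots,e_r,2]$ recursion, tracking the Kodaira-dimension-$1$ structure of $S$ and its elliptic fibration to rule out pathological configurations. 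I would isolate that as a separate lemma (presumably the content of Section~\ref{s1}) and then deduce Corollary~\ref{ineqq} as a short formal consequence.
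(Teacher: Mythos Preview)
Your canonical-class bookkeeping gets you only to the starting line. The identity $E\cdot C = K_W^2 - K_S^2 + m + l - K_S\cdot\pi(C)$ (where $E=\sum E_i$ is the total pullback of the blown-up points and $m$ is the number of blow-ups) is indeed the first step, and is exactly \cite[Lemma 3.3]{FRU23}. But from there the paper does \emph{not} proceed by local discrepancy estimates on each T-chain. Instead it partitions the $E_i$ according to the intersection number $h = E_i\cdot\big(\sum_{C_{k,j}\nsubseteq E_i}C_{k,j}\big)$, builds an auxiliary bipartite graph $G$ whose vertices are the $E_i$ with $E_i\cdot\big(\sum_{C_{k,j}\subseteq E_i}C_{k,j}\big)=0$ together with the $l$ T-chains, and then reads off the inequality from the Euler characteristic of $G$: one gets $K_W^2-K_S^2-K_S\cdot\pi(C)+l_c=\alpha+\sum_{h\ge3}(h-2)(s_h-t_h)$, where $l_c$ is the number of connected components of $G$ and $\alpha=\mathrm{rk}\,H^1(G,\Z)$. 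The right side is manifestly $\ge0$, and the bound with $|J|$ follows once one knows $|J|\ge l_c$. Your proposal contains no hint of this graph or the Euler-characteristic trick, and your ``each T-chain contributes at most one unit of excess'' is not a local fact at all---it is precisely what the global graph argument encodes.

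Two further gaps. First, the inequality $|J|\ge l_c$ is not automatic: one needs a separate proposition (Proposition \ref{koda} in the paper) showing that a connected component of $G$ with \emph{no} curve in $J$ forces $S$ to be K3 or Enriques---this is a case analysis over the Kodaira dimension of $S$, not the Kodaira-dimension-$1$ fibration structure you invoke. Second, your equality argument (``a $(-1)$-curve on $X$ must meet $\mathrm{Exc}(\phi)$ and produce positive slack'') is too coarse. The paper's equality analysis uses that $\alpha=0$ means $G$ is a forest, then shows (Lemma \ref{nocycles}) that under no cycles every relevant $E_i$ is an honest $(-1)$-curve meeting each T-chain transversally in at most one point; finally a leaf-counting argument on the tree, combined with the classification of $E_i$ with $E_i\cdot C=1$ from \cite[Section 4]{FRU23}, forces each connected component to be a single vertex, hence $X=S$. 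None of this structure appears in your sketch.
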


Therefore every KSBA T-surface satisfying $K^2<2p_g-4$ is a small surface (see Corollary \ref{hori}). It also implies the following general structure theorem for T-surfaces with low $K^2$. A \textit{double section} is an irreducible curve in an elliptic fibration whose intersection with fibers is $2$.

\begin{theorem} [Corollary \ref{low}]
Let $W$ be a T-surface with $K_W$ big and nef. Assume $K_W^2 <3p_g(W) -6$, and that the Kodaira dimension of $S$ is $1$. Thus  there is an elliptic fibration $S \to \P^1$. Then $\pi(\text{Exc}(\phi))$ contains exactly either one double section, two sections, or one section, and some components of singular fibers.
\label{main4}
\end{theorem}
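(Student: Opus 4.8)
The plan is to extract the statement directly from the inequality of Theorem~\ref{main3}, using only the standard form of the canonical class of a minimal properly elliptic surface. Write $f\colon S\to\P^1$ for the elliptic fibration and $F$ for a general fiber. Since $S$ is the minimal model and has Kodaira dimension $1$, we have $K_S^2=0$, and the canonical bundle formula gives numerically $K_S\equiv m_SF$ with
$m_S=p_g(S)-1+\sum_j\bigl(1-\tfrac1{m_j}\bigr)\ge p_g(S)-1$,
the sum running over the (possibly empty) set of multiple fibers of $f$, of multiplicities $m_j$. Here $p_g(W)=p_g(S)$ because the singularities of $W$ are quotient, hence rational, so resolving them does not change $p_g$, and blow-ups do not change $p_g$ either; the base degree $\delta=p_g(S)-1$ in the formula comes from $p_g(S)=h^0\bigl(\P^1,\O_{\P^1}(\delta)\bigr)$. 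Since $K_W$ is big we have $K_W^2>0$, so the hypothesis $K_W^2<3p_g(W)-6$ forces $p_g(W)=p_g(S)\ge 3$; in particular $m_S-1\ge p_g(S)-2\ge 1$ and $m_S>0$.

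First I would rewrite the two quantities appearing in Theorem~\ref{main3} in fibration terms. Split the components of $\pi(\text{Exc}(\phi))$ into \emph{horizontal} ones (surjecting onto $\P^1$, meeting $F$ in some multiplicity $d_i\ge 1$) and \emph{vertical} ones (contained in fibers). For any component $\Gamma$ we have $\Gamma\cdot K_S=m_S(\Gamma\cdot F)$, which is positive exactly when $\Gamma$ is horizontal; hence $J$ is precisely the set of horizontal components, $|J|=h$ is their number, and
$$K_S\cdot\pi(\text{Exc}(\phi))=m_S\sum_i d_i,\qquad F\cdot\pi(\text{Exc}(\phi))=\sum_i d_i\ \ge\ h=|J|.$$
Feeding this into Theorem~\ref{main3} and using $K_S^2=0$ gives $m_S\sum_i d_i\le K_W^2+\min\{|J|,l\}\le K_W^2+\sum_i d_i$, i.e. $(m_S-1)\sum_i d_i\le K_W^2<3(p_g(S)-2)\le 3(m_S-1)$. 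Since $m_S-1\ge 1$, this forces $\sum_i d_i\le 2$.

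Next I would rule out $\sum_i d_i=0$, i.e. that $\pi(\text{Exc}(\phi))$ is entirely vertical. If it were, then $\text{Exc}(\phi)$ would be vertical for the induced fibration $X\to\P^1$ (the blow-ups composing $\pi$ are centered at points of fibers, hence preserve the fibration and have vertical exceptional curves), so $X\to\P^1$ contracts every connected component of $\text{Exc}(\phi)$ and therefore factors through $\phi$, yielding an elliptic fibration $W\to\P^1$. A general fiber $F_W$ is then a smooth genus-one curve inside the smooth locus of $W$, moving in a base-point-free pencil, so $F_W^2=0$ and, by adjunction, $K_W\cdot F_W=0$; but then the Hodge index theorem on the normal surface $W$ (where $F_W^2=0$ makes $F_W^{\perp}$ negative semidefinite) gives $K_W^2\le 0$, contradicting that $K_W$ is big. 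Hence $\sum_i d_i\in\{1,2\}$, and the possibilities are exactly: one horizontal component which is a section (if $\sum_i d_i=1$); or one horizontal component meeting $F$ twice, i.e. a double section, or exactly two horizontal components each a section (if $\sum_i d_i=2$). Finally, every vertical component of $\pi(\text{Exc}(\phi))$ is the image of a smooth rational curve in $X$, hence rational, hence a component of a singular fiber, since a smooth fiber is irreducible of genus $1$. This gives the three cases in the statement.

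The real work is Theorem~\ref{main3}; the rest is bookkeeping with the numerical class $K_S\equiv m_SF$. The one place I expect to need a genuine (if short) argument is the exclusion of a purely vertical configuration via Hodge index, and — on the arithmetic side — applying the canonical bundle formula carefully enough that multiple fibers only improve the bound $m_S\ge p_g(S)-1$ rather than weaken it.
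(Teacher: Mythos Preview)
Your proof is correct and follows essentially the same strategy as the paper: plug the canonical class formula $K_S\equiv m_S F$ into the inequality of Theorem~\ref{main3} (Corollary~\ref{ineqq}) and bound the total horizontal degree. Your bounding step is in fact a bit cleaner than the paper's: you observe $\min\{|J|,l\}\le |J|\le \sum_i d_i$ and get $(m_S-1)\sum_i d_i<3(m_S-1)$ in one line, whereas the paper keeps $|J|$ and does a short case analysis on $|J|\ge 3$, $|J|=2$, $|J|=1$. For the exclusion of a purely vertical configuration you give an explicit Hodge-index argument; the paper handles this by the remark (inside the proof of Proposition~\ref{koda}) that T-chains cannot lie entirely in fibers when $K_W$ is big. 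Both routes are fine.
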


Horikawa classified surfaces with $K^2=2p_g-3$ in \cite{H76b}. Using Theorem \ref{main4}, we can classify all T-surfaces $W$ with $K_W^2=2p_g(W)-3$, which of course includes I-surfaces ($p_g=2$) and quintic surfaces ($p_g=4$). We point out that the corresponding KSBA surfaces have been studied e.g. in \cite{CFPR22}, \cite{FPRR22}, \cite{CFPRR23}, \cite{GPSZ24}, \cite{RT24}, \cite{CFPR24} for I-surfaces, and in \cite{R14}, \cite{G19} for quintics. We can prove that any T-surface $W$ with $K_W$ big and nef with $K_W^2=2p_g(W)-3$ must belong to one of the options: 

\begin{itemize}
    \item[(A)] The surface $S$ is a Horikawa surface and we contract one $[4]$ or $[3,2,\ldots,2,3]$ in $S$. Smoothability of these surfaces has recently been studied in \cite{CP25}.
    \item[(B)] There is a minimal elliptic fibration $S \to \P^1$ and a double section $D$ with $D^2=-2p_g$, and $W$ is the contraction of one Wahl chain $[2p_g,2,\ldots,2]$ starting with $D$.   
    \item[(C)] The surface $W$ is constructed from an elliptic fibration $S \to \P^1$ such that $\pi(\text{Exc}(\phi))$ contains precisely $2$ sections and components from fibers.  
    \item[(D)] The surface $W$ is small.
\end{itemize}

The case (C) of two sections brings a new huge world of surfaces which will be studied in another article. The details for case (A) will be given in that article. Case (D) can be classified via Theorem \ref{main2}. As an example, we prove that I-surfaces with T-singularities are precisely the surfaces already studied in \cite{CFPRR23}.

\begin{theorem}
Let $W$ be a surface with only non Du Val T-singularities, $K_W$ big and nef, $p_g=2$ and $K_W^2=1$. Then $W$ is one of the following:
\begin{itemize}
    \item[(i)] Contraction of a double section $D$ with $D^2=-4$ in an elliptic surface $S \to \P^1$.

    \item[(ii)] Contraction of a configuration $[3,2,\ldots,2,3]$ in an elliptic surface $S \to \P^1$ where the two $(-3)$-curves are sections.

    \item[(iii)] A small surface from one S1F.4. Hence we have one T-chain $[2,3,4]$, and $n$ $[4]$ for some $0\leq n \leq 32$. When $n>1$, the chains $[4]$ can be arranged in one $[3,2,\ldots,2,3]$.

    \item[(iv)] A small surface from one S1F.1. Hence we have one $[3,5,2]$ and $n$ $[5,2]$ for some $0\leq n \leq 33$. When $n>1$, the chains $[5,2]$ can be arranged in one $[4,2,\ldots,2,3,2]$.
\end{itemize}
\label{main5}
\end{theorem}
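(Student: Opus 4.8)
The plan is to derive Theorem~\ref{main5} by specialising the structure theorem for T-surfaces with $K_W^2=2p_g(W)-3$ — the case analysis into (A)--(D) recorded above — to $p_g(W)=2$, $K_W^2=1$, and reading off the four possibilities. First I would record the basic numerology. With $\phi\colon X\to W$ the minimal resolution and $\pi\colon X\to S$ the contraction to a minimal model, rationality of T-singularities gives $p_g(S)=p_g(X)=p_g(W)=2$ and $q(S)=q(W)$; since no minimal surface of Kodaira dimension $\le 0$ has $p_g=2$, necessarily $\kappa(S)\in\{1,2\}$. If $\kappa(S)=2$ then $S$ is minimal of general type with $p_g(S)=2$, hence $K_S^2\ge 1$; but the only one of the options (A)--(D) with $S$ of general type is (A), in which $S$ is a Horikawa surface and thus $K_S^2=2p_g(S)-4=0$, a contradiction. (Alternatively this follows directly from Corollary~\ref{ineqq}: with $C:=\pi(\text{Exc}(\phi))$ effective and nonzero and $K_S$ nef and big, the bound $K_S\cdot C\le K_W^2-K_S^2+\min\{|J|,l\}$ is incompatible with the $(-2)$-configurations forced on $C$.) Hence $\kappa(S)=1$, so $S\to\P^1$ is elliptic, and $\chi(\O_S)=3-q(S)>0$ forces $q(S)=0$, $\chi(\O_S)=3$; in particular every section of $S\to\P^1$ is a $(-3)$-curve, the fibre components occurring in $C$ are $(-2)$-curves, and $K_S$ is numerically a multiple of a fibre. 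We are then in one of cases (B), (C), (D).

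Cases (B) and (C) I would dispatch using only the combinatorics of T-chains: a non--Du Val T-chain $[e_1,\dots,e_r]$ of type $d$ satisfies $\sum_i(e_i-2)=r-d+2$ with $d\le r$, so $\sum_i(e_i-2)\ge 2$, and contracting it lowers $K^2$ by $\sum_i(e_i-2)-1$. In case (B) the chain to contract is $[2p_g,2,\dots,2]=[4,2,\dots,2]$; with one or more $2$'s this has $\sum(e_i-2)=2$ yet length $\ge 2$, so it cannot be a T-chain (the only T-chains with $\sum(e_i-2)=2$ are the base cases $[4]$ and $[3,2,\dots,2,3]$), and therefore the chain is $[4]$: $W$ is the contraction of a single $(-4)$-curve that is a double section, giving (i). In case (C), $C$ is exactly two sections plus fibre components; a connected T-chain containing just one section would be $[3,2,\dots,2]$, excluded since $\sum(e_i-2)=1$, and a connected chain of $(-2)$-curves is Du Val, excluded by hypothesis; so both sections lie on one T-chain, necessarily $[3,2,\dots,2,3]$ with the two $(-3)$-curves the sections. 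That chain has $\sum(e_i-2)=2$, so $K_X^2=K_W^2-1=0=K_S^2$ and $X=S$, giving (ii).

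For case (D), $W$ is small, and I would invoke Theorem~\ref{main2}: part (1) gives $K_W^2=p_g(S)-2+N=N$, hence $N=1$, so exactly one fibre of $S\to\P^1$ lies entirely inside $C$. Running the list~\ref{blocks} of the fourteen building blocks of Theorem~\ref{main2}(2) under the constraints $p_g(S)=2$ and $N=1$ should leave only S1F.1 and S1F.4, with no additional FIBs to glue; S1F.4 produces the chain $[2,3,4]$ together with up to $n$ copies of $[4]$, and S1F.1 produces $[3,5,2]$ together with up to $n$ copies of $[5,2]$, where the bounds $n\le 32$, respectively $n\le 33$, are imposed by the Euler-number and monodromy bookkeeping of the singular fibres of $S$ (with $e(S)=12\chi(\O_S)=36$) underlying the classification of small surfaces, and when $n>1$ the extra chains may be regrouped inside the ambient fibre as a single T-chain $[3,2,\dots,2,3]$, respectively $[4,2,\dots,2,3,2]$. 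These are (iii) and (iv). Since (A)--(D) are exhaustive and all four families visibly occur (Lee--Park-type elliptic surfaces realise (i), (ii), and explicit elliptic surfaces with $\chi=3$ realise (iii), (iv)), the classification follows.

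The hard part will be case (D): showing that precisely S1F.1 and S1F.4 among the fourteen building blocks are compatible with $p_g(S)=2$ and $N=1$, and pinning down the sharp ranges $0\le n\le 32$ and $0\le n\le 33$. This rests on the full machinery behind Theorems~\ref{formulaK^2}, \ref{ClassSmallSurf}, \ref{geogrSmall} and on a careful count of how many $(-4)$-curves can be packed into the singular fibres of an elliptic surface with $\chi(\O)=3$ while keeping $K_W$ big and nef. By contrast, excluding case (A) and reducing cases (B), (C) to (i), (ii) should be routine once the T-chain identity $\sum_i(e_i-2)=r-d+2$ is in hand.
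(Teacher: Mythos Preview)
Your plan has two genuine gaps.

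First, the (A)--(D) decomposition you invoke is derived in the paper from Corollary~\ref{low}, whose proof explicitly assumes $p_g\ge 3$ (the hypothesis $K_W^2<3p_g-6$ is vacuous for $p_g=2$) and defers the case $p_g=2$ to the proof of Theorem~\ref{main5} itself. So specialising (A)--(D) here is circular: you must establish directly that $\pi(C)$ contains at most one double section, or two sections, or one section. The paper does this from scratch, first ruling out $\kappa(S)=2$ via Corollary~\ref{ineqq} (your parenthetical alternative is in the right spirit: one gets $|J|\le K_S\cdot\pi(C)\le K_W^2-K_S^2+|J|$, forcing $K_S^2\le 1$, hence equality and $X=S$, hence $K_W^2>K_S^2$, contradiction), and then using the Euler-characteristic identity of Corollary~\ref{euler} to obtain $s-l_c\le 1$, where $s$ is the number of sections and $l_c$ the number of components of the auxiliary graph $G$.

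Second, and more seriously, your treatment of the two-section case assumes $X=S$ rather than proving it. You claim a T-chain containing one section must be $[3,2,\ldots,2]$ and one containing both must be $[3,2,\ldots,2,3]$; but these are the self-intersections of the curves in $S$, not in $X$. The map $\pi\colon X\to S$ may involve blow-ups, after which the section becomes a $(-r)$-curve with $r>3$ and fiber components acquire lower self-intersection, so the T-chain in $X$ is not determined by the image in $S$. Your deduction ``$\sum(e_i-2)=2$, so $K_X^2=0=K_S^2$ and $X=S$'' is then circular: it presupposes the shape $[3,2,\ldots,2,3]$, which is what needs proof. The paper's argument is sharper: equality in Corollary~\ref{euler} forces $G$ to be a tree with $S_h=T_h$ for $h\ge 3$; a component with a single section would then carry a nontrivial P-resolution of $[2,\ldots,2,3,2,\ldots,2]$, impossible by Proposition~\ref{2r2} for $r=3$; and the remaining case (two sections in one component) reduces to classifying P-resolutions of $[2,\ldots,2,3,2,\ldots,2,3,2,\ldots,2]$, whose dual has length $3$, so only the zero continued fractions $[2,1,2]$ and $[1,2,1]$ arise, and only the first gives a non-Du-Val T-chain, namely the contraction of $[3,2,\ldots,2,3]$ itself with no blow-ups. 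Your $\sum(e_i-2)$ bookkeeping does not replace this P-resolution analysis.
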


For quintic surfaces we have (A) with a Horikawa surface with $K^2=4$, and (B) using a double section $\Delta$ on $S$ with $\Delta^2=-8$, and contracting a $[8,2,2,2,2]$. The case (C) has lots of geometric situations, and so it will be considered in a sequel to this paper. The case (D) is the next corollary.  

\begin{corollary}
Let $W$ be a small surface with $K_W$ big and nef, $p_g=4$ and $K_W^2=5$. Then $W$ is constructed through one of the following combinations of building blocks:
\begin{itemize}
    \item[(i)] One S0F with $r=11$ and $3$ FIBs.

    \item[(ii)] One S1F.i with $i\in \{2,4\}$ and $r=9$, and $2$ FIBs.

    \item[(iii)] One S2F.i with $i\in \{4,5,6,7,8\}$ and $r=7$, and $1$ FIB.
\end{itemize}
\label{main6}
\end{corollary}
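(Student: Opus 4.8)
The plan is to deduce everything from the two structural statements packaged in Theorem~\ref{main2}, after first pinning down the invariants of the underlying elliptic surface $S$.

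First I would record that the birational morphisms $\phi\colon X\to W$ and $\pi\colon X\to S$ change neither $p_g$ nor $\chi$. Indeed, T-singularities are quotient singularities, hence rational, so $\phi_*\mathcal{O}_X=\mathcal{O}_W$ and $R^i\phi_*\mathcal{O}_X=0$ for $i>0$; the Leray spectral sequence then gives $h^j(\mathcal{O}_W)=h^j(\mathcal{O}_X)$ for all $j$, and in particular $p_g(W)=p_g(X)$. Since $\pi$ is a composition of blow-ups of the smooth surface $S$, one also has $p_g(X)=p_g(S)$ and $q(X)=q(S)$. Because $\pi(\text{Exc}(\phi))$ contains a section of the elliptic fibration $S\to\mathbb{P}^1$, the surface $S$ is relatively minimal elliptic over $\mathbb{P}^1$ with a section, so $q(S)=0$; hence $p_g(S)=p_g(W)=4$ and $\chi(\mathcal{O}_S)=5$, i.e. the distinguished section has self-intersection $-5$. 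Feeding $p_g(S)=4$ and $K_W^2=5$ into the identity $K_W^2=p_g(S)-2+N$ of Theorem~\ref{main2}(1) forces $N=3$, and the same statement gives $l\geq\max\{1,N-1\}=2$, consistent with all of (i)--(iii).

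Next I would split into cases according to how these three complete fibers are distributed among the building blocks. By Theorem~\ref{main2}(2), $W$ is obtained from exactly one building block of the list~\ref{blocks}, of type S0F, S1F.i or S2F.i, glued to a number of FIB blocks; by construction a block of type SkF accounts for precisely $k$ of the $N$ fibers of $S\to\mathbb{P}^1$ lying entirely in $\pi(\text{Exc}(\phi))$ and each FIB for one more. Since that total is $N=3$, the only possibilities are S0F with $3$ FIBs, S1F.i with $2$ FIBs, or S2F.i with $1$ FIB, which already yields the trichotomy (i)--(iii). What remains is, for each base block, to decide which index $i$ and which length parameter $r$ are compatible with $\chi(\mathcal{O}_S)=5$.

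This last step is where the real work lies, and it is essentially a finite check against the explicit list~\ref{blocks}: for each of the fourteen blocks one computes $\chi(\mathcal{O}_S)$ --- equivalently the self-intersection of the distinguished section --- together with the admissible length $r$ of the T-chain carried by that section; imposing $\chi(\mathcal{O}_S)=5$ then rules out every block except the ones claimed and simultaneously pins down $r=11$ for S0F, $r=9$ for S1F (surviving only for $i\in\{2,4\}$), and $r=7$ for S2F (surviving only for $i\in\{4,5,6,7,8\}$). Conversely, each of these numerical patterns has to be exhibited by an actual configuration on such an $S$, which is a direct construction. I expect this bookkeeping, which reuses the case analysis underlying Theorem~\ref{main2}, to be the only non-routine ingredient; modulo it, the corollary is a straight substitution of invariants.
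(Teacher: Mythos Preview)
Your proposal is correct and mirrors the paper's own proof, which is just a simple inspection via Theorem~\ref{ClassSmallSurf} after noting $N=3$ and that the section in $S$ has self-intersection $-5$. Your opening paragraph on $p_g(S)=p_g(W)$ is harmless but redundant (this is recorded right after Definition~\ref{small}), and the final bookkeeping you anticipate is exactly what the paper also leaves implicit: with $s=3,2,1$ FIBs one has $r=p_g+1+2s=11,9,7$, whence the building blocks S1F.1, S1F.3, S2F.1--3, S2F.9 (whose section self-intersection is fixed at $-3,-5,-4,-4,-5,-3$ respectively) are excluded.
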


The classification of small surfaces is determined by the classification of P-resolutions of particular cyclic quotient singularities. This is developed in Section \ref{s2} and \ref{app}. P-resolutions was a tool used by Koll\'ar and Shepherd-Barron \cite[Section 3]{KSB88} to study the deformation spaces of quotient singularities. The way P-resolutions appear for small surfaces is through the T-chain in $X$ that contains the relevant section of $S \to \P^1$. In an intermediate blow-up of $S$, we show that this T-chain must be part of a P-resolution of a particular singularity. Similarly for T-chains totally included in fibers, which come, as we prove in this paper, from the unique building block FIB in the list \ref{blocks}. We can verify that discrepancies of exceptional divisors work out for the plumbings of the 15 building blocks via a careful analysis of all possible P-resolutions.    

\subsubsection*{Acknowledgments} We thank Fabrizio Catanese, Igor Dolgachev, Jonny Evans, DongSeon Hwang, Pedro Montero, Matthias Sch\"utt, and Roberto Villaflor for valuable discussions. Special thanks to Juan Pablo Z\'u\~niga for discovering and proving Theorem \ref{JPZ}, and to Makoto Enokizono for pointing out an error in Proposition 5.6 in Section \ref{s4} of the first version. This paper was partially written while the third-named author was at the Freiburg Institute for Advanced Studies under a Marie S. Curie FCFP fellowship (2023-2024). The first-named author was funded by ANID scholarship 22221734, and the third-named author was also supported by FONDECYT regular grant 1230065.

\tableofcontents

\section{An inequality for rational KSBA configurations} \label{s1}

Let us recall first some basic definitions and facts. For $0<q<m$ coprime integers, a \textit{cyclic quotient singularity} (c.q.s.) $\frac{1}{m}(1,q)$ is the surface germ at the image of $(0,0)$ of the quotient of $\C^2$ by $(x,y) \mapsto (\zeta x, \zeta^q y)$, where $\zeta$ is an $m$-th primitive root of $1$. A \textit{T-singularity} is either a Du-Val singularity or a c.q.s. $\frac{1}{dn^2}(1,dna-1)$ with $0<a<n$ coprime and $d\geq 1$. When $d=1$, we call it \textit{Wahl singularity}. A singularity $\frac{1}{m}(1,q)$ can be minimally resolved by a chain of nonsingular rational curves $C_1,\ldots,C_r$ where $C_i^2=-e_i \leq -2$ and $\frac{m}{q}=[e_1,\ldots,e_r]$, which is the \textit{Hirzebruch-Jung continued fraction} of $\frac{m}{q}$. The symbol $[e_1,\ldots,e_r]$ will also refer to these chains of curves. T-singularities (Wahl singularities) are minimally resolved by \textit{T-chains} (\textit{Wahl chains}). As usual, if $\sigma \colon \widetilde{Y} \to Y=\frac{1}{m}(1,q)$ is the minimal resolution, then one can write the numerical equivalence $$K_{\widetilde{Y}} \equiv \sigma^*(K_Y) + \sum_{i=1}^r \delta_i C_i$$ where $\delta_i \in ]-1,0]$ is (by definition) the \textit{discrepancy} at $C_i$. For T-chains we have the well-known properties (see e.g. \cite[Section 1]{FRU23}):

\begin{proposition}
For non-ADE T-singularities $\frac{1}{dn^2}(1,dna-1)$ we have:

\begin{itemize}
    \item[(i)] If $n=2$ then the T-chain is either $[4]$ or $[3,2, \ldots,2,3]$, where the number of $2$s is $d-2$. In this case, all discrepancies are equal to $-\frac{1}{2}$.
    \item[(ii)] If $[e_1,e_2,\ldots,e_r]$ is a T-chain for a given $d$, then $[2,e_1, \ldots, e_{r-1},e_r+1]$ and $[e_1+1,e_2, \ldots,e_r,2]$ are T-chains for the same $d$.
    \item[(iii)] Every T-chain can be obtained by starting with one of the singularities in (i) and iterating the steps described in (ii).
    \item[(iv)] Consider a T-chain $[e_1, \ldots, e_r]=\frac{dn^2}{dna-1}$ with discrepancies $-1+\frac{t_1}{n}, \ldots, -1+\frac{t_r}{n}$ respectively, where $t_1+t_r=n$. Then $[e_1+1,e_2, \ldots,e_r,2]$ has discrepancies $-1+\frac{t_1}{n+t_1},\ldots,-1+\frac{t_r}{n+t_1},-1+\frac{t_1+t_r}{n+t_1}$, and $[2,b_1, \ldots,b_r+1]$ has discrepancies $-1+\frac{t_1+t_r}{n+t_r},$ $-1+\frac{t_1}{n+t_r},\ldots,-1+\frac{t_r}{n+t_r}$, respectively.
    \item[(v)] Given the T-chain $[e_1,\ldots,e_r]$, the discrepancy of an ending $(-2)$-curve is $>-\frac{1}{2}$.
\end{itemize}

\label{T-chain}
\end{proposition}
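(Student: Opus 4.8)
Items (i)--(v) are standard facts about T-singularities (see \cite{W81} and \cite[Section~1]{FRU23}); I would organize the argument around the \emph{discrepancy linear system} of a chain $[e_1,\dots,e_r]$ with exceptional curves $C_1,\dots,C_r$. Intersecting $K_{\widetilde Y}\equiv\sigma^*K_Y+\sum_i\delta_iC_i$ with each $C_j$ and using adjunction yields
\[
\delta_{i-1}-e_i\delta_i+\delta_{i+1}=e_i-2,\qquad 1\le i\le r,\ \ \delta_0=\delta_{r+1}:=0 ,
\]
and since the intersection matrix $(C_i\cdot C_j)$ is negative definite (it is the intersection form of a resolution's exceptional divisor) this system has a unique solution; so to identify discrepancies it suffices to exhibit one solution. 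For (i), running the Hirzebruch--Jung algorithm on $\tfrac{4d}{2d-1}=2+\tfrac{2}{2d-1}$ gives $[4]$ when $d=1$, and for $d\ge2$ it successively extracts a $3$ and then $d-2$ twos, ending at $[3,2,\dots,2,3]$; substituting $\delta_i\equiv-\tfrac12$ verifies the system at interior $(-2)$-curves ($-\tfrac12+1-\tfrac12=0$), at the extremal $(-3)$-curves ($\tfrac32-\tfrac12=1$), and for $[4]$ ($2=4-2$), so by uniqueness every discrepancy equals $-\tfrac12$.

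I would prove (ii) and (iv) together. Encoding a chain by the product $M(e_1)\cdots M(e_r)$ with $M(e)=\left(\begin{smallmatrix}e&-1\\1&0\end{smallmatrix}\right)$, a direct computation expresses the numerator and denominator of $[2,e_1,\dots,e_{r-1},e_r+1]$ and of $[e_1+1,e_2,\dots,e_r,2]$ in terms of those of $[e_1,\dots,e_r]$, and one checks these still have the form $\bigl(d(n')^2,\,dn'a'-1\bigr)$ with $n'=n+t_r$, resp.\ $n'=n+t_1$; this gives (ii). For (iv) I would feed the proposed discrepancies into the discrepancy system of the enlarged chain: the interior equations are literally the same integer identities $t_{i-1}-e_it_i+t_{i+1}=0$ that already hold for $[e_1,\dots,e_r]$, so only the three modified equations --- at the two re-weighted ends and at the new $(-2)$-curve --- require checking, and each collapses to an identity among $t_1,\dots,t_r$ and $n$: precisely the boundary relations of the original chain together with $t_1+t_r=n$. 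The same bookkeeping shows $t_1+t_r=n$ propagates: for $[e_1+1,\dots,e_r,2]$ the new extreme $t$-values are $t_1$ and $t_1+t_r$, and $t_1+(t_1+t_r)=t_1+n=n'$.

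For (iii) I would induct on $n$ (equivalently on the number of curves). If $n=2$ the chain is one of the base cases of (i). If $n\ge3$, the key point is that a non-ADE T-chain has exactly one extremal curve of self-intersection $-2$, and that undoing the corresponding move --- deleting that terminal $2$ and lowering the opposite end by $1$, or the mirror operation --- returns the minimal resolution of a T-singularity with parameter $t_r$ (resp.\ $t_1$), which is $<n$ because $t_1+t_r=n$ and $t_1,t_r\ge1$; by the inductive hypothesis it arises from a base case, hence so does the original chain. Part (v) then follows: a base case has no extremal $(-2)$-curve, so by (iii) such a curve occurs only as the $2$ produced by a last move, and by (iv) its discrepancy equals $-1+\tfrac{t_1+t_r}{n+t_1}=-1+\tfrac{n}{n+t_1}$ (or $-1+\tfrac{n}{n+t_r}$), which is $>-\tfrac12$ since $1\le t_1\le n-1$.

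The step I expect to be the main obstacle is the reduction in (iii): showing that a non-ADE T-chain necessarily has an extremal $(-2)$-curve and that the reverse move lands back among T-chains for the same $d$. Sign estimates on the discrepancies are not enough --- for instance $[3,3]$ has both ends of self-intersection $-3$ and discrepancy $-\tfrac12$ --- so one really has to follow the arithmetic of the pair $(n,a)$, most cleanly via the $\mathrm{SL}_2(\Z)$ / toric description of $\tfrac1{dn^2}(1,dna-1)$, in order to force one of $e_1,e_r$ to be $2$ once $n\ge3$ and make the induction run.
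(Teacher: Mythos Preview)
The paper does not actually prove this proposition: it is stated as a list of ``well-known properties'' with a bare citation to \cite[Section~1]{FRU23}. Your sketch is therefore not competing with a proof in the paper but rather filling in what the authors deliberately omitted, and it does so along the standard lines (discrepancy linear system, recursive structure of T-chains).

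Your argument is correct. The verification of (i) and the inductive check of (iv) via the discrepancy equations $\delta_{i-1}-e_i\delta_i+\delta_{i+1}=e_i-2$ are exactly how one does this; your observation that the interior equations for the enlarged chain reduce to the same integer identities $t_{i-1}-e_it_i+t_{i+1}=0$, with only three boundary equations to check, is the clean way to organize (iv). Your derivation of (v) from (iii) and (iv) is also correct: once one knows the chain has the form $[e_1+1,\dots,e_r,2]$ or its mirror, the new $(-2)$-curve has discrepancy $-1+\tfrac{n}{n+t_1}>-\tfrac12$ since $0<t_1<n$.

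You are right that the only real content lies in (iii), specifically in showing that a T-chain with $n\ge3$ has exactly one extremal $(-2)$-curve so that the reverse move is available. This is usually handled arithmetically: writing $m=dn^2$, $q=dna-1$, one checks directly that $e_1=\lceil m/q\rceil=2$ iff $2a>n$, and by the symmetry $[e_r,\dots,e_1]=\tfrac{m}{q'}$ with $qq'\equiv1\pmod m$ one has $q'=dna'-1$ where $aa'\equiv1\pmod n$, so $e_r=2$ iff $2a'>n$; since $\gcd(a,n)=1$ and $n\ge3$, exactly one of $2a>n$, $2a'>n$ holds. This is the toric/$\mathrm{SL}_2(\Z)$ bookkeeping you allude to, and it closes the gap you flagged.
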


For a non-ADE T-singularity $\frac{1}{dn^2}(1,dna-1)$, we define its \textit{center} to be the collection of exceptional divisors in the corresponding T-chain which have the lowest discrepancy, that is, the curves in the center all have discrepancy $-\frac{n-1}{n}$. In this way, the center is the collection of curves corresponding to the initial step (i) in the algorithm (i), (ii), (iii) in Proposition \ref{T-chain}.

Let $W$ be a projective surface with $K_W$ big and nef, and only non-Du Val T-singularities. Let $\frac{1}{d_in_i^2}(1,d_in_ia_i-1)$ be its T-singularities, where $i \in \{1,\ldots,l\}$, and so $l$ is the number of singularities in $W$. Let us consider the diagram of morphisms
$$ \xymatrix{  & X  \ar[ld]_{\pi} \ar[rd]^{\phi} &  \\ S &  & W}$$
where $\phi$ is the minimal resolution of $W$, and $\pi$ is a composition of $m$ blow-ups such that $S$ has no $(-1)$-curves. We will use the same notation as in \cite{R14,RU17,FRU23}. Let $E_i$ be the pull-back divisor in $X$ of the $i$-th point blown-up through $\pi$. Therefore, $E_i$ is a connected, possibly non-reduced tree of $\P^1$s, $E_i^2=-1$, and $E_i\cdot E_j=0$ for $i\neq j$. Let $E:=\sum_{i=1}^m E_i$. We have that $$K_X \sim \pi^*(K_S) +E.$$ Let $C=\sum_{j=1}^l C_j=\sum_{j=1}^l \sum_{i=1}^{r_i}C_{j,i}$ be the exceptional (reduced) divisor of $\phi$, where $C_j=\sum_{i=1}^{r_j}C_{j,i}$ is the T-chain of the singularity $\frac{1}{d_j n_j^2}(1,d_j n_j a_j-1)$. We have the formula $ K_S^2-m+\sum_{j=1}^l(r_j-d_j+1)=K_W^2.$

To bound singularities on $W$ for a fixed $K_W^2$, it is key to study the intersection $E \cdot C$. We have the formula (see \cite[Lemma 3.3]{FRU23}) $$E \cdot C = \sum_{j=1}^l (r_j - d_j + 2) - K_S \cdot \pi(C)= K_W^2 -K_S^2 + m + l - K_S \cdot \pi(C).$$ Recall that $E_i \cdot \Big(\sum_{C_{k,j} \subseteq E_i} C_{k,j} \Big)$ is equal to $0$ or $-1$, via the projection formula. It is $-1$ if and only if $E_i$ is the pull-back of a $(-1)$-curve whose proper transform belongs to $C$.

\begin{definition}
Let $S_h$ be the set of $E_i$s such that $E_i \cdot \Big(\sum_{C_{k,j} \nsubseteq E_i} C_{k,j} \Big)=h$. Let $T_h$ be the set of $E_i$s in $S_h$ satisfying $E_i \cdot \Big(\sum_{C_{k,j} \subseteq E_i} C_{k,j} \Big)=0$. Let $s_h=|S_h|$ and $t_h=|T_h|$. \footnote{For a set $T$, its cardinality will be denoted by $|T|$.}
\end{definition}

\begin{definition}
Let us consider the following partition of $C$: 
\begin{itemize}
    \item $M:= \{ C_{i,j} \ \text{contracted by} \ \pi \}$,
    \item $J:= \{ C_{i,j} \ \text{such that} \ \pi(C_{i,j}) \cdot K_S \neq 0 \}$, and 
    \item $J^c:= \{ C_{i,j} \ \text{such that} \ \pi(C_{i,j}) \cdot K_S = 0 \ (\text{and} \ \pi(C_{i,j}) \ \text{a curve in } S)\}$. 
\end{itemize}
\end{definition}

In \cite[Section 3]{FRU23}, it was proved that $s_0=s_1=0$ \footnote{In this paper the hypothesis on $K_W$ is big and nef. In \cite{FRU23} $K_W$ is ample. We will be careful when we use results from \cite{FRU23}. In the case of $s_0=s_1=0$ there are no discrepancies since this is based on \cite[Section 2]{FRU23}, and there $K_W$ can be taken big and nef.} (and so $t_0=t_1=0$). As $\sum_{h \geq 0} s_h =m$, we have $$E \cdot C = \sum_{h \geq 2} \left( \sum_{E_i \in S_h} E_i \right) \cdot C = -|M| + \sum_{h \geq 2} h s_h = -|M| + 2m + \sum_{h\geq 3} (h-2) s_h,$$ and $E \cdot \Big(\sum_{C_{k,j} \subseteq E_i} C_{k,j} \Big)=-|M|$. Note that $\sum_{h\geq 2} t_h= m -|M|$, and so we have $$E \cdot C=m + \sum_{h \geq 2} t_h +\sum_{h\geq 3} (h-2) s_h= m + \sum_{h \geq 2} (h-1)t_h + \sum_{h\geq 3} (h-2) (s_h-t_h).$$

\bigskip 

Let us construct an auxiliary graph $G$. Its vertices are either the $E_i$ in $\cup_{h\geq 2} T_h$, or each of the $l$ T-chains. It will only have edges between the vertices of these $E_i$s and the vertices corresponding to T-chains. For a $E_i$ vertex and a vertex of a T-chain $C_j$, we assign $E_i \cdot C_j$ edges between them. Let $l_c$ be the number of connected components of $G$, and let $\alpha$ be the rank of $H^1(G,\Z)$. Then the Euler characteristic of $G$ gives $$\Big(\sum_{h\geq 2} t_h +l \Big) - \Big(\sum_{h\geq 2} h t_h \Big) = l_c -\alpha.$$ Thus $\sum_{h\geq 2} (h-1)t_h-l+l_c \geq 0$, and it is equal to zero if and only if there are no cycles in $G$. 

\begin{corollary}
We have $K_W^2-K_S^2-K_S\cdot \pi(C)+l_c=\alpha+\sum_{h\geq 3} (h-2) (s_h-t_h)\geq 0$. Thus we have equality if and only if $G$ is a tree and $S_h=T_h$ for $h\geq 3$.
\label{euler}
\end{corollary}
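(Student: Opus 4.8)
The plan is that essentially everything needed has already been assembled in the discussion above, so the proof will be a short bookkeeping computation with no further geometric input; the only care required is to track which identities hold under the big-and-nef (rather than ample) hypothesis on $K_W$, as flagged in the footnote on $s_0=s_1=0$. Concretely, I would derive the asserted equality by writing $E\cdot C$ in two ways and equating, then read off nonnegativity termwise.

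First I would record the two expressions for $E\cdot C$ that are available. The ``intersection-theoretic'' one is $E\cdot C = K_W^2-K_S^2+m+l-K_S\cdot\pi(C)$, coming from \cite[Lemma 3.3]{FRU23} together with $K_S^2-m+\sum_j(r_j-d_j+1)=K_W^2$. The ``combinatorial'' one is $E\cdot C = m+\sum_{h\geq2}(h-1)t_h+\sum_{h\geq3}(h-2)(s_h-t_h)$, coming from $s_0=s_1=0$ and the expansions recorded just above. Next I would bring in the Euler-characteristic identity for the auxiliary graph $G$: since the only edges of $G$ are the $h$ edges emanating from each vertex $E_i\in T_h$, one has $\big(\sum_{h\geq2}t_h+l\big)-\sum_{h\geq2}h\,t_h=l_c-\alpha$, i.e. $\sum_{h\geq2}(h-1)t_h=l-l_c+\alpha$. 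Substituting this into the combinatorial formula for $E\cdot C$ and equating it with the intersection-theoretic one, the summands $m$ and $l$ cancel on both sides, leaving exactly $K_W^2-K_S^2-K_S\cdot\pi(C)+l_c=\alpha+\sum_{h\geq3}(h-2)(s_h-t_h)$.

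For the inequality and the equality statement I would then observe that the right-hand side is visibly a sum of nonnegative terms: $\alpha$ is the rank of $H^1(G,\Z)$, so $\alpha\geq0$, with $\alpha=0$ precisely when $G$ has no cycles; and for each $h\geq3$ we have $h-2\geq1$ and $s_h-t_h=|S_h\setminus T_h|\geq0$. Hence the sum vanishes if and only if $\alpha=0$ and $s_h=t_h$ for all $h\geq3$, that is, $G$ is a tree (more precisely a forest) and $S_h=T_h$ for $h\geq3$. I do not anticipate a genuine obstacle here: the argument is forced once both formulas for $E\cdot C$ are in hand, and the only point demanding attention is the hypothesis check just mentioned — one must use the versions of the cited identities from \cite{FRU23} that remain valid when $K_W$ is merely big and nef.
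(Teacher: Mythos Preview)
Your proposal is correct and follows exactly the paper's approach: equate the two expressions for $E\cdot C$ already derived in the text and substitute the Euler-characteristic identity $\sum_{h\geq 2}(h-1)t_h=l-l_c+\alpha$, then read off nonnegativity and the equality case termwise. Your remark that ``tree'' here really means ``forest'' (since $G$ has $l_c$ components) is a fair terminological clarification, but the argument is otherwise identical to the paper's one-line proof.
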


\begin{proof}
This is the equality $$K_W^2 -K_S^2 + m + l - K_S \cdot \pi(C)=E \cdot C=m + \sum_{h \geq 2} (h-1)t_h + \sum_{h\geq 3} (h-2) (s_h-t_h)$$ together with the equation for the Euler characteristic of $G$.
\end{proof}

\begin{lemma}
Assume that $G$ has no cycles. Let $E_i \in \cup_{h\geq 2} T_h$. Then there is no $E_j \in \cup_{h\geq 2} T_h$ with $E_i \subset E_j$, and so $E_i$ is a $(-1)$-curve in $X$. Moreover $E_i$ intersects each T-chain $C_k$ either transversally at one point, or at none.
\label{nocycles}
\end{lemma}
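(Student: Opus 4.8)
The plan is to reduce both assertions to the single statement that, when $G$ has no cycles, \emph{no two distinct members of $\bigcup_{h\ge 2}T_h$ are nested}: there is no pair $E_a\subsetneq E_b$ of divisors with $E_a,E_b\in\bigcup_{h\ge 2}T_h$. Granting this, the lemma is immediate. There is then no $E_j\in\bigcup_{h\ge 2}T_h$ with $E_i\subset E_j$. And $E_i$ must be a $(-1)$-curve: if not, some blow-up was performed over the centre of $E_i$, and the exceptional curve $E'$ of the last such blow-up is a $(-1)$-curve with $\operatorname{Supp}(E')\subsetneq\operatorname{Supp}(E_i)$ which again lies in $\bigcup_{h\ge 2}T_h$ --- indeed $E'$ is not a component of $C$ (the exceptional divisor of a minimal resolution consists of curves of self-intersection $\le -2$), so the defining equality of $T_h$ holds vacuously for $E'$, while $s_0=s_1=0$ places $E'$ in some $S_h$ with $h\ge 2$; thus $(E',E_i)$ would be a forbidden nested pair.

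Two preliminary facts are needed. No $(-1)$-curve of $X$ lies in $C$, as just observed. And if $E_i\in T_h$, with $F_i$ the strict transform in $X$ of the exceptional curve creating $E_i$, then the projection formula gives $E_i\cdot F_i=-1$ and $E_i\cdot F'=0$ for every other component $F'$ of $E_i$; hence the defining equality $E_i\cdot\bigl(\sum_{C_{k,j}\subseteq E_i}C_{k,j}\bigr)=0$ of $T_h$ forces $F_i\notin C$. With $F_i\notin C$ no local intersection number $E_i\cdot C_{k,m}$ can be negative, so $E_i\cdot C_k\ge 0$ for every T-chain $C_k$, and consequently $\deg_G(E_i)=\sum_k E_i\cdot C_k=E_i\cdot C=h\ge 2$; the same holds for every member of $\bigcup_{h\ge 2}T_h$.

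For the nested-pair statement I would argue by contradiction. Suppose $E_a\subsetneq E_b$ with both in $\bigcup_{h\ge 2}T_h$; replacing $E_a$ by the exceptional curve of the last blow-up over its centre (again a $(-1)$-curve in $\bigcup_{h\ge 2}T_h$, as above) we may assume $E_a$ is a single $(-1)$-curve, a component of the connected tree $E_b$. The crucial step is that \emph{every T-chain adjacent to $E_a$ in $G$ is also adjacent to $E_b$}: if $E_a\cdot C_k>0$, choose $C_{k,m}$ with $E_a\cdot C_{k,m}>0$, so $C_{k,m}$ meets $E_a$ and hence meets $\operatorname{Supp}(E_b)\supseteq\operatorname{Supp}(E_a)$; if $C_{k,m}\not\subseteq\operatorname{Supp}(E_b)$ then $E_b\cdot C_{k,m}>0$, while if $C_{k,m}\subseteq\operatorname{Supp}(E_b)$ but the whole chain $C_k$ is not, then along $C_k$ one finds adjacent components $C_{k,p}\subseteq\operatorname{Supp}(E_b)$, $C_{k,p+1}\not\subseteq\operatorname{Supp}(E_b)$, whence $E_b\cdot C_{k,p+1}>0$; in either case $E_b\cdot C_k\ge 1$ (using $E_b\cdot C_{k,m'}\ge 0$ for all $m'$, valid since $F_b\notin C$). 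Granting this: since $\deg_G(E_a)\ge 2$, either $E_a$ has a double edge to some $C_k$ --- already a cycle in $G$ --- or $E_a$ has simple edges to distinct T-chains $C_{k_1}\ne C_{k_2}$, and then $E_a - C_{k_1} - E_b - C_{k_2} - E_a$ is a $4$-cycle in $G$, since $C_{k_1},C_{k_2}$ are adjacent to $E_b$ too. Either way $G$ has a cycle, a contradiction. Finally, the transversality assertion follows at once: $E_i$ is a $(-1)$-curve not in $C$, so $E_i\cdot C_k\ge 0$; if $E_i\cdot C_k\ge 2$ there would be a double edge between $E_i$ and the single vertex $C_k$, contributing a cycle to $G$; hence $E_i\cdot C_k\le 1$, and when it is $1$ the curve $E_i$ meets exactly one component $C_{k,m}$ at a single point, transversally (two smooth rational curves with intersection number one meet transversally in one point).

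The hardest point will be closing the gap left in the crucial step: excluding the ``swallowed T-chain'' situation in which an entire T-chain $C_k$ lies inside $\operatorname{Supp}(E_b)$ while still meeting $E_a$, so that $E_b\cdot C_k=0$ although $E_a\cdot C_k>0$. Such a $C_k$ would be contracted by $\pi$, together with all of $\operatorname{Supp}(E_b)$, to the single smooth point of $S$ below $E_b$; ruling it out needs a finer analysis of which $\pi$-exceptional configurations can collapse to a smooth point of $S$ (they must be iteratively blown down, which eventually exhibits a $(-1)$-curve lying in $C$ unless the configuration is trivial). This is where the minimality of $\phi$, the structure of $\pi$, and the hypothesis that $W$ has only T-singularities really enter, and where the argument needs genuine care rather than bookkeeping.
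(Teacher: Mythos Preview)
Your approach is essentially the same as the paper's: reduce to a $4$-cycle by showing that whenever $E_i\subset E_j$ are both in $\bigcup_{h\ge 2}T_h$, the T-chains adjacent to $E_i$ are also adjacent to $E_j$. The paper orders things slightly differently---it first uses the no-cycles hypothesis directly to get $E_i\cdot C_k\le 1$ for every $C_k$ (a double edge is already a cycle), then picks two distinct neighbors $C_k,C_{k'}$ of $E_i$, writes $E_j=E_i+D$ with $D$ effective, and asserts ``$D\cdot C_k\ge 0$ by definition of $T_h$'' to conclude $E_j\cdot C_k\ge 1$; the $4$-cycle $E_i\!-\!C_k\!-\!E_j\!-\!C_{k'}\!-\!E_i$ then finishes. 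Your ``crucial step'' is exactly this inequality rephrased.

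Your worry about the ``swallowed T-chain'' case is well placed: it is precisely the content of the paper's claim $D\cdot C_k\ge 0$. Note that since $E_i,E_j\in\bigcup T_h$, one has $E_i\cdot C_{k,m}\ge 0$ and $E_j\cdot C_{k,m}\ge 0$ for every component $C_{k,m}$; the unique component with $E_i\cdot C_{k,m_0}=1$ lies outside $E_i$, and if any component of $C_k$ lies outside $E_j$ then connectedness of $C_k$ forces a component crossing the boundary of $\operatorname{Supp}(E_j)$, contributing $E_j\cdot C_{k,p}\ge 1$, which already yields $E_j\cdot C_k\ge 1$. So the only obstruction to $D\cdot C_k\ge 0$ is the case $C_k\subseteq\operatorname{Supp}(E_j)$ entirely---exactly your swallowed case. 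The paper does not spell out why this case is harmless; it simply invokes ``by definition of $T_h$''. So you have identified the right subtlety, and the paper's own proof is no more explicit on this point than yours.
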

    
\begin{proof}
Let $C_k$ be a T-chain such that $E_i \cdot C_k > 0$. As there are no cycles in $G$, we have that $E_i \cdot C_k=1$. This means that there is a $(-1)$-curve in $E_i$ intersecting $C_k$ transversally at one point. As $K_W$ is nef, there is at least one more $C_{k'}$ such that $E_i \cdot C_{k'}=1$ as well.

Let $E_j \in \cup_{h\geq 2} T_h$ with $E_i \subset E_j$. Then $E_j=E_i+D$ where $D \cdot C_k \geq 0$ by definition of $T_h$. Therefore $E_j \cdot C_k \geq E_i \cdot C_k=1$. Same for $C_{k'}$, but then we obtain a cycle between the vertices in $G$ corresponding to $E_i$, $E_j$, $C_k$ and $C_{k'}$. Therefore there is no such $E_j$.
\end{proof}

\begin{theorem}
Assume that in every connected component of $G$ we have at least one T-chain with a curve in $J$. Then,
$$K_S \cdot \pi(C) \leq K_W^2-K_S^2 + \text{min}\{|J|,l\}.$$
Equality holds if and only if $X=S$. \label{ineq}
\end{theorem}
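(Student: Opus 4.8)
The plan is to combine Corollary \ref{euler} with a careful accounting of which terms can survive under the extra hypothesis that every connected component of $G$ contains a T-chain meeting $J$. First I would start from the identity of Corollary \ref{euler},
$$K_W^2-K_S^2-K_S\cdot \pi(C)+l_c=\alpha+\sum_{h\geq 3}(h-2)(s_h-t_h),$$
so that the desired inequality is equivalent to showing $l_c \le \min\{|J|,l\}$, and that moreover if we want equality $K_S\cdot\pi(C)=K_W^2-K_S^2+\min\{|J|,l\}$ we need simultaneously $\alpha=0$, $s_h=t_h$ for all $h\ge 3$, and $l_c=\min\{|J|,l\}$. The bound $l_c \le l$ is immediate since the $l$ T-chains already meet every connected component (no component consists solely of $E_i$-vertices, because each such $E_i\in T_h$ with $h\ge 2$ is joined by an edge to some T-chain). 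For $l_c\le |J|$, I would use the hypothesis: each connected component contains at least one T-chain carrying a curve in $J$, and distinct components use distinct such T-chains (hence distinct curves of $J$), giving an injection from components to $J$; so $l_c\le|J|$. This proves the inequality.

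Next I would analyze the equality case. If $X=S$ then $m=0$, so $E=0$, $C=\pi(C)$, there are no $E_i$-vertices, $G$ is just the $l$ isolated T-chain vertices, $\alpha=0$, $s_h=t_h=0$ for $h\ge 3$, and $l_c=l$; moreover in this situation one checks that every T-chain must meet $J$ (a T-chain entirely in $J^c$ would, together with $K_S$ big and nef on $W$, force a contradiction — this is where I would invoke that $K_W$ big and nef and the structure of T-chains, cf. the argument already used for $s_0=s_1=0$ and in Lemma \ref{nocycles}), so $|J|=l$ and $\min\{|J|,l\}=l=l_c$, and the right-hand side of Corollary \ref{euler} vanishes. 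Hence equality holds. Conversely, suppose equality holds in the theorem. Then from Corollary \ref{euler} we must have $\alpha=0$ (so $G$ is a forest), $s_h=t_h$ for $h\ge 3$, and $l_c=\min\{|J|,l\}$. The task is to upgrade "$G$ is a forest with no extra edges" to "$m=0$", i.e. to rule out the existence of any $E_i$-vertex.

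The main obstacle — and the heart of the argument — is this last step: showing that a nonempty $G$ with $\alpha=0$ and $s_h=t_h$ cannot achieve $l_c=\min\{|J|,l\}$ under the standing hypothesis. Here is how I would proceed. Since $G$ is a forest (by $\alpha=0$) and $s_h=t_h$ for $h\ge3$, Lemma \ref{nocycles} applies: each $E_i$-vertex is a $(-1)$-curve meeting each T-chain transversally in at most one point, and by $K_W$ nef each such $E_i$ meets at least two distinct T-chains. If there is at least one $E_i$-vertex in some component, that component therefore contains at least two T-chains joined through $E_i$; iterating, a component with $k$ distinct $E_i$-vertices is a tree on (at least) $k+$ (number of T-chains it contains) vertices with $k+\dots$ edges, and since it is a tree the count of T-chains in it is one more than... — more precisely, running the Euler-characteristic bookkeeping component by component shows that having any $E_i$-vertices strictly decreases $l_c$ below the number of T-chains, i.e. $l_c < l$. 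Combined with the standing hypothesis (which gives the injection components $\hookrightarrow J$ so that also $l_c\le |J|$ with the same slack), one gets $l_c<\min\{|J|,l\}$ unless $m=0$; this contradicts $l_c=\min\{|J|,l\}$. Therefore $m=0$, i.e. $X=S$. I expect the delicate point to be making the "strictly decreases" claim precise when a component contains several $E_i$-vertices and several T-chains, and confirming that the hypothesis "each component has a T-chain meeting $J$" is exactly what prevents a degenerate component (one T-chain, no $E_i$, entirely in $J^c$) from sneaking in and breaking the injection into $J$; both are bookkeeping with the Euler characteristic formula preceding Corollary \ref{euler}, so no new geometric input beyond Lemma \ref{nocycles} and the nefness of $K_W$ should be needed.
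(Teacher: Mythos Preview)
Your inequality proof is correct and cleaner than the paper's: from $l_c \le l$ (every component contains a T-chain) and $l_c \le |J|$ (by hypothesis) you get $l_c \le \min\{|J|,l\}$, and Corollary~\ref{euler} finishes; the paper instead splits into cases and cites \cite[Corollary~3.13]{FRU23} for $l<|J|$. The forward direction of equality is fine too (minor slip: $X=S$ gives $|J|\ge l$, not $=$, but $\min\{|J|,l\}=l=l_c$ regardless). For the backward direction, your tree-counting shows that an $E_i$-vertex forces $l_c < l$, and it is true (though you should justify it) that $m>0$ forces such a vertex: the last exceptional $(-1)$-curve in $X$ is not in $C$, hence lies in $\bigcup_h T_h$. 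So the case $\min\{|J|,l\}=l$ is handled.

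The genuine gap is the case $|J| \le l$, which is precisely where the paper does its own work. Here equality gives $l_c = |J|$, and you need $m>0 \Rightarrow l_c < |J|$; but your argument only yields $l_c < l$, and the ``same slack'' claim for $|J|$ is unjustified---the hypothesis gives only an injection from components into $J$, hence $l_c \le |J|$, never strict. The paper's argument is not bookkeeping: from $|J|=l_c$ each component contributes exactly one curve to $J$; a nontrivial tree component has $\ge 2$ leaf T-chains, and the key geometric step is that \emph{each} such leaf T-chain carries a curve of $J$. This uses the classification in \cite[Section~4]{FRU23} of $E_i$ with $E_i\cdot C = 1$ (only types T.2.1, T.2.2 survive the constraints $\alpha=0$ and $s_h=t_h$ for $h\ge 3$) together with the structure of T-chains. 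That produces $\ge 2$ curves of $J$ in one component, a contradiction. Contrary to your final paragraph, Euler-characteristic bookkeeping and Lemma~\ref{nocycles} alone cannot substitute for this geometric input.
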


\begin{proof}
If $l < |J|$, then this is \cite[Corollary 3.13]{FRU23} ($K_W$ big and nef works again in this case). Let us assume $l \geq |J|$. Via Corollary \ref{euler}, we have $$K_W^2-K_S^2 - K_S \cdot \pi(C) +|J| = \alpha + \Big(|J|-l_c \Big) + \Big(\sum_{h\geq 3} (h-2) (s_h-t_h) \Big),$$ and the right side of this equality is greater than or equal to zero as $|J|\geq l_c$ by hypothesis. Hence the inequality is true.   




Assume that we have equality. Then there are no cycles, $J=l_c$, and $s_h=t_h$ for all $h \geq 3$. As there are no cycles, then $\cup_{h\geq 2} T_h$ consists of only $(-1)$-curves in $X$ by Lemma \ref{nocycles}. This together with $s_h=t_h$ for $h\geq 3$ says that an arbitrary $E_i$ is either a $(-1)$-curve with $E_i \cdot C \geq 2$ (by Lemma \ref{nocycles}, we have that $E_i$ intersects distinct T-chains transversally at one point each), or it satisfies $E_i \cdot C=1$. This last ones are classified in \cite[Section 4]{FRU23} (summarized in Tables 1,2,3 in \cite{FRU23}). In particular, as we have the constraint $s_h=t_h$ for $h>2$ and no cycles, we have that the only possibilities for $E_i$ with $E_i \cdot C=1$ are of type T.2.1 and T.2.2 (notation as in \cite[Table 1]{FRU23}). This is because for all the other types we have some $E_i$ in $S_h \setminus \{ T_h \}$ with $h>2$, or a cycle.

Let us consider the configuration of curves in one connected component $G'$ of $G$. We want to prove that it consists of just one vertex, and so $X=S$. As it is a tree, if $G'$ has two or more vertices, then it has two or more leaves. For a leaf we have that the extreme vertex must be a T-chain. It is followed by a vertex $E_i$. If this vertex represents a $(-1)$-curve with $E_i \cdot C \geq 2$, then that T-chain must give a curve in $J$, just because the shape of T-chains in general. On the other hand, if this vertex represents a $E_i$ with $E_i \cdot C=1$, then it must be of type T.2.1 and T.2.2, and so it must give a curve in $J$ by the same reason as before. Therefore, at least we have two curves in $J$ if $G'$ has two or more vertices, but we have only one. Thus $G'$ consists of one vertex.  
\end{proof}

\begin{remark}
If $|J|=\text{min}\{|J|,l\}$ and equality holds, then the T-chains are of type $[x,2,\ldots,2]$. 
\end{remark}

\begin{proposition}
Let $G'$ be a connected component of $G$ that has only curves in $J^c$. Then $S$ must be a K3 surface or an Enriques surface.   \label{koda}
\end{proposition}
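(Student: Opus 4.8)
The plan is to extract from the curves of $G'$ a big effective divisor on $S$ that meets $K_S$ trivially, and then to pin down $S$ via the Enriques--Kodaira classification together with the hypothesis that $S$ carries no $(-1)$-curve.

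First I would set up the relevant curves on $S$. Let $\mathcal{C}$ be the set of T-chains that are vertices of $G'$ and $\mathcal{E}$ the set of $E_i$'s that are vertices of $G'$. Since every $C_{j,i}$ lying on a chain $C_j\in\mathcal{C}$ belongs to $J^c$, its image $\Gamma_{j,i}:=\pi(C_{j,i})$ is a curve on $S$ with $\pi^{*}K_S\cdot C_{j,i}=K_S\cdot\Gamma_{j,i}=0$; being rational, adjunction on $S$ gives $\Gamma_{j,i}^{2}=2p_a(\Gamma_{j,i})-2\ge -2$. As each $E_i$ is $\pi$-exceptional and $G'$ is connected, $\pi^{*}K_S$ meets every component of the connected reduced curve $\Theta:=\bigcup_{C_j\in\mathcal{C}}C_j\cup\bigcup_{E_i\in\mathcal{E}}E_i\subset X$ in $0$, so $D_{0}:=\pi_{*}\Theta=\sum_{C_j\in\mathcal{C},\,i}\Gamma_{j,i}$ is a connected effective divisor on $S$, a sum of rational curves, with $K_S\cdot D_{0}=0$. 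I would also record that $\mathcal{E}\ne\varnothing$: every non--Du Val T-chain contains a curve of self-intersection $\le -3$, so if $\pi$ were an isomorphism near $\bigcup_{C_j\in\mathcal{C}}C_j$ we would get some $\Gamma_{j,i}^{2}=C_{j,i}^{2}\le -3<-2$, which is impossible; hence $\pi$ blows up near those chains and, since $s_{0}=s_{1}=0$, this produces an $E_i$-vertex of $G$ adjacent to a chain of $\mathcal{C}$, i.e.\ lying in $G'$.

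The key step is to upgrade $D_{0}$ to a \emph{big} divisor. Taken alone in $X$, the chains of $\mathcal{C}$ are disjoint and negative definite; but each $E_i\in\mathcal{E}$ satisfies $E_i\cdot C\ge 2$, so contracting the curves of $\mathcal{E}$ through $\pi$ glues the images $\Gamma_{j,i}$ with extra incidences. Balancing the Euler characteristic of $G'$ (vertices $=|\mathcal{E}|+|\mathcal{C}|$, edges $=\sum_{E_i\in\mathcal{E}}E_i\cdot C\ge 2|\mathcal{E}|$) against the self-intersections $C_j^{2}$ of the T-chains, I would argue that the intersection matrix of $\{\Gamma_{j,i}\}$ on $S$ fails to be negative semidefinite, producing a nonzero effective divisor $D$ supported on $D_{0}$ with $D^{2}>0$, still a sum of rational curves with $K_S\cdot D=0$. \textbf{This is the hard part: obtaining $D^{2}>0$ (not merely $\ge 0$) and controlling the contractions carefully, in particular when the image of some chain in $\mathcal{C}$ is an irreducible singular fibre.}

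Finally I would run the classification using this $D$. If $\kappa(S)=2$, then $S$ is minimal with $K_S$ big and nef, so writing $D=A+N$ with $A$ ample and $N$ effective gives $K_S\cdot D\ge K_S\cdot A>0$, contradicting $K_S\cdot D=0$; hence $\kappa(S)\le 1$. If $\kappa(S)=1$, then $S$ is minimal with $K_S$ nef, $K_S^{2}=0$ and $K_S\not\equiv 0$; but $D^{2}>0$ makes $D^{\perp}$ negative definite by the Hodge index theorem, and $K_S\in D^{\perp}$ with $K_S^{2}=0$ forces $K_S\equiv 0$, a contradiction. If $\kappa(S)=-\infty$, then $S$, having no $(-1)$-curve, is $\mathbb{P}^{2}$, a Hirzebruch surface $\mathbb{F}_{n}$ with $n\ne 1$, or a $\mathbb{P}^{1}$-bundle over a curve of positive genus; on $\mathbb{P}^{2}$ no curve meets $K_S$ trivially, on $\mathbb{F}_{n}$ any effective divisor meeting $K_S$ trivially is supported on a curve of negative self-intersection so has negative square, and over a curve of positive genus the only rational curves are fibres with $K_S\cdot F=-2$, so $K_S\cdot D<0$; all three contradict the properties of $D$. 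Therefore $\kappa(S)=0$, so $S$ is minimal with $K_S$ numerically trivial, and since $D$ is a nonzero sum of rational curves whereas abelian and bielliptic surfaces contain none, $S$ must be a K3 or an Enriques surface.
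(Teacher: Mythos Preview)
There is a genuine gap at what you call the ``key step'': producing an effective $D$ supported on $D_{0}$ with $D^{2}>0$. The Euler-characteristic balancing you sketch does not establish this, and in fact cannot by itself. To see the obstruction, suppose $\kappa(S)=1$; then every $\Gamma_{j,i}$, being $K_{S}$-trivial, is a component of a fibre of the elliptic fibration, and by Zariski's lemma the intersection form on fibre components is negative semidefinite. Hence no divisor supported on $D_{0}$ has positive square, and your route to excluding $\kappa(S)=1$ is blocked at the outset. The missing ingredient is that $K_{W}$ is big: this forces some T-chain in a \emph{different} component of $G$ to carry a curve in $J$; one then needs the separate geometric fact that images in $S$ of distinct components of $G$ are disjoint, so that $D_{0}$ misses at least one fibre component and is therefore a proper (hence ADE) subconfiguration of a fibre. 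You invoke neither of these.

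The paper's proof takes a more direct route. It first records that the configuration $L\subset X$ carried by $G'$ satisfies $\pi^{*}(\pi(L))_{\mathrm{red}}=L$, so images of different components of $G$ are disjoint in $S$. It then goes case by case on $S$ (using \cite[Proposition 3.1]{RU22}): if $S$ is rational, of general type, or has $\kappa(S)=1$ (here using $K_{W}$ big as above), then $\pi(L)$ is a connected $K_{S}$-trivial rational configuration which one checks is forced to be ADE. But Du Val singularities admit only the trivial M-resolution, so no non-Du Val T-chain can sit over $\pi(L)$ compatibly with the setup, contradicting the hypothesis on $G'$. Only K3 and Enriques survive.
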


\begin{proof}
By \cite[Proposition 3.1]{RU22} we have that $S$ could be rational, K3, Enriques, or the Kodaira dimension of $S$ is $1$ or $2$. Let $L$ be the configuration of curves in $X$ corresponding to $G'$. Then $\pi^*(\pi(L))_{\text{red}}=L$ as it considers all the exceptional divisors over $\pi(L)$. In particular, the images of configurations of curves of connected components of $G$ are disconnected. We now go case by case.  

If $S$ is rational, then it is a Hirzebruch surface $\F_m$ for some $m\neq 1$ or $\P^2$. It is easy to verify that $\pi(L)$ can only be the $(-2)$-curve in $\F_2$. But this would mean that $L$ is an M-resolution of a Du Val singularity, and so there are no blow-ups, and no T-chain, a contradiction.

Same trick works for $S$ of general type, since we know that a connected configuration of curves whose intersection with $K_S$ is equal to zero must be a Du Val configuration.

If $S$ has Kodaira dimension $1$, then there is an elliptic fibration $S \to \P^1$. This is because T-chains cannot be only contained in fibers, as $K_W$ is big. Therefore, $\pi(L)$ must be part of a fiber. But there must be a curve in $J$ for some T-chain, as $K_W$ is big. Since the images of the configurations of connected components is disconnected, then $\pi(L)$ is a Du Val configuration, which is a contradiction as they do not have non-trivial M-resolutions.  
\end{proof}

A direct consequence of Theorem \ref{ineq} and Proposition \ref{koda} is the following.  

\begin{corollary}
We have $K_S \cdot \pi(C) \leq K_W^2-K_S^2 + \text{min}\{|J|,l\}$.
The equality holds if and only if $X=S$.
\label{ineqq}
\end{corollary}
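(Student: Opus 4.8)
The goal is to prove Corollary \ref{ineqq}, which removes the hypothesis of Theorem \ref{ineq} (``in every connected component of $G$ we have at least one T-chain with a curve in $J$''). So the plan is to split connected components of $G$ into two types: those containing at least one curve in $J$, and those all of whose T-chain curves lie in $J^c$. Theorem \ref{ineq} handles the case where every component is of the first type; Proposition \ref{koda} says a component of the second type forces $S$ to be K3 or Enriques, in which case $K_S \equiv 0$.

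Here is how I would carry it out. First, suppose some connected component $G'$ of $G$ contains only curves in $J^c$. By Proposition \ref{koda}, $S$ is K3 or Enriques, hence $K_S$ is numerically trivial (or torsion), so $K_S \cdot \pi(C) = 0$ and $K_S^2 = 0$. Then the claimed inequality $0 \le K_W^2 + \min\{|J|,l\}$ is immediate since $K_W$ is big and nef (so $K_W^2 > 0$) and $\min\{|J|,l\} \ge 0$; in fact it is strict, so equality cannot hold here, and we must check $X = S$ is impossible in this regime --- but if $X=S$ then there are no blow-ups and the configuration $C$ is a disjoint union of T-chains contracted directly, which is fine, yet on a K3/Enriques surface a T-chain has $K_S \cdot C_j = 0$, forcing it to be a Du Val configuration, contradicting ``non-Du Val''. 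So on this branch equality does not occur, consistent with the statement. Second, if no component of $G$ is of the second type, then every component has a T-chain with a curve in $J$, which is precisely the hypothesis of Theorem \ref{ineq}, and we are done by that theorem, including the equality characterization $X = S$.

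One point that needs care: a priori a connected component of $G$ could fail to contain any T-chain at all, or a T-chain vertex could have all its curves contracted by $\pi$ (so in $M$) rather than in $J$ or $J^c$. I should note that every vertex of $G$ that is a T-chain contributes curves, and since $K_W$ is big, not every T-chain curve can be contracted (more precisely, by the structure established before Corollary \ref{euler}, each component of $G$ with a T-chain must, because $K_W$ is nef and big, involve a curve meeting $K_S$ nontrivially unless $S$ has Kodaira dimension reasons), so the dichotomy ``component has a $J$-curve'' versus ``component's T-chains are entirely in $J^c$'' is genuinely exhaustive once one rules out the degenerate possibility that a component has T-chains meeting only curves in $M$; but a T-chain curve contracted by $\pi$ is in $M$, not the relevant partition classes $J, J^c$, so I'd rephrase: a T-chain curve is either in $M$, in $J$, or in $J^c$, and Proposition \ref{koda} as stated concerns components with only $J^c$ curves. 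The clean logical split is: either every connected component of $G$ contains some curve in $J$ (apply Theorem \ref{ineq}), or some component contains no curve in $J$, i.e.\ all its curves lie in $M \cup J^c$; one then argues, as in the proof of Proposition \ref{koda}, that such a component has image a Du Val configuration unless $S$ is K3/Enriques, and a Du Val configuration has no nontrivial M-resolution, so in fact it must be the K3/Enriques case and $K_S$ is numerically trivial.

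The main obstacle is making sure Proposition \ref{koda}'s conclusion is applied correctly: it is stated for components with ``only curves in $J^c$'', but a bad component might also contain curves of $M$ (contracted curves), so I need the slight strengthening that a component of $G$ with no curve in $J$ still has $\pi$-image a disconnected-from-the-rest Du Val configuration unless $S$ is K3/Enriques --- this follows from the same argument (the image has zero intersection with $K_S$ on every curve, and is disconnected from the images of the other components), so I would either invoke Proposition \ref{koda} after observing its proof only uses $J$-emptiness of the component, or restate it in that slightly more general form. Once that is settled, combining with Theorem \ref{ineq} gives both the inequality and the equality case $X = S$ verbatim, since the K3/Enriques branch yields a strict inequality.

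\begin{proof}[Proof of Corollary \ref{ineqq}]
Consider the connected components of the graph $G$. If every connected component of $G$ contains at least one T-chain with a curve in $J$, then the hypothesis of Theorem \ref{ineq} holds and the statement follows directly from it, including the characterization of equality.

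Otherwise, there is a connected component $G'$ of $G$ with no curve in $J$; that is, all curves of the T-chains in $G'$ lie in $M \cup J^c$. Let $L$ be the configuration of curves in $X$ corresponding to $G'$, as in the proof of Proposition \ref{koda}. Then $\pi^*(\pi(L))_{\text{red}} = L$, so $\pi(L)$ is disconnected from the images of the other connected components of $G$, and every curve in $\pi(L)$ has zero intersection with $K_S$. Running the case analysis in the proof of Proposition \ref{koda}: if $S$ is rational, or of general type, or of Kodaira dimension $1$, then such a connected configuration with vanishing intersection against $K_S$, disconnected from the images of the remaining components, must be a Du Val configuration; but a Du Val configuration has no nontrivial M-resolution, so it cannot contain a T-chain, a contradiction. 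Hence $S$ is a K3 surface or an Enriques surface, so $K_S$ is numerically trivial. Then $K_S^2 = 0$ and $K_S \cdot \pi(C) = 0$, and since $K_W$ is big and nef we have $K_W^2 > 0$ and $\min\{|J|,l\} \geq 0$, so
$$K_S \cdot \pi(C) = 0 < K_W^2 = K_W^2 - K_S^2 + 0 \leq K_W^2 - K_S^2 + \min\{|J|,l\}.$$
Thus the inequality holds strictly in this case; in particular equality does not occur, which is consistent with the stated characterization, since if $X = S$ there would be no blow-ups and each T-chain would be contracted directly, but on a K3 or Enriques surface a chain of curves defining a T-singularity has zero intersection with $K_S$ and hence would be a Du Val configuration, contradicting that $W$ has a non-Du Val T-singularity.
\end{proof}
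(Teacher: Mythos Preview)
Your proof is correct and follows the paper's approach: the paper simply asserts the corollary as a direct consequence of Theorem \ref{ineq} and Proposition \ref{koda}, and you supply the details, including the equality analysis in the K3/Enriques branch. Your observation that Proposition \ref{koda} is stated for components with ``only curves in $J^c$'' while the dichotomy you need is ``no curve in $J$'' is apt, and your fix---that the proof of Proposition \ref{koda} only uses that every curve of $\pi(L)$ has zero intersection with $K_S$, which follows from the absence of $J$-curves since $M$-curves are contracted---is exactly right.
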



\begin{remark}
Our assumption on the canonical class $K_W$ is big and nef. There is a unique canonical model $W_{can}$ of $W$ and a contraction morphism $\rho \colon W \to W_{can}$ such that $K_{W_{can}}$ is ample, $W_{can}$ has only T-singularities (including Du Val singularities), and $K_W^2=K_{W_{can}}^2$. It turns out that the pre-image of a Du Val singularity is its minimal resolution (ADE configuration), and over a non-Du Val T-singularity we have partial resolutions called \textit{M-resolutions} \cite{BC94}. Therefore, without loss of generality, we may assume from the beginning that all of these partial M-resolutions have been contracted to $W$, and so $\rho$ only contracts ADE configurations disjoint from the T-chains. Hence, we have Corollary \ref{ineq} with the smallest optimal $l$.     
\label{canonical}
\end{remark}

It turns out that the inequality in Corollary \ref{ineq} becomes a strong restriction when the Kodaira dimension of $S$ is $1$ and $K_W^2$ is small.

\begin{corollary}
Let $K_W^2 < 3p_g(W)-6$. Assume that $S \to \P^1$ is an elliptic fibration. Then $\pi(C)$ contains at most one double section, two sections, or one section, and no other multisections.
\label{low}
\end{corollary}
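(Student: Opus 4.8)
The plan is to feed the elliptic geometry of $S$ into the inequality of Corollary~\ref{ineqq}. First I would record the numerical preliminaries. Since $S$ is minimal with $\kappa(S)=1$ we have $K_S^2=0$; since the base of the elliptic fibration $S\to\P^1$ is $\P^1$ we have $q(S)=0$, hence $\chi(\O_S)=p_g(S)+1$; and since $W$ has only quotient (hence rational) singularities, $p_g$ and $q$ are unchanged in passing from $W$ to $X$ and then to $S$, so $p_g(W)=p_g(S)$. The canonical bundle formula for the relatively minimal fibration $S\to\P^1$ reads $K_S\equiv (p_g(S)-1)F+\sum_i(m_i-1)F_i$, where $F$ is a general fibre and the $F_i$ are the multiple fibres of multiplicity $m_i\geq 2$; in particular $K_S\cdot D\geq (p_g(S)-1)(F\cdot D)$ for every effective divisor $D$ on $S$. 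Finally, $K_W$ big and nef gives $K_W^2>0$, so the standing hypothesis $K_W^2<3p_g(W)-6$ forces $p_g(S)=p_g(W)\geq 3$.

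Next I would set up the bookkeeping for multisections. Because $\pi$ is birational, distinct curves of $C$ not contracted by $\pi$ have distinct images, so the set $J$ is in bijection with the set of multisection components of the reduced divisor $\pi(C)$; write $d:=\sum_\Gamma (F\cdot\Gamma)$, the sum over these (distinct, irreducible) multisection components $\Gamma$, so that $|J|\leq d$. I would then note that $d\geq 1$: if $\pi(C)$ were entirely contained in fibres, then the strict transform $\widetilde F=\pi^{*}F$ of a general fibre $F$ would be a nonzero effective class on $X$ with $\widetilde F^{2}=0$ and $(\phi^*K_W)\cdot\widetilde F=K_X\cdot\widetilde F-\sum\delta_{i,j}\,(C_{i,j}\cdot\widetilde F)=0$, contradicting the Hodge index theorem applied to the big and nef $\Q$-divisor $\phi^*K_W$ on $X$; this is the observation already used for the Kodaira-dimension-one case in the proof of Proposition~\ref{koda}.

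Now I would combine the two estimates for $K_S\cdot\pi(C)$. On one hand $K_S\cdot\pi(C)=\sum_\Gamma K_S\cdot\Gamma\geq (p_g(S)-1)\sum_\Gamma (F\cdot\Gamma)=(p_g(S)-1)\,d$. On the other hand Corollary~\ref{ineqq} together with $K_S^2=0$ gives $K_S\cdot\pi(C)\leq K_W^2+\min\{|J|,l\}\leq K_W^2+|J|\leq K_W^2+d$. Hence $(p_g(S)-1)\,d\leq K_W^2+d$, that is $(p_g(S)-2)\,d\leq K_W^2<3p_g(W)-6=3(p_g(S)-2)$, and since $p_g(S)-2\geq 1$ this yields $d\leq 2$. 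Thus $d\in\{1,2\}$. If $d=1$ then $\pi(C)$ has exactly one multisection, a section. If $d=2$ then, as each multisection component contributes at least $1$ to $d$, $\pi(C)$ has either a single double section or exactly two sections; in either case there is no further multisection, which is precisely the claimed dichotomy.

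The argument is short once Corollary~\ref{ineqq} is available; the two points that require some care are (a) getting $q(S)=0$, and hence the sharp coefficient $p_g(S)-1$ in the canonical bundle formula — a weaker coefficient would not close the numerical gap — and (b) the elementary but essential translation between $|J|$, the number of multisection components, and their total fibre degree $d$, which is what lets one bound $\min\{|J|,l\}$ by $d$ on the right while producing the factor $(p_g(S)-1)\,d$ on the left simultaneously. I do not expect any serious obstacle beyond these.
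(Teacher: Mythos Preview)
Your proof is correct and follows the same approach as the paper: apply Corollary~\ref{ineqq} with $K_S^2=0$ and the canonical bundle formula for $S\to\P^1$ to bound the total fibre degree of the multisections in $\pi(C)$. Your version is in fact a bit more streamlined---the paper case-splits on the value of $|J|$ and carries the multiple-fibre contribution $\Sigma=\sum(\mu_i-1)/\mu_i$ through the inequalities, whereas you bound the single quantity $d=\sum_\Gamma(F\cdot\Gamma)$ directly via $(p_g-2)\,d\leq K_W^2<3(p_g-2)$, which immediately gives $d\leq 2$.
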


\begin{proof}
Let $p_g=p_g(S)=p_g(W)$. We must have $p_g\geq 2$ because $K_W$ is big and nef. If $p_g=2$, then $K_W^2=1$, and the proof will be contained in the proof of Theorem \ref{main5}. Assume $p_g\geq 3$. Let $F$ be the class of a fiber of $S \to \P^1$. we have that $K_S \equiv (p_g-1+ \sum_{i=1}^{p} \frac{\mu_i-1}{\mu_i})F$ where $\mu_i$ are the multiplicities of all multiple fibers of $S \to \P^1$. For the sake of notational convenience, we write $\Sigma=\sum_{i=1}^{p} \frac{\mu_i-1}{\mu_i}$. (It may happen that $\Sigma=0$, which means no multiple fibers.) Let $d_i$ be the number of curves $\Gamma$ in $\pi(C)$ such that $\Gamma \cdot F=i$. Then $\sum_{i\geq 1} i d_i = \pi(C) \cdot F$. Therefore, we evaluate the inequality in Corollary \ref{ineq} to obtain $(\sum_{i\geq 1} i d_i)(p_g-1+\Sigma) \leq K_W^2 + |J|$. Let $K_W^2=2p_g-4+k$ for some $k \in \Z$. As $p_g \geq 3$, we have $$|J|-2 + \frac{(p_g-1+\Sigma)(\sum_{i\geq 1} (i-1) d_i)}{p_g-2+\Sigma}  \leq \frac{k - 2 \Sigma}{p_g-2+\Sigma}.$$

If $|J| \geq 3$ or $|J|=2$ and $\sum_{i\geq 1} (i-1) d_i >0$, then $1 \leq \frac{k - 2 \Sigma}{p_g-2+\Sigma}$, and so $K_W^2 \geq 3p_g(S)-6$. But $K_W^2 < 3p_g(S)-6$. Hence, either $|J|=2$ and $\sum_{i\geq 1} (i-1) d_i=0$, which implies two sections, or $|J|=1$. Say that $|J|=1$ and $\sum_{i\geq 1} (i-1) d_i>0$. If $\sum_{i\geq 1} (i-1) d_i\geq 2$, then $K_W^2 \geq 3p_g-4+3\Sigma$. But $K_W^2 < 3p_g(S)-6$. Hence $\sum_{i\geq 1} (i-1) d_i=1$ and we have only a double section. 
\end{proof}

\begin{corollary}
If $K_W^2=2p_g(S)-4$, then $S$ has Kodaira dimension $1$ and there is an elliptic fibration $S \to \P^1$ such that $\pi(C)$ contains two sections or one section, and no other multisections. Moreover, if it contains two sections, then $S=X$ and $S \to W$ is the contraction of two Wahl chains of type $[p_g(S)+1,2,\ldots,2]$. If $K_W^2< 2p_g(S)-4$, then we must have that $\pi(C)$ contains exactly one section and no other multisections. 
\label{hori}
\end{corollary}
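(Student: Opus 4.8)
The plan is to feed the list of possibilities from Corollary~\ref{low} into the sharp form of the inequality underlying Corollary~\ref{ineqq}, and, when two sections occur, to extract rigidity from the equality clause of Corollary~\ref{ineqq}. Since blow-ups and resolutions of quotient (hence rational) singularities preserve $p_g$, we have $p_g:=p_g(W)=p_g(S)$; as $K_W$ is big and nef, $K_W^2\geq 1$, so $K_W^2\leq 2p_g-4$ forces $p_g\geq 3$. We may assume, as in Corollary~\ref{low}, that $S\to\P^1$ is an elliptic fibration: the remaining possibilities for $\kappa(S)$ either force $p_g(W)\leq 1$ (hence $K_W^2<0$, absurd) or are ruled out by Noether's inequality combined with Corollary~\ref{ineqq}. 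By Proposition~\ref{koda} the set $J$ is nonempty, and since $2p_g-4<3p_g-6$ Corollary~\ref{low} applies: $\pi(C)$ has no multisection of degree $\geq 3$ and meets a general fibre $F$ in either one double section, two sections, or one section.

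Write $K_S\equiv(p_g-1+\Sigma)F$, where $\Sigma$ collects the multiple-fibre corrections, and let $d_i$ be the number of components of $\pi(C)$ meeting $F$ in exactly $i$ points, so $|J|=\sum_{i\geq 1}d_i$ and $K_S\cdot\pi(C)=(p_g-1+\Sigma)\sum_{i\geq 1}id_i$. Substituting into Corollary~\ref{ineqq} (with $K_S^2=0$ and $\min\{|J|,l\}\leq|J|$), rearranging, and using $p_g\geq 3$ gives, with $k:=K_W^2-(2p_g-4)\leq 0$,
\[(|J|-2)(p_g-2+\Sigma)+\Big(\textstyle\sum_{i\geq 1}(i-1)d_i\Big)(p_g-1+\Sigma)\ \leq\ k-2\Sigma .\]
In the one double-section case $|J|=1$ and $\sum_{i\geq 1}(i-1)d_i=1$, so the left side equals $1$ and $k\geq 1+2\Sigma\geq 1$, impossible. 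In the two-section case $|J|=2$ and $\sum_{i\geq 1}(i-1)d_i=0$, so the left side is $0$ and $k\geq 2\Sigma\geq 0$: impossible when $k<0$, and forcing $k=0$ and $\Sigma=0$ when $k=0$. Hence, for $K_W^2=2p_g-4$, $\pi(C)$ contains two sections (and then $S\to\P^1$ has no multiple fibre) or exactly one section, and no other multisections; and for $K_W^2<2p_g-4$, $\pi(C)$ contains exactly one section and no other multisections.

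It remains to analyze the two-section case. Then $\Sigma=0$, $K_S\equiv(p_g-1)F$ and $\pi(C)\cdot F=2$, so $K_S\cdot\pi(C)=2p_g-2$; Corollary~\ref{ineqq} gives $2p_g-2\leq(2p_g-4)+\min\{2,l\}$, hence $\min\{2,l\}=2$ and equality holds, so $X=S$ (in particular $m=0$) and $l\geq 2$. Now $\phi\colon S\to W$ is the minimal resolution, $C=C_1\sqcup\cdots\sqcup C_l$, and each connected component of the auxiliary graph $G$ is a single T-chain; by Proposition~\ref{koda} each $C_j$ then contains a curve of $J$, and since $J$ consists precisely of the two sections we get $l=2$ with exactly one section $D_j\subset C_j$. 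Every other component of $C_j$ has zero intersection with $F$, so lies in a fibre; being a smooth rational curve in the relatively minimal fibration $S\to\P^1$, it is a $(-2)$-curve. Thus $C_j$ is a chain all of whose entries equal $2$ except at $D_j$, where $D_j^2\leq -3$ because $C_j$ is not Du Val; by the classification of T-chains (Proposition~\ref{T-chain}) such a chain must be $[m_j,2,\dots,2]$ up to reversal, and in particular a Wahl chain. Finally, adjunction gives $K_S\cdot D_j=-2-D_j^2$ while $K_S\cdot C_j=(p_g-1)(C_j\cdot F)=p_g-1$, so $D_j^2=-(p_g+1)$ and $C_j=[p_g+1,2,\dots,2]$; therefore $S\to W$ contracts two disjoint Wahl chains of type $[p_g(S)+1,2,\dots,2]$.

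The main obstacle should be this last step. It requires using the equality clause of Corollary~\ref{ineqq} to force $X=S$ and $l=2$, the relative minimality of the elliptic fibration to recognize the non-section components of each $C_j$ as $(-2)$-curves, and the combinatorics of T-chains (a non-Du Val Wahl chain with all entries but one equal to $2$ is necessarily $[m,2,\dots,2]$), with adjunction supplying the value $m=p_g+1$ at the end.
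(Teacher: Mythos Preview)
Your proof is correct and follows essentially the same approach as the paper: invoke Corollary~\ref{low} to reduce to the three cases, plug into the inequality of Corollary~\ref{ineqq} to eliminate the double-section case (and, when $K_W^2<2p_g-4$, the two-section case), and in the surviving two-section situation use the equality clause of Corollary~\ref{ineqq} to force $X=S$. You supply more detail than the paper on the last step---pinning down $l=2$ via Proposition~\ref{koda} and identifying each chain as $[p_g+1,2,\ldots,2]$ by noting the non-section components are fiber components hence $(-2)$-curves---whereas the paper leaves this implicit, relying on the Remark following Theorem~\ref{ineq}.
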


\begin{proof} 
By \cite[Proposition 3.1]{RU22} and because $p_g(W)\geq 3$, the Kodaira dimension of $S$ is equal to $1$ and there must be an elliptic fibration $S\to \P^1$. If $\pi(C)$ contains only a double section, then $2(p_g-1+\Sigma)\leq 2p_g-4+1$, a contradiction. If it contains two sections, then $\Sigma=0$ and we have equality. Therefore $X=S$. If $K_W^2< 2p_g(S)-4$, then $|J|$ cannot be bigger than $2$ by $$|J|-2 + \frac{(p_g-1+\Sigma)(\sum_{i\geq 1} (i-1) d_i)}{p_g-2+\Sigma}  \leq \frac{k - 2 \Sigma}{p_g-2+\Sigma}.$$ Therefore $|J|=1$ and so $\pi(C)$ contains exactly one section. 
\end{proof}

\section{Small surfaces} \label{s2}

By Corollary \ref{hori}, a classification of Horikawa surfaces with only non-Du Val T-singularities requires to understand the case when $\pi(C)$ contains exactly one section and no other multisections. It turns out that this particular geometric situation appears in various other contexts. In this section we will classify it completely.

\begin{definition}
We say that $W$ is a \textit{small surface} if $S$ is an elliptic surface with Kodaira dimension $1$, and $\pi(C)$ contains precisely one section and components of singular fibers.
\label{small}
\end{definition}

In particular $q(W)=q(S)=0$ and $p_g(W)=p_g(S) \geq 2$.

\begin{theorem}
Let $W$ be a small surface, and let $N$ be the number of fibers completely contained in $\pi(C)$. Then $l \geq \text{max} \{1,N-1\}$ and $K_W^2=p_g(S)-2+N$.
\label{formulaK^2}
\end{theorem}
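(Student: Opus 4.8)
The plan is to run the intersection-theory bookkeeping of Section \ref{s1} in the special situation of a small surface, where $\pi(C)$ consists of one section $D$ together with components of singular fibers. First I would write $K_S \equiv \big(p_g(S)-1+\Sigma\big)F$ as in the proof of Corollary \ref{low}, with $\Sigma = \sum \frac{\mu_i-1}{\mu_i}$ the usual correction from multiple fibers. Since by Corollary \ref{hori} (the $K_W^2 \le 2p_g(S)-4$ regime forces exactly one section) a small surface has $|J|=1$ coming from $D$, and more importantly all other curves of $\pi(C)$ lie in fibers, I would apply Corollary \ref{ineqq}: $K_S\cdot\pi(C) \le K_W^2 - K_S^2 + \min\{|J|,l\} = K_W^2 + 1$ since $K_S^2=0$. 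On the other hand $K_S\cdot\pi(C) = (p_g(S)-1+\Sigma)\big(F\cdot\pi(C)\big) = (p_g(S)-1+\Sigma)$ because the only curve of $\pi(C)$ meeting a general fiber is the section $D$ (so $F\cdot\pi(C)=1$). That already forces $\Sigma$ small; I expect the argument to actually pin down $\Sigma=0$ (no multiple fibers inside the relevant range) and hence to reduce everything to the cleaner identity $K_S \equiv (p_g(S)-1)F$.

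The equality $K_W^2 = p_g(S)-2+N$ I would then get from the master formula $K_S^2 - m + \sum_{j=1}^{l}(r_j-d_j+1) = K_W^2$ combined with the formula for $E\cdot C$, but it is cleaner to argue directly on $S$ once one knows $X=S$ would be too strong in general, so instead I would compute on $X$: write $K_X \equiv \pi^*K_S + E$ and intersect suitable divisors. The cleanest route: the canonical class of $W$ satisfies $\phi^*K_W \equiv K_X - \sum \delta_{j,i}C_{j,i}$, and pushing forward to $S$ one relates $K_W^2$ to $(K_S + \text{[fibral correction]})^2$ plus discrepancy contributions. Concretely, I would compute $K_W^2$ as $(\phi^*K_W)^2$ and use that $\phi^*K_W \cdot C = 0$ together with $K_X\equiv \pi^*K_S+E$; the fibral components of $\pi(C)$ each contribute, and the count of full fibers $N$ enters precisely through how much of $K_S \equiv (p_g(S)-1)F$ is "absorbed" by the $N$ fibers contained in $\pi(C)$ versus the one remaining in the mobile part. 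I expect $K_W^2 = K_S\cdot(\text{section part of } \pi(C)) + N = (p_g(S)-1) - 1 + N$, where the $-1$ accounts for passing from the big-and-nef $K_W$ to the contribution of $D$ and the $+N$ is the extra self-intersection gained by contracting the $N$ fibral Wahl chains (each full fiber contracted raises $K^2$ by exactly $1$, as for a single fibral T-chain). This is where Proposition \ref{T-chain} on discrepancies of T-chains does the real work: summing the discrepancy contributions over a chain that forms a full fiber telescopes to give exactly $1$.

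For the inequality $l \ge \max\{1,N-1\}$: the bound $l\ge 1$ is immediate since $W$ has non-Du Val T-singularities. For $l \ge N-1$, the idea is that the $N$ fibers contained in $\pi(C)$, together with the section $D$, must be glued into $l$ connected T-chains by the resolution $\phi$; since $D$ meets every fiber, the dual graph of $\pi^{-1}(\pi(C))_{\mathrm{red}}$ is connected, but $C$ itself (the exceptional locus of $\phi$) has exactly $l$ connected components, one per singularity. Each connected component of $C$ can contain curves from at most... here is the crux: a T-chain is a chain, so it is "thin," and a single T-chain meeting the section $D$ can absorb components of at most two adjacent fibers on either side through the blow-up structure — more precisely, the auxiliary graph $G$ of Section \ref{s1} being acyclic (for $X$ with $K_W$ big and nef one reduces to the cycle-free case via Lemma \ref{nocycles}) forces the $N$ fibers to be distributed among the $l$ chains with enough overlap that $l \ge N-1$. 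The main obstacle I anticipate is precisely making this last combinatorial bound rigorous: controlling exactly how the fibral components distribute among T-chains and how they attach to the chain containing $D$, i.e. showing a configuration with $l \le N-2$ would either create a cycle in $G$, violate the chain structure of T-singularities (Proposition \ref{T-chain}), or contradict $K_W$ nef. I would handle this by a careful local analysis of the blow-up tree $\pi$ over the section–fiber intersection points, using that $s_0=s_1=0$ to rule out degenerate attachments.
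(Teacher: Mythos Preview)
Your proposal has a genuine gap in establishing the exact formula $K_W^2 = p_g(S)-2+N$. The inequality machinery of Section~\ref{s1} (Corollary~\ref{ineqq}) only yields $K_W^2 \ge p_g(S)-2$, which is the $N=0$ case; it cannot produce an \emph{equality} involving $N$. Your key claim that ``each full fiber contracted raises $K^2$ by exactly $1$'' is precisely the content of the theorem, and your appeal to discrepancies of T-chains ``telescoping over a full fiber'' is not an argument --- the T-chains in $X$ do not correspond one-to-one with fibers, so there is nothing to telescope. The paper takes a completely different route: it computes the log Chern numbers $\bar c_1^2$, $\bar c_2$ of the pair $(Z,D)$ obtained by blowing up the non-simple singularities of $\pi(C)$, then invokes the identities from \cite[Theorem 3.8]{RU22} relating these to $K_W^2$, $l$, and the number $t_3^0$ of unused $(-1)$-curves over triple points. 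Subtracting the two expressions eliminates $l$ and $\rho$ and yields $K_W^2 - N - p_g(S) + 2 + N^* = t_3^0$; one then checks $t_3^0=0$ directly and $N^*=0$ via Lemma~\ref{no*}. None of this is accessible from the formulas you are manipulating.

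Two smaller points. First, $\Sigma=0$ requires no argument: the existence of a section already rules out multiple fibers. Second, your approach to $l \ge N-1$ via acyclicity of $G$ is unnecessarily heavy. The paper's argument is one line: the T-chain containing the proper transform $\bar\Gamma$ of the section is a \emph{chain}, so it has at most two neighbors of $\bar\Gamma$ and hence extends into at most two fibers; each of the remaining $N-2$ complete fibers must then be covered by some other T-chain (whose image, being connected and disjoint from the section, lies in a single fiber), giving $l \ge 1 + (N-2)$.
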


\begin{proof}
The inequality for $l$ is trivial if $N\leq 2$. Let $N \geq 3$. We note that we have a T-chain from the section which may use at most two complete fibers. Each one of the $N-2$ resting fibers must be used to construct at least one extra T-chain. Therefore $l \geq 1+N-2=N-1$. We will now use the relation between log Chern numbers of the configuration $\pi(C)$, and $K_W^2$ and $l$ (see \cite[Section 3]{RU22}).

The configuration $\pi(C)$ is formed by $1$ section, $N$ complete singular fibers, and $N'$ parts of singular fibers. Let $N^*$ be the total number of fibers of type $I_n^*$ (for various $n$s), $II^*$, $III^*$ and $IV^*$. Since singular fibers may be cuspidal (type II) or self-tangent (type III), we blow-up to obtain a configuration with only simple singularities. Say that $\pi(C)$ contains $c$ type II fibers, and $a$ type III fibers. Let us consider $Z \to S$ which is the blow-up at the cusps of type II fibers twice, and at the tangency of type III fibers once. Let $D$ be the configuration of $\rho$ curves in $Z$ given by the total transform of $\pi(C)$. Note that $K_Z^2=-2c-a$, and $D$ has only double points and triple points. The number of triple points is $a+b+c$, where $b$ is the number of type IV fibers in $\pi(C)$.

The number of double points in $D$ is $N-N^*+\rho-1-3(a+b+c)$. Let $\Gamma$ be the section in $Z$. We have that $\Gamma^2=-(p_g(S)+1)$. We compute the log Chern numbers of $(Z,D)$: (see \cite[Proposition 3.3]{RU22}) $$\bar{c}_1^2= p_g(S)-3+2(N-N^*)-(a+b+c)$$  and $ \bar{c}_2=\chi_{\text{top}}(Z)-1+N-N^*-\rho-(a+b+c)$.

One can check that $\bar{c}_1^2$ and $\bar{c}_2$ do not change if we subtract the $(-1)$-curves over the type II and/or type III fibers. This may happen when we do not use those curves in the final $C$.

On the other hand, by \cite[Theorem 3.8]{RU22}, we have that $$\bar{c}_1^2=K_Z^2-l+\rho+2t_3^0 \ \ \ \text{and} \ \ \ \bar{c}_2=K_Z^2+\chi_{\text{top}}(Z)-l-K_W^2+ t_3^0,$$ where $t_3^0$ counts the number of $(-1)$-curves over triple points of $D$ which are not part of the final $C$. 

Then, we have $p_g(S)-3+2(N-N^*)-(a+b+c)=K_Z^2-l+\rho+2t_3^0$ and $\chi_{\text{top}}(Z)-1+N-N^*-\rho-(a+b+c)=K_Z^2+\chi_{\text{top}}(Z)-l-K_W^2+ t_3^0$, and so $$K_W^2-N-p_g(S)+2+N^*=t_3^0.$$ But now a quick check over the triple points of $D$ shows that all of their $(-1)$-curves must be used in $C$. (Otherwise, we would have M-resolutions over one $(-2)$-curve with Wahl singularities, which is not possible.) Thus, $t_3^0=0$. We have $N^*=0$ by Lemma \ref{no*}.
\end{proof}

\begin{remark}
Following the same proof of Theorem \ref{formulaK^2}, there is also a formula when we allow more sections or multiple sections in $\pi(C)$, but in that case the geometry is richer and so it is more complicated to describe. Let us assume that
$\pi(C)$ be formed by $e$ disjoint sections, and fibers and/or part of fibers. Let $N$, $N'$, $N^*$, and $t_3^0$ be as in the previous proof. Let $e'$ be the intersection number between the $e$ sections and the $N'$ partial fibers (so $N'\leq e'\leq eN'$). Then one can prove that $$K_W^2=(N+p_g(S)-2)e+e'-N'-N^*+t_3^0.$$ Therefore we can make $K_W^2/p_g(S)$ big via increasing the number of sections. However, the log BMY inequality (see \cite{L03}) tells us that there is an upper bound $$ K_W^2 \leq 9(1+p_g(S)) - \frac{3}{4} \sum_{i=1}^{\ell} \Big(d_i - \frac{1}{d_i n_i^2} \Big).$$
In Theorem \ref{geogrSmall}, we show how high $K_W^2$ can be for small surfaces.
\label{manySections}
\end{remark}

\begin{corollary}
If $K_W^2=2p_g(S)-4$, then either $X=S$ and $l=2$, or $W$ is a small surface and $l \geq \text{max} \{1,p_g(S)-3\}$. 
\end{corollary}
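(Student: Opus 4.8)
The plan is to first pin down that $\kappa(S)=1$, with elliptic fibration over $\P^1$, and then to read off the statement from Corollary \ref{hori} and Theorem \ref{formulaK^2}. To begin, I would note that $p_g(S)=p_g(W)\ge 3$: since $K_W$ is big and nef, $K_W^2>0$, hence $2p_g(W)-4>0$. This already excludes $\kappa(S)\le 0$, because rational, ruled, K3, Enriques, abelian and bielliptic surfaces all have $p_g\le 1$.

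Next I would exclude $\kappa(S)=2$. Suppose it holds; then $S$ is minimal of general type, $K_S$ is nef, and Noether's inequality gives $K_S^2\ge 2p_g(S)-4=K_W^2$. As $K_S$ is nef, the components of $C$ contracted by $\pi$ map to points and those in $J^c$ are orthogonal to $K_S$, so $K_S\cdot\pi(C)\ge\sum_{\Gamma\in J}K_S\cdot\pi(\Gamma)\ge |J|\ge\min\{|J|,l\}$. Feeding this into Corollary \ref{ineqq} gives $\min\{|J|,l\}\le K_W^2-K_S^2+\min\{|J|,l\}$, hence $K_S^2\le K_W^2$; together with Noether this forces $K_S^2=K_W^2$, and then all the above inequalities are equalities, so in particular equality holds in Corollary \ref{ineqq} and $X=S$, i.e. $m=0$. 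But then the blow-up count $K_W^2=K_S^2-m+\sum_j(r_j-d_j+1)$ forces $\sum_j(r_j-d_j+1)=0$, which is absurd since $r_j-d_j+1\ge 1$ for every T-chain (it equals $1$ for the initial chains $[4]$ and $[3,2,\dots,2,3]$ and increases by $1$ under each move of Proposition \ref{T-chain}(ii)). Hence $\kappa(S)=1$.

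Now $S$ has its Iitaka elliptic fibration $f\colon S\to B$, and I would check $B=\P^1$. Take a general fibre $F$ of $f$ and set $\hat F=\pi^*F$, a general fibre of $f\circ\pi\colon X\to B$; then $\hat F^2=0$ and $K_X\cdot\hat F=(\pi^*K_S+E)\cdot\hat F=K_S\cdot F=0$, the last equality by adjunction since $F$ is an elliptic curve. Writing $\phi^*K_W=K_X+D'$ with $D':=-\sum_i\delta_iC_i\ge 0$ supported on $C$, we get $\phi^*K_W\cdot\hat F=D'\cdot\hat F\ge 0$. But $\phi^*K_W$ is big and nef with $(\phi^*K_W)^2=K_W^2>0$, so by the Hodge index theorem $\phi^*K_W\cdot\hat F=0$ would force $\hat F\equiv 0$, which is impossible. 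Thus $D'\cdot\hat F>0$, so some component of $C$ is not $\pi$-exceptional and its image in $S$ dominates $B$; being the image of a component of a T-chain, that curve is rational, whence $B=\P^1$.

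Finally, with $S\to\P^1$ an elliptic fibration, $p_g(S)\ge 3$ and $K_W^2=2p_g(S)-4<3p_g(S)-6$, Corollary \ref{hori} applies: $\pi(C)$ contains either two sections or exactly one, and no other multisections. In the two-section case the same corollary gives $X=S$ and that $S\to W$ is the contraction of two Wahl chains $[p_g(S)+1,2,\dots,2]$, so $l=2$, the first alternative. In the one-section case the remaining components of $\pi(C)$ are rational non-multisection curves, hence components of singular fibres, so $W$ is a small surface in the sense of Definition \ref{small}; then Theorem \ref{formulaK^2} gives $K_W^2=p_g(S)-2+N$, so $N=p_g(S)-2$ and $l\ge\max\{1,N-1\}=\max\{1,p_g(S)-3\}$, the second alternative. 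I expect the exclusion of $\kappa(S)=2$ to be the one subtle step, since there one has to play Corollary \ref{ineqq}, Noether's inequality and the blow-up count against each other; the rest is bookkeeping once Corollary \ref{hori} and Theorem \ref{formulaK^2} are available.
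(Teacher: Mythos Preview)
Your proof is correct. The final two paragraphs (applying Corollary~\ref{hori} and then reading off $N=p_g(S)-2$ from Theorem~\ref{formulaK^2}) are exactly the paper's one-line argument for this corollary. The paper does not, at this point, justify $\kappa(S)=1$ or $B=\P^1$: those are taken as standing context from Section~1 (Corollary~\ref{low}/\ref{hori} already assume the elliptic fibration). Your reduction of the Kodaira dimension via Noether's inequality, Corollary~\ref{ineqq}, and the blow-up count is the same argument the paper deploys later in the proof of Theorem~\ref{main5} for I-surfaces; so you are not doing anything alien to the paper, just making explicit here what the paper postpones or leaves implicit. Your Hodge-index argument for $B=\P^1$ is also fine and slightly more detailed than the paper's one-line remark that ``T-chains cannot be only contained in fibers, as $K_W$ is big''.
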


\begin{proof}
We have $K_W^2=2p_g-4=p_g-2+N$, and so $N=p_g-2$.
\end{proof}

From now on, the purpose will be to classify all small surfaces. This will use as a key ingredient P-resolutions of particular cyclic quotient singularities.

\begin{definition} 
Let $P \in \overline{W}$ be a c.q.s. A \textit{P-resolution} of $P \in \overline{W}$ is a partial resolution $f \colon W \to \overline{W}$ over $P$ such that $W$ has only T-singularities and $K_{W}$ is ample relative to $f$. 
\label{pres}
\end{definition}

The idea is the following. Let $W$ be a small surface. The configuration of rational curves $\pi(C) \subset S$ consists of one section $\Gamma$, some complete singular fibers, and some partial singular fibers. The composition of blow-ups $\pi \colon X \to S$ contains some P-resolutions of particular c.q.s. coming from the T-chain that contains the proper transform of $\Gamma$, and T-chains from fibers. Therefore the strategy has two parts. First we analyze all possible P-resolutions of these relevant c.q.s., and then we glue them to create all possible configurations of T-chains and exceptional divisors for any small surface. In \ref{app}, we classify all of what we need from these particular c.q.s. As an important application, we start with a lemma which reduces the possibilities of complete fibers in $\pi(C)$. We recall that $\pi(C)$ is connected by Proposition \ref{koda}. We use Kodaira notation for singular fibers of (relatively minimal) elliptic fibrations.    

\begin{lemma}
Let $W$ be a small surface. Then the configuration of curves $\pi(C)$ does not contain complete fibers of Kodaira types $I_n^*$, $II^*$, $III^*$, or $IV^*$.
\label{no*} 
\end{lemma}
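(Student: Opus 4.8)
The plan is to derive a contradiction from the existence of a complete fiber $F_0$ of one of the listed Kodaira types inside $\pi(C)$, using the formula $K_W^2 = p_g(S) - 2 + N$ from Theorem \ref{formulaK^2} together with the classification of P-resolutions of the relevant cyclic quotient singularities. First I would recall that since $F_0$ is a complete fiber of type $I_n^*$, $II^*$, $III^*$, or $IV^*$, it contains a vertex of valency three in its dual graph (the "branching" component), and the T-chains contracted by $\phi$ together with the exceptional divisors of $\pi$ over $F_0$ must organize the components of $F_0$ (and possibly the section $\Gamma$, plus some extra blow-ups) into a union of T-chains glued along $(-1)$-curves — in other words, the configuration over $F_0$ is forced to be (part of) a P-resolution of a cyclic quotient singularity. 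The key point is that the star-shaped nature of these four fiber types is incompatible with being resolved by chains: a P-resolution of a c.q.s. is built from T-chains linked in a chain-like (not star-like) pattern, so the central triple point of $F_0$ obstructs this. I expect this to be the crux of the argument and the main obstacle — making precise that "the configuration over $F_0$ must be part of a P-resolution of a chain" and that no such P-resolution can contain the forbidden branch vertex.

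Concretely, I would argue as follows. The section $\Gamma$ meets $F_0$ in exactly one point, lying on one of the multiplicity-one components of $F_0$. Its T-chain in $X$ starts at the proper transform of $\Gamma$ and runs into $F_0$; by the analysis in Section \ref{s2} and Appendix \ref{app} (the P-resolution picture described just before the lemma), the portion of $\pi(C)$ supported on $F_0 \cup \Gamma$, after contracting $\pi$, must be a P-resolution of a particular c.q.s. determined by $\Gamma^2 = -(p_g(S)+1)$ and the combinatorics of $F_0$. Now one inspects the dual graphs: each of $I_n^*$, $II^*$, $III^*$, $IV^*$ has a component $\Theta$ of valency $\geq 3$ in the fiber. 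Since P-resolutions of cyclic quotient singularities have chain-shaped dual graphs and are glued only along $(-1)$-curves meeting transversally, the branch component $\Theta$ would have to appear as an end of several T-chains simultaneously or force a blow-up at a triple point whose $(-1)$-curve is not usable — but, as in the proof of Theorem \ref{formulaK^2}, an unusable $(-1)$-curve over a triple point would produce an M-resolution supported on a single $(-2)$-curve with Wahl singularities, which is impossible. Hence every $(-1)$-curve over such a triple point must be used in $C$, and then the resulting configuration is no longer chain-shaped, contradicting that it is a P-resolution.

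Alternatively — and this is the cleaner route I would try to push through — I would use the quantity $N^*$ from the proof of Theorem \ref{formulaK^2}, which counts exactly the complete fibers of types $I_n^*$, $II^*$, $III^*$, $IV^*$ in $\pi(C)$. The identity derived there reads
$$K_W^2 - N - p_g(S) + 2 + N^* = t_3^0,$$
and it was shown that $t_3^0 = 0$ because every $(-1)$-curve over a triple point of $D$ must be used in $C$ (otherwise an impossible M-resolution over a $(-2)$-curve arises). Feeding $t_3^0 = 0$ back in gives $K_W^2 = N + p_g(S) - 2 - N^*$. But Theorem \ref{formulaK^2} (whose proof invokes precisely this lemma) would give $K_W^2 = N + p_g(S) - 2$; rather than circularity, the right logical order is: the step $t_3^0 = 0$ is established first and unconditionally, and then the equation $K_W^2 - N - p_g(S) + 2 + N^* = 0$ shows $N^* = K_W^2 - N - p_g(S) + 2 \leq 0$ would need $N^* \le 0$, forcing $N^* = 0$ once one separately checks $K_W^2 \ge N + p_g(S) - 2$ from the log Chern number computation that does not presuppose $N^* = 0$.

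The hard part will be disentangling this potential circularity: I must verify that the log Chern number computation of $\bar c_1^2$ and $\bar c_2$ for $(Z,D)$, and the comparison with \cite[Theorem 3.8]{RU22}, can be run \emph{before} knowing $N^* = 0$, so that the equation $K_W^2 - N - p_g(S) + 2 + N^* = t_3^0 = 0$ genuinely forces $N^* = 0$. Granting that, the lemma follows immediately: $N^* = 0$ is the statement that $\pi(C)$ contains no complete fiber of type $I_n^*$, $II^*$, $III^*$, or $IV^*$. I would also double-check the boundary bookkeeping (contributions of the extra blow-ups $Z \to S$ at cusps and tangencies, and of the $(-1)$-curves over type II and type III fibers not used in $C$) to confirm it is insensitive to the presence or absence of starred fibers, exactly as asserted in the proof of Theorem \ref{formulaK^2}.
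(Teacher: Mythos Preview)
Your first approach has the right intuition --- the star shape of $I_n^*, II^*, III^*, IV^*$ is incompatible with the chain shape of P-resolutions --- but the mechanism you invoke is wrong. The branch component $\Theta$ of a $*$-fiber is a curve meeting its three neighbors at three \emph{distinct} points; there is no triple point on the surface to blow up. The argument about ``an M-resolution over a single $(-2)$-curve with Wahl singularities'' is the paper's justification for $t_3^0=0$ in Theorem~\ref{formulaK^2}, and $t_3^0$ counts $(-1)$-curves over genuine triple points of $D$ (as occur in types II, III, IV). It simply does not apply here. What the paper actually does is isolate the c.q.s.\ that must be P-resolved: once the T-chain through $\bar\Gamma$ enters $F$ along one arm, the remaining two arms through the branch vertex form (after the necessary blow-up separating them from the section's chain) a chain of type $[2,\ldots,2,3,2,\ldots,2]$ or more generally $[2,\ldots,2,a,2,\ldots,2]$ with $a\ge 3$. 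Proposition~\ref{2r2} lists all P-resolutions of such a c.q.s., and every one of them carries an $A_n$ singularity at an end. Those $A_n$ curves are then not images of non-Du Val T-chains, so they are not in $\pi(C)$, contradicting $F\subset \pi(C)$. You never reach this point: you neither identify the relevant c.q.s.\ nor invoke Proposition~\ref{2r2}.

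Your second approach is genuinely circular, and the fix you propose does not exist. The log Chern computation in Theorem~\ref{formulaK^2} yields, after $t_3^0=0$, exactly the single relation $K_W^2 + N^* = N + p_g(S) - 2$ (note also your sign slip: it gives $N^* = N + p_g(S) - 2 - K_W^2$, not the reverse). To force $N^*=0$ you would need the independent inequality $K_W^2 \ge N + p_g(S) - 2$, but neither $\bar c_1^2$ nor $\bar c_2$ alone gives this --- each still involves $l$ and $\rho$, which are not otherwise controlled. The paper resolves this not by squeezing a second inequality out of the log Chern numbers, but by proving Lemma~\ref{no*} \emph{directly} via Proposition~\ref{2r2}, and only then feeding $N^*=0$ back into Theorem~\ref{formulaK^2}.
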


\begin{proof}
Let $F$ be a complete fiber in $\pi(C)$ of type $I_n^*$, $II^*$, $III^*$, or $IV^*$. We note that the section $\Gamma$ must intersect one of the ending components of $F$. As $F$ has a particular (not chain) form, the pre-image $\pi^{-1}(F)$ must contain a P-resolution of the c.q.s. corresponding to either $[2,\ldots,2,3,2,\ldots,2]$ (where one side may not contain any $2$s), or $[2,\ldots,2,a,2,\ldots,2]$ (where both sides contain $2$s and $a>2$). It is key to note that the image of the curves in the non-Du-Val T-chains involved in these P-resolutions must fill up $F$. But a simple checked on all possible P-resolutions in Proposition \ref{2r2} shows that we always need some $A_n$ singularities, and so always not all the curves in $F$ are used. Therefore $\pi(C)$ cannot contain $F$.
\end{proof}

Next we fix the notation that will be used until the end of this section. Let $W$ be a small surface. Assume there are no curves $H$ connecting two T-singularities at their ends such that $H\cdot K_W=0$; otherwise, we can contract $H$ to form a new T-singularity. Let $\Gamma \subset \pi(C)$ be the section. Let $\bG$ be its proper transform in $X$. As $\bG$ must belong to a T-chain, we have three options for this T-chain:
\begin{itemize}
    \item[(0)] The T-chain is $\bG$.
    \item[(1)] The T-chain starts with $\bG$. Let $\bG'$ be the $\P^1$ next to $\bG$ in this T-chain.
    \item[(2)] The curve $\bG$ has two neighboring curves $\bG'$ and $\bG''$ in this T-chain (such that $\bG' \cdot \bG''=0$).
\end{itemize}

In case $(1)$ we have a fiber or part of a fiber $F'$ inside of $\pi(C)$. In case $(2)$ we have the same for two fibers $F'$ and $F''$. We will always express a c.q.s. by means of its H-J continued fraction.

\begin{lemma}
Assume the notation above. Consider the T-chain in $X$ that contains $\bG$. Let $r\geq p_g(S)+1$. Then we have one of the following options:
\begin{itemize}
\item[(0)] If the T-chain is $\bG$, then the T-chain is $[4]$.

\item[(1)] Assume that the T-chain starts with $\bG$. Then, either $(1.0)$ the T-chain has the form $[r,2,\ldots,2]$, or the pre-image of $\Gamma \cup F'$ consists of: 

\begin{itemize}
    \item[(1.1)] If $F'=I_1$, then a P-resolution of $[r,a,\underbrace{2,\ldots,2}_{a-4}]$, and a $(-1)$-curve connecting an end of the P-resolution with another curve in it.  
    
    \item[(1.2)] If $F'=I_{n}$ with $n\geq 2$, then a P-resolution of $[r,a,\underbrace{2,\ldots,2}_{n-2},3,$ $ \underbrace{2,\ldots,2}_{a-3}],$ and a $(-1)$-curve connecting an end of the P-resolution with another curve in it. 

    \item[(1.3)] If $F'=II$, then we have either 

    \begin{itemize}
        \item[(1.3.1)] a P-resolution of $[r,4]$ and a $(-1)$-curve tangent to the (proper transform of) $(-4)$-curve, or 

        \item[(1.3.2)] a P-resolution of $[r,5,2]$ and a $(-1)$-curve passing through the intersection of the (proper transform of) $(-5)$-curve and $(-2)$-curve, or 

        \item[(1.3.3)] a P-resolution of $[r,6,2,2]$, a $[4]$ and a $(-1)$-curve between them, or 

        \item[(1.3.4)] a P-resolution of $[r,7,\underbrace{2,\ldots,2}_{r'-2}]$, a P-resolution of $[3,r',2]$, and a $(-1)$-curve connecting the last curve of the first P-resolution with the proper transform of the curve in the second corresponding to $r'$.

    \end{itemize}

    \item[(1.4)] If $F'=III$, then we have either

    \begin{itemize}
        \item[(1.4.1)] a P-resolution of $[r,3,3]$ and a $(-1)$-curve passing through the intersection of the (proper transform of) two $(-3)$-curves, or 

        \item[(1.4.2)] a P-resolution of $[r,4,3,2]$, $[2,5]$ and a $(-1)$-curve, or 

        \item[(1.4.3)] a P-resolution of $[r,4]$, $[4]$, $(-2)$-curve, and a $(-1)$-curve, or  

        \item[(1.4.4)] a P-resolution of $[r,5,\underbrace{2,\ldots,2}_{r'-2}]$, a P-resolution of $[4,r',2]$, and a $(-1)$-curve connecting the last curve of the first P-resolution with the proper transform of the curve in the second corresponding to $r'$.  
    \end{itemize}

    \item[(1.5)] If $F'=IV$, then we have either

    \begin{itemize}
        \item[(1.5.1)] a P-resolution of $[r,3,2,3]$, a $[4]$ and a $(-1)$-curve between them, or 


        \item[(1.5.2)] a P-resolution of $[r,4,\underbrace{2,\ldots,2}_{r'-2}]$, a P-resolution of $[3,r',3]$, and a $(-1)$-curve connecting the last curve of the first P-resolution with the proper transform of the curve in the second corresponding to $r'$.
        
    \end{itemize}   
\end{itemize}

\item[(2)] Assume that $\bG$ has two neighboring curves $\bG'$ and $\bG''$ in this T-chain. Then either $F'$ is contained in $\pi(C)$ but $F''$ is not (or vice versa), or both $F'$ and $F''$ are contained in $\pi(C)$.

\begin{itemize}
    \item[(2.1)] In the first case, we have that the pre-image of $\Gamma \cup F' \cup (\pi(C) \cap F'')$ consists of what is described in $(1)$ adding a $[2,...,2]$ on the left of $[r,\ldots]$ for each P-resolution (that contains the section).

    \item[(2.2)] In the second case we have that the pre-image of $\Gamma \cup F' \cup F''$ consists of combinations of what is described in $(1)$ taking as center $r$ (the section) for each P-resolution. The parameters at the left and at the right of $[\ldots,r,\ldots]$ are independent.
\end{itemize}
\end{itemize}
\label{allforsection}
\end{lemma}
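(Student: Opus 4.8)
The plan is to exploit the position of $\bG$ inside its T-chain and to feed the resulting local configuration into the P-resolution classification of Section \ref{s2} and Appendix \ref{app}. Keep the notation fixed before the statement: $\bG$ is the proper transform of the section $\Gamma$ in $X$, it lies in a T-chain $C_j$, and $r:=-\bG^{2}$; since $\pi$ is a composition of blow-ups, $r\geq -\Gamma^{2}=\chi(\mathcal O_{S})=p_g(S)+1$, which is the standing hypothesis. The alternatives $(0)$, $(1)$, $(2)$ are exactly the three possible positions of $\bG$ in $C_j$: an isolated vertex, an endpoint, or an interior vertex. In case $(0)$ the whole T-chain is the single curve $\bG$, hence a one-curve non-Du Val T-chain, which by Proposition \ref{T-chain}(i)--(iii) must be $[4]$; nothing more is needed. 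So assume we are in case $(1)$, and leave case $(2)$ to the end.

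In case $(1)$ the unique neighbour $\bG'$ of $\bG$ is either contracted by $\pi$ onto a point of $\Gamma$, or maps to a curve of $\pi(C)$ meeting $\Gamma$; since $\pi(C)\setminus\Gamma$ consists of components of singular fibres and $\Gamma$ is a section, in both cases there is a well-defined singular fibre $F'$ through the relevant point of $\Gamma$, and this is the $F'$ of the statement. If $F'$ is not entirely contained in $\pi(C)$, then only a string of $(-2)$-curves of $F'$ is used, $\Gamma$ and these curves form a chain in $S$ after blowing up $\Gamma\cap F'$, and $C_j$ is forced to be $[r,2,\dots,2]$: this is $(1.0)$. If $F'$ is entirely contained in $\pi(C)$, then by Lemma \ref{no*} it has Kodaira type $I_n$ $(n\geq 1)$, $II$, $III$, or $IV$. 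I would then analyse the total transform $\pi^{-1}(\Gamma\cup F')$. Its distinguished point --- the node of $I_1$, a cycle-closing node of $I_n$, the cusp of $II$, the tangency of $III$, the triple point of $IV$ --- must be blown up by $\pi$ (otherwise $C$ fails to be a disjoint union of chains, i.e.\ $\phi$ is not the minimal resolution of T-singularities), together with finitely many infinitely near points of $\Gamma\cap F'$; moreover, by the $M$-resolution reduction of Remark \ref{canonical} --- in particular the absence of a non-trivial $M$-resolution over a $(-2)$-curve, already used in the proof of Lemma \ref{no*} --- no further blow-ups occur over this locus. After these forced blow-ups the part of $\pi^{-1}(\Gamma\cup F')$ containing $\bG$ becomes a chain $\mathcal D$ of rational curves whose negative part contracts to a cyclic quotient singularity $P$; computing self-intersections case by case in the type of $F'$ and in the number of blow-ups made at the distinguished point and at $\Gamma\cap F'$ identifies $P$ with the c.q.s.\ appearing in the corresponding entry of $(1.1)$--$(1.5)$ (namely $[r,a,2^{a-4}]$, $[r,a,2^{n-2},3,2^{a-3}]$, $[r,4]$, $[r,5,2]$, $[r,6,2,2]$, $[r,3,3]$, $[r,4,3,2]$, $[r,3,2,3]$, or one of $[r,7,2^{r'-2}]$, $[r,5,2^{r'-2}]$, $[r,4,2^{r'-2}]$ paired with a second small c.q.s.). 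The exceptional curve over the distinguished point that is left outside $\mathcal D$ is the ``extra $(-1)$-curve'' of the statement, and the components of $F'$ not on $\mathcal D$ form the auxiliary chains ($[4]$, $[2,5]$, $[3,r',2]$, $[3,r',3]$, $[4,r',2]$, or a single $(-2)$-curve), which --- as $W$ has only T-singularities --- are forced to be precisely those T-chains.

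To pin down which $C_j$ actually occur, contract $\mathcal D$ and the $A_n$-configurations disjoint from $C$ to get a surface $\bar Y$ carrying $P$; then $X\to\bar Y$ over $P$ composed with $\phi$ realises $W$, locally over $P$, as a partial resolution $f\colon W\to\bar Y$ with only T-singularities. As $K_W$ is nef it is $f$-nef, and after the reductions already in force (no curve $H$ joining two T-singularities at their ends with $H\cdot K_W=0$, cf.\ Remark \ref{canonical}) it is $f$-ample, so $f$ is a P-resolution of $P$ in the sense of Definition \ref{pres}. The classification of P-resolutions of the c.q.s.\ of type $[2,\dots,2,a,2,\dots,2]$ and the relatives listed above (Proposition \ref{2r2} and Appendix \ref{app}) then yields the admissible $C_j$, and matching this with the identification of $P$ gives exactly $(1.1)$--$(1.5)$; if $F'$ is not fully used, or the plain Wahl chain already suffices, one is in $(1.0)$. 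For case $(2)$, $\bG$ is now an interior vertex, so $P$ has $r$ at an interior position, $[\dots,r,\dots]$, with the two sides produced independently by $F'$ and $F''$ exactly as above; because $\bG$ has self-intersection $-r\leq -3$ it sits at a rigid spot and a P-resolution of such a two-sided c.q.s.\ restricts on each side of $\bG$ to a P-resolution of the corresponding one-sided problem with centre $r$ --- this gives $(2.2)$ --- and if $F''$ is only partially contained in $\pi(C)$ that side contributes merely a string of $(-2)$-curves, i.e.\ a prefix $[2,\dots,2]$, which is $(2.1)$.

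The main obstacle is twofold. First, one has to prove that $f\colon W\to\bar Y$ is genuinely a \emph{P-resolution}, i.e.\ that $K_W$ is $f$-ample: this needs control of the discrepancies along $\mathcal D$ and the exclusion (or contraction) of the borderline curves contracted by $f$ on which $K_W$ vanishes, which is precisely where Remark \ref{canonical} is essential. Second, there is a finite but delicate bookkeeping: one must show that exactly the minimal number of blow-ups at the distinguished point of $F'$ is forced --- no more and no less --- and then read off correctly the c.q.s.\ $P$, the auxiliary T-chains, and the connecting $(-1)$-curves in each of the configurations $(1.1)$--$(1.5)$, matching them against the P-resolution classification.
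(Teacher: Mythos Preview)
Your proposal is correct and follows the paper's approach closely: force the blow-ups at the distinguished singular point of $F'$, read off the resulting chain as a c.q.s., and recognise the remaining configuration as (the minimal resolution of) a P-resolution together with the leftover $(-1)$-curve and auxiliary pieces. The paper's own proof is just as terse on case~(2) and spells out your ``bookkeeping'' only for type~II, by tracking the three intersection points $P_1,P_2,P_3$ of the central $(-1)$-curve after three blow-ups of the cusp and noting that at least one branch must be detached for that curve to sit in a chain; types III and IV are dismissed as ``similar analysis''. One small remark: invoking the full Appendix classification goes slightly beyond what the lemma asserts --- it only claims ``a P-resolution of $[\ldots]$'', and pinning down which P-resolutions actually occur is the content of Proposition~\ref{realizations} --- though you do legitimately need the easy instance (Proposition~\ref{2r2}) that $[3,2,\ldots,2]$ and $[4,2,\ldots,2]$ admit no non-trivial P-resolution to force the auxiliary $[4]$ in (1.3.3), (1.4.3), (1.5.1).
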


\begin{proof}
By Lemma \ref{no*} we do not have complete fibers of type $I_n^*$, $II^*$, $III^*$, or $IV^*$.

For $(1.1)$, we start blowing up the node of $F'$, and then the blow-ups at one of the two intersections points and in one direction. We do as much as we can. In this way, we obtain a chain of $\P^1$s with negative self-intersections $[r,a,\underbrace{2,\ldots,2}_{a-4}]$, and a $(-1)$-curve connecting the last $(-2)$-curve with the proper transform of the fiber. Then the pre-image of $\Gamma \cup I_1$ must be a P-resolution of the contraction of $[r,a,\underbrace{2,\ldots,2}_{a-4}]$ and a $(-1)$-curve. Any other alternative is easily checked to be impossible. Same happens for $(1.2)$.

For $(1.3)$, we need to have a blow-up at the singular point of $F'$ (type II). If nothing else happens, then we have the first possibility. Otherwise, we blow-up the tangent point of the new fiber, giving the second possibility. We now blow up the triple point in the new fiber. Let $P_1$ be the point of intersection between the $(-1)$-curve and the $(-6)$-curve in this fiber. The same for $P_2$ and the $(-3)$-curve, and $P_3$ for the $(-2)$-curve. We note that the proper transform of the $(-1)$-curve must belong to a T-chain in $X$. Therefore, over at least one of the $P_i$ we need to have no curve in the T-chain of this $(-1)$-curve. 

If that happens for $P_1$, then we have the option with $[r,7,\underbrace{2,\ldots,2}_{r'-2}]$ in the statement. If that happens for $P_2$, then we obtain a chain $[r,6,u,2]$ and $[4,2,\ldots,2]$. Then over $[4,2,\ldots,2]$ we should have a P-resolution of its contraction, but this happens only if $[4,2,\ldots,2]=[4]$. Therefore we obtain chains $[r,6,2,2]$ and $[4]$, and a P-resolution over $[r,6,2,2]$. Similarly for $P_3$, where we do not have a P-resolution with T-singularities over $[3,2,\ldots,2]$, and so we do not consider this case.

For $(1.4)$ we do a similar analysis as for $(1.4)$. But in this case we get an extra case, as the two blow-ups over the tangency of III gives the possibility of not considering the $(-1)$-curve and the $(-2)$-curve in the fiber (this does not affect $t_3^0$ as this is not considered as a triple point of $D$). For $(1.5)$ we use the same strategy, getting only two possibilities.

The alternatives for $(2)$ are obvious from the previous analysis.  
\end{proof}

The following lemma gives a restriction for (2.2) in Lemma \ref{allforsection}, which will be useful to optimize computations later.

\begin{lemma}
Let us consider the situation $F'=I_n$ and $F''=I_{n'}$ in (2.2) of Lemma \ref{allforsection}, and let $r\geq p_g+1$. Then we only need to consider P-resolutions of c.q.s. of type $[4,r,a,\underbrace{2,\ldots,2}_{a-4}]$, $[4,r,a,\underbrace{2,\ldots,2}_{n'-2},3,\underbrace{2,\ldots,2}_{a-3}]$, $[3,\underbrace{2,\ldots,2}_{n-2},3,r,a,\underbrace{2,\ldots,2}_{n'-2},3,\underbrace{2,\ldots,2}_{a-3}]$, or $[3,\underbrace{2,\ldots,2}_{n-2},3,r,a,\underbrace{2,\ldots,2}_{a-4}]$. \\
This is, there is no termination with $2$s in both sides. 
\label{In+r+In'}
\end{lemma}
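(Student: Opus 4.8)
The plan is to go through the blow-up sequence $\pi\colon X\to S$ producing the T-chain through $\bG$ in case $(2.2)$, and to show that its $F''$-side can always be taken to be the shortest Wahl cap --- a single $[4]$ when $n'=1$, a $[3,2,\ldots,2,3]$ when $n'\geq 2$ --- with no extra $(-2)$-curves beyond that cap. By the independence of the left and right parameters asserted in $(2.2)$ of Lemma \ref{allforsection}, this collapses the c.q.s.\ that need P-resolving in this case to exactly the four strings in the statement (the trailing $a-4$ or $a-3$ twos surviving only on the $F'$-side).

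Concretely, I would first recall and carry out the local construction of Lemma \ref{allforsection}$(1.1)$--$(1.2)$ on the two sides. Over $\Gamma\cup F'$ one gets, after $\Gamma$, the string $[a,2,\ldots,2]$ ($a-4$ twos, if $n=1$) or $[a,2,\ldots,2,3,2,\ldots,2]$ ($n-2$ then $a-3$ twos, if $n\geq 2$), together with a connecting $(-1)$-curve; mirroring over $\Gamma\cup F''$ gives a priori a left arm $[2,\ldots,2,a']$ ($a'-4$ twos, if $n'=1$) or $[2,\ldots,2,3,2,\ldots,2,a']$ ($a'-3$ then $n'-2$ twos, if $n'\geq 2$), and the full c.q.s.\ is the concatenation left-arm, $r$, right-arm. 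The heart of the matter is to exclude $a'>4$ (resp.\ $a'>3$). Here I would argue that the blow-ups generating those extra left $(-2)$'s lie strictly on the left arm, away from $\bG$ (whose two neighbours $\bG'$, $\bG''$ and whose self-intersection $-r$ are fixed in case $(2)$), so they are genuinely extra exceptional curves; on the other hand, since $F''$ is a \emph{complete} fiber in $\pi(C)$ and $\pi(C)$ is connected by Proposition \ref{koda}, the $n'$ components of the $I_{n'}$-cycle are already forced to appear, and a single node blow-up opens the cycle into precisely the minimal cap $[4]$ or $[3,2,\ldots,2,3]$. I then expect to show that prepending an extra $(-2)$ to the far left of the c.q.s.\ is reversible among the P-resolutions realized by small surfaces: such a $(-2)$ must sit as a non-central curve of a Wahl piece, and deleting it together with its blow-up yields a P-resolution of the shortened c.q.s.\ that is again realized by a small surface, with the discrepancy and relative-ampleness conditions of Definition \ref{pres} preserved. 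This last point is checked against the classification of P-resolutions of the relevant c.q.s.\ in \ref{app} (in the spirit of Proposition \ref{2r2}).

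The main obstacle is exactly this reversibility/matching step. Making rigorous the claim that ``the extra left $(-2)$'s are redundant'' requires producing, for every P-resolution of one of the longer strings $[2,\ldots,2,a',\ldots,r,\ldots]$ occurring here, a small-surface configuration built instead on $[4,\ldots,r,\ldots]$ (symmetrically with $3$'s when $n'\geq 2$), and verifying that the interface $(-1)$-curves and the $f$-ampleness of $K$ survive; this is a compatibility check between the blow-up combinatorics of Lemma \ref{allforsection} and the P-resolution list of \ref{app}, not a single inequality. Once it is in place, the enumeration in the case $F'=I_n$, $F''=I_{n'}$ reduces to the four stated c.q.s.\ types.
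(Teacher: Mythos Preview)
Your plan misreads what the lemma asserts and therefore aims at the wrong target. The sentence ``This is, there is no termination with $2$s in both sides'' is an \emph{impossibility} claim: for a small surface in situation $(2.2)$ with $F'=I_n$, $F''=I_{n'}$, the c.q.s.\ produced over $\Gamma\cup F'\cup F''$ \emph{cannot} have trailing $(-2)$-curves at both ends. You instead try to argue a \emph{reducibility} statement --- that any configuration with extra $2$'s on the $F''$-side can be traded for one without --- and you yourself flag the matching step as the main obstacle. That step is not just hard; it does not make sense for a fixed $W$. The surface $W$, the blow-up $\pi\colon X\to S$, and the T-chain through $\bG$ are given, so the c.q.s.\ is determined. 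There is nothing to ``reverse'': deleting an ending $(-2)$-curve is not an operation on $W$, and undoing one blow-up over the node changes both the number of $2$'s \emph{and} the parameter $a'$, breaking the T-chain structure of the P-resolution.

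The paper's proof is a short contradiction argument with a completely different mechanism. Assume $2$'s at both ends. Every T-chain of length $\geq 2$ has an ending $(-2)$-curve (Proposition~\ref{T-chain}), so the T-chain of $\bG$ has its $2$-tail on one side, say over $I_{n'}$; hence on the other side its extreme curve has self-intersection $\leq -3$. Over $I_n$ the c.q.s.\ reads $[\underbrace{2,\ldots,2}_{b-4},b,r,\ldots]$ (or the $n\geq 2$ variant), and since the T-chain of $\bG$ cannot reach the far-left $2$'s there must be another T-chain to the left of it. The key input is \cite[Corollary 2.4]{FRU23}: in a P-resolution, blowing down the separating $(-1)$-curves never contracts a center. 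Since the leftmost T-chain must have a center curve of self-intersection $\leq -3$, and the only candidate in the chain is the $(-b)$-curve, the center is at $b$. But with $b-4$ $2$'s to its left, the T-chain construction forces more than one blow-up between $b$ and $r$, which puts a $(-2)$-curve at the \emph{left} end of $\bG$'s T-chain --- contradicting that this end was $\leq -3$. (Case 2 is the same, tracking the possible center $3$ back to $b$.) Your proposal never engages with centers or with this asymmetry of the T-chain of $\bG$; that is the missing idea.
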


\begin{proof}
Let us assume that there are ending $2$s in both sides. The section $\bG$ must belong to a T-chain in $X$ which ends with $2$s. Let us say that these $2$s are over the $I_{n'}$, i.e. over the right side of $r$. On the left side of $r$ we can have for the c.q.s. the chain (Case 1) $[\underbrace{2,\ldots,2}_{b-4},b,r,\ldots]$ if $n=1$, or (Case 2) $[\underbrace{2,\ldots,2}_{b-3},3,\underbrace{2,\ldots,2}_{n-2},b,r,\ldots]$ for $n\ge 2$. In both cases the key point will be to look at the centers (see after Proposition \ref{T-chain})  of the T-chains at the left of the T-chain of $\bG$.

(Case 1): We are assuming existence of $2$s on the left, so $b\geq 5$. Thus $2,\ldots,2,b,r$ cannot be part of the T-chain of $\bG$. So, there is another T-chain on the left of the T-chain of $\bG$. By \cite[Corollary 2.4]{FRU23}, after we blow-down all possible $(-1)$-curves  in the corresponding P-resolution of the c.q.s., we cannot blow-down any curve in the center. In addition, for the left-most T-chain, there must be a curve in the center with self-intersection $\leq -3$. Therefore the only possibility for center is $b$. But then we must blow-up between $b$ and $r$, and as we have $b-4$ $2$s on the left of $b$, we must blow-up more than one time, producing a $2$ on the left of the T-chain of $\bG$, which is a contradiction.     

(Case 2): This is similar to (Case 1). The left-most center could be $3$ or $b$, but the second case does not work as before. If it is $3$, then one can check that we must reach eventually $b$ as a center, and just as before we get a contraction and we need more than one blow-up between $b$ and $r$. (We need to pass through potential $2$ centers between $3$ and $b$, but always leaving a $2$ for the corresponding T-chain on the left.) 
\end{proof}

We now consider T-chains in $X$ which connect with $\bG$ via a $(-1)$-curve $\bE$. This T-chain must be inside of a fiber of $X \to S \to \P^1$, and so it is over a fiber $F$ of $S\to \P^1$. Therefore $F$ or part of $F$ is in $\pi(C)$.

\begin{lemma}
In the situation above, we have that $F$ is contained in $\pi(C)$. Let $b\geq 0$. Moreover, we have one of the following options for the pre-image of $F$ minus $\bE$: 

\begin{itemize}
    \item[(3.1)] If $F=I_1$, then a P-resolution of $[\underbrace{2,\ldots,2}_{b},a,\underbrace{2,\ldots,2}_{a-5}]$, and a $(-1)$-curve connecting an end of the P-resolution with another curve in it.  
    
    \item[(3.2)] If $F=I_{n\geq 2}$, then a P-resolution of $[\underbrace{2,\ldots,2}_{b},a,\underbrace{2,\ldots,2}_{n-2},3,\underbrace{2,\ldots,2}_{a-4}]$, and a $(-1)$-curve connecting an end of the P-resolution with another curve in it. 

    \item[(3.3)] If $F=II$, then we have either 

    \begin{itemize}
        \item[(3.3.1)] a P-resolution of $[\underbrace{2,\ldots,2}_{b},5]$ and a $(-1)$-curve tangent to the (proper transform of) $(-5)$-curve, or 

        \item[(3.3.2)] a P-resolution of $[\underbrace{2,\ldots,2}_{b},7,2,2]$, a $[4]$ and a $(-1)$-curve between them, or 

        \item[(3.3.3)] a P-resolution of $[\underbrace{2,\ldots,2}_{b},6,2]$ and a $(-1)$-curve passing through the intersection of the (proper transform of) $(-6)$-curve and $(-2)$-curve, or 

        \item[(3.3.4)] a P-resolution of $[\underbrace{2,\ldots,2}_{b},8,\underbrace{2,\ldots,2}_{r'-2}]$, a P-resolution of $[3,r',2]$, and a $(-1)$-curve connecting the last curve of the first P-resolution with the proper transform of the curve in the second corresponding to $r'$.
    \end{itemize}
    
    \item[(3.4)] If $F=III$, then we have either 

    \begin{itemize}
        \item[(3.4.1)] a P-resolution of $[\underbrace{2,\ldots,2}_{b},4,3]$ and a $(-1)$-curve passing through the intersection of the (proper transform of) $(-4)$-curve and $(-3)$-curve, or 
        
        \item[(3.4.2)] a P-resolution of $[\underbrace{2,\ldots,2}_{b},5,3,2]$, $[2,5]$ and a $(-1)$-curve, or 
        
        \item[(3.4.3)] a P-resolution of $[\underbrace{2,\ldots,2}_{b},5]$, $[4]$, $(-2)$-curve, and a $(-1)$-curve, or
        
        \item[(3.4.4)] a P-resolution of $[\underbrace{2,\ldots,2}_{b},6,\underbrace{2,\ldots,2}_{r'-2}]$, a P-resolution of $[4,r',2]$, and a $(-1)$-curve connecting the last curve of the first P-resolution with the proper transform of the curve in the second corresponding to $r'$. 
    \end{itemize}
    
    \item[(3.5)] If $F=IV$, then we have either

    \begin{itemize}
        \item[(3.5.1)] a P-resolution of $[\underbrace{2,\ldots,2}_{b},4,2,3]$, a $[4]$ and a $(-1)$-curve between them, or


        \item[(3.5.2)] a P-resolution of $[\underbrace{2,\ldots,2}_{b},5,\underbrace{2,\ldots,2}_{r'-2}]$, a P-resolution of $[3,r',3]$, and a $(-1)$-curve connecting the last curve of the first P-resolution with the proper transform of the curve in the second corresponding to $r'$.        
    \end{itemize}   
\end{itemize}
\label{allfiber}
\end{lemma}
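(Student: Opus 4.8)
The plan is to imitate the proof of Lemma~\ref{allforsection}: I would perform the blow-ups forced over the singular points of $F$ and over $\Gamma\cap F$, and then use that the configuration produced over $F$ must assemble into P-resolutions of cyclic quotient singularities, since $W$ has only T-singularities and $K_W$ is big and nef. Throughout, $\bE$ is a $(-1)$-curve of $X$, hence not contracted by $\phi$ (a $(-1)$-curve cannot sit in a T-chain), and it is contracted by $\pi$ to the point $\Gamma\cap F$, since it meets $\bG$ (which maps to the section) and a T-chain $T$ lying in the fibre $f$ of $X\to\P^1$ over $[F]$.

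First I would prove that $F$ is a complete fibre of $\pi(C)$. Since $K_W$ is nef, the discrepancy identity $-1=K_X\cdot\bE=K_W\cdot\phi(\bE)+\sum_{k,j}\delta_{k,j}(C_{k,j}\cdot\bE)$, with all $\delta_{k,j}<0$ on the non-Du Val T-chains and $K_W\cdot\phi(\bE)\ge 0$, gives $\bE\cdot C\ge 2$, so $\bE$ meets the T-chain of $\bG$ and at least one further T-chain $T\subset f$. If a component of $F$ were missing from $\pi(C)$, its proper transform would be a rational curve inside the connected fibre $f$ but not in $C$, and the same discrepancy identity (again using $K_W$ nef) would show that it meets the T-chains very little; then, just as for the bad fibres in Lemma~\ref{no*}, the part of $f$ outside $C$ cannot be fitted into a valid M-resolution, since one would be forced either into an $A_n$-tail (already contracted by Remark~\ref{canonical}) or into an M-resolution over a single $(-2)$-curve. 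Hence $F\subseteq\pi(C)$, and Lemma~\ref{no*} rules out the types $I_n^*$, $II^*$, $III^*$, $IV^*$, leaving $I_1$, $I_n$ ($n\ge 2$), $II$, $III$, $IV$.

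Then I would run the case analysis. As in the proof of Theorem~\ref{formulaK^2}, I would blow up the cusp of a type~$II$ fibre twice and the tangency of a type~$III$ fibre once, so that the transform of $F$ has only double and triple points; $\bG$ meets $F$ at one smooth point $p$, and $\bE$ is infinitely near $p$, so blowing up $p$ and its infinitely near points as far as is forced yields a chain $[\underbrace{2,\ldots,2}_{b},a,\ldots]$ whose first $b$ curves are the exceptionals over $p$ and whose $(-a)$-curve is the proper transform of the component of $F$ met by $\bE$. From here the argument repeats cases~(1.1)--(1.5) of Lemma~\ref{allforsection} almost verbatim: the proper transform of $\bE$ must lie in a T-chain of $X$, so over at least one of the points where $\bE$ meets the fibre components no curve of that T-chain appears, which pins down where the P-resolution terminates; the absence of a nontrivial P-resolution over a $(-2)$-curve discards the remaining alternatives; and for $II$, $III$, $IV$ the choice of whether to keep the $(-1)$- and $(-2)$-curves created over the cusp/tangency (which does not change $t_3^0$) yields the sub-cases~(3.3.x), (3.4.x), (3.5.x). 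The only substantive difference from Lemma~\ref{allforsection} is that the chains start with $b$ twos rather than with the $(-r)$-section, and that the way $\bE$ meets the node configuration produces the shift in the number of trailing $2$'s (for instance $a-5$ in (3.1) against $a-4$ in (1.1)). Finally I would check each listed configuration against the P-resolution classification of Section~\ref{app} (Proposition~\ref{2r2} among others), confirming both admissibility of discrepancies and exhaustiveness.

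The hard part will be the completeness of $F$: partial fibres genuinely occur elsewhere in a small surface, so one really has to extract, from the presence of the connecting $(-1)$-curve $\bE$ together with nefness of $K_W$ and the M-resolution constraint, that \emph{this} particular fibre is entirely absorbed. Once that is in place, the remaining case analysis is a finite, if tedious, replay of the bookkeeping already carried out for Lemma~\ref{allforsection}.
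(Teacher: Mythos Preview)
Your proposal is correct and follows exactly the approach the paper takes: the paper's entire proof is the one line ``same analysis as in the proof of Lemma~\ref{allforsection}'', and you have correctly unpacked what that means --- replace the section $[r]$ by $b$ leading $(-2)$-curves coming from the infinitely-near blow-ups at $\Gamma\cap F$, rerun the fibre-by-fibre case split of (1.1)--(1.5), and note that the extra blow-up over $\Gamma\cap F$ accounts for the shift $a-4\mapsto a-5$, $r'$-indexing, etc.

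One sharpening worth recording: your completeness argument for $F$ is phrased around ``the part of $f$ outside $C$'', which is a little back-to-front. The clean version looks at the part \emph{inside}: if $\pi(C)\cap F$ were a proper subchain of (say) an $I_n$, then after the forced blow-ups at $\Gamma\cap F$ the chain over it is $[\underbrace{2,\ldots,2}_{b},3,\underbrace{2,\ldots,2}_{m-1}]$, and the configuration in $X$ must be a P-resolution of that c.q.s. Proposition~\ref{2r2} with $r=3$ gives only the minimal resolution $A_b-(3)-A_{m-1}$, hence no non-Du Val T-chain sits in $f$, contradicting the hypothesis on $T$. This is exactly the mechanism behind Lemma~\ref{no*}, just used in the opposite direction; the analogous check disposes of partial $III$, $IV$, and $*$-type fibres. (Your final sentence about checking the listed configurations against Section~\ref{app} really belongs to Proposition~\ref{realizations} rather than to this lemma, which only sets up the case list.)
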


\begin{proof}
The proof uses the same analysis as in the proof of Lemma \ref{allforsection}.    
\end{proof}

\subsection{List of building blocks for small surfaces} \label{blocks}

We produce a list of geometrical realizations for the options in Lemma \ref{allforsection} and Lemma \ref{allfiber}. They are described in $X$ starting with a configuration in $S$. Each example has a ``local $K^2$ over that configuration" which will be used to compute the global $K_W^2$ later. We also compute the discrepancy $d(\bG)$ of $\bG \subset X$ in its corresponding T-chain. After this list of examples, we will prove that there are no other realizations from Lemma \ref{allforsection} and Lemma \ref{allfiber}, and so this is the list.

\vspace{0.3cm}

\noindent
\textbf{(S0F)} This is from $(0)$ and $(1)$ in Lemma \ref{allforsection}. We use a section $\Gamma$ such that $\overline{\Gamma}^2=-r \leq -4$ together with $(r-4)$ $(-2)$-curves from some singular fiber. We obtain the Wahl chain $[r,2,\ldots,2]$. We have $K^2=0+r-3=r-3$ and $d(\bG)=-\frac{r-3}{r-2}$.

\vspace{0.4cm}

\noindent
\textbf{(S1F.1)} This is from $(1.1)$ and $(1.3)$ in Lemma \ref{allforsection}. We use a section $\Gamma$ such that $\overline{\Gamma}^2=-3$ and a fiber $F$ of one of the following types: $II$, $III$, or $I_n$ with $n\geq 1$. When $n=1$ or $F=II$, we obtain the Wahl chain $[3,5,2]$. If $n$ increases, we have the P-resolution
$[3,5,2]-(1)-[4,\underbrace{2,\ldots,2}_{n-3},3,2]$
over $[3,4,2,\ldots,2,3,2]$ with $(n-2)$ $2$s in the middle, as shown in Figure \ref{fS1F.1}. When $n=2$, the chain on the right is the Wahl chain $[5,2]$. If $F=III$, we obtain the Wahl chain $[3,5,2]$ connected to the T-chain $[2,3,4]$ by a $(-1)$-curve. The discrepancies of the curves attached to this $(-1)$-curve are $-\frac{2}{5}$ and $-\frac{2}{3}$ ($\frac{2}{5}+\frac{2}{3}=\frac{16}{15}>1$). In all cases $K^2=1$ (e.g. $K^2=-4+3+(n-(n-1)+1)=1$) and $d(\overline{\Gamma})=-\frac{3}{5}$.


\begin{figure}[htbp]
\centering
\vspace*{-1.5em}
\includegraphics[width=10cm]{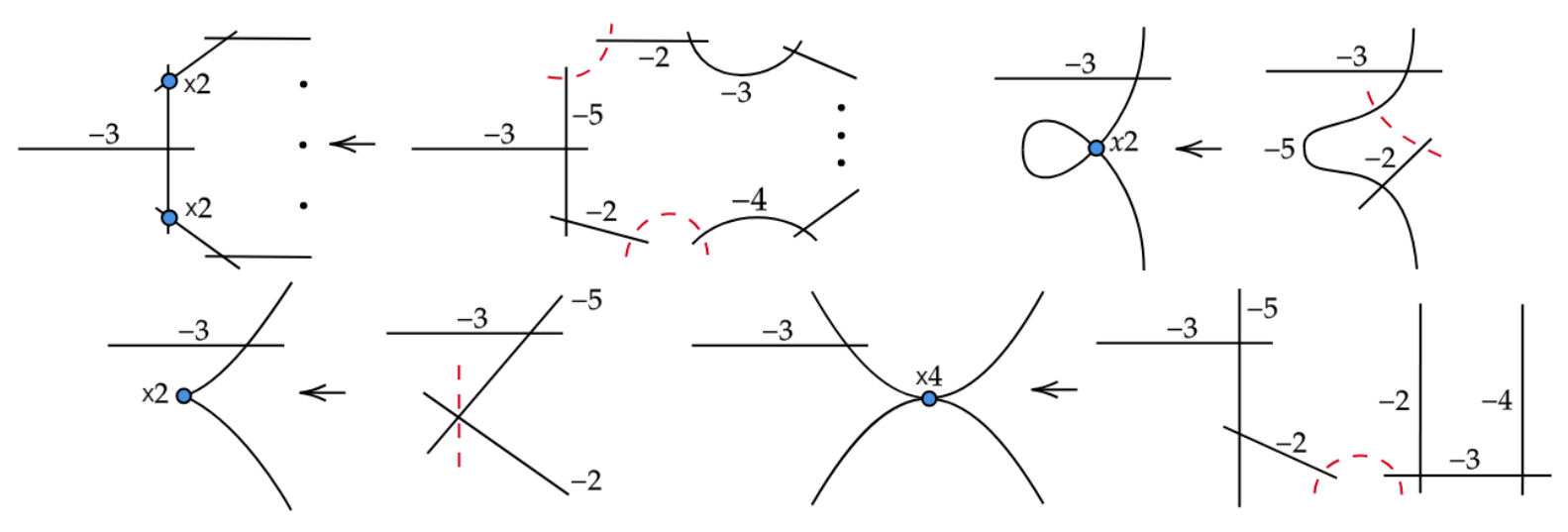}

\vspace*{-1.6em}
\caption{Configurations (S1F.1).}
\label{fS1F.1}
\end{figure}

\noindent
\textbf{(S1F.2)} This is in Lemma \ref{allforsection} $(1.2)$. We use a section $\Gamma$ such that $\overline{\Gamma}^2=-r\leq -4$ and an $I_n$ with $n\geq r-2$. When $n=r-2$, we obtain the Wahl chain $[r,r+1,\underbrace{2,\ldots,2}_{r-4},3,\underbrace{2\ldots,2}_{r-2}]$. If $n$ $\vspace*{-1em}$ increases, we have the P-resolution
$$[r,r+1,\underbrace{2,\ldots,2}_{r-4},3,\underbrace{2\ldots,2}_{r-2}]-(1)-[r+1,\underbrace{2,\ldots,2}_{n-r},3,\underbrace{2,\ldots,2}_{r-2}]$$
over $[r,r+1,\overbrace{2,\ldots,2}^{n-2},3,\overbrace{2,\ldots,2}^{r-2}]$, as shown in Figure \ref{fS1F.2}. When $n=r-1$, the chain on the right is the Wahl chain $[r+2,\underbrace{2,\ldots,2}_{r-2}]$. In all cases $K^2=-2(r-1)+(2r-3)+(n-(n-r+2)+1)=r-2$ and $d(\bG)=-\frac{r^2-2r}{r^2-r-1}$.

\begin{figure}[htbp]
\centering
\vspace*{-1.85em}
\includegraphics[width=8.8cm]{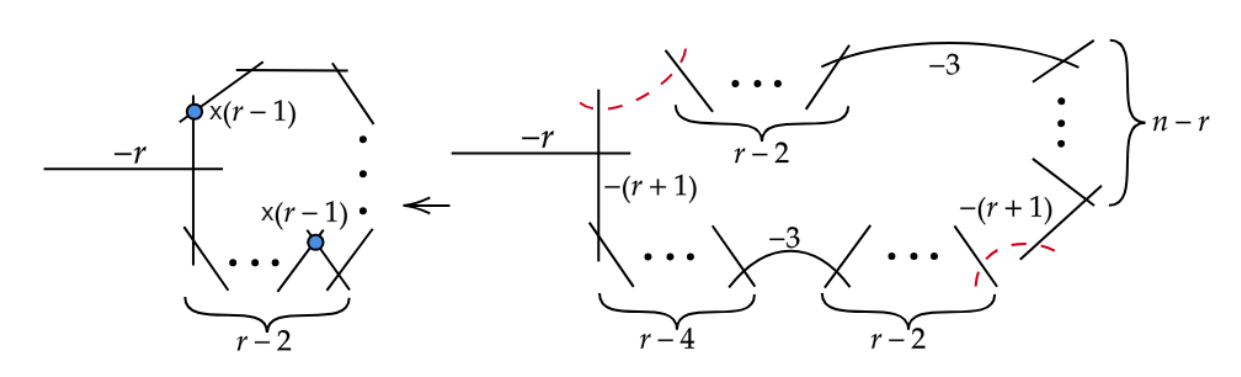}
\vspace{-1.4em}
\caption{Configuration (S1F.2).}
\label{fS1F.2}
\end{figure}

\noindent
\textbf{(S1F.3)} This is in Lemma \ref{allforsection} $(2.1)$. We use two $(-2)$-curves followed by a section $\Gamma$ such that $\overline{\Gamma}^2=-5$, and a fiber $F$ of one of the following types: $II$, $III$, $IV$, or $I_n$ with $n\geq 1$. If $F=III$, we obtain the Wahl chain $[2,2,5,4]$ connected to the chains $[4]$ and $[2]$ by a $(-1)$-curve. If $F=IV$, we obtain the Wahl chain $[2,2,5,4]$ connected to the T-chain $[3,2,3]$ by a $(-1)$-curve. The discrepancies of the curves attached to these $(-1)$-curves are ($-\frac{5}{7}$, $-\frac{1}{2}$, and $0$), and $(-\frac{5}{7}$ and $-\frac{1}{2}$), respectively. Note that ($\frac{5}{7}+\frac{1}{2}=\frac{17}{14}>1)$. When $n=1$ or $F=II$, we obtain the Wahl chain $[2,2,5,4]$. If $n$ increases, we have the P-resolution
$[2,2,5,4]-(1)-[3,\underbrace{2,\ldots,2}_{n-3},3]$
over $[2,2,5,3,2,\ldots,2,3]$ with $(n-2)$ $2$s in the middle, see Figure \ref{fS1F.3}. When $n=2$, the chain on the right is the Wahl chain $[4]$. In all cases $K^2=3$ (e.g. 
$K^2=-2+4+(n-1-(n-1)+1)=3$) and $d(\overline{\Gamma})=-\frac{6}{7}$.
\begin{figure}[htbp]
\centering
\vspace*{-1.5em}
\includegraphics[width=9.3cm]{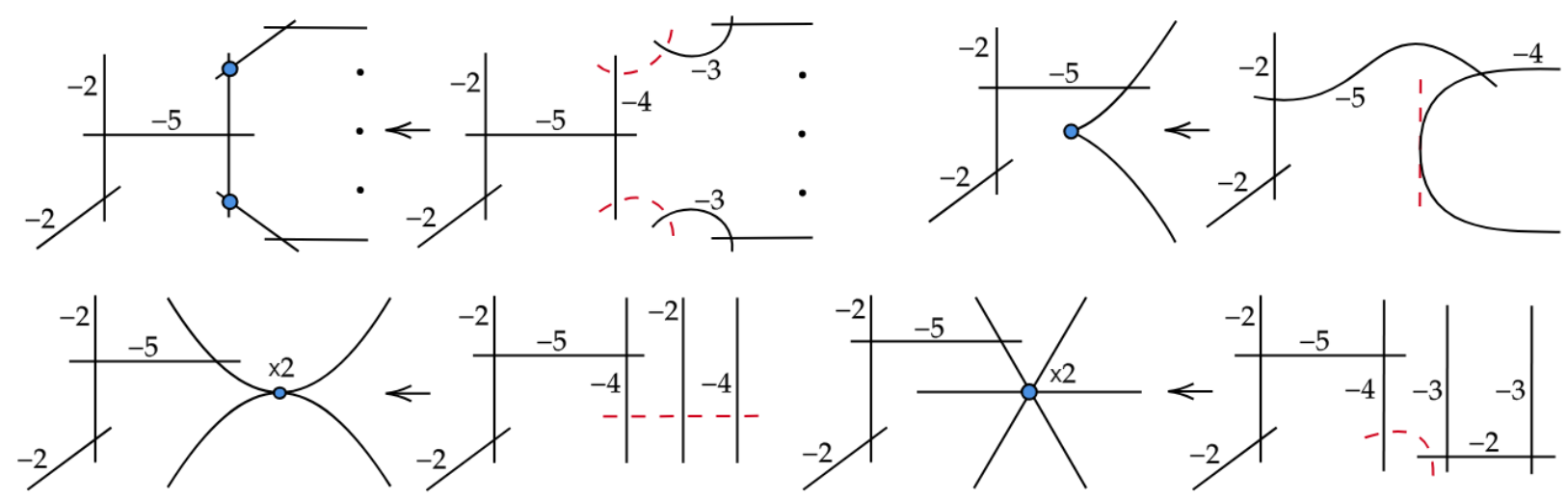} 
\vspace*{-1.2em}
\caption{Configurations (S1F.3).}
\label{fS1F.3}
\end{figure}

\vspace{0.1cm}

\noindent
\textbf{(S1F.4)} This is in Lemma \ref{allforsection} $(2.1)$. We use one $(-2)$-curve followed by a section $\Gamma$ such that $\overline{\Gamma}^2=-r$, and a fiber F of one of the following types: $II$, $III$, or $I_n$ with $n\geq r-2$. When $n=r-2$, we have the T-chain $[2,r,3,2,\ldots,2,3]$ with $(r-4)$ $2$ in the middle. If $n$ increases, then we have the P-resolution $[2,r,3,\underbrace{2,\ldots,2}_{r-4},3]-(1)- [3,\underbrace{2,\ldots,2}_{n-r},3]$
over $[2,r,3,2,\ldots,2,3]$ with $(n-2)$ $2$s in the middle, as in Figure \ref{fS1F.4}. When $n=r-1$, the chain on the right is the Wahl chain $[4]$. If $F=III$, then $\overline{\Gamma}^2=-4$ and we have the same P-resolution as when $F$ is a $I_2$. If $F=IV$ and $\overline{\Gamma}^{2}$ is either $-4$ or $-5$, then we have the same P-resolution as when $F$ is a $I_3$ but the T-chain $[2,4,3,3]$ is connected to another Wahl chain $[4]$ by a $(1)$-curve, and the T-chain $[2,5,3,2,3]$ is connected in the middle to another Wahl chain $[4]$ by a $(-1)$-curve, respectively. The discrepancies of the curves attached to these $(-1)$-curves are ($-\frac{3}{5}$ and $-\frac{1}{2}$) for the first case ($\frac{3}{5}+\frac{1}{2}=\frac{11}{10}>1$), and $(-\frac{5}{7}$ and $-\frac{1}{2})$ for the second one $(\frac{5}{7}+\frac{1}{2}=\frac{17}{14}>1)$.
In all cases $K^2=r-2$ (e.g. $K^2=-2+(r-2+1)+(n-r+2-(n-r+2)+1)=r-2$) and $d(\bG)=-\frac{2r-4}{2r-3}$.
\begin{figure}[htbp]
\centering
\vspace*{-1.4em}
\includegraphics[width=9.3cm]{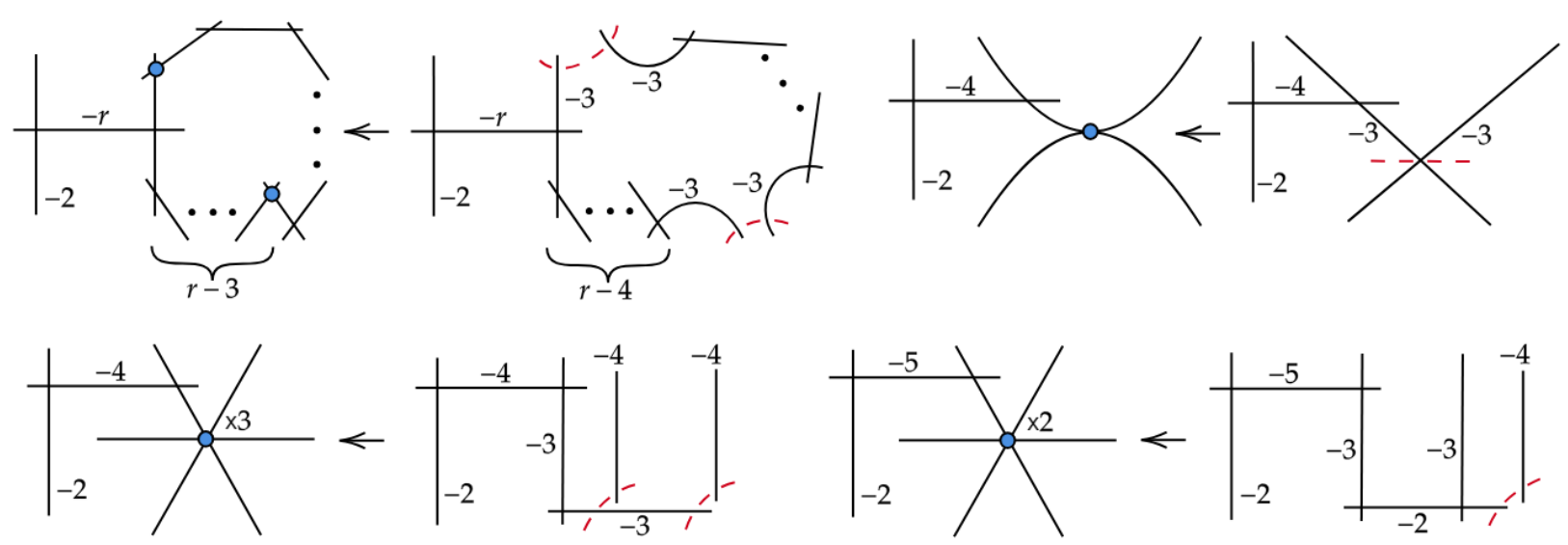}
\vspace*{-1.1em}
\caption{Configurations (S1F.4).}
\label{fS1F.4}
\end{figure}

\noindent
\textbf{(S2F.1)} This is in Lemma \ref{allforsection} $(2.2)$. We use a fiber $F$ of one the following types: $III$, $IV$, or $I_n$ with $n\geq 2$ followed by a section $\Gamma$ such that $\overline{\Gamma}^2=-4$, and a fiber $I_{n'}$ with  $n'\geq 3$. When ($n=2$ or $F=III$) and $n'=3$, we have the P-resolution $[3,3,5,3,2]-(1)-[3,6,2,3,2] \to [3,3,4,4,2,3,2].$ If $n$ and/or $n'$ increases, then we have the P-resolution $$[3,\underbrace{2,\ldots,2}_{n-4},3]-(1)-[3,3,5,3,2]-(1)-[3,6,2,3,2]-(1)-[4,\underbrace{2,\ldots,2}_{n'-5},3,2]$$ over $[3,\overbrace{2,\ldots,2}^{n-2},3,4,4,\overbrace{2,\ldots,2}^{n'-2},3,2]$, as in Figure \ref{fS2F.1}. If $F=IV$, then we have the same P-resolution as when $F=I_2$ but the T-chain $[3,3,5,3,2]$ is connected to a $[4]$ by a $(-1)$-curve. In all cases $K^2=3$. In the general case one computes $K^2=-10+(n-(n)+1)+(5)+(5)+(n'-(n'-1)+1)=3$. We have $d(\bG)=-\frac{12}{13}$.

\begin{figure}[htbp]
\centering
\vspace*{-1.5em}
\includegraphics[width=7.8cm]{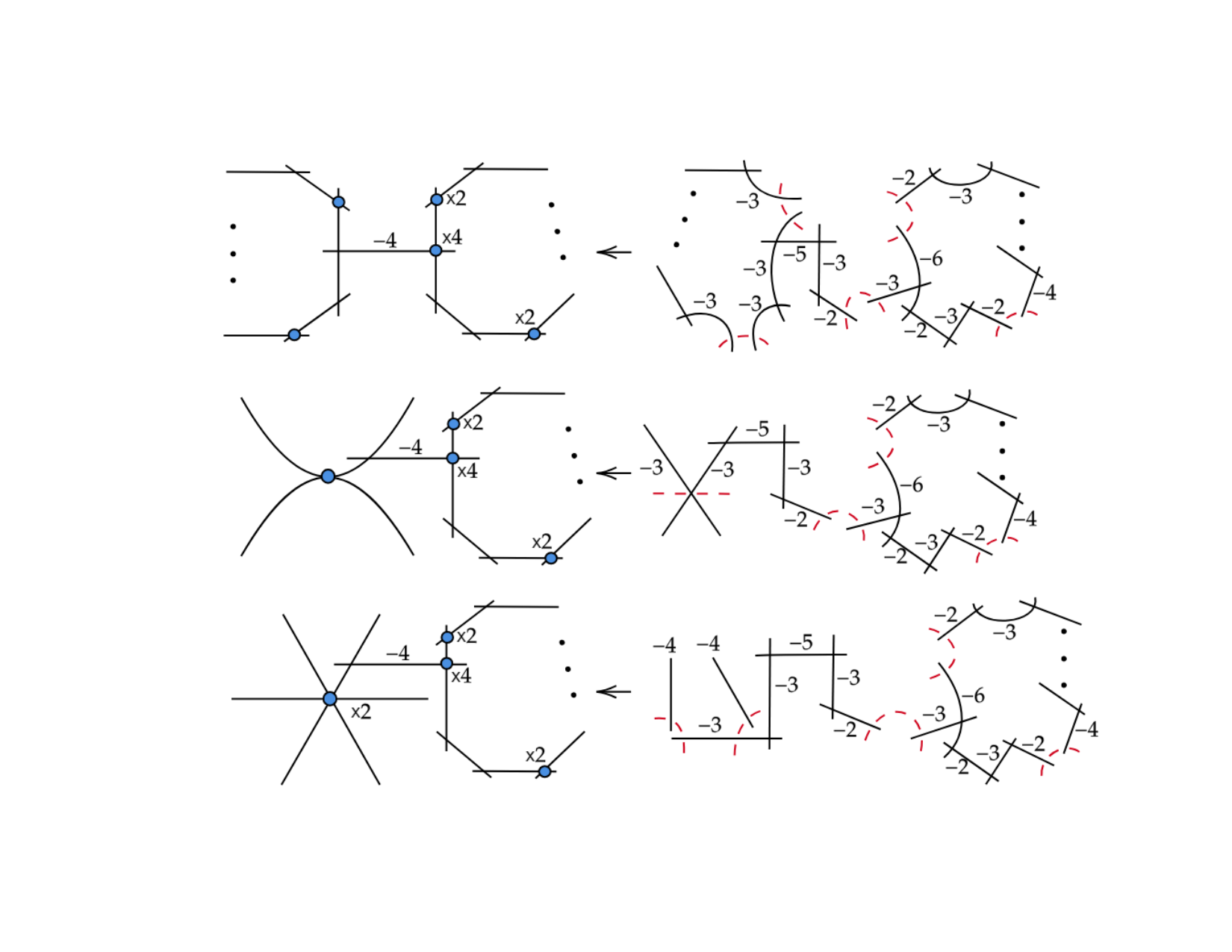}
\includegraphics[width=7.2cm]{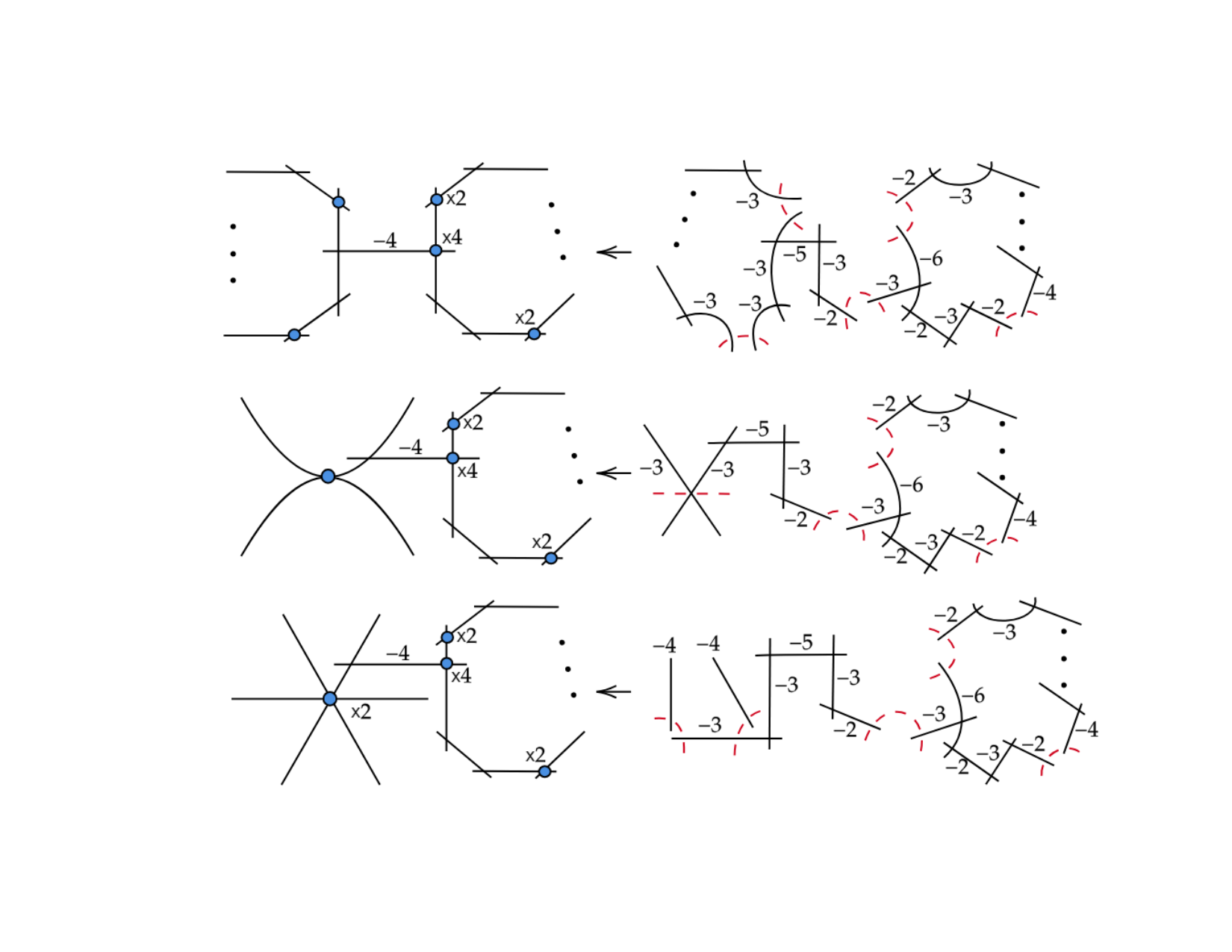} \ \ \hspace*{-1.3em}  \includegraphics[width=7.3cm]{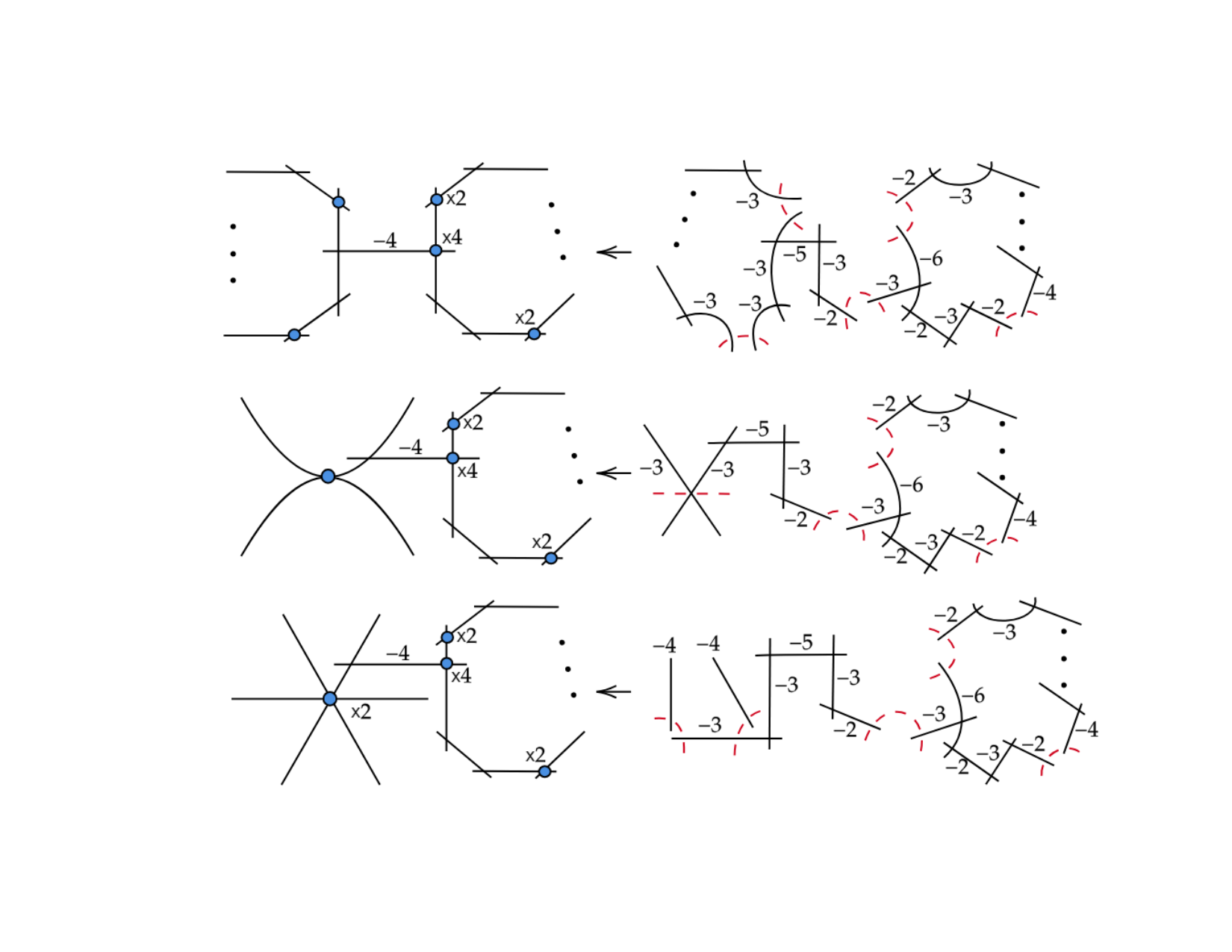}
\vspace*{-1.2em}
\caption{Configurations (S2F.1).}
\label{fS2F.1}
\end{figure}

\vspace{0.4cm}

\noindent
\textbf{(S2F.2)}
This is in Lemma \ref{allforsection} $(2.2)$. We use an $I_n$ with $n\geq 4$ followed by a section $\Gamma$ such that $\overline{\Gamma}^2=-4$, and a fiber $F$ of one of the following types: $II$, $III$, or $I_{n'}$ with $n'\geq 1$. When $n=4$ and ($n'=1$ or $F=II$), we have the P-resolution $[3,2,2,7,2]-(1)-[3,2,2,5,5,2] \to [3,2,2,3,4,5,2].$ 
If $n$ and/or $n'$ increases, then we have the P-resolution $$[3,\underbrace{2,\ldots,2}_{n-6},3]-(1)-[3,2,2,7,2]-(1)-[3,2,2,5,5,2]-(1)-[4,\underbrace{2,\ldots,2}_{n'-3},3,2]$$ over $[3,\overbrace{2,\ldots,2}^{n-2},3,4,4,\overbrace{2,\ldots,2}^{n'-2},3,2]$, as in Figure \ref{fS2F.2}. If $F=III$, then we have the same P-resolution as when $F=I_1$ but the Wahl chain $[3,2,2,5,5,2]$ is connected to a T-chain $[2,3,4]$ by a $(-1)$-curve. The discrepancies of the curves attached to this $(-1)$-curve are $-\frac{7}{16}$ and $-\frac{2}{3}$ respectively. In all cases $K^2=3$. In the general case one computes $K^2=-11+(n-4-(n-4)+1)+(5)+(6)+(n'-(n'-1)+1)=3$. We have $d(\bG)=-\frac{15}{16}$.

\begin{figure}[htbp]
\centering
\vspace*{-1em}
\includegraphics[width=9.5cm]
{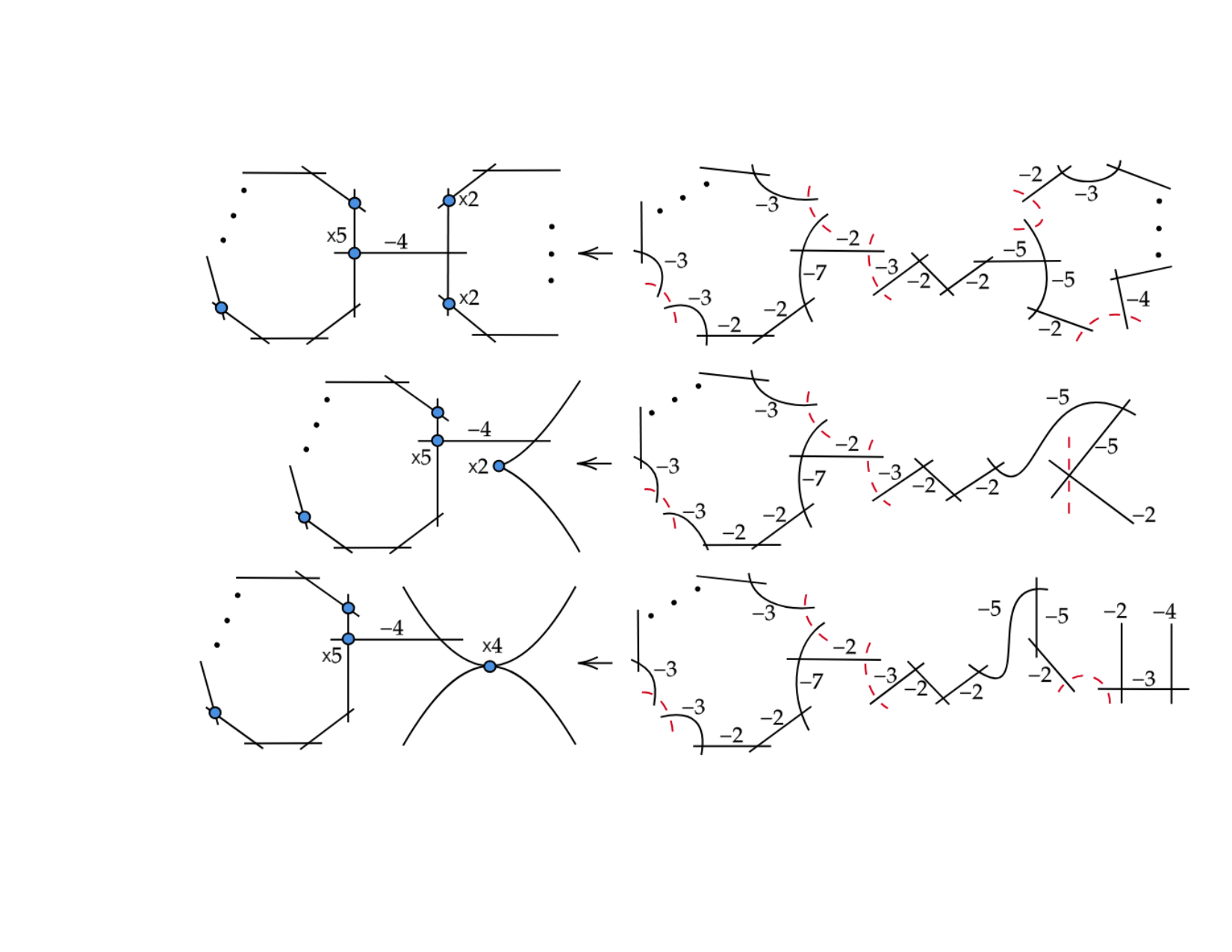}

\includegraphics[width=7.2cm]{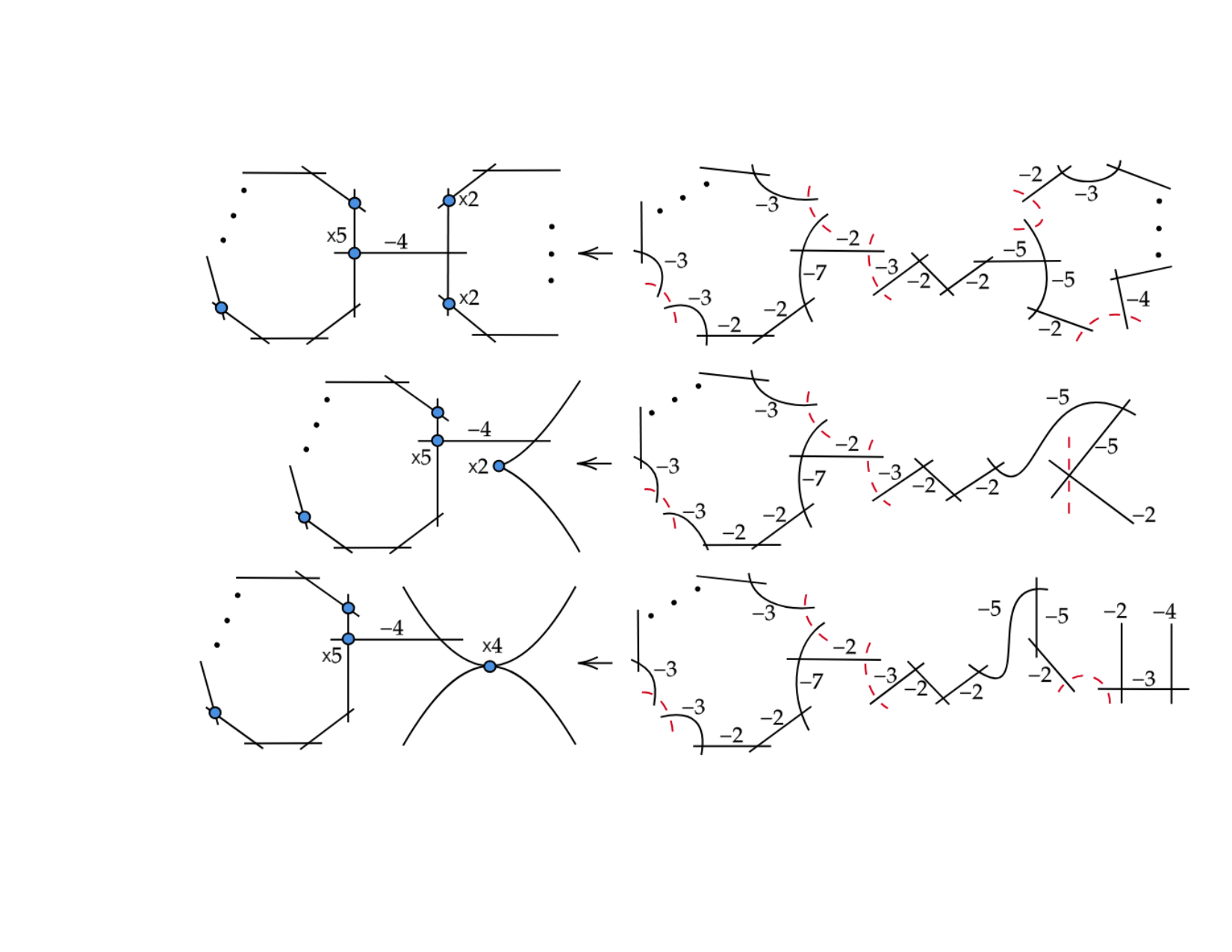} \ \ \hspace*{-0,8em}\includegraphics[width=7.4cm]{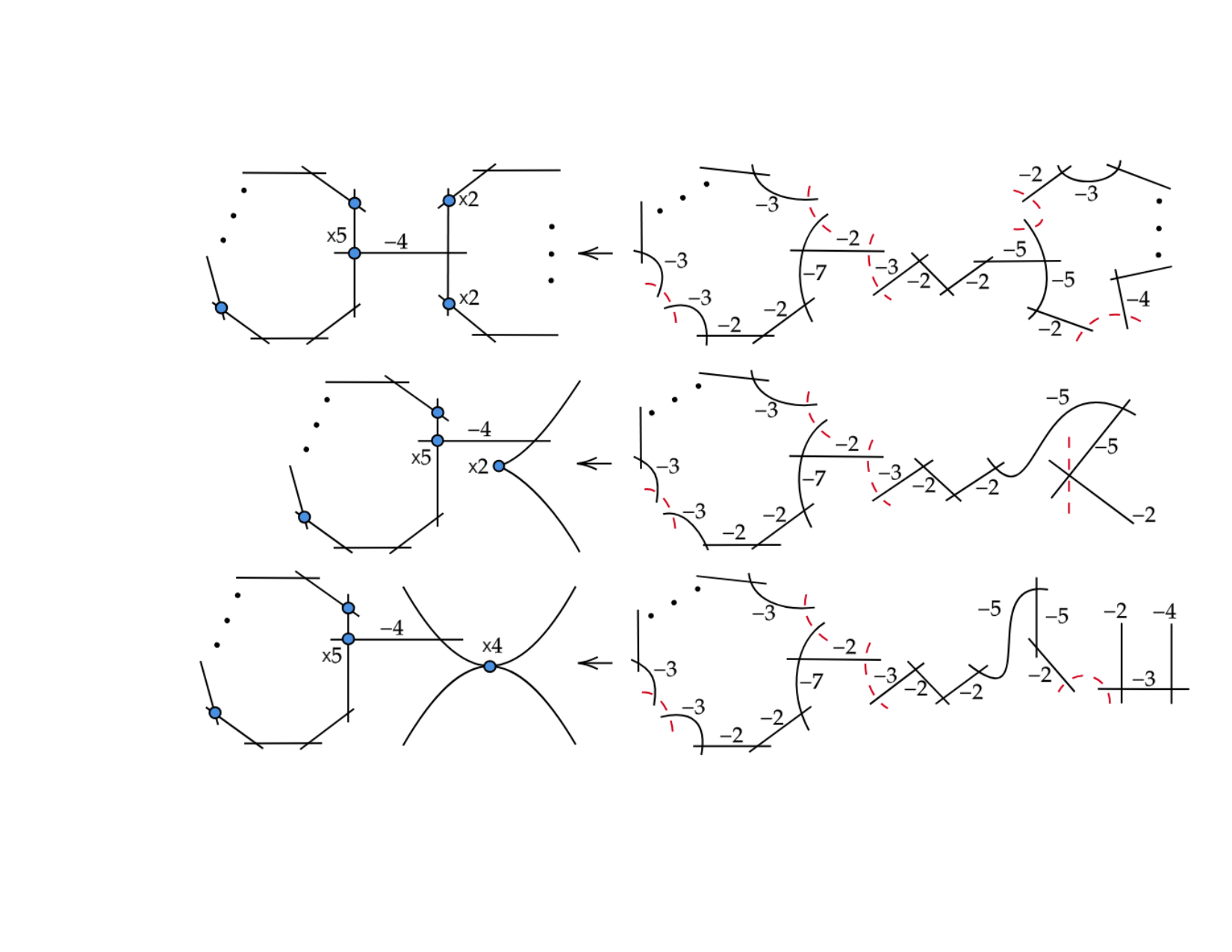}
\vspace*{-2.3em}
\caption{Configurations (S2F.2).}
\label{fS2F.2}
\end{figure}

\noindent
\textbf{(S2F.3)} This is in Lemma \ref{allforsection} $(2.2)$. We use an $I_n$ with $n\geq 4$ followed by a section $\Gamma$ such that $\overline{\Gamma}^2=-5$, and a fiber $F$ of one of the following types: $II$, $III$, or $I_{n'}$ with $n'\geq 1$. When $n=4$ and ($n'=1$ or $F=II)$, we have the Wahl chain $[3,2,2,3,5,5,2]$. If $n$ and/or $n'$ increases, then we have the P-resolution $$[3,\underbrace{2,\ldots,2}_{n-6},3]-(1)-[3,2,2,3,5,5,2]-(1)-[4,\underbrace{2,\ldots,2}_{n'-3},3,2]$$ over $[3,\overbrace{2,\ldots,2}^{n-2},3,5,4,\overbrace{2,\ldots,2}^{n'-2},3,2]$, as in Figure \ref{fS2F.3}. If $F=III$, then we have the same P-resolution as when $F=I_1$ but the Wahl chain $[3,2,2,3,5,5,2]$ is connected to the T-chain $[2,3,4]$ by a $(-1)$-curve. The discrepancies of the curves attached to this $(-1)$-curve are $-\frac{11}{25}$ and $-\frac{2}{3}$ respectively.
In all cases $K^2=4$. In the general case one computes $K^2=-6+(n-4-(n-4)+1)+(7)+(n'-(n'-1)+1)=4$. We have $d(\bG)=-\frac{24}{25}$.

\begin{figure}[htbp]
\centering
\vspace*{-1em}
\includegraphics[width=7.2cm]{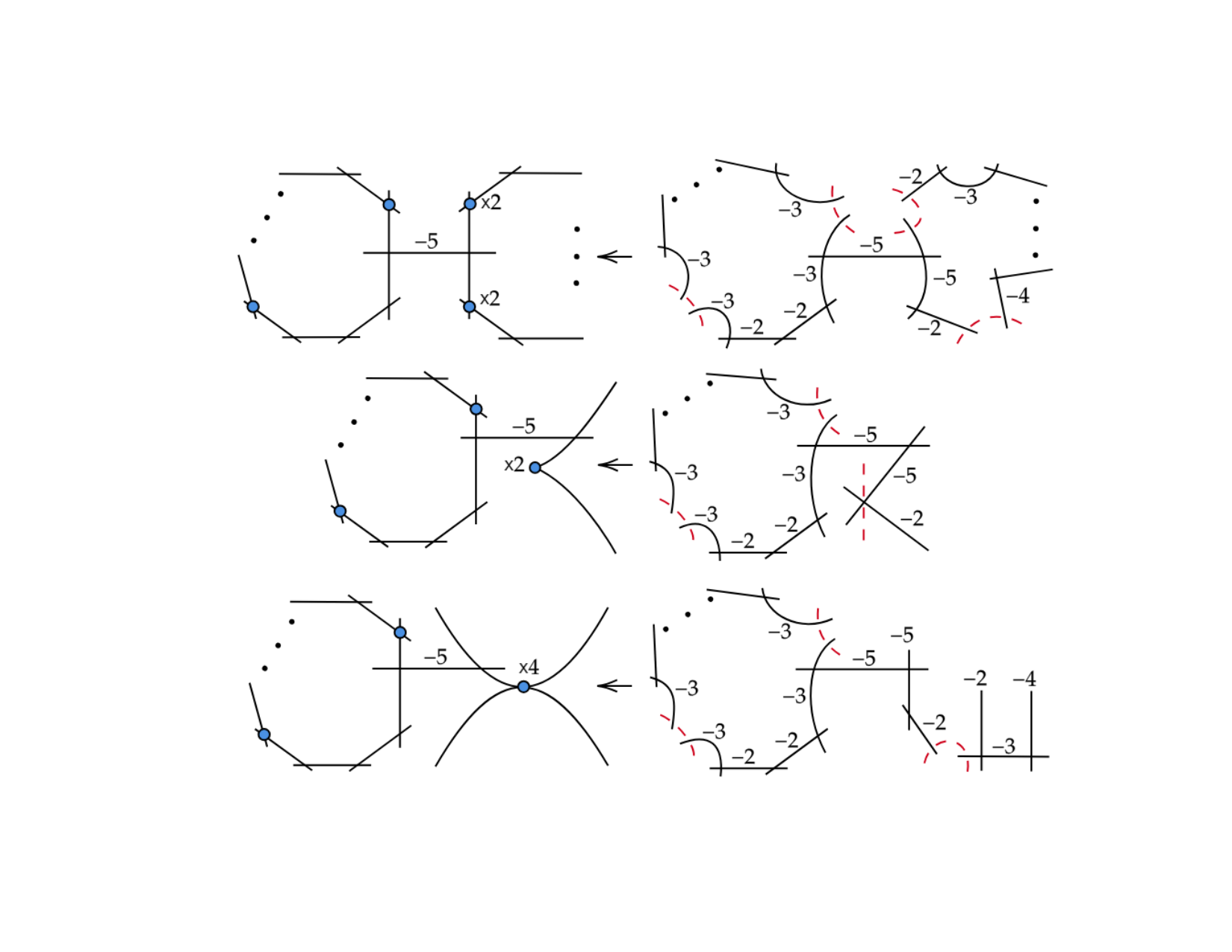}

\includegraphics[width=5.6cm]{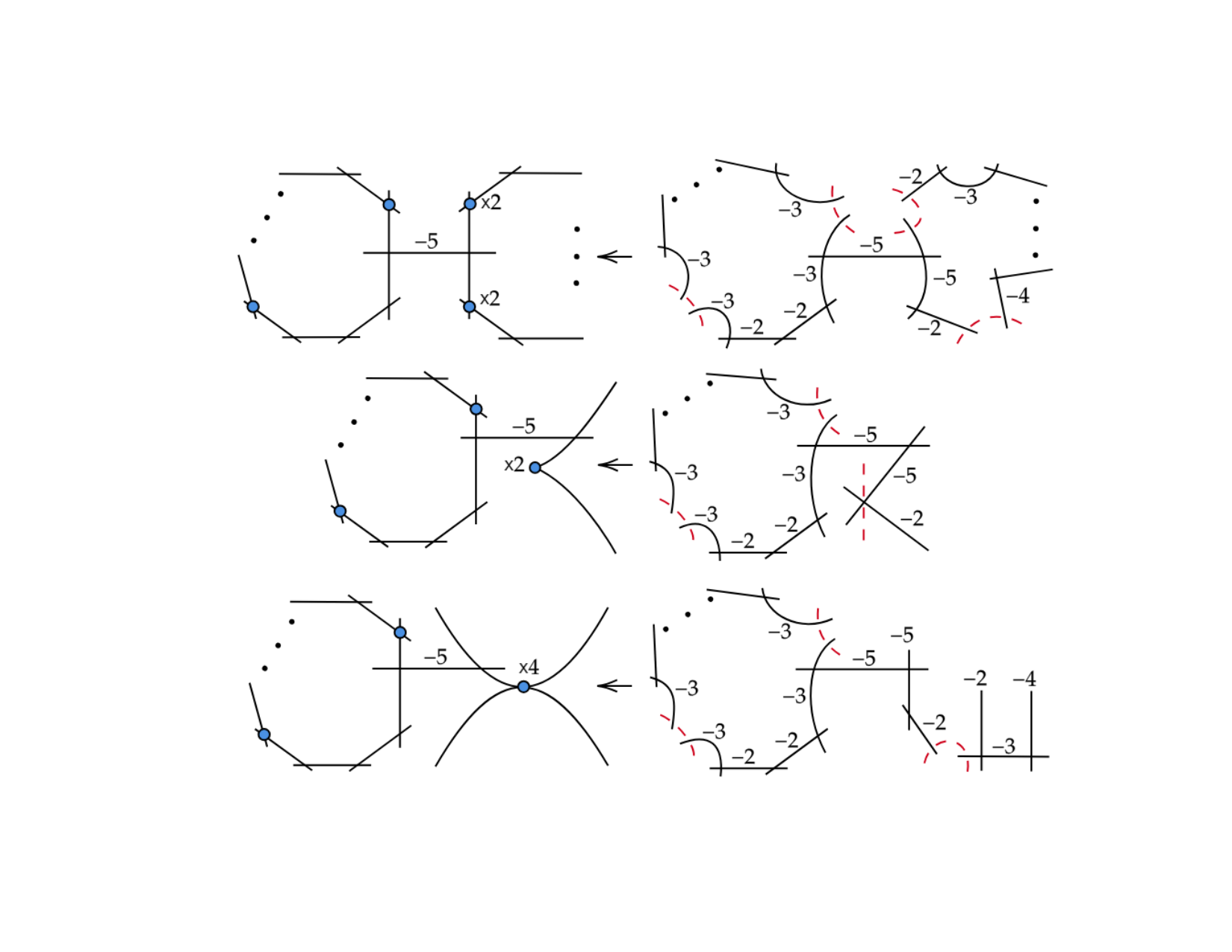} \ \ \hspace{0,2em} \includegraphics[width=7.1cm]{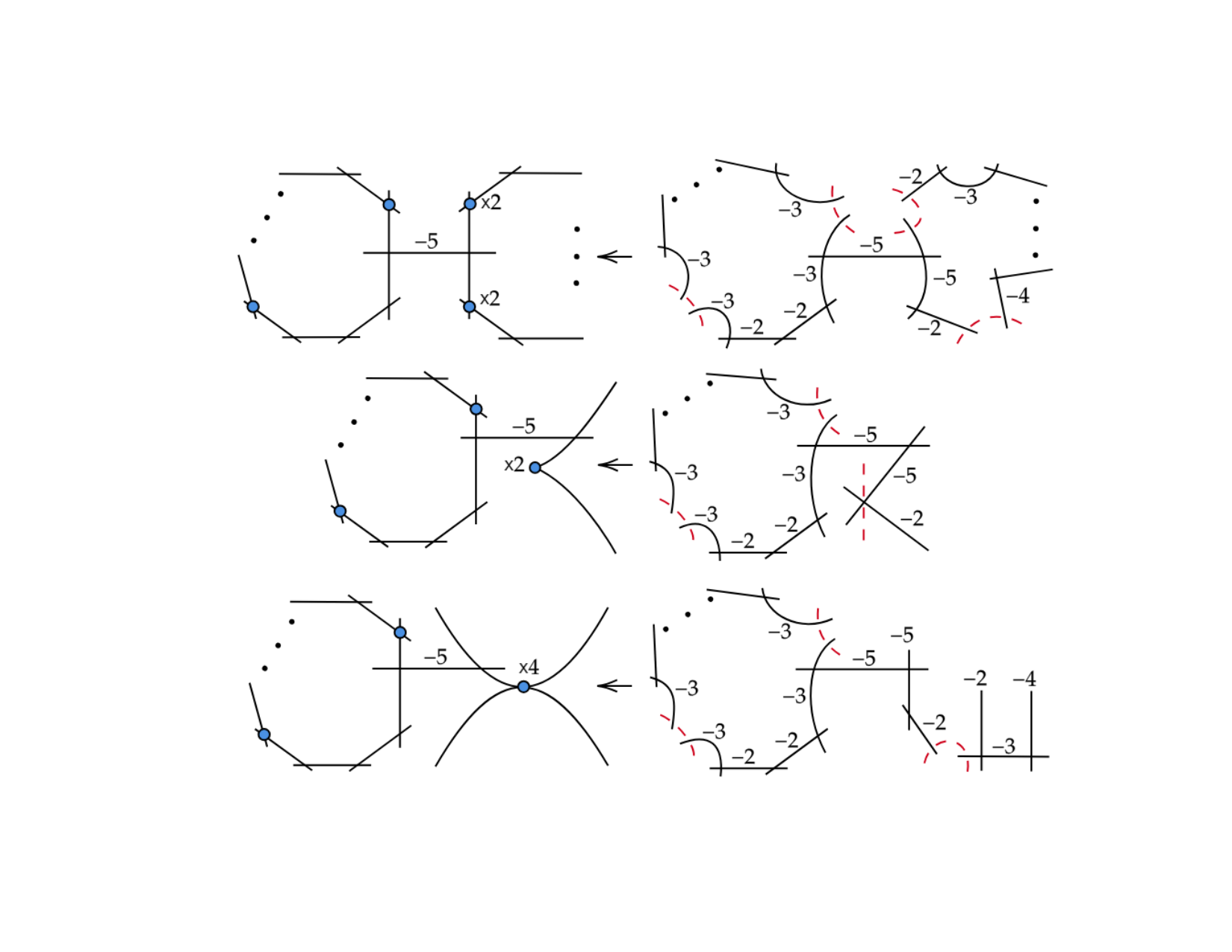}
\vspace*{-1.2em}
\caption{Configurations (S2F.3).}
\label{fS2F.3}
\end{figure}

\noindent
\textbf{(S2F.4)} This is in Lemma \ref{allforsection} $(2.2)$. We use an $I_n$ followed by a section $\Gamma$ such that $\overline{\Gamma}^2=-(r+3)$ and $n\geq r+1$; and a fiber $F$ of one of the following types: $II$, $III$, or $I_{n'}$ with $n'\geq 1$. When $n=r+1$ and ($n'=1$ or $F=II$), we have the P-resolution $$[3,\underbrace{2,\ldots,2}_{r-1},3,r+3,2]-(1)-[3,2,6,2] \to [3,\underbrace{2,\ldots,2}_{r-1},3,r,5,2].$$ If $n$ and/or $n'$ increases, then we have the P-resolution $$[3,\underbrace{2,\ldots,2}_{n-r-3},3]-(1)-[3,\underbrace{2,\ldots,2}_{r-1},3,r+3,2]-(1)-[3,2,6,2]-(1)-[4,\underbrace{2,\ldots,2}_{n'-3},3,2]$$ over $[3,\overbrace{2,\ldots,2}^{n-2},3,4,4,\overbrace{2,\ldots,2}^{n'-2},3,2]$, as in Figure \ref{fS2F.4}. If $F=III$, then we have the same P-resolution as when $n'=1$ but the Wahl chain $[3,2,6,2]$ is connected to the a T-chain $[2,3,4]$ by a $(-1)$-curve. The discrepancies of the curves attached to this $(-1)$-curve are $-\frac{3}{7}$ and $-\frac{2}{3}$.
In all cases $K^2=r-1$. In the general case one computes $K^2=-10+(1) + (r+3-2+1) + (4) + (2)=r-1$. We have $d(\bG)=-\frac{2r+2}{2r+3}$.

\begin{figure}[htbp]
\centering
\vspace*{-1.2em}

\includegraphics[width=9cm]{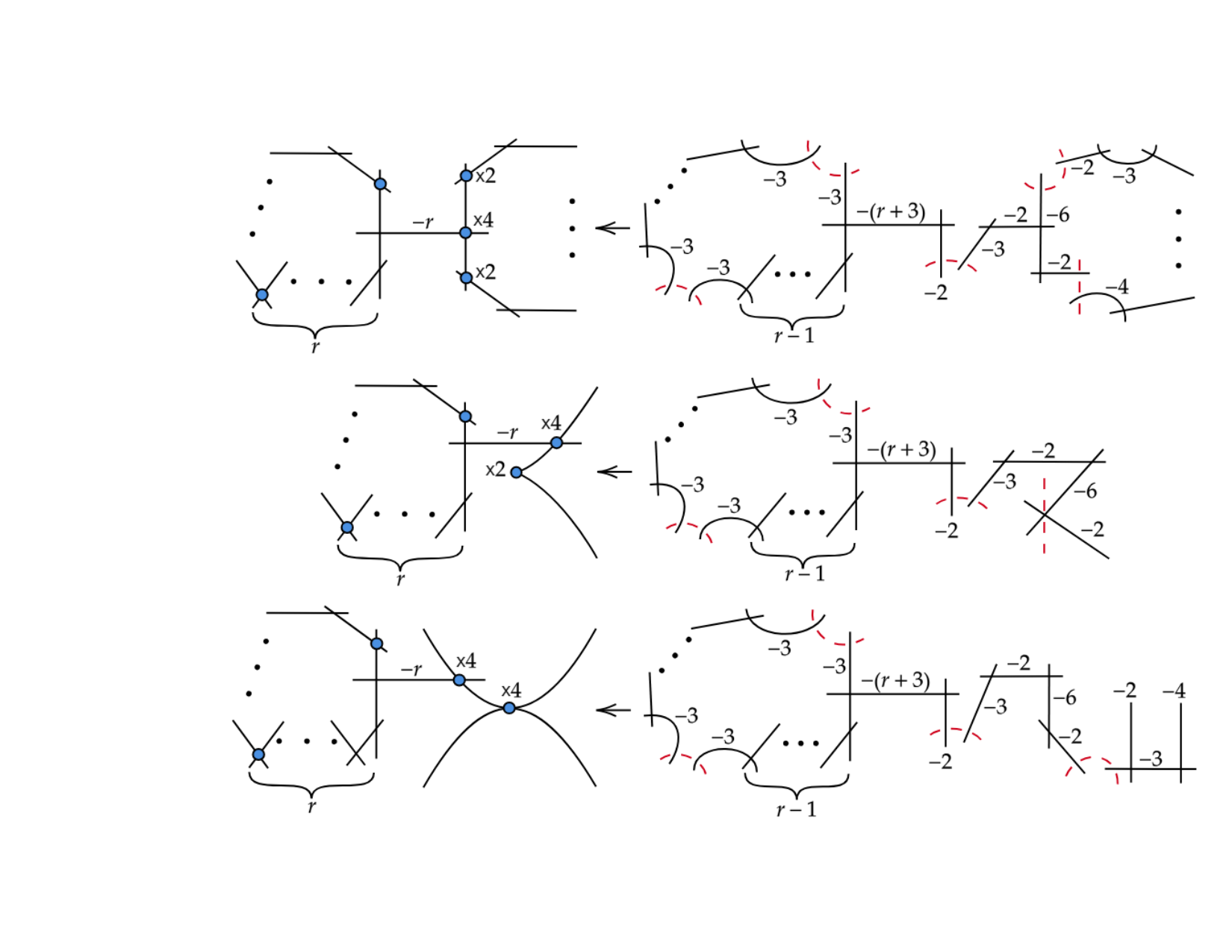}

\includegraphics[width=7cm]{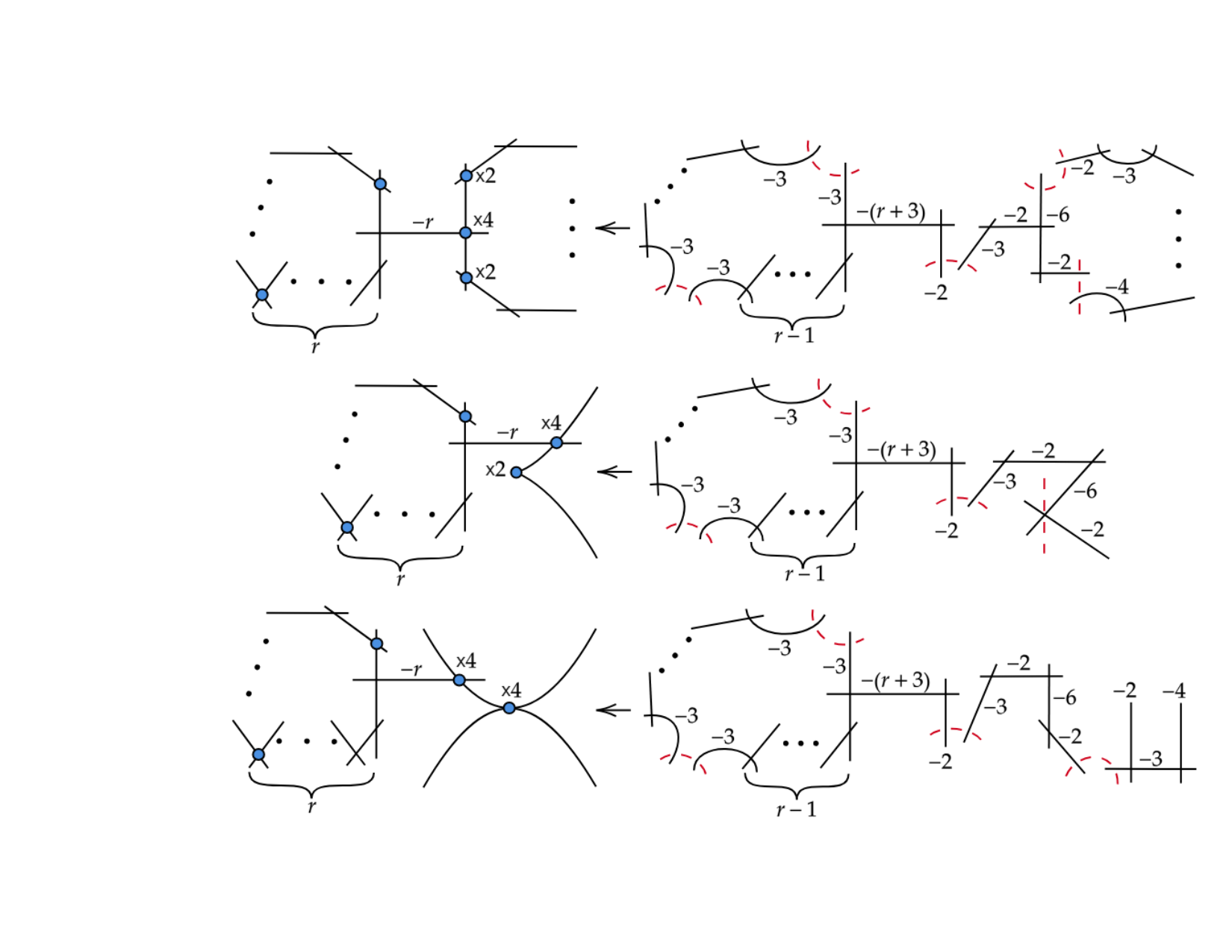} \ \ \hspace*{-1em} \includegraphics[width=7.6cm]{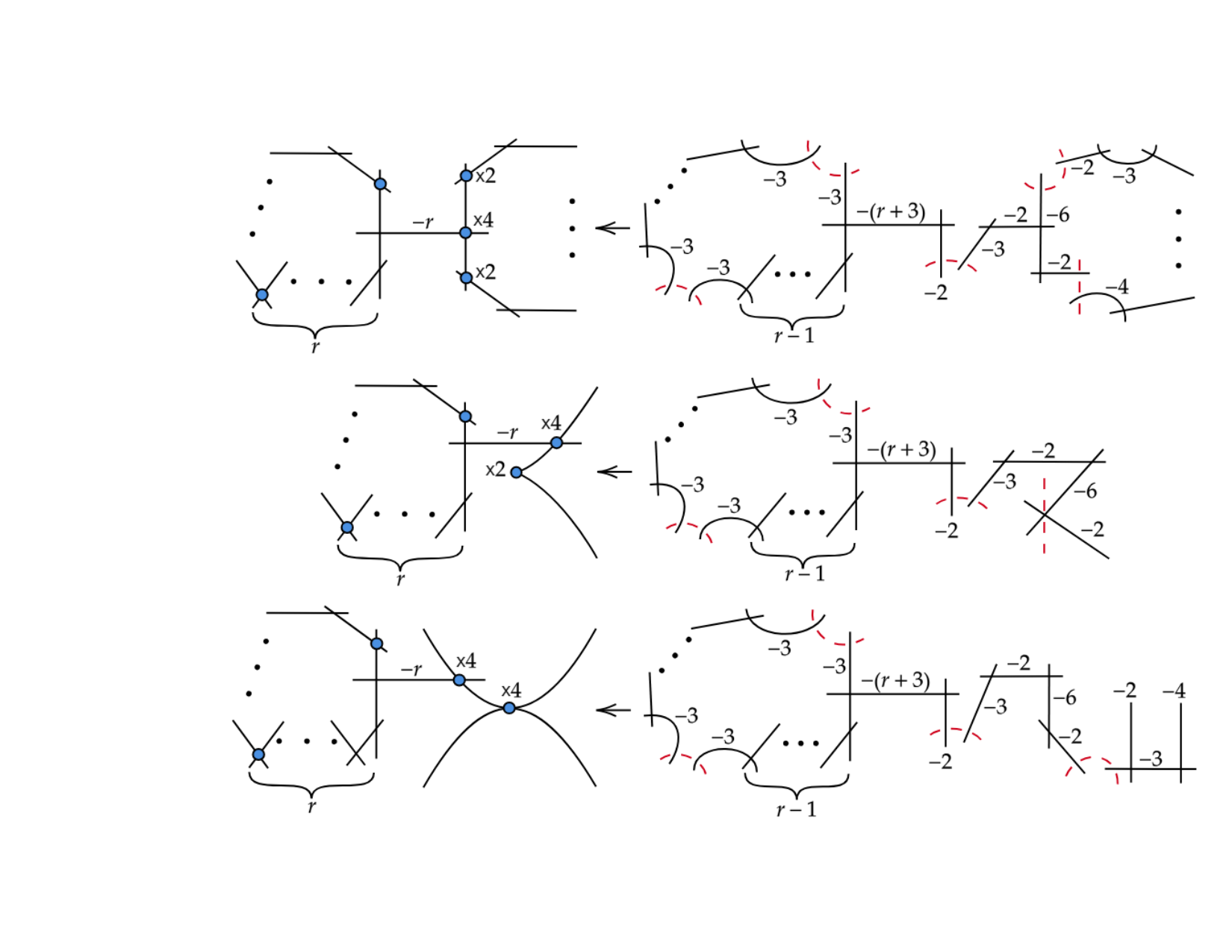}
\vspace*{-2.3em}
\caption{Configurations (S2F.4).}
\label{fS2F.4}
\end{figure}

\noindent
\textbf{(S2F.5)}
This is in Lemma \ref{allforsection} $(2.2)$. We use an $I_n$ or $III$ followed by a section $\Gamma$ such that $\overline{\Gamma}^2=-(r+2)$ and $n\geq r$; and an $I_{n'}$ with $n'\geq 2$. When $n=r$ and $n'=2$, we have the P-resolution $$[3,\underbrace{2,\ldots,2}_{r-2},3,r+2,2]-(1)-[3,5,3,2] \to [3,\underbrace{2,\ldots,2}_{r-2},3,r,4,3,2].$$ If $n$ and/or $n'$ increases, then we have the P-resolution $$[3,\underbrace{2,\ldots,2}_{n-r-2},3]-(1)-[3,\underbrace{2,\ldots,2}_{r-2},3,r+2,2]-(1)-[3,5,3,2]-(1)-[4,\underbrace{2,\ldots,2}_{n'-4},3,2]$$ over $[3,\overbrace{2,\ldots,2}^{n-2},3,r,4,\overbrace{2,\ldots,2}^{n'-2},3,2]$, as in Figure \ref{fS2F.5}. In all cases $K^2=r-1$. In the general case one computes $K^2=-9+(1) + (r+2-2+1) + (4) + (2)=r-1$. We have $d(\bG)=-\frac{2r}{2r+1}$.  
\begin{figure}[htbp]
\vspace*{-1.2em}
\centering
\includegraphics[width=7.3cm]{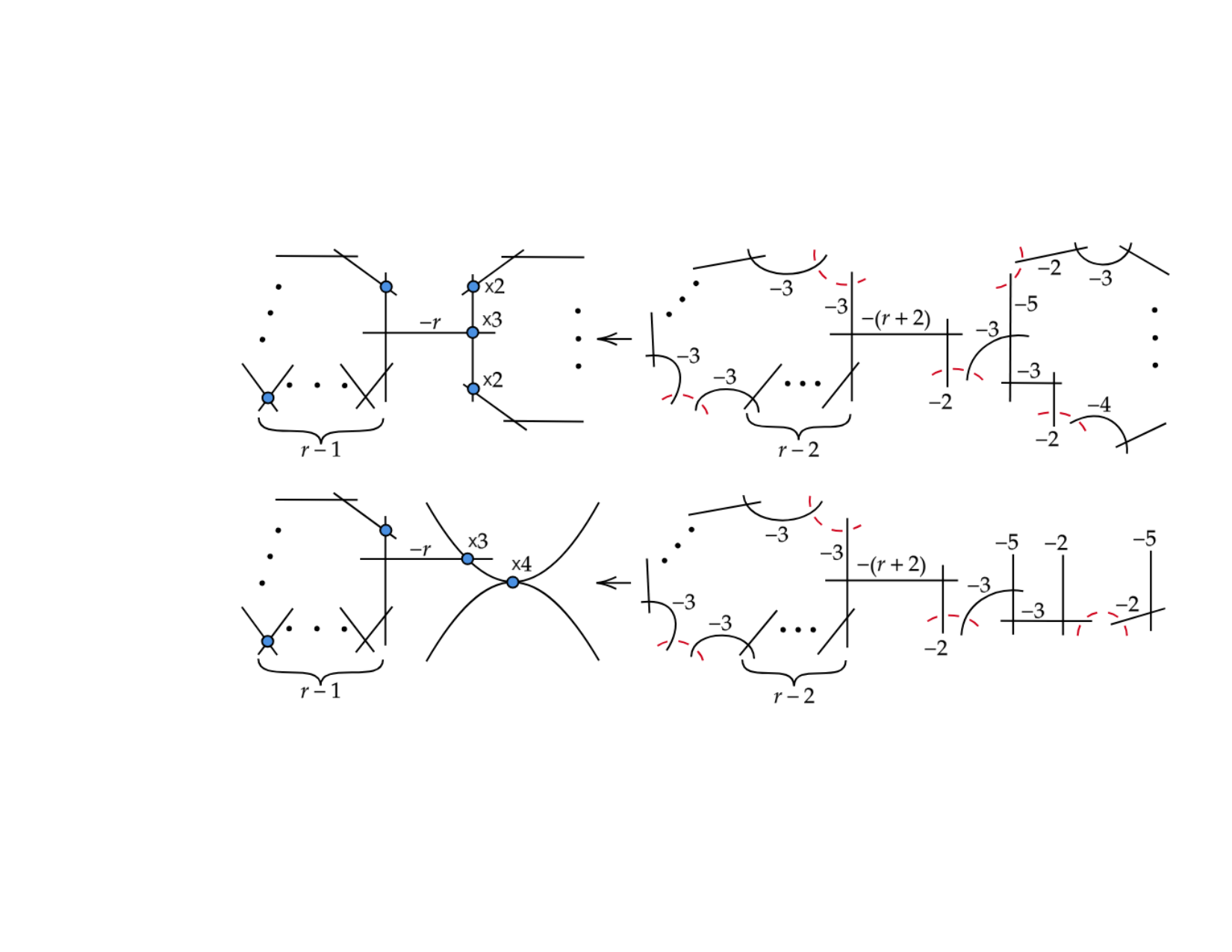} \hspace*{-0.4em} \includegraphics[width=7.3cm]{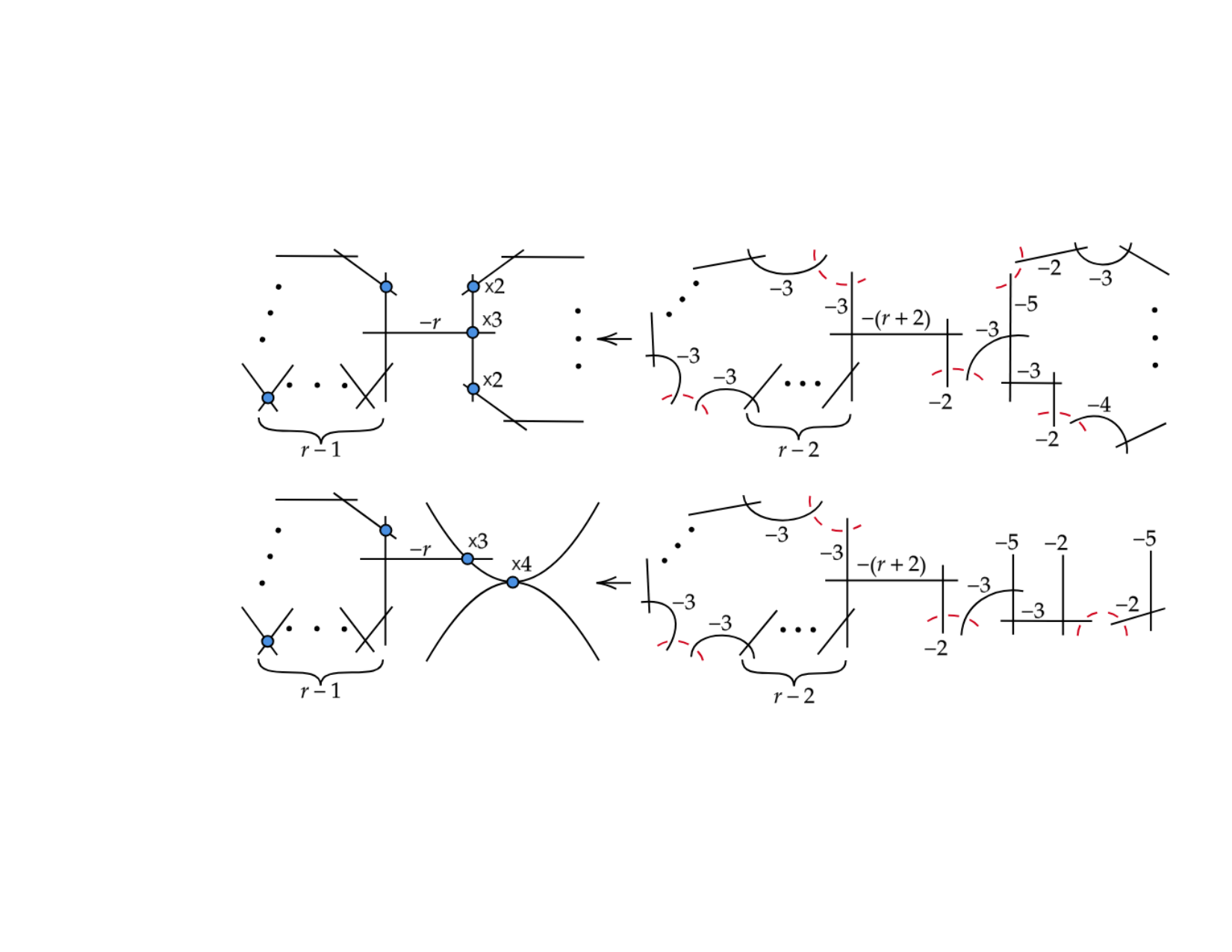}
\vspace*{-2.2em}
\caption{Configurations (S2F.5).}
\label{fS2F.5}
\end{figure}

\noindent
\textbf{(S2F.6)}
This is in Lemma \ref{allforsection} $(2.2)$. We use a fiber $F$ of type $III$, $IV$, or $I_n$ with $n\geq 2$ followed by a section $\Gamma$ such that $\overline{\Gamma}^2=-(r+2)$, and an $I_{n'}$ with $n'\geq r$. When $n'=r$ and ($n=2$ or $F=III$), we have the P-resolution $$[3,5,2]-(1)-[3,r+2,\underbrace{2,\ldots,2}_{r-3},3,2]-(1)-[3,r+3,\underbrace{2,\ldots,2}_{r-2},3,2]$$ over $[3,3,r,4,2,\ldots,2,3,2]$ with $(r-2)$ $2$s in the middle. If $n$ and/or $n'$ increases, then we have the P-resolution $$ [3,\underbrace{2,\ldots,2}_{n-4},3]-(1)-[3,5,2]-(1)-[3,r+2,\underbrace{2,\ldots,2}_{r-3},3,2]-(1)- \hspace{4cm}$$ $$\hspace{5cm} [3,r+3,\underbrace{2,\ldots,2}_{r-2},3,2]-(1)- [4,\underbrace{2,\ldots,2}_{n'-r-2},3,2]$$ over $[3,\underbrace{2,\ldots,2}_{n-2},3,r,4,\underbrace{2,\ldots,2}_{n'-2},3,2]$, as in Figure \ref{fS2F.6}. If $F=IV$, then the first chain in this P-resolution is a $[4]$, and the chain $[3,5,2]$ is connected to another $[4]$ in the fiber by the first curve. The discrepancies of these curves are $-\frac{3}{5}$ and $-\frac{1}{2}$ respectively. In all cases $K^2=r-1$. In the general case one computes $K^2=-(r+10)+(1) + (3) + (r+1) + (r+2)+ (2)=r-1$. We have $d(\bG)=-\frac{3r-2}{3r-1}$.  
\begin{figure}[htbp]
\centering
\vspace*{-1.2em}
\includegraphics[width=9cm]{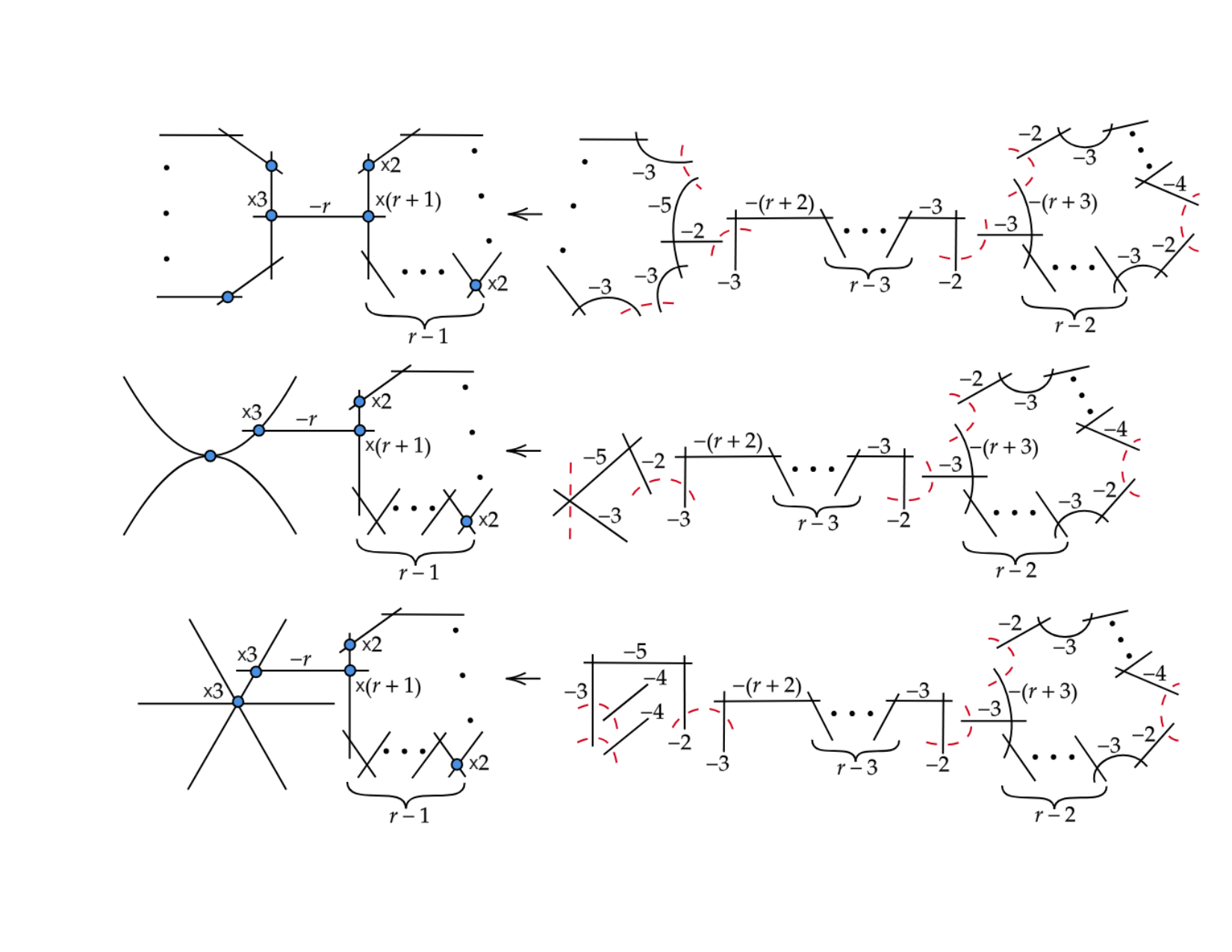}
\includegraphics[width=7.2cm]{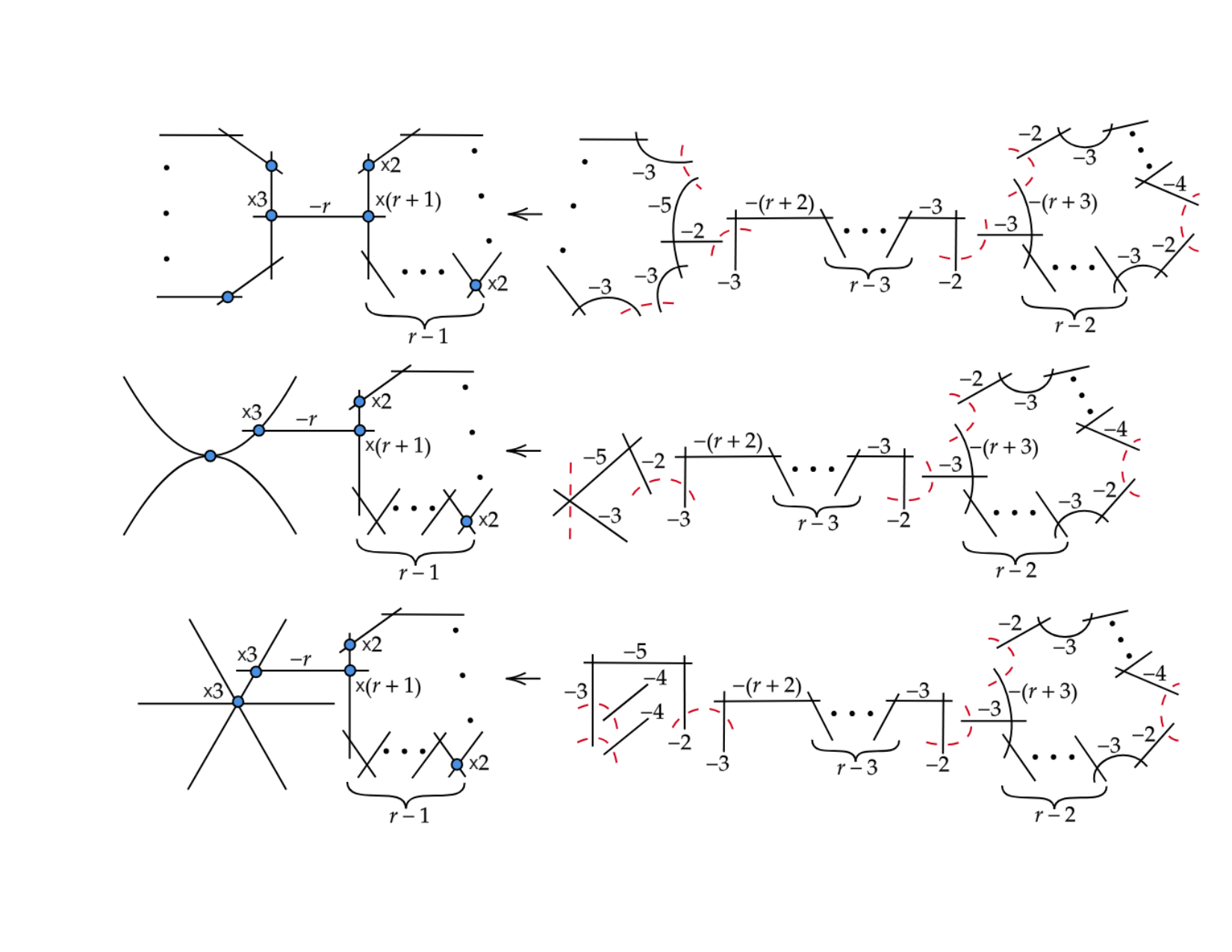}  \includegraphics[width=7.2cm]{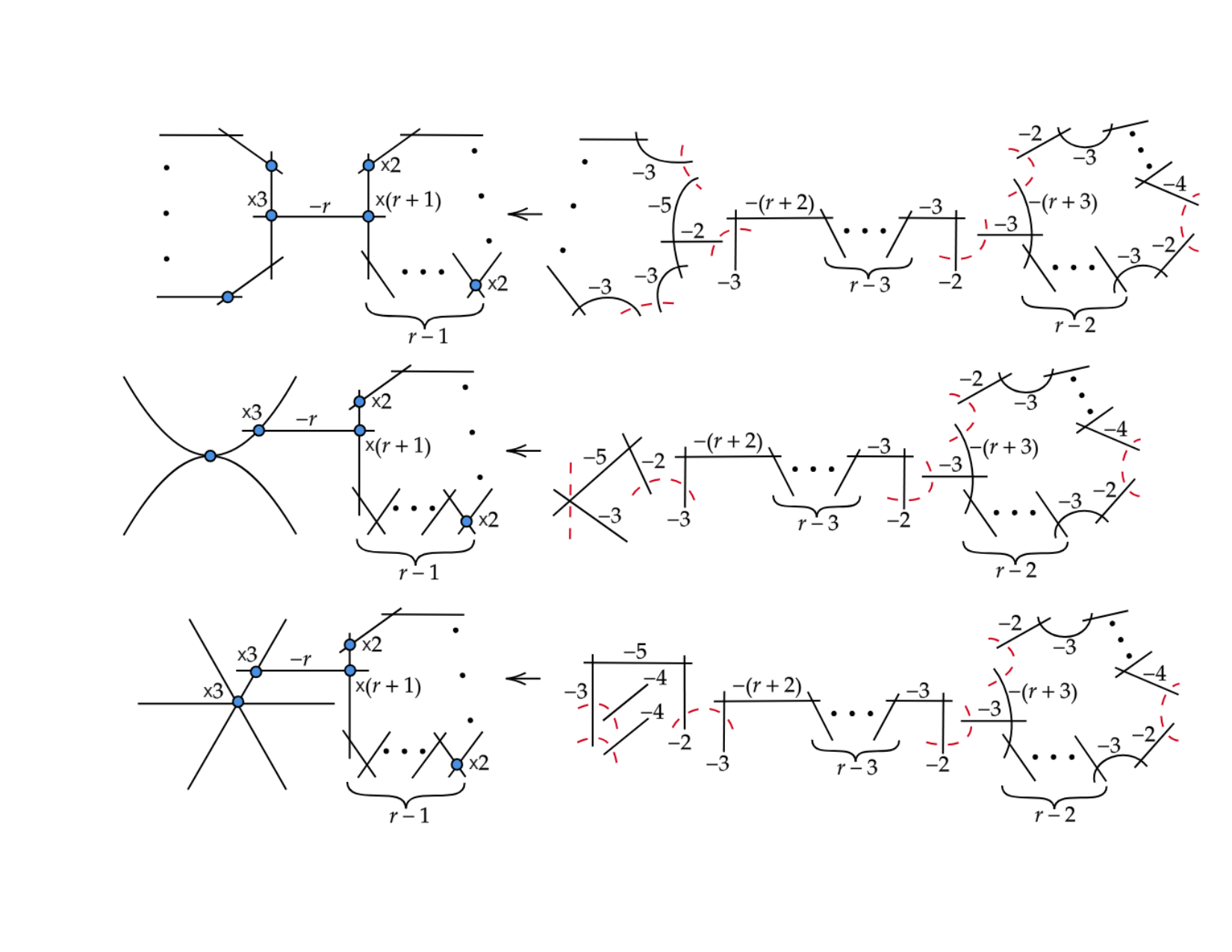}
\vspace*{-1.1em}
\caption{Configurations (S2F.6).}
\label{fS2F.6}
\end{figure}

\vspace{0.2cm}

\noindent
\textbf{(S2F.7)}
This is in Lemma \ref{allforsection} $(2.2)$. We use a fiber $F$ of type $II$, $III$, $IV$, or $I_{n}$ with $n\geq 1$ followed by a section $\Gamma$ such that $\overline{\Gamma}^2=-r$, and an $I_{n'}$ with $n'\geq r-1$. When $n'=r-1$ and $F=II$ or $I_1$, we have the Wahl chain $[4,r,5,2,\ldots,2,3,2,2]$ with $(r-3)$ $2$s in the middle. When $n'=r-1$ and $F=III$ we obtain the same Wahl chain plus one $[4]$ and one $[2]$. When $n'=r-1$ and $F=IV$, we obtain the same Wahl chain plus one $[3,2,3]$. If $n$ and/or $n'$ increases, then we have the P-resolution $$ [3,\underbrace{2,\ldots,2}_{n-3},3]-(1)-[4,r,5,\underbrace{2,\ldots,2}_{r-3},3,2,2]-(1)-[5,\underbrace{2,\ldots,2}_{n'-r-1},3,2,2]$$ over $[3,\overbrace{2,\ldots,2}^{n-2},3,r,5,\overbrace{2,\ldots,2}^{n'-2},3,2,2]$, as in Figure \ref{fS2F.7}. When $F=III$ or $F=IV$, the first chain of the P-resolution is connected to one $[4]$ or $[3,2,3]$ respectively. The discrepancies of the connected curves are $-\frac{6r-5}{8r-6}$ and $-\frac{1}{2}$. In all cases $K^2=r-1$. In the general case one computes $K^2=-8+(1) + (r+3)+ (3)=r-1$.  We have $d(\bG)=-\frac{8r-8}{8r-6}$.    

\begin{figure}[htbp]
\centering
\vspace*{-1.4em}
\includegraphics[width=7.1cm]{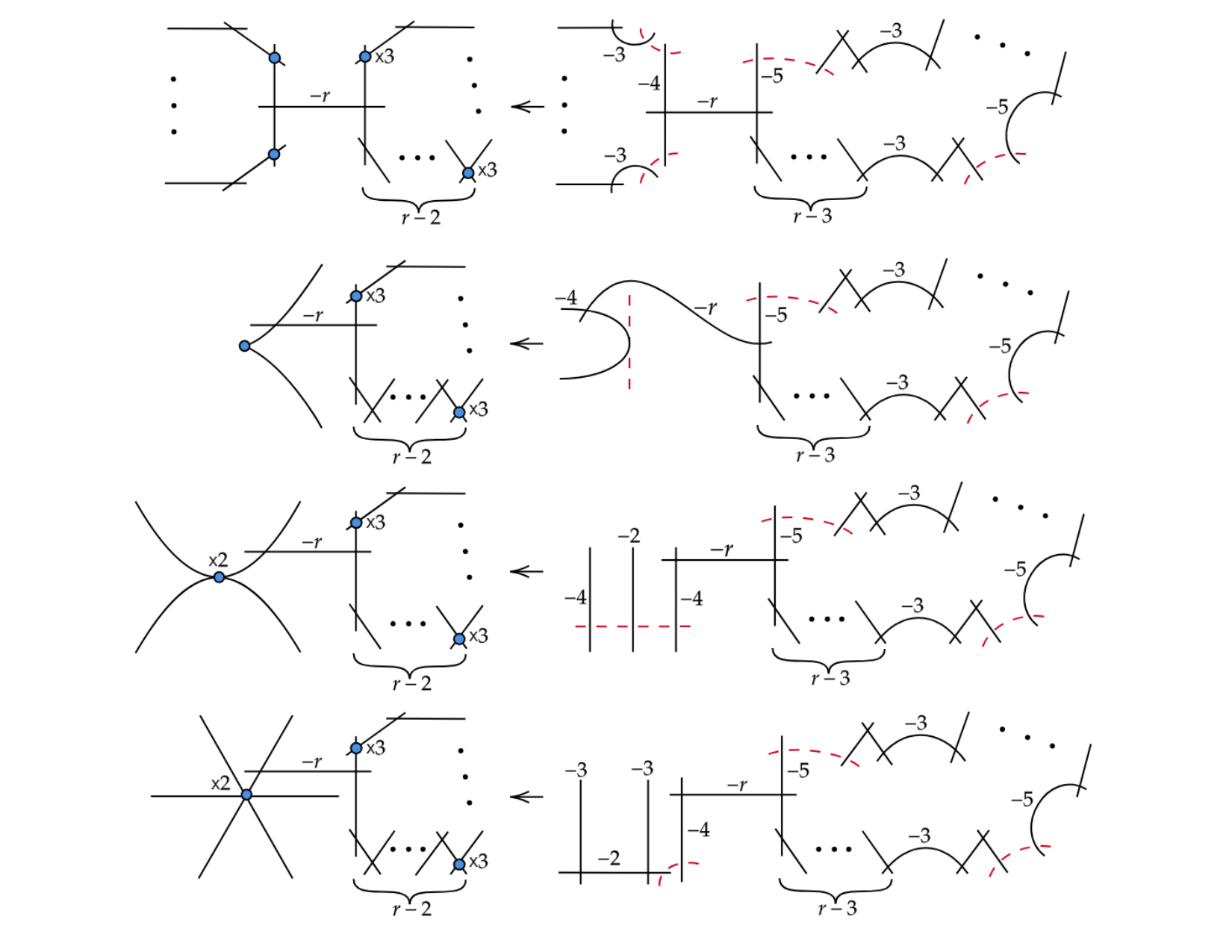}  \includegraphics[width=7.2cm]{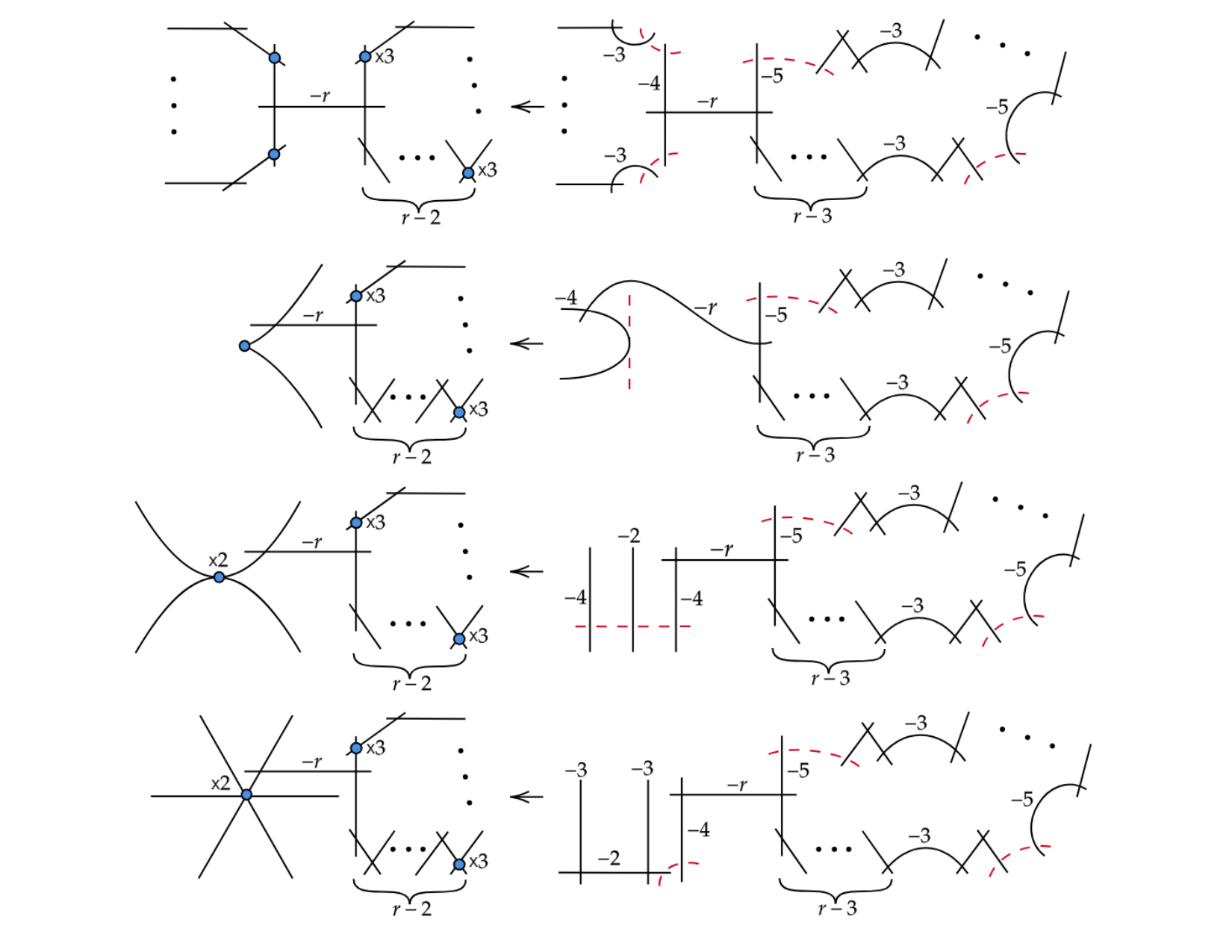} 
\vspace*{-1.1em}
\caption{Configurations (S2F.7).}
\label{fS2F.7}
\end{figure}

\noindent
\textbf{(S2F.8)}
This is in Lemma \ref{allforsection} $(2.2)$. We use a fiber $F$ of type $II$, $III$, $IV$, or $I_n$ with $n\geq 1$ followed by a section $\Gamma$ such that $\overline{\Gamma}^2=-(r+1)$, and an $I_{n'}$ with $n'\geq r$. When $n'=r$ and ($n=1$ or $F=II$), we have the P-resolution $$[4,r+1,\underbrace{2,\ldots,2}_{r-4},3,2,2]-(1)-[4,r+3,\underbrace{2,\ldots,2}_{r-2},3,2,2]$$ over $[4,r,5,2,\ldots,2,3,2,2]$ with $(r-2)$ $2$s in the middle. When $n'=r$ and $F=III$, we obtain the same Wahl chain plus one $[4]$ and one $[2]$. When $n'=r$ and $F=IV$, we obtain the same Wahl chain plus one $[3,2,3]$. If $n$ and/or $n'$ increases, then we have the P-resolution 

$$[3,\underbrace{2,\ldots,2}_{n-3},3]-(1)-[4,r+1,\underbrace{2,\ldots,2}_{r-4},3,2,2]-(1)- \hspace{6cm}$$ $$\hspace{5cm} [4,r+3,\underbrace{2,\ldots,2}_{r-2},3,2,2]-(1)-[5,\underbrace{2,\ldots,2}_{n'-r-2},3,2,2]$$ over $[3,\overbrace{2,\ldots,2}^{n-2},3,r,5,\overbrace{2,\ldots,2}^{n'-2},3,2,2]$, as in Figure \ref{fS2F.8}. When $F=III$ or $F=IV$, the first chain of the P-resolution is connected to one $[4]$ or $[3,2,3]$ respectively. The discrepancies of the connected curves are $-\frac{3r-4}{4r-5}$ and $-\frac{1}{2}$ ($\frac{3r-4}{4r-5}+\frac{1}{2}=\frac{10r-13}{8r-10}>1$, for $2r>3$). In all cases $K^2=r-1$. In the general case one computes $K^2=-(r+9)+(1) + (r+1)+(r+3)+ (3)=r-1$.  We have $d(\bG)=-\frac{4r-6}{4r-5}$.

\begin{figure}[htbp]
\centering
\vspace*{-1em}
\includegraphics[width=7.2cm]{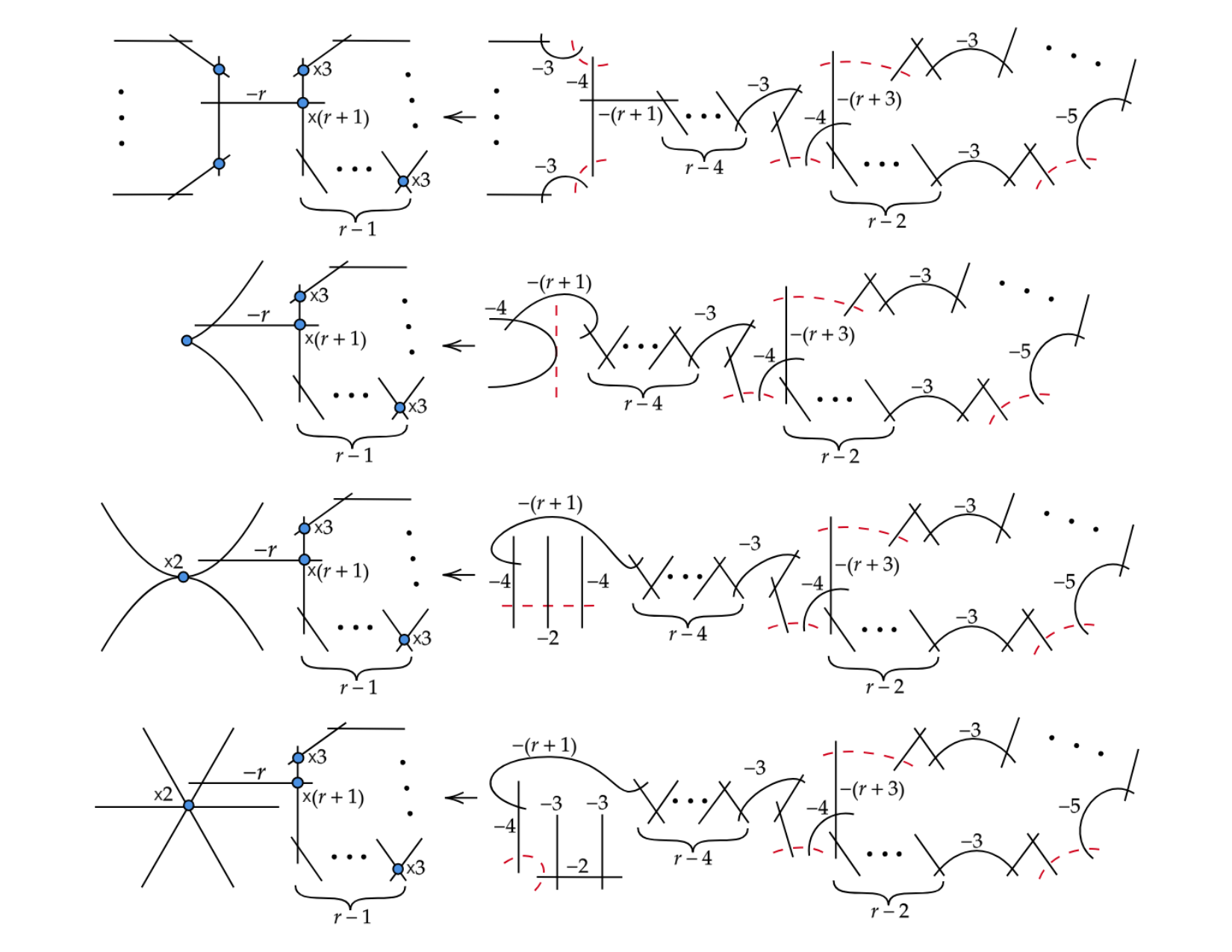} \includegraphics[width=7.3cm]{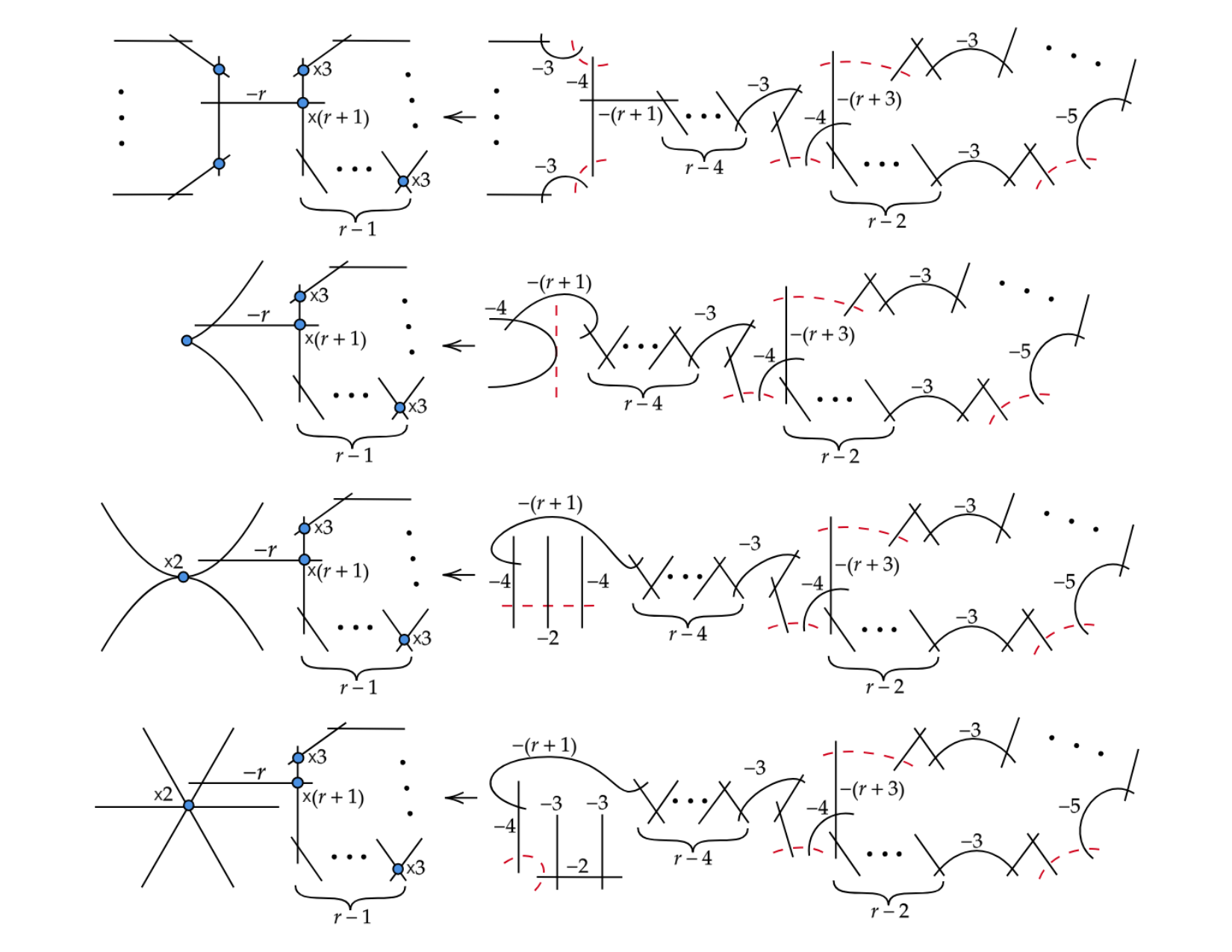}
\vspace*{-2.1em}
\caption{Configurations (S2F.8).}
\label{fS2F.8}
\end{figure}

\noindent
\textbf{(S2F.9)}
This is in Lemma \ref{allforsection} $(2.2)$. We use a fiber $F$ of type $III$, $IV$, or $I_n$ with $n\geq 2$ followed by a section $\Gamma$ such that $\overline{\Gamma}^2=-3$, and a fiber $F'$ of type $III$ or $I_{n'}$ with $n'\geq 2$. When ($n=2$ or $F=III$) and $F'=I_2$, we have the T-chain $[3,3,3,4,3,2]$. When $F'$ is changed to a $III$ we obtain the same T-chain plus one $[2,5]$, whereas when $F$ is changed to a $IV$ we have the same T-chain plus two $[4]$. If $n$ and/or $n'$ increases, then we have the P-resolution $[3,\underbrace{2,\ldots,2}_{n-4},3]-(1)-[3,3,3,4,3,2]-(1)-[4,\underbrace{2,\ldots,2}_{n'-4},3,2]$ over $[3,\overbrace{2,\ldots,2}^{n-2},3,3,4,\overbrace{2,\ldots,2}^{n'-2},3,2]$, as in Figure \ref{fS2F.9}. When $F'=III$, the T-chain $[3,3,3,4,3,2]$ is connected to one $[2,5]$ and the discrepancies of the connected curves are $-\frac{10}{13}$ and $-\frac{1}{3}$ respectively ($\frac{10}{13}+\frac{1}{3}=\frac{43}{39}>1$). When $F=IV$, the T-chain $[3,3,3,4,3,2]$ is connected to some $[4]$ and the discrepancies of the connected curves are $-\frac{8}{13}$ and $-\frac{1}{2}$ respectively ($\frac{8}{13}+\frac{1}{2}=\frac{29}{26}>1$). In all cases $K^2=3$. In the general case one computes $K^2=-6+(1)+(4)+3=2$.  We have $d(\Gamma)=-\frac{12}{13}$.

\begin{figure}[htbp]
\centering
\vspace*{-1.1em}
\includegraphics[width=7cm]{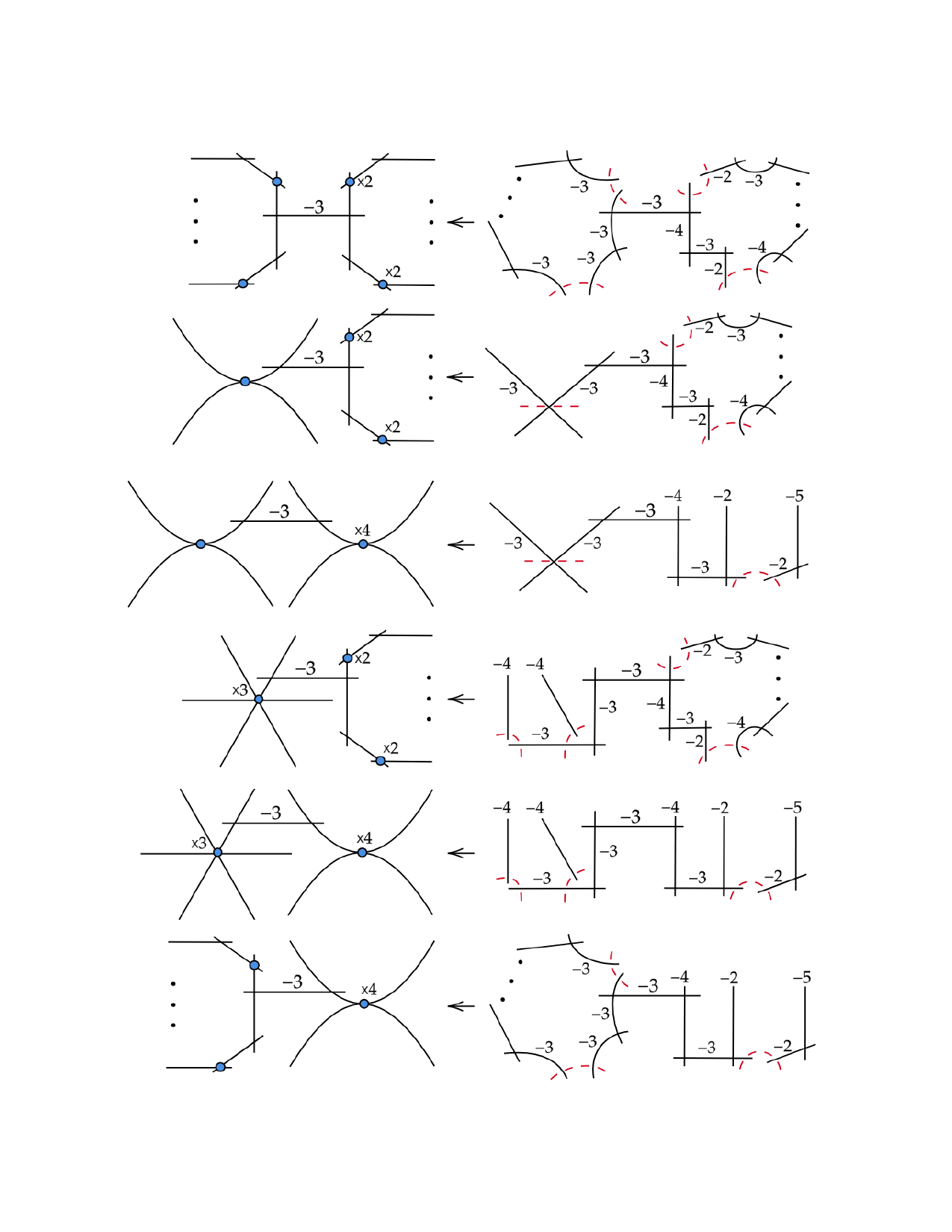} \ \includegraphics[width=7cm]{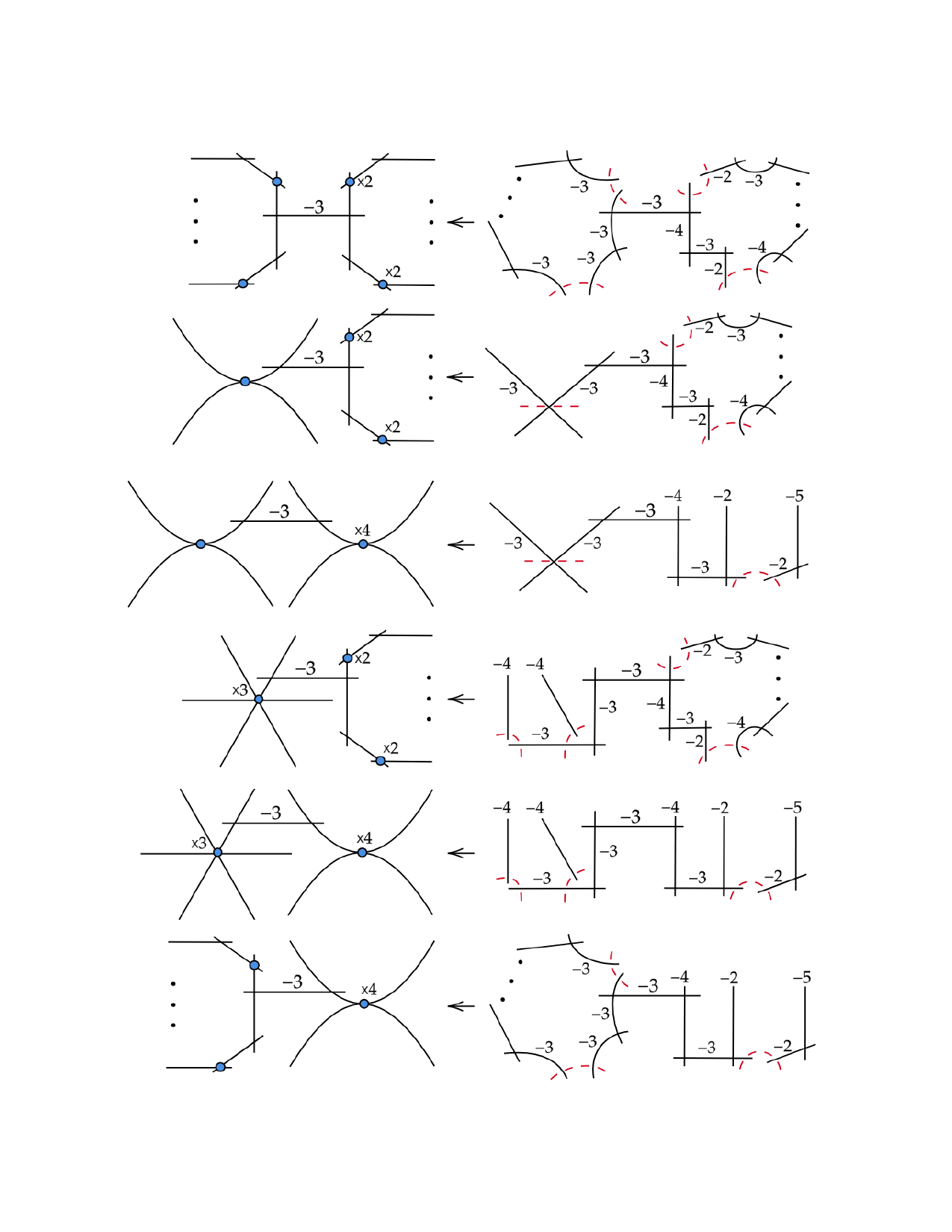}
\vspace*{-1.4em}
\caption{Configurations (S2F.9).}
\label{fS2F.9}
\end{figure}

\noindent
\textbf{(FIB)}
This is $(3.1)$ and $(3.2)$ in Lemma \ref{allfiber}. We use one $I_n$ with $n\geq 1$, or II, or III, or IV. It is connected with a section $\bG$ but this section is connected with some other P-resolution. When $n=1$ we have the Wahl chain $[2,5]$. When $n=2$ we have the Wahl chains $[2,5]$ and $[4]$. Cases II, III, and IV are similar, see Figure \ref{fFIB}. For $n\geq 3$, we obtain Wahl chain $[2,5]$ and T-chain $[3,\underbrace{2,\ldots,2}_{n-3},3]$ as shown in Figure \ref{fFIB}. In all cases $K^2=-1$. In the general case one computes $K^2=-(4)+(2)+(1)=-1$.
The discrepancy of the $(-2)$-curve which is joint to $\bG$ via a $(-1)$-curve is $-\frac{1}{3}$.   
\begin{figure}[htbp]
\centering
\vspace*{-1.35em}
\includegraphics[width=7cm]{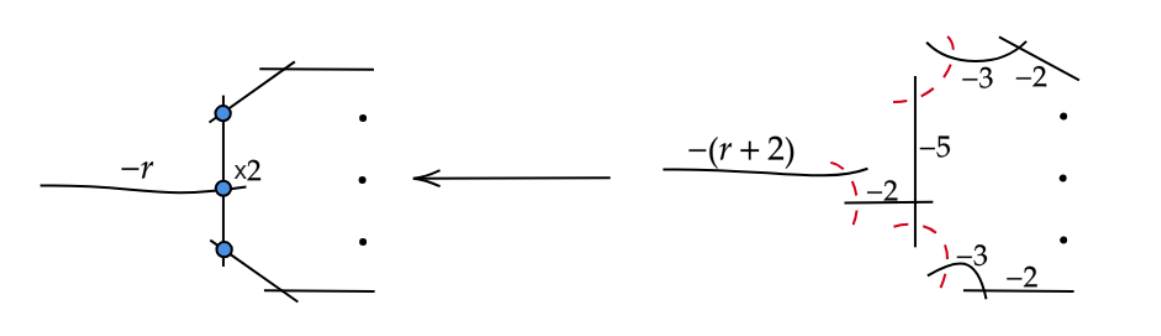} \ \includegraphics[width=7cm]{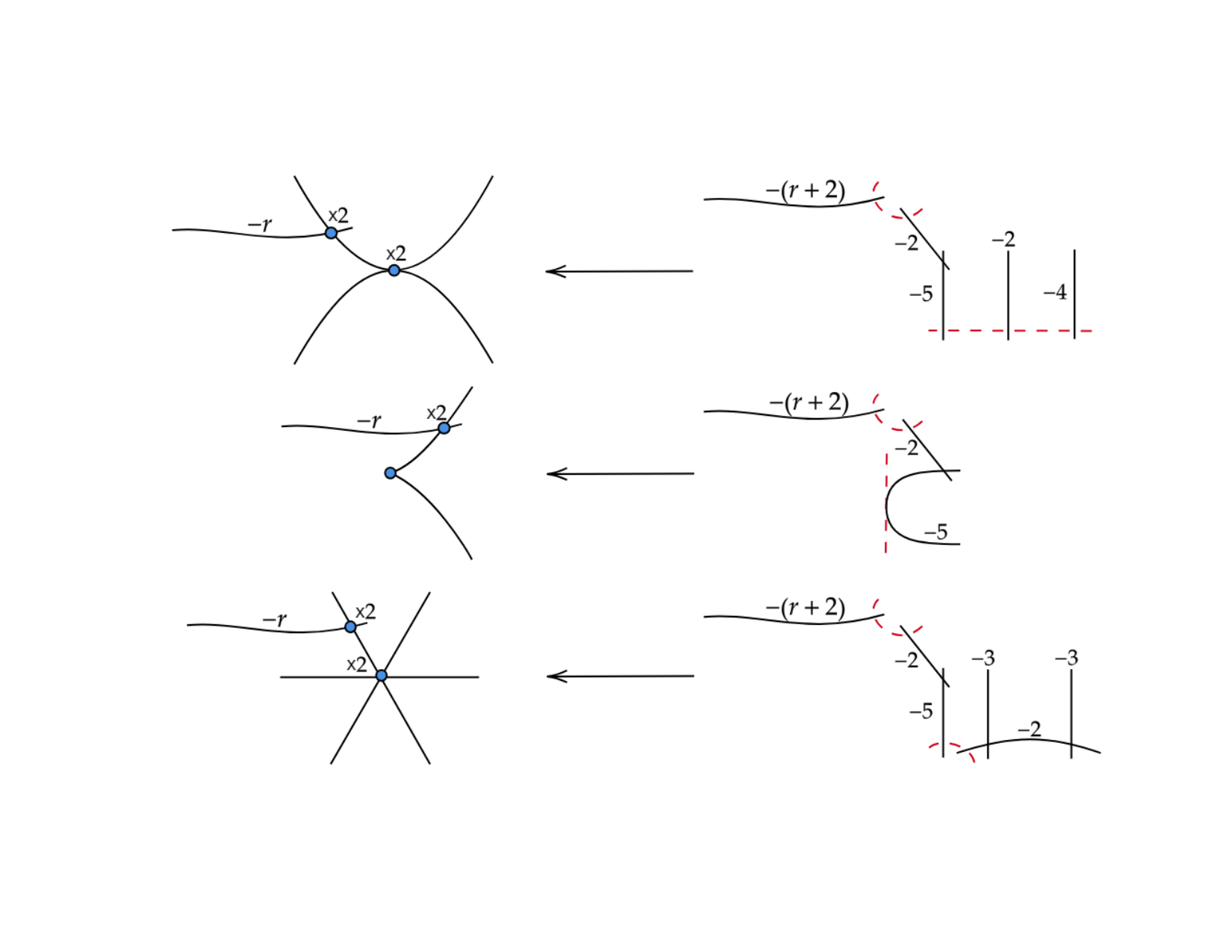} 

\includegraphics[width=6.5cm]{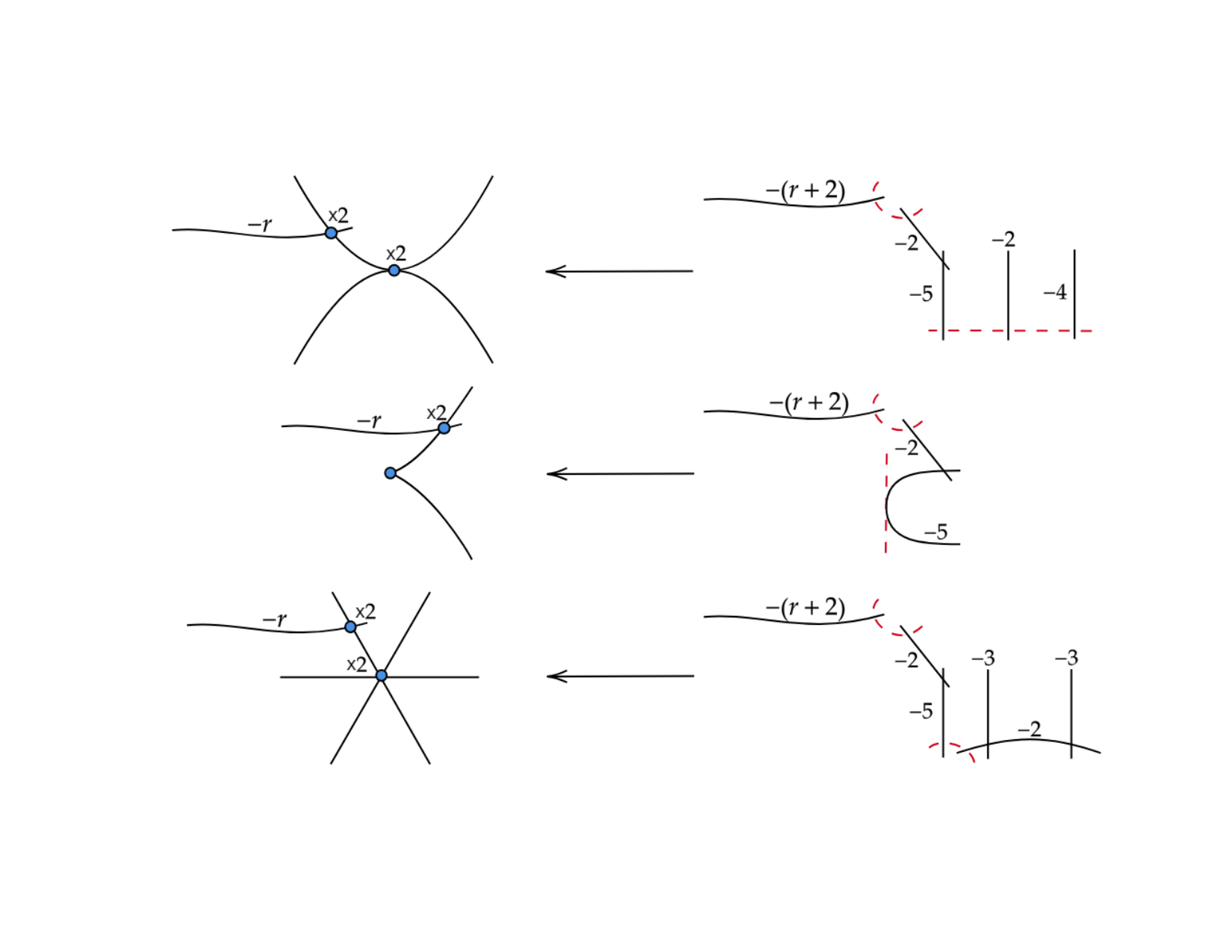} \ \hspace{0,2em}\includegraphics[width=7.2cm]{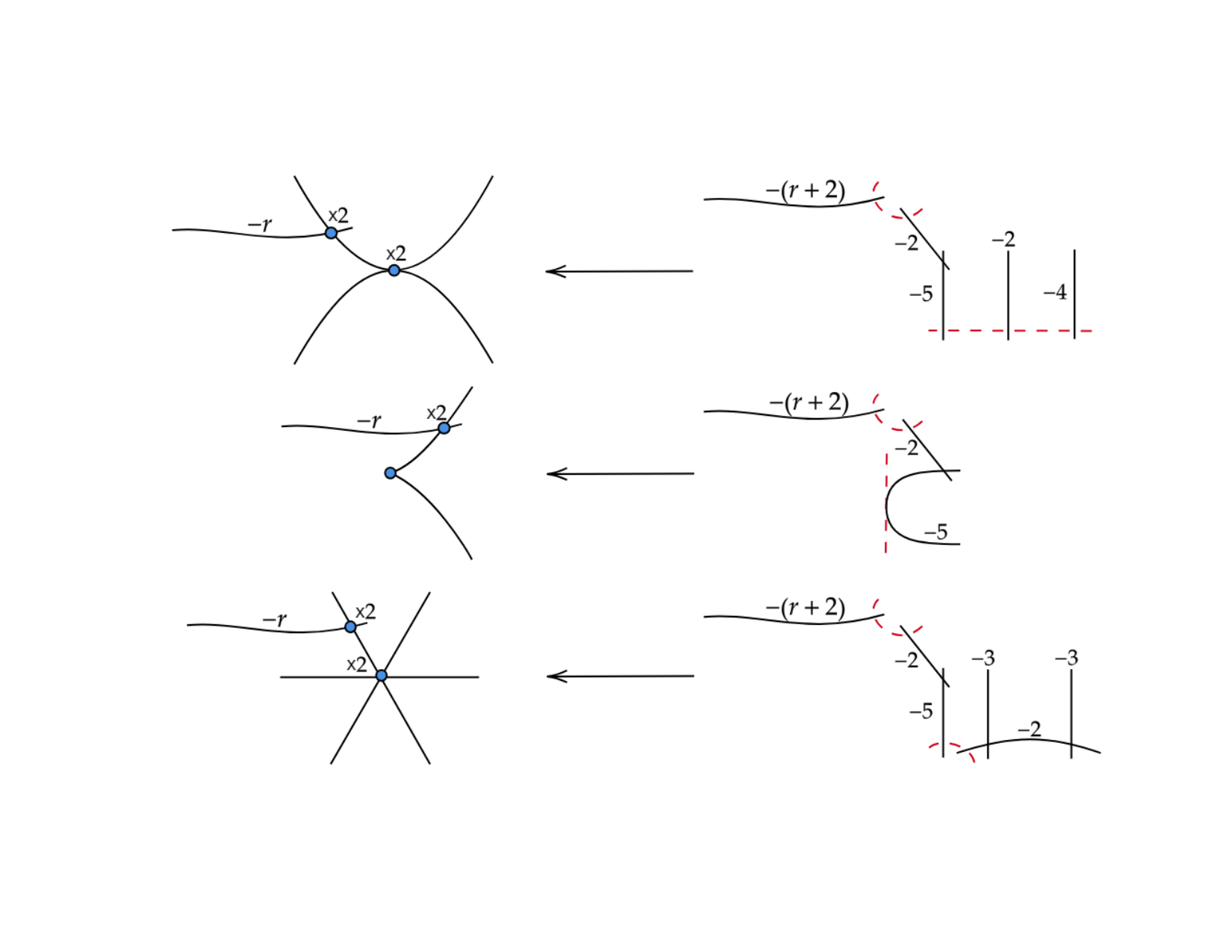}
\vspace*{-1.2em}
\caption{Configurations (FIB).}
\label{fFIB}
\end{figure}

\begin{remark}
We note that the building blocks S2F.7 y S2F.8 are related by a wormhole c.q.s. as defined and studied in \cite{UV22} (i.e. it admits two extremal P-resolutions). If there are global complex smoothings for the corresponding surfaces $W'$, then this would represent a wormhole in the KSBA moduli space of Horikawa surfaces. The c.q.s. is $[4,r,5,\underbrace{2,\ldots,2}_{r-2},3,2,2]$ and it appears for S2F.7 and S2F.8 when $n=1$ and $r=n'$. Both extremal P-resolutions have $\delta=2$, and for S2F.7 the indices have gcd$=2$, but for S2F.8 the gcd$=1$. We do not know what the topology is after we do the rational blowdown on the corresponding Wahl chains (see Subsection \ref{topo} and Remark \ref{smallsurfsimply}). Wormholes may change topology \cite[Section 6]{UV22}.
\label{wormhole}
\end{remark}

\subsection{Classification theorems} \label{ClassSmallSurf}

\begin{proposition}
All possible realizations for Lemma \ref{allforsection} and Lemma \ref{allfiber} are shown in the list \ref{blocks}.
\label{realizations}
\end{proposition}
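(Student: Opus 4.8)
The plan is to prove the proposition by a finite, case-by-case verification. For every cyclic quotient singularity that occurs in the statements of Lemma~\ref{allforsection} and Lemma~\ref{allfiber}, I would first write down all of its P-resolutions, using the classification carried out in Section~\ref{app} (Proposition~\ref{2r2} and its companions), and then, for each such P-resolution, decide whether it can actually be produced by a composition of blow-ups $\pi\colon X\to S$ of the shape described in those lemmas. What makes this feasible is that every c.q.s. appearing there has a Hirzebruch--Jung continued fraction with at most two entries $\ge 3$ --- the shapes $[2,\ldots,2,c,2,\ldots,2]$ and $[2,\ldots,2,c,2,\ldots,2,3,2,\ldots,2]$ with $c\ge 3$, and in case $(2)$ a further leading block of $2$s --- so their P-resolutions are precisely the ones already enumerated in the appendix. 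Hence the combinatorial input is finite and available.

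For each P-resolution on that list I would impose three geometric filters and keep only the survivors. First, every $(-1)$-curve $\overline{E}$ that appears as a connector (gluing the T-chain through $\overline{\Gamma}$ to a fiber T-chain, or two T-chains inside one fiber) has proper transform lying in a T-chain of $W$; since $S$ carries no $(-1)$-curve, this forces $\overline{E}$ to meet the adjacent T-chains transversally at a single point each. Second, $K_W$ is nef, so $\overline{E}\cdot\phi^{*}(K_W)\ge 0$; writing $\phi^{*}K_W=K_X-\sum_i\delta_iC_i$ with $\delta_i\in\,]-1,0]$ and using $\overline{E}^2=-1$, this is exactly the inequality $\sum_i|\delta_i|\,(\overline{E}\cdot C_i)\ge 1$, i.e. the discrepancies (in absolute value) of the curves met by $\overline{E}$ add up to at least $1$ --- this is the ``$\tfrac{2}{5}+\tfrac{2}{3}>1$''-type check recorded in each entry of the list in~\ref{blocks}. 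Third, any component of $\pi(C)\cap F$ not used in a non-Du-Val T-chain of the P-resolution would have to form its own chain of $(-2)$-curves, hence a Du Val configuration with trivial M-resolution and no blow-ups, which is impossible; so the images of the non-Du-Val T-chains must exhaust $\pi(C)\cap F$ (the mechanism already used in the proof of Lemma~\ref{no*} and at the end of the proof of Theorem~\ref{formulaK^2}). After these three tests, each surviving P-resolution, glued with its connector curves, is one of S0F, S1F.1--S1F.4, S2F.1--S2F.9 or FIB; conversely each of these is realized by an explicit blow-up sequence exactly as in the proof of Lemma~\ref{allforsection}, and the recorded local $K^2$ and discrepancy $d(\overline{\Gamma})$ are then a routine computation; the proposition follows.

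It remains to organize the enumeration so that nothing is missed. Cases $(0)$ and $(1.0)$ are immediate. Cases $(1.1)$--$(1.5)$ are split according to the Kodaira type of $F'$: the $I_n$ cases follow directly from the blow-up recipe in the proof of Lemma~\ref{allforsection} and Proposition~\ref{2r2}, while the delicate cases are the cuspidal fiber $II$, the tangent fiber $III$ and type $IV$, where one first blows up the singular point (twice for $II$, once for $III$) and then tracks which of the three curves through the resulting triple point is permitted to carry no curve of the T-chain of the connector --- this produces the sub-cases $(1.3.1)$--$(1.3.4)$, $(1.4.1)$--$(1.4.4)$, $(1.5.1)$--$(1.5.2)$, and after the nef test the building blocks S1F.1, S1F.3, S1F.4. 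Case $(2.1)$ is obtained from case $(1)$ by prepending a block $[2,\ldots,2]$, and case $(2.2)$ by the independent two-sided analysis, where Lemma~\ref{In+r+In'} already rules out termination by $2$s on both sides and cuts the list of c.q.s. to examine down to the four shapes named there; running the three filters then yields S2F.1--S2F.9. The fiber-side statement, Lemma~\ref{allfiber}, is treated identically, and the filling-up condition together with the nef test (and the discrepancy $-\tfrac{1}{3}$ of the ending $(-2)$-curve joined to $\overline{\Gamma}$) leaves only the block FIB.

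I expect the main obstacle to be bookkeeping rather than a single conceptual difficulty: for each surviving P-resolution one must verify that the discrepancies \emph{along the whole chain}, not merely at the connector, are consistent with the T-chain recursion of Proposition~\ref{T-chain}, and that no further P-resolution of the same c.q.s. passes all three filters. These are finite checks, but intricate ones, especially around the c.q.s. $[4,r,5,2,\ldots,2,3,2,2]$ common to S2F.7 and S2F.8, which admits two extremal P-resolutions (the wormhole of Remark~\ref{wormhole}); there one has to confirm that both are realized and that no third one survives.
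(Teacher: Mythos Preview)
Your plan matches the paper's: run through each case of Lemmas~\ref{allforsection} and~\ref{allfiber}, pull the finite list of P-resolutions for that c.q.s.\ from Section~\ref{app}, and discard those incompatible with the geometry. Filter~2 (the nef test on connecting $(-1)$-curves) is exactly the paper's ``no hanging $A_x$ over the ending $2$s'', and is the main tool.

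There is, however, a genuine gap in your set of filters. Take case~(1.1) with $b=0$, $c=a-4$, and option~(b) of Proposition~\ref{2ra2}: the P-resolution is $(r)-\bigl[\binom{a-2}{1}\bigr]$, i.e.\ one contracts the Wahl chain $[a,2,\ldots,2]$ and leaves the $(-r)$-curve as a free $\Gamma$-curve. All three of your filters pass here --- in particular filter~2, since the $(-1)$-curve from $I_1$ meets the two ends of $[a,2,\ldots,2]$, whose discrepancies $\frac{a-3}{a-2}$ and $\frac{1}{a-2}$ sum to exactly~$1$; and filter~3, since the unique component of $F'$ does land inside that Wahl chain. Yet this is not a building block. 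What rules it out is the hypothesis of Lemma~\ref{allforsection} itself: $\bG$ must lie in a non-Du-Val T-chain (equivalently $\Gamma\in\pi(C)$, part of the small-surface definition). You need this as an explicit filter; it is what actually eliminates options~(a) and~(b) of Proposition~\ref{2ra2} and their analogues throughout. The paper's terse ``no hanging $A_x$'' is shorthand for the conjunction of this constraint with your nef test. Once you add it, filter~3 becomes redundant --- and is in any case slightly off: leftover exceptional $(-2)$-curves over $F'$, as in~(1.4.3), are allowed, since they are not components of $F'$.
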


\begin{proof} 
Of course, the strategy is to go through each of the possibilities in Lemma \ref{allforsection} and Lemma \ref{allfiber} checking if there are suitable P-resolutions for the claimed c.q.s. All of these P-resolutions are classified in \ref{app}.

For \textbf{(0)} and \textbf{(1.0)} in Lemma \ref{allforsection}, we must consider P-resolutions of $[r,2,\ldots,2]$. By Proposition \ref{2r2} the only one that works is contraction of $[r,2,\ldots,2]$ as a Wahl singularity, and this is S0F. For \textbf{(1.1)}, we need to consider P-resolutions of $[r,a,\underbrace{2,\ldots,2}_{a-4}]$ where $a\geq 4$. They are described in Proposition \ref{2ra2} when $b=0$ and $c=a-4$, and only (h) is possible as we cannot have a hanging $A_x$ over the ending $2$s. This gives an example in S1F.1. For \textbf{(1.2)} we need to know about P-resolutions of $[r,a,\underbrace{2,\ldots,2}_{n-2},3,\underbrace{2,\ldots,2}_{a-3}]$, which is Proposition \ref{2ra232} with $b=0$. Again there must not be a hanging $A_x$ over the ending $2$s. Then only (o) is possible, with $a=r+1$ and $n\geq r-2$; this is S1F.2 if $r>3$ or S1F.1 if $r=3$.

The cases \textbf{(1.3.1)}, \textbf{(1.3.2)} and \textbf{(1.3.3)} work as \textbf{(1.1)}, and so we obtain examples in S1F.1. For \textbf{(1.3.4)}, we need a P-resolution of $[r,7,\underbrace{2,\ldots,2}_{r'-2}]$, and a P-resolution of $[3,r',2]$. Again, the only P-resolution for $[r,7,\underbrace{2,\ldots,2}_{r'-2}]$ not having a hanging $A_x$ is (h) in Proposition \ref{2ra2}, but $a=5$, so not possible. Similarly for \textbf{(1.4)} we obtain only one example in S1F.1 from \textbf{(1.4.4)}, and for \textbf{(1.5)} we obtain none.     

Before checking possibilities for two fibers \textbf{(2.1)} and \textbf{(2.2)}, which is the hardest, we check possible P-resolutions only from fibers, i.e. Lemma \ref{allfiber}.

\textbf{(3.1)} and \textbf{(3.2)} can be checked via Propositions \ref{2r2} and \ref{2a232}. For the first, we easily get the answer. For the second we have five options, but to avoid hanging $A_x$ at the ends, we need $a=4$. Then we obtain (b) and (d), and they coincide in the examples of FIB. Similarly one can work out the cases \textbf{(3.3)}, \textbf{(3.4)} and \textbf{(3.5)}, which only give examples in FIB. We remark that the fiber II produces an example with a free $(-2)$-curve. For \textbf{(2.1)} with an $I_1$ fiber, we have to consider P-resolutions of $[2,\ldots,2,r,a,2,\ldots,2]$. Via Proposition \ref{2ra2} we realize that only (d) is possible with $c=a-4=0$, $r=a=3$, $b=1$, and (g) with $a=4$ and $b=2$. The first is S1F.4, the second in S1F.3. More general, for \textbf{(2.1)} with an $I_n$ fiber we need to check P-resolution in Proposition \ref{2ra232}. One can verify that the only possibilities are (i) producing S1F.4, and (m) producing S1F.3. For \textbf{(2.1)} plus \textbf{(1.3.1)}, \textbf{(1.3.2)} or \textbf{(1.3.3)}: Same analysis as $I_1$. For \textbf{(1.3.4)} we check Propositions \ref{2ra2} and \ref{typeII}. We obtain, all in all, members in S1F.3 and S1F.4. For \textbf{(2.1)} plus \textbf{(1.4.1)}, we have the same as for \textbf{(2.1)} plus $I_2$, so we obtain the same. For \textbf{(2.1)} plus \textbf{(1.4.2)}, we need to check P-resolutions of $[2,\ldots,2,r,4,3,2]$ (Proposition \ref{2ra232} with $a=4$ and $n=2$ and $b>0$) attached to $[5,2]$. There are none. For \textbf{(2.1)} plus \textbf{(1.4.3)}, we are as in \textbf{(2.1)} plus $I_1$, so we obtain the same. For \textbf{(2.1)} plus \textbf{(1.4.4)} we use Proposition \ref{2ra2} (where $b>0$, $a=5$, $c=1,2,3$) and Proposition \ref{typeIII} (where $a=3,4,5$). There are none. Similarly one can verify \textbf{(2.1)} plus \textbf{(1.5)}, obtaining S1F.3 and S1F.4.

The case \textbf{(2.2)} is the richest, and the subcase with two fibers $I_n, I_{n'}$ produces the majority of the possibilities. Because of Lemma \ref{In+r+In'} and the restriction on ``ending $2$s", we can directly use Propositions \ref{4rb232}, \ref{2ar323} and \ref{323rb232} to find all possibilities from S2F.1 to S2F.9. 

For $I_n$ plus II we have the following. \textbf{(1.3.1)} and \textbf{(1.3.2)}: Just as for $I_n + I_{n'}$ with $n'=1,2$. \textbf{(1.3.3)}: Here there is a small observation. We have the ending $2$ in $[\ldots,r,6,2,2]$ which may not be used. But if not, then the ending $2$ in $[\ldots,r,6,2]$ cannot be a center, and so any of their P-resolutions ends with a $2$. But this is connected to a $[4]$ and so discrepancies are not enough to make $K_W$ positive. So, it is enough to consider the options in Proposition \ref{2ar323}. \textbf{(1.3.4)}: We need to consider all P-resolutions in Proposition \ref{2ar323}, and the ones in Proposition \ref{typeII}.

For $I_n$ plus III we have the following. \textbf{(1.4.1)}: This is just a particular $I_n + I_2$. \textbf{(1.4.2)}: In this case we have a $[\ldots,r,4,3,2]$ and a $[5,2]$ with a $(-1)$-curve between the $[3]$ and the $2$ in $[5,2]$. If the last $2$ of $[\ldots,r,4,3,2]$ is considered in the P-resolution, then this is as $I_n+I_2$. If not, then we look for P-resolutions of $[\ldots,r,4,3,1,2,5]=[\ldots,r,3,3]$, and this can be analyzed as a $I_n+I_2$ as well. \textbf{(1.4.3)} and \textbf{(1.4.4)} are treated similarly as before. The case $I_n$ plus IV is similarly analyzed. 

We are only left with the small cases of combination of two of II, III, IV. One gets very few examples from these combinations.

For the case of Lemma \ref{allfiber}, we end up only with the building block FIB.
\end{proof}

\begin{theorem}[Classification of small surfaces] Let $W$ be a small surface with $K_W$ big and nef. Then $W$ is constructed from a S0F, or a S1F.i, or a S2F.i for some $i$, by adding, if necessary, a suitable amount of $s$ FIBs. Every such choice of building blocks is realizable. We have that $K_W^2=p_g-2+s+j$, where $j=0,1,2$ depends on the choice of S0F ($j=0$) or S$j$F.i. If $K_W$ is not ample, then its canonical model $W_{\text{can}}$ is obtained by contracting ADE configurations from components of fibers which are disjoint from the used building blocks.
\label{ClassSmallSurf}
\end{theorem}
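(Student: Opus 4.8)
The plan is to bootstrap off the local analysis already carried out. I would begin by recording the global structure of a small surface $W$ with $K_W$ big and nef: $S\to\mathbb{P}^1$ is a relatively minimal elliptic fibration of Kodaira dimension $1$, $q(W)=q(S)=0$ so $\chi(\mathcal{O}_S)=p_g+1$, the section $\Gamma\subset\pi(C)$ is unique, and by Proposition \ref{koda} the configuration $\pi(C)$ is connected, being the union of $\Gamma$, finitely many complete singular fibres, and proper subconfigurations of finitely many other singular fibres. Write $\bG\subset X$ for the proper transform of $\Gamma$ and $C_{j_0}$ for the T-chain of $C$ containing it. According to the trichotomy $(0)/(1)/(2)$, the curve $\bG$ has $0$, $1$, or $2$ neighbours in $C_{j_0}$; Lemma \ref{allforsection} then describes $C_{j_0}$ together with the preimage of $\Gamma$ and of the fibre(s) $F'$ (and $F''$) meeting $\bG$ inside $C_{j_0}$, and Proposition \ref{realizations} identifies the realizations as exactly the building blocks S0F (from $(0)$ and $(1.0)$, consuming no complete fibre), S1F.$i$ with $i\in\{1,\dots,4\}$ (from $(1)$ and $(2.1)$, consuming one complete fibre), and S2F.$i$ with $i\in\{1,\dots,9\}$ (from $(2.2)$, consuming two complete fibres). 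Set $j\in\{0,1,2\}$ equal to this number of complete fibres, so that $j=0$ for an S0F and $j=1,2$ for an S$j$F.$i$.

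Next I would account for every remaining T-chain. A T-chain $C_k\subseteq C$ not contained in the section block has image a connected chain of fibre components, hence is supported on a single singular fibre $F\neq F',F''$; since $\pi(C)$ is connected and, among the curves of the section block, only $\bG$ has image meeting $F$ (and it meets $F$ in the single point $\Gamma\cap F$), the entire part of $C$ lying over $F$ must attach to $\bG$ through that point, so Lemma \ref{allfiber} applies and, by Proposition \ref{realizations}, this part is a FIB block with $F$ complete. Thus $\pi(C)$ is precisely one section block together with $s\ge 0$ FIB blocks supported on $s$ pairwise distinct complete fibres, every complete fibre of $S\to\mathbb{P}^1$ inside $\pi(C)$ arising in this way; hence their number is $N=j+s$, and Theorem \ref{formulaK^2} gives $K_W^2=p_g-2+N=p_g-2+s+j$. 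If $K_W$ is not ample, Remark \ref{canonical} provides $\rho\colon W\to W_{\text{can}}$ contracting only ADE configurations disjoint from the non-Du Val T-chains; since $K_S$ is a positive rational multiple of the fibre class, any connected configuration whose classes are $K_W$-trivial must map into a fibre, so these ADE configurations are exactly those formed by components of singular fibres of $S\to\mathbb{P}^1$ that lie in no building block.

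It then remains to prove realizability. For each admissible combination --- one section block with parameters in its allowed range, together with $s$ FIBs where $s$ runs over the values compatible with $\sum_v e(F_v)=12(p_g+1)$ and with the discrepancy inequalities recorded in \ref{blocks} (this is the content of ``a suitable amount of $s$ FIBs'') --- I would construct a relatively minimal elliptic surface $S\to\mathbb{P}^1$ of the correct Euler number carrying a section and singular fibres of the prescribed Kodaira types at distinct points of $\mathbb{P}^1$, which is possible by the standard theory of elliptic surfaces over $\mathbb{P}^1$ (Weierstrass models, existence of prescribed singular-fibre configurations), then perform the blow-ups dictated by the blocks to obtain $X$, and contract the resulting negative-definite T-chains by Artin's criterion \cite{A62} to obtain $W$. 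Bigness of $K_W$ is immediate from $K_W^2=p_g-2+s+j>0$, and nefness reduces to verifying that at each $(-1)$-curve gluing two T-chains (or gluing a FIB to $\bG$) the two discrepancies at its feet sum to strictly less than $-1$: these are exactly the inequalities $\tfrac{a}{b}+\tfrac{c}{d}>1$ listed throughout \ref{blocks}, together with the tabulated values $d(\bG)$ and the discrepancy $-\tfrac13$ of the FIB $(-2)$-curve.

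I expect the realizability step to be the main obstacle. Once Lemmas \ref{allforsection} and \ref{allfiber} and Proposition \ref{realizations} are available, the structural assertion is essentially forced by connectedness of $\pi(C)$ and the $K^2$ formula is a one-line consequence of Theorem \ref{formulaK^2}; the real work lies in exhibiting global elliptic surfaces with exactly the prescribed singular-fibre configurations and in checking that, after attaching FIBs, the discrepancies of $\bG$ and of the FIB chains still patch so that $K_W$ remains nef --- which is where the precise numerical data of the block list \ref{blocks} is indispensable.
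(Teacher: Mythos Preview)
Your proposal is correct and follows essentially the same route as the paper: both rely on Lemmas \ref{allforsection} and \ref{allfiber} together with Proposition \ref{realizations} to pin down the section block and the FIBs, and both finish by checking discrepancies at the gluing $(-1)$-curves. The one noteworthy difference is how $K_W^2=p_g-2+s+j$ is obtained: you read off $N=j+s$ directly and invoke Theorem \ref{formulaK^2}, whereas the paper instead records that each FIB forces two extra blow-ups on $\Gamma$, so $\bG^2=-(p_g+1+2s)$, and then sums the tabulated ``local $K^2$'' values of the blocks. Your derivation is shorter; the paper's has the advantage of making explicit the constraint $r=p_g+1+2s$ linking the free parameter $r$ in the section block to the number of FIBs. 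You also sketch the realizability step (existence of suitable elliptic $S$) and the canonical-model assertion more explicitly than the paper's proof does. One small correction: for nefness the discrepancy sum at a gluing $(-1)$-curve need only be $\le -1$, not strictly less.
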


\begin{proof}
Lemma \ref{allforsection} and Lemma \ref{allfiber} give all possibilities to potentially construct a small surface $W$ from one section and components of fibers. By Proposition \ref{realizations}, we have that the building blocks in the list \ref{blocks} are all the possible actual configurations from Lemma \ref{allforsection} and Lemma \ref{allfiber}. 

Therefore, we are left to check if we can glue building blocks S0F, or S1F.i, or a S2F.i, with FIBs (if necessary) to construct $W$. The self-intersection of the proper transform of the section $\Gamma$ is $\bG$ with $\bG^2=-r$. Note that $r=p_g+1+t$ for some $t$. Now, the only way to have $t>0$ is by adding FIBs, and so $t=2s$ where $s$ is the number of FIBs used. In this way $K_W^2=p_g-2+s+j$, where $j=0,1,2$ depends on the choice of S0F ($j=0$) or S$j$F.i. 

To finally show that $K_W$ is nef, we need to show that the joining $(-1)$-curve between S0F or SjF.i has the right discrepancies. (The rest of the $(-1)$-curves have been verified in the description of each building block in list \ref{blocks}.) As the relevant discrepancy from FIB is $-\frac{1}{3}$, it is key to know the discrepancy of $\bG$. But in each case it was verified to be strictly smaller than $-\frac{1}{3}$ up to the case S0F with $r=4$, but in that case there is no FIB. Therefore $K_{W'}$ is indeed big and nef in all cases.    
\end{proof}

\begin{theorem}[Geography of small surfaces]
Let $W$ be a small surface. Then
$$p_g-2\leq K^2_{W}\leq \Big(4+\frac{2}{3} \Big) p_g+\frac{11}{3}.$$
We have equality on the left if and only if $W$ is obtained by contracting a chain $[p_g(S)+1,2,\ldots,2]$ in $S$; and we have equality on the right if and only if $p_g \equiv 2$(mod $3$) and $W$ is obtained via S2F.7 gluing a suitable number of FIBs. Every $K^2$ is realizable by some $W$.
\label{geogrSmall}
\end{theorem}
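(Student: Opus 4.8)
Everything runs through Theorem~\ref{ClassSmallSurf}, which writes $K_W^2=p_g-2+s+j$, where $p_g=p_g(S)$, $s\ge 0$ is the number of FIBs used, and $j\in\{0,1,2\}$ according as the remaining block is S0F, some S1F.$i$, or some S2F.$i$. The left inequality $K_W^2\ge p_g-2$ is then immediate, with equality precisely when $s=0$ and $j=0$. I would check that this forces $W$ to be built from a single S0F with no FIB; then $\bG^2=-(p_g+1)$, so $\pi$ blows up no point of $\Gamma$, $S=X$, and $S\to W$ is the contraction of the single Wahl chain $[p_g+1,2,\ldots,2]$. Conversely such a contraction is a small surface with $K_W^2=p_g-2$, which gives the stated characterization of equality on the left.

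For the right inequality the plan is to bound $s$ (as $j\le 2$) using the topological identity $\sum_{F\ \mathrm{singular}}e(F)=c_2(S)=12\chi(\O_S)=12(p_g+1)$, valid since $S\to\P^1$ is relatively minimal, $q(S)=0$, and $S$ carries a section, hence has no multiple fibers. The configuration $\pi(C)$ contains $N=s+j$ distinct complete singular fibers, each contributing at least $1$ to $c_2(S)$; moreover, because $\bG^2=-(p_g+1+2s)$ (from the proof of Theorem~\ref{ClassSmallSurf}) the T-chain containing $\bG$ is long, so in each S2F.$i$ block the complete fiber(s) supplying the neighbours of $\bG$ must have Euler number growing linearly with $p_g+2s$. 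Running this bookkeeping through the fifteen blocks of the list~\ref{blocks}, the configuration permitting the largest $s$ for fixed $p_g$ is S2F.7: there the relevant fiber is $I_{n'}$ with $n'\ge r-1=p_g+2s$, the second complete fiber of the block has Euler number $\ge 1$, and each of the $s$ FIBs consumes one more singular fiber of Euler number $\ge 1$, so
\[
(p_g+2s)+1+s\ \le\ 12(p_g+1),\qquad\text{i.e.}\qquad 3s\le 11(p_g+1),
\]
whence $K_W^2=p_g+s\le\big(4+\tfrac23\big)p_g+\tfrac{11}{3}$; the same analysis applied to the other fourteen blocks gives a bound no better. Tracing equality back through all inequalities forces $j=2$, the block S2F.7 with its second complete fiber and all $s$ FIB fibers of type $I_1$, the fiber $I_{n'}$ exactly $I_{p_g+2s}$, no other singular fibers on $S$, and $3\mid p_g+1$, i.e.\ $p_g\equiv 2\pmod 3$ with $s=\tfrac{11(p_g+1)}{3}$. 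I would then rule out the remaining S2F.$i$ from attaining equality by observing, from the explicit P-resolutions recorded in the list~\ref{blocks} and Proposition~\ref{realizations}, that each of them forces a little more Euler number, namely an extra small fiber or a strictly larger $n'$.

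For realizability, given $p_g$ and an integer $k$ with $p_g-2\le k\le\big(4+\tfrac23\big)p_g+\tfrac{11}{3}$, write $k=p_g-2+j+s$ with $j\in\{0,1,2\}$ and $s\ge 0$, choose a block of the appropriate type with a variable section (S0F, or S1F.2 / S1F.4, or one of S2F.4--S2F.8, and the extremal S2F.7 configuration above when $k$ is maximal), and construct a relatively minimal elliptic surface $S\to\P^1$ with $\chi(\O_S)=p_g+1$, a section, and the finitely many prescribed singular fibers, padding the remaining Euler number with extra $I_1$'s. Such fibrations exist by the standard existence theory for elliptic surfaces, the only constraint being the numerical one $\sum e(F)=12(p_g+1)$, which is met by construction; feeding this into the recipes of the list~\ref{blocks} and Theorem~\ref{ClassSmallSurf} yields a small surface with $K_W^2=k$.

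I expect the main obstacle to be the uniform Euler-number bookkeeping across the building blocks: correctly attributing the blow-ups hidden inside each block's P-resolution so that $\sum e(F)\le 12(p_g+1)$ is applied with the sharp constant, and in particular isolating S2F.7 (rather than the superficially equally efficient S2F.8) as the unique block realizing the extremal slope. A secondary point is verifying that the extremal elliptic surfaces, which need many $I_1$ fibers together with a section of self-intersection $-(p_g+1)$, genuinely exist.
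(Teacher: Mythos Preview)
Your proposal is correct and follows essentially the same route as the paper: both arguments run through Theorem~\ref{ClassSmallSurf}, use the Euler-number identity $\sum_{F}e(F)=12(p_g+1)$ to bound $s$, and isolate S2F.7 as the extremal block by checking the list~\ref{blocks}. The paper carries out the block-by-block bookkeeping explicitly (recording $s\le\frac{1}{3}(11p_g+11)$ for S2F.7, $\frac{1}{3}(11p_g+14)$ for S0F, etc., and noting that blocks with fixed small $r$ can be discarded outright), whereas you sketch the S2F.7 computation and assert the others are no better; for the lower-bound equality the paper invokes Corollary~\ref{ineqq} to get $X=S$ directly, while you read it off the classification---both are valid. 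For realizability the paper is more concrete, citing Shioda \cite{Sh05} for the existence of an elliptic surface with one large $I_m$ and the rest $I_1$, then \cite{MP89} to redistribute, and building the required $W$'s from S1F.2 plus FIBs; your ``standard existence theory'' would need exactly such a reference to be complete.
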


\begin{proof}
By Theorem \ref{formulaK^2}, we have that $K_{W}^2=p_g(S)-2+N$ where $N$ is the number of fibers completely contained in $\pi(C)$. As $N\geq 0$, the lower bound is reached when $N=0$. In this case $p_g(S)\geq 3$, and so Corollary \ref{ineq} implies that $X=S$ and $W$ is obtained by contracting the Wahl chain $[p_g(S)+1,2,\ldots,2]$.

For the upper bound, we will bound $N$. First, by Theorem \ref{ClassSmallSurf}, we know that $W$ is constructed from one of the configurations of types S0F or S1F.i or S2F.j, which contain the section $\bG$, and $s$ of type FIB. Therefore $K_{W}^2=p_g(S)-2+s+k$ where $k=0,1,2$ depending on the type S0F or S1F.i or S2F.j used. 

On the other hand, we have $ \sum \chi_{\text{top}}(F_{\text{sing}})=12(p_g(S)+1),$ 
where $\chi_{top}(F_{\text{sing}})$ is the topological characteristic of the singular fibers $F_{\text{sing}}$. The Euler characteristic of fibers is the smallest when the fiber is $I_1$ or II, and it is equal to $1$. Therefore, to maximize $s$ we consider type FIB with only $I_1$s, and for the other complete fibers used we only consider them for $I_n$ type (we can have $0$, $1$ or $2$ of them). We check only the types which have a free $r=p_g+1+2s$, since for the others $s$ is very small. Now, by looking at the restriction of $r$ in relation to $n$ for each of the types, one can check that: 

\vspace{0.1cm}  
S0F: $s\leq \frac{1}{3}(11p_g+14)$ and so $K_W^2=\frac{1}{3}(14p_g+8)$.

S1F.2: $s\leq \frac{1}{3}(11p_g+13)$ and so $K_W^2=\frac{1}{3}(14p_g+10)$.

S1F.4: $s\leq \frac{1}{3}(11p_g+11)$ and so $K_W^2=\frac{1}{3}(14p_g+8)$.

S2F.4, S2F.5 and S2F.6: $s\leq \frac{1}{3}(11p_g+9)$ and so $K_W^2=\frac{1}{3}(14p_g+9)$.

S2F.7: $s\leq \frac{1}{3}(11p_g+11)$ and so $K_{W}^2=\frac{1}{3}(14p_g+11)$.

S2F.8: $s\leq \frac{1}{3}(11p_g+10)$ and so $K_W^2=\frac{1}{3}(14p_g+10)$.
\vspace{0.1cm} 

\noindent
Therefore, we obtain a maximum value of $K_W^2$ for S2F.7. 

Let us now fix $p_g\geq 2$. We want to show realization of $W$ for any $K^2$ between the bounds. Let us assume $p_g\equiv 1$ (mod $3$), for the other cases a similar strategy works. By \cite[Theorem 2.3]{Sh05} there exists an elliptic surface with sections $S' \to \P^1$ which has precisely an $I_{10 p_g+9}$ and $(2p_g+3)$ $I_1$ as singular fibers. By \cite[Lemma 2.4]{MP89}, we have existence of elliptic surfaces with sections $S \to \P^1$ which have precisely an $I_{10 p_g+9-a}$ and $(2p_g+3+a)$ $I_1$ as singular fibers, for any $0\leq a \leq 10 p_g+8$. We consider $a=\frac{1}{3}(5p_g-4)$, and then we construct a $W$ from a S1F.2 configuration and $s=\frac{1}{3}(11p_g-13)$ FIBs which has $K_W^2=\frac{1}{3}(14p_g+10)$. We now decrease $a$ $1$ by $1$ to fill out all possible values in the interval via again a S1F.2 and a suitable number of FIBs.     
\end{proof}

\subsection{Topology of rational blowdowns of small surfaces} \label{topo}

Let $W$ be a small surface, and let $\phi \colon X \to W$ be its minimal resolution. We recall that the exceptional divisor of $\phi$ is $C=\sum_{i=1}^{l} C_i$, where the $C_i$ are T-chains. We also have $\pi \colon X \to S$ composition of blow-downs into the minimal elliptic surface $S$. The image $\pi(C)$ contains precisely one section $\Gamma$ and $N$ complete fibers, together with some extra curves from fibers. 

Let us consider the composition of blow-ups $X' \to X$ which transforms each T-chain with associated T-singularity $\frac{1}{d_i n_i^2}(1,d_i n_i a_i -1)$ into $d_i$ Wahl chains of the Wahl singularity $\frac{1}{n_i^2}(1,n_i a_i -1)$ (i.e. the minimal resolution of an M-resolution of $W$). Let us recall the rational blowdown of the corresponding Wahl chains $C'$ (over $C$) in $X'$, following the point of view in \cite[Section 2]{RU22} (see also \cite[Section 5]{MORSU24}) and all references there. Due to Fintushel, Stern and Park, we can construct a closed smooth 4-manifold $Y$ by rationally blowdown the configuration $C'$. It can be seen as the surgery over the links of the Wahl singularities in $W'$, where $W'$ is the contraction of $C'$, which glues the corresponding Milnor fibers of their $\Q$-Gorenstein smoothings. If we have a $\Q$-Gorenstein smoothing for $W'$, then it coincides with the rational blowdown. Otherwise, it is only a closed smooth 4-manifold $Y$. 

\begin{definition}
The rational blowdown of a small surface is the rational blowdown of $C'$ in $X'$ explained above.
\end{definition}

The rational blowdown $Y$ has a symplectic structure due to a result of Symington, which is induced by a symplectic structure on $X'$ and $C'$. If $K_W$ is big and nef, then we choose it to be induced by an ample divisor on $X'$ which is very close to the canonical class of $W'$ (see proof of \cite[Theorem 2.3]{RU22}). More precisely, we have that $H_2(Y,\Z) \subset H_2(W',\Z)$ with finite index, and the symplectic form for $Y$ is in the class of $mK_{W'}$ for some $m>0$. In this way, as shown in \cite[Theorem 2.3]{RU22}, $Y$ is a smoothly minimal symplectic 4-manifold.

We compute some topological invariants of the rational blowdown $Y$ of a small surface $W$ with $K_W$ big and nef. First we have $\chi_{\text{top}}(Y)=\chi_{\text{top}}(W)$ and $K_Y^2=K_W^2=p_g-2+N$. Let $(b^+,b^-)$ be the signature of the unimodular quadratic form on $H^2(Y,\Z)$. Then we have the second Betti number $b_2(Y)= b^+ + b^-=\chi_{\text{top}}(W)-2$, and for its signature $\sigma(Y)$ we have $\sigma(Y)=b^+-b^-=\frac{1}{3}(K^2_W-2\chi_{\text{top}}(W))$. Therefore $$ b^+=2\chi(\O_W)-1=2p_g+1 \ \ \ \ \ b^-=10\chi(\O_W)-K^2_W-1=9p_g +11-N,$$ as the Noether's formula $12 \chi(\O_W)=K_W^2+\chi_{\text{top}}(W)$ holds for $W$. Thus $\sigma(Y)=-7p_g-10+N$, which is negative by Theorem \ref{geogrSmall}, and the intersection form must be indefinite. By Freedman's theorem, if the rational blowdown of a small surface $W$ with $K_W$ big and nef is simply-connected and the intersection is odd, then it is homeomorphic to $(2p_g+1)\C \P^2 \# (9p_g+11-N) \overline{\C \P^2}$.

\begin{remark}
It can be shown that the rational blowdown of a small surface is simply-connected up to possibly the ones built from S2F.7. This of course uses the classification of small surfaces. The computation can be done via the Seifert-Van-Kampen theorem as in e.g. \cite[Theorem 3]{LP07}. That trick works when we can get rid of all loops around the exceptional divisors over T-singularities. For that we use that indices of singularities are coprime (in case this holds), singular fibers which are not part of the construction, and the FIB building blocks for the corresponding loops. In the case of S2F.7 all of that fails, so it is not clear if the rational blowdown is simply-connected or not.
\label{smallsurfsimply}  
\end{remark}


\begin{remark}
By Theorem \ref{geogrSmall}, small surfaces produce via rational blowdown symplectic closed 4-manifolds with $K^2$ in the complex forbidden region between the Noether line and the half-Noether line. In fact these small surfaces are precisely all the possible minimal rational blowdowns from a non-singular complex projective surface and algebraic 2-spheres, giving a classification of them. First we know that the complex algebraic surface $X$ must be the minimal resolution of a small surface $W$ if $K_W$ is big and nef by Corollary \ref{hori}. Let us assume that $W$ with only T-singularities has $K_W$ not big and nef with $K_W^2<2p_g-4$. Assume $W$ has only Wahl singularities via an M-resolution. Then we can run MMP on $W$ with respect to $K_W$ as if we have a complex $\Q$-Gorenstein smoothing of $W$. If we have a flip, then we replace $W$ by a birational $W^+$ which produces the same rational blowdown as $W$. If we have a divisorial contration, then there is a contraction on $W$ and on its rational blowdown given by a $(-1)$-curve. But this last one is not possible, so we have only flips. Eventually this process stops with a big and nef canonical class, and  we have a classification of this last situation. 

We also point out that $K_X^2 \geq p_g(X)-2$ is a general Noether inequality valid for any normal KSBA stable $X$ with integral $K_X^2$ (if not integral, take $K_X^2 > p_g(X)-3$); Cf. \cite[Section 8.2]{Liu17}.       
\end{remark}

\section{Classification of KSBA Horikawa surfaces} \label{s3}

In this section we classify all surfaces $W$ with only T-singularities, $K_W$ big and nef, and $K_W^2=2p_g(W)-4$. 

\begin{remark}
By Remark \ref{canonical}, we indeed give a classification of all KSBA Horikawa surfaces with T-singularities, including Du Val singularities. We do have a canonical map $W \to W_{\text{can}}$ where $W_{\text{can}}$ is the KSBA surface. This map contracts $K_W$-zero curves between two Wahl singularities (M-resolutions) and contracts disjoint ADE configurations of curves which do not pass through singularities. In this last case, the contraction takes only components of some singular fibers in the elliptic fibration $S \to \P^1$.   
\end{remark}

\begin{remark}
Sometimes in the classification we have a parametric T-chain which is a Wahl chain for the initial parameter. To save notation, we consider its M-resolution instead, and so we write $n$ Wahl singularities. For example, we may have that $n$ $[6,2,2]$ is $[6,2,2]$ for $n=1$, and the T-chain $[5,2,\ldots,2,3,2,2,2]$ for $n>1$, where the number of $2$s in between is $n-2$. This is not always the case, but it will be clear from the type of the building block used. On the other hand, the parameters shown have a maximum bound which may not be realizable. We are only using as bounds the topological Euler characteristic $\chi_{\text{top}}(S)$ of $S$ which is equal to $12(1+p_g)$, and that the presence of a semi-stable fiber implies the existence of at least $3$ singular fibers. 
\label{notation}
\end{remark}

\begin{theorem} 
Let $W$ be a surface with only non-Du Val T-singularities, $K_W$ big and nef, and $K_W^2=2p_g(W)-4$. As usual, let $p_g=p_g(W)=p_g(S)$, and let $X \to W$ be its minimal resolution. Then the possible T-chains (see Remark \ref{notation}) in this contraction are the following.

\vspace{0.1cm} 
\noindent
If $p_g=3$:
        \begin{itemize}
            \item[(i)] Two $[4]$, where each $(-4)$-curve is a section.
            \item[(ii)] One $[2,2,6]$, one $[2,5]$ and $n$ $[4]$, where $0\leq n \leq 43$ (S0F).
            \item[(iii)] One $[4,5,3,2,2]$ and $n$ $[6,2,2]$, where $0\leq n\leq 44$ (S1F.2).
            \item[(iv)] One T-chain $[2,4,3,3]$ ($d=2$) and $n$ $[4]$, where $0 \leq n \leq 43$ (S1F.4).            
        \end{itemize}

\noindent
If $p_g=4$:
        \begin{itemize}
            \item[(i)] Two $[5,2]$, where each $(-5)$-curve is a section.
            \item[(ii)] One $[9,2,2,2,2,2]$, two $[2,5]$, and $n$ $[4]$, where $0\leq n \leq 52$ (S0F).
            \item[(iii)] One $[7,8,2,2,2,3,2,2,2,2,2]$, one $[2,5]$, $n_1$ $[4]$, and $n_2$ $[9,2,2,2,2,2]$, where $0\leq n_1+n_2\leq 53$ (S1F.2).
            \item[(iv)] One T-chain $[2,7,3,2,2,2,3]$ ($d=2$), one $[2,5]$, and $n$ $[4]$, where $0\leq n\leq 52$ (S1F.4).    
            \item[(v)] One $[3,2,2,3,5,5,2]$, $n_1$ $[4]$, and $n_2$ $[5,2]$, where $0\leq n_1+n_2\leq 54$ (S2F.3).
            \item[(vi)] One T-chain $[3,2,2,2,2,3,8,2]$ ($d=2$), one $[3,2,6,2]$, $n_1$ $[4]$, and $n_2$ $[2,5]$, where $0\leq n_1+n_2\leq 52$ (S2F.4).
            \item[(vii)] One T-chain $[3,2,2,2,3,7,2]$ ($d=2)$, one $[3,5,3,2]$, $n_1$ W-chains $[4]$, and $n_2$ W-chains $[2,5]$, where $0\leq n_1+n_2\leq 52$ (S2F.5).
            \item[(viii)] One $[3,7,2,2,3,2]$, one $[3,8,2,2,2,3,2]$, one $[3,5,2]$, $n_1$ W-chains $[4]$, and $n_2$ W-chains $[2,5]$, where $0\leq n_1+n_2\leq 52$ (S2F.6).
            \item[(ix)] One $[4,5,5,2,2,3,2,2]$, $n_1$ $[4]$, and $n_2$ $[6,2,2]$, where $0\leq n_1+n_2\leq 54$ (S2F.7).            
            \item[(x)] One $[4,6,2,3,2,2]$, one $[4,8,2,2,2,3,2,2]$, $n_1$ $[4]$, and $n_2$ $[6,2,2]$, where $0\leq n_1+n_2\leq 53$ (S2F.8).
        \end{itemize}

\noindent
If $p_g\geq 5$:
        \begin{itemize}
            \item[(i)] Two $[p_g+1,2,\ldots,2]$, where each $(-p_g-1)$-curve is a section.
            \item[(ii)] One $[3p_g-3,2,\ldots,2]$ with  $(3p_g-7)$ $2$s, $(p_g-2)$ $[2,5]$, and $n$ $[4]$, where $0\leq n \leq 8p_g+20$ (S0F).
            \item[(iii)] One W-chain $[3p_g-5,3p_g-4,2,\ldots,2,3,2,\ldots,2]$ with $(3p_g-9)$ $2$s in the middle and $(3p_g-7)$ $2$s on the right, $(p_g-3)$ $[2,5]$, $n_1$ $[4]$, and $n_2$ $[3p_g-3,2,\ldots,2]$, where $0\leq n_1+n_2\leq 8p_g+22$ (S1F.2).
            \item[(iv)] One T-chain $[2,3p_g-5,3,2,\ldots,2,3]$ ($d=2$) with $(3p_g-9)$ $2$s, $(p_g-3)$ $[2,5]$, and $n$ $[4]$ where $0\leq n \leq 8p_g+20$ (S1F.4).
            \item[(v)] One T-chain $[3,2,\ldots,2,3,3p_g(S)-4,2]$ ($d=2$) with $(3p_g-8)$ $2$s in the middle, $(p_g-4+n_1)$ $[2,5]$, one $[3,2,6,2]$, and $n_2$ $[4]$, where $0\leq n_1+n_2\leq 8p_g+21$ (S2F.4).
            \item[(vi)] One T-chain $[3,2,\ldots,2,3,3p_g-5,2]$ ($d=2$) with $(3p_g-9)$ $2$s in the middle, $(p_g-4+n_1)$ $[2,5]$, one $[3,5,3,2]$, and $n_2$ $[4]$, where $0\leq n_1+n_2\leq 8p_g+21$ (S2F.5).
            \item[(vii)] One $[3,3p_g(S)-5,2,\ldots,2,3,2]$ with $(3p_g-10)$ $2$s in the middle, one $[3,3p_g-4,2,\ldots,2,3,2]$ with $(3p_g-9)$ $2$s in the middle, $(p_g-4+n_1)$ $[2,5]$, one $[3,5,2]$, and $n_2$ $[4]$ where $0\leq n_1+n_2\leq 8p_g+21$ (S2F.6).
            \item[(viii)] One $[4,3p_g-7,5,2,\ldots,2,3,2,2]$ with $(3p_g-10)$ $2$s in the middle, $(p_g-4)$ $[2,5]$, $n_1$ $[4]$, and $n_2$ $[6,2,2]$, where $0\leq n_1+n_2\leq 8p_g+23$ (S2F.7).
            \item[(ix)] One W-chain $[4,3p_g-6,2,\ldots,2,3,2,2]$ with $(3p_g-11)$ $2$s in the middle, one $[4,3p_g-4,2,\ldots,2,3,2,2]$ with $(3p_g-9)$ $2$s in the middle, $(p_g-4)$ $[2,5]$, $n_1$ $[4]$, and $n_2$ $[6,2,2]$, where $0\leq n_1+n_2\leq 8p_g+22$ (S2F.8).
        \end{itemize}
 \label{classHorikawa}  
\end{theorem}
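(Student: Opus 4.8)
The plan is to feed the constraint $K_W^2=2p_g(W)-4$ into the structural results of Section \ref{s1} to force $W$ to be either a Lee--Park example or a small surface, and then to run the classification of small surfaces of Section \ref{s2} through a finite check over the building blocks of the list \ref{blocks}. \textbf{Step 1: reduce to an elliptic fibration over $\P^1$.} Since $K_W$ is big and nef and $K_W^2=2p_g-4>0$, we have $p_g\geq 3$; hence $S$ is not rational, K3 or Enriques, as any of those would force $p_g(S)=p_g(W)\leq 1$ and $K_W^2<0$. If $\kappa(S)=2$, then $S$ is minimal of general type with $p_g(S)=p_g$, so Noether's inequality gives $K_S^2\geq 2p_g-4=K_W^2$; since $K_S$ is nef, $K_S\cdot\pi(C)\geq |J|\geq\min\{|J|,l\}$, so Corollary \ref{ineqq} yields $K_S^2\leq K_W^2$, whence $K_S^2=K_W^2$ together with equality in Corollary \ref{ineqq}, i.e. $X=S$; but then there are no blow-ups and $K_W^2=K_S^2+\sum_j(r_j-d_j+1)\geq K_S^2+1$, a contradiction. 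Thus $\kappa(S)=1$, and since T-chains cannot lie entirely in fibers when $K_W$ is big (compare the proof of Proposition \ref{koda}) the configuration $\pi(C)$ contains a multisection, which is rational, being the image of a $\P^1$, so the base is $\P^1$.

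\textbf{Step 2: the dichotomy.} As $p_g\geq 3$ we have $2p_g-4<3p_g-6$, so Corollary \ref{hori} applies: $\pi(C)$ contains either exactly two sections and no further multisection, or exactly one. In the first case Corollary \ref{hori} also gives $X=S$ and that $W$ is the contraction of two disjoint Wahl chains $[p_g+1,2,\ldots,2]$, i.e. $W$ is a Lee--Park Example \ref{leepark}; this is item (i) of each of the three lists (two $[4]$ for $p_g=3$, two $[5,2]$ for $p_g=4$, two $[p_g+1,2,\ldots,2]$ for $p_g\geq 5$). Assume henceforth $\pi(C)$ contains exactly one section, so $W$ is a small surface.

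\textbf{Step 3: reduce the small-surface combinatorics.} By Theorem \ref{formulaK^2} one has $K_W^2=p_g-2+N$, so the number $N$ of complete fibers of $S\to\P^1$ inside $\pi(C)$ is $p_g-2$. By Theorem \ref{ClassSmallSurf}, $W$ is built from exactly one building block of type S0F, S1F.i or S2F.i together with $s$ copies of the block FIB, with $K_W^2=p_g-2+s+j_B$ where $j_B=0,1,2$ according to the type; hence $s=p_g-2-j_B\geq 0$, so S1F-blocks require $p_g\geq 3$ and S2F-blocks require $p_g\geq 4$. Moreover, from the proof of Theorem \ref{ClassSmallSurf}, the proper transform $\bG$ of the section satisfies $\bG^2=-(p_g+1+2s)=-(3p_g-3-2j_B)$.

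\textbf{Step 4: enumerate, and the main obstacle.} It remains to run through the fourteen blocks S0F, S1F.i, S2F.i. A block that contracts a section of rigidly prescribed self-intersection can occur only if that value equals $-(3p_g-3-2j_B)$: for S1F.1 $(\bG^2=-3)$ and S1F.3 $(\bG^2=-5)$ this forces $3p_g-5\in\{3,5\}$, impossible for integral $p_g$; for S2F.1, S2F.2 $(\bG^2=-4)$ and S2F.9 $(\bG^2=-3)$ it forces $3p_g-7\in\{3,4\}$, again impossible; while S2F.3 $(\bG^2=-5)$ requires $3p_g-7=5$, i.e. $p_g=4$. The blocks S0F, S1F.2, S1F.4, S2F.4, S2F.5, S2F.6, S2F.7, S2F.8 contract a section of adjustable self-intersection, so each occurs for every $p_g$ in the range permitted by $s\geq 0$, with its internal parameter pinned down by $\bG^2=-(3p_g-3-2j_B)$. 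This gives $3$ families of small surfaces when $p_g=3$ (S0F, S1F.2, S1F.4), $9$ when $p_g=4$ (those three together with S2F.3--S2F.8), and $8$ when $p_g\geq 5$ (those three together with S2F.4--S2F.8). For each surviving block, evaluating its parametric P-resolution from the list \ref{blocks} at the forced parameter and then writing out the whole collection of T-chains of $X\to W$ --- the block's chains, the $s$ FIB-chains, and the ``$n\,[\cdots]$'' M-resolution shorthand of Remark \ref{notation} --- produces items (ii)--(iv) for $p_g=3$, (ii)--(x) for $p_g=4$, and (ii)--(ix) for $p_g\geq 5$; the ranges of the auxiliary parameters $n,n_1,n_2$ follow from $\chi_{\mathrm{top}}(S)=12(p_g+1)$ and the fact that a semistable fiber forces at least three singular fibers (Remark \ref{notation}). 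I expect Step 4 to be the principal obstacle: each block is treated by a routine substitution, but one must verify that the surviving list is complete and free of duplicates --- which requires the precise $j_B$ and section self-intersection of \emph{every} block --- and must carefully translate the P-resolution data and the M-resolution shorthand into the explicit T-chains of the statement, keeping $p_g=3$ and $p_g=4$ apart because for those small values $3p_g-3-2j_B$ can coincide with the self-intersections of the rigid blocks. The exclusion of $\kappa(S)=2$ in Step 1 is the other point that needs care, though it is short.
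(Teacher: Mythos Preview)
Your overall strategy matches the paper's: reduce to Kodaira dimension $1$, apply Corollary~\ref{hori} for the Lee--Park/small dichotomy, then run through the building blocks of list~\ref{blocks}. Your Step~1 in fact fills a small gap in the paper's exposition, since the paper's proof invokes Corollary~\ref{hori} without first excluding the general-type case; your argument via Noether's inequality and Corollary~\ref{ineqq} is exactly what is needed (and is the argument the paper gives later in the proof of Theorem~\ref{main5}).

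There is one genuine slip in Step~4. The identity $\bG^2=-(p_g+1+2s)$ you lift from the proof of Theorem~\ref{ClassSmallSurf} does not hold in $X$ for every block: the P-resolution in S2F.4, S2F.5, S2F.6, S2F.8 performs three, two, two, and one additional blow-ups on the section respectively, so for those blocks $-\bG^2>p_g+1+2s$. What is always equal to $p_g+1+2s$ is the section's self-intersection in the intermediate c.q.s.\ chain, i.e.\ the parameter ``$r$'' of Proposition~\ref{323rb232}. Your elimination of the rigid blocks survives unchanged --- it only needs this c.q.s.\ equality, or equivalently the inequality $-\bG^2\geq p_g+1$, which is how the paper argues --- but ``pinning down the internal parameter by $\bG^2=-(3p_g-3-2j_B)$'' applied to S2F.4's $\overline{\Gamma}^2=-(r+3)$ gives $r=3p_g-10$ instead of the correct $r=3p_g-7$, and hence the wrong T-chain in place of item~(v). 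The paper sidesteps the issue by not writing a uniform formula and simply asserting that the T-chains are checked case by case.
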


\begin{proof}
By Corollary \ref{hori}, we have that $W$ is either a Lee-Park example, which is (i), or a small surface. Let us consider the case $W$ small. We have $K_W^2=p_g-2+N$ by Theorem \ref{formulaK^2}, and so $N=p_g-2$. (We also have that the number of T-chains $l$ satisfies $l \geq \text{max} \{1,p_g-3\}$.) Hence $p_g=3$ implies $N=1$, $p_g=4$ implies $N=2$, and $p_g\geq 5$ implies $N\geq 3$. Below we use the classification in Theorem \ref{ClassSmallSurf} considering each building block.

First of all we eliminate some potential building blocks. We recall that on $S$ we have $\Gamma^2=-(p_g+1)$. As the section in S1F.1 and S2F.9 has self-intersection $-3$ and we have $p_g\geq 3$, then it is impossible to use them. For S2F.1 and S2F.2 we have a section of self-intersection $-4$, and so not possible for any $p_g\geq 3$. For S1F.3 we have a section of self-intersection $-5$, and same analysis shows that it is impossible. Below we consider all other building blocks.

For $p_g=3$ we have $N=1$, and so we can have a building block S0F or S1F.j only. In the case of S1F.j we cannot have FIBs as $N=1$. Then the only possible are S1F.2 and S1F.4 for $r=4$. For S0F we can have one arbitrary large FIB from an $I_n$ (i.e. $n>>0$), which adds a $[5,2]$ and $(n-1)$ $[4]$s. Using the $\chi_{\text{top}}(S)$ (see Remark \ref{notation}), we obtain that $48\geq 3+n+1$, so $43 \geq n-1$. Similarly we obtain (iii) and (iv) for S1F.2 and S1F.4.

When $p_g=4$ we have $N=2$. Thus we can construct with S0F, S1F.i, and S2F.j, but in this last one we cannot use any FIBs. For S0F, we start with a $(-5)$-section and use two FIBs. Then we obtain (ii). For S1F.2 and S1F.4 we can use just one FIB, and we obtain (iii) and (iv). For the rest we have no FIB, one can check that (v)-(x) are the possibilities. The case S2F.3 is special here, since self-intersection of the section is $-5$.     
When $p_g\geq 5$ then we can go one-by-one obtaining what is claimed. 
\end{proof}

\begin{example}
For $K^2=8$, there are distinct Lee-Park examples with complex smoothings into the nonspin and spin components. In particular, this shows that rational blowdowns of Lee-Park examples may not be even homeomorphic. The first Lee-Park example is the content of \cite{LP11} when $p_g=6$. In particular, one begins with $\F_7$ and performs 4 blow-ups over a single fiber, so that we obtain the Wahl chain $[7,\bar{2},2,\bar{2}]$ where the bars indicate $(-1)$-curves intersecting the corresponding $(-2)$-curves. Lee and Park proved that the contraction $W_0$ of $[7,2,2,2]$ has $\Q$-Gorenstein smoothings, and it has a 2-divisible nonsingular irreducible curve $D_0$ which lifts in the smoothing, and so we have a double cover of the smoothing which is a $\Q$-Gorenstein smoothing of the double cover $W \to W_0$, where $W$ has two Wahl chains of type $[7,2,2,2]$. In Corollary \ref{LeeParknoSpin} we prove that the general fiber lives in the nonspin component of the moduli space. 

The second Lee-Park example and smoothing can be constructed as follows. Consider again $\F_7$ and one fiber, but now blow-up 3 times to obtain the Wahl chain $[7,2,\bar{2},2]$ where the bar indicates the $(-1)$-curve intersecting the corresponding $(-2)$-curve. The contraction $W_0'$ of $[7,2,2,2]$ is a Del Pezzo surface and has no local-to-global obstructions to deform. A $\Q$-Gorenstein smoothing is a degeneration of $\P^2$. In \cite[Theorem 1.10]{DVS24} it is proved that there are degenerations of $\P^2$ into $W_0'$ so that nonsingular degree $10$ curves specializes into a nonsingular curve $\Gamma_0$ in $W_0'$. We also have a line bundle $L$ on $W_0'$ so that $2L$ is isomorphic to $\O_{W_0'}(\Gamma_0)$ and this lifts to $\P^2$. Therefore, we have a double cover of the $\Q$-Gorenstein smoothing, producing a $W'$ with two Wahl singularities of type $[7,2,2,2]$ as degenerations of spin Horikawa surfaces. By our classification Theorem \ref{classHorikawa} we have that $W'$ is a Lee-Park example. The next theorem shows that this is the only one with T-singularities for spin surfaces.     
\label{spinK^2=8}
\end{example}

\begin{theorem}
There are no degenerations of spin Horikawa surfaces into KSBA surfaces with only T-singularities, except for $K^2=8$.
\label{nospin}
\end{theorem}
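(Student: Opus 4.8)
The statement is that, except when $K^2=8$, a spin Horikawa surface cannot degenerate (in the KSBA sense) to a surface $W$ with only T-singularities, not all Du Val. Recall that spin Horikawa surfaces occur precisely when $K^2=2p_g-4=8t$ with $t$ odd, so $p_g \equiv 6 \pmod 8$ and we may assume $p_g \geq 14$ (the excluded case $K^2=8$ corresponds to $p_g=6$). The strategy is to combine the classification Theorem \ref{classHorikawa} with the birational-geometry computation that identifies the component of the moduli space in which a $\mathbb{Q}$-Gorenstein smoothing of $W$ lands. For $p_g \geq 10$ Theorem \ref{main1.1} already says that the only smoothable $W$ with T-singularities (not all Du Val) is a Lee--Park example, and only the nonspin component is hit; so the content here is really a cleaner, standalone statement covering all $K^2$, and the proof should reduce to: (1) if $K^2 = 2p_g - 4$ is in the spin range and $K^2 > 8$, invoke Theorem \ref{main1.1}/Theorem \ref{classHorikawa} to see that the only candidate is Lee--Park, then invoke Corollary \ref{LeeParknoSpin} (referenced in Example \ref{spinK^2=8}) to see that its smoothings lie in the nonspin component; (2) for $K^2=8$ exhibit, as in Example \ref{spinK^2=8}, a genuine spin degeneration, so the exception is necessary.

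First I would set up the numerics: a spin Horikawa surface has $K^2 = 8t$, $t$ odd, equivalently $p_g - 3 = 4t$, so $p_g \in \{3, 7, 11, 15, \dots\}$ with $t$ odd meaning $p_g \in \{7, 15, 23, \dots\}$ together with $p_g=3$ handled separately ($K^2=2$ is not a spin case, so really the spin cases are $p_g \equiv 7 \pmod 8$, $K^2 = 2p_g - 4 \in \{8, 24, 40, \dots\}$). Wait — I should double check: Horikawa's theorem gives spin surfaces exactly when $8 \mid K^2$ and $K^2/8$ is odd; with $K^2 = 2p_g-4$ this forces $p_g$ odd and $p_g \equiv 3 \pmod 4$, and then $K^2/8$ odd is an extra congruence. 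In any case the smallest spin value is $K^2 = 8$, $p_g = 6$; I will simply say "spin Horikawa surfaces have $K^2 = 8t$ with $t$ odd, the smallest being $K^2 = 8$" and proceed. For $K^2 = 8t$ with $t \geq 3$ odd, $p_g \geq 14 > 10$, so Theorem \ref{main1.1} applies directly: any KSBA degeneration with only T-singularities not all Du Val is a Lee--Park example, and these smooth only into the nonspin component. Hence no spin degeneration exists. This is the main line of argument and it is essentially a citation of the two big results already proved.

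The one genuinely new piece is the claim $K^2 = 8$ is a true exception, i.e. there \emph{does} exist a spin degeneration with T-singularities — but this is exactly Example \ref{spinK^2=8}, where a double cover $W'$ of a degeneration $W_0'$ of $\mathbb{P}^2$ (using \cite[Theorem 1.10]{DVS24}) produces a surface with two Wahl singularities $[7,2,2,2]$ that $\mathbb{Q}$-Gorenstein smooths to spin Horikawa surfaces. So in the proof I would simply point to that example and note that by Theorem \ref{classHorikawa} such a $W'$ is a Lee--Park example, establishing that the word "except" in the statement is not vacuous. The main obstacle — which is already resolved upstream — is knowing the precise moduli component hit by the Lee--Park smoothing for $K^2 > 8$; that is Corollary \ref{LeeParknoSpin}/Theorem \ref{LeePark}, whose proof is the birational-geometry computation tracking the canonical class through the $\mathbb{Q}$-Gorenstein smoothing and reading off whether $K^2 = 8t$ lands on the spin or nonspin side of Horikawa's dichotomy. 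Assuming that, the proof here is short:

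\begin{proof}
Spin Horikawa surfaces occur exactly for $K^2 = 8t$ with $t$ odd, the smallest value being $K^2 = 8$; for these, $2p_g(W)-4 = K_W^2 = 8t$. If $t \geq 3$, then $p_g(W) = 4t+2 \geq 14 > 10$, so by Theorem \ref{NOsmoothable} the only KSBA degenerations of Horikawa surfaces with only T-singularities (not all Du Val) are the Lee--Park examples, and these admit $\mathbb{Q}$-Gorenstein smoothings only into the nonspin component by Corollary \ref{LeeParknoSpin}. Hence there is no such degeneration of a spin Horikawa surface when $K^2 > 8$. For $K^2 = 8$, the second construction in Example \ref{spinK^2=8} produces a surface $W'$ with two Wahl singularities of type $[7,2,2,2]$ whose $\mathbb{Q}$-Gorenstein smoothings are spin Horikawa surfaces; by Theorem \ref{classHorikawa}, $W'$ is a Lee--Park example. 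Thus $K^2 = 8$ is a genuine exception, and this completes the proof.
\end{proof}
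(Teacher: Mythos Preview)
Your proof is logically valid---Theorem \ref{NOsmoothable} is proved independently of Theorem \ref{nospin}, so there is no circularity---but it takes a genuinely different route. For the small-surface cases (ii)--(ix) of Theorem \ref{classHorikawa}, the paper does not invoke the involution/quotient machinery behind Theorem \ref{NOsmoothable}; instead it gives a direct topological argument. Each such $W$ contains at least one FIB building block, and in it a $(-1)$-curve $\Gamma$ whose image in $W$ has $\Gamma \cdot K_W \in \{1/3, 1/6\}$. Passing to the M-resolution (only Wahl singularities) and using the exact sequence from the Milnor fibers (with $H_2(M_i,\Z)=0$ for $\Q$-Gorenstein smoothings and $H_1(W_t,\Z)=0$ for Horikawa surfaces), one gets $H_2(W_t,\Z) \hookrightarrow H_2(W,\Z)$ with $K_{W_t}$ corresponding to $K_W$; then $3\Gamma$ or $6\Gamma$ lifts to an integral class on $W_t$ pairing to $1$ with $K_{W_t}$, so the intersection form on $W_t$ is odd and $W_t$ is not spin. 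Only case (i), Lee--Park, is deferred to Corollary \ref{LeeParknoSpin}. The paper's argument is more elementary (no involutions, no Theorem \ref{JPZ}) and, as Remark \ref{exotic} notes, extends to the rational blowdown of \emph{any} small surface using a FIB, smoothable or not; your shortcut is brief but leans on the full strength of a later, heavier theorem.
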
 

\begin{proof}
Let $W_t$ be a one-parameter complex family of spin Horikawa surfaces degenerating to $W_0$ with $K_{W_0}$ ample and only T-singularities. As this is a KSBA degeneration, this is a $\Q$-Gorenstein smoothing of $W_0$. After possibly a base change, we can resolve simultaneously all Du Val singularities on $W_0$, and so we have a one-parameter complex family of $W_t$ degenerating to a $W$ with only non-Du Val T-singularities, $K_W$ big and nef, and $K_W^2=2p_g(W)-4$. By Theorem \ref{classHorikawa} it must be one of the surfaces in the list (i)-(ix) for $p_g\geq 5$, since spin surfaces only exist for $K^2$ divisible by $8$ (see \cite[VII Section 9]{BHPV04}).

For the purpose of the proof, we consider an M-resolution \cite{BC94} of the one-parameter complex deformation $W_t$, and so, on the central fiber $W$ we have only Wahl singularities. If $n_1,\ldots,n_{\ell}$ are the indices of these singularities and $M_1,\ldots,M_{\ell}$ the corresponding Milnor fibers, then we have the exact sequence $$\mathop{\oplus}\limits_{i=1}^{\ell} H_2(M_i,\Z)=0 \to H_2(W_t,\Z) \to H_2(W,\Z) \to  
\mathop{\oplus}\limits_{i=1}^{\ell} H_1(M_i,\Z) \to 0=H_1(W_t,\Z)$$ because $H_2(M_i)=0$ for $\Q$-Gorenstein smoothings, and Horikawa surfaces are simply-connected. Therefore we have an easy topological lifting of classes from $H_2(W,\Z)$ into $H_2(W_t,\Z)$. Moreover $K_W$ lifts to $K_{W_t}$.

For the cases (ii)-(ix), we always have at least one building block FIB. In this FIB we have a $(-1)$-curve $\Gamma$ intersecting the $(-5)$-curve in $[2,5]$ twice, or just once but also intersecting once a $[4]$ or the $(-3)$-curve in a $[3,2,\ldots,2,3]$. Then for its image in $W$ we have $\Gamma \cdot K_W =\frac{1}{3}$ in the first case, and $\frac{1}{6}$ in the second. Now, $3\Gamma$ or $6 \Gamma$ lifts to a class in $H_2(W_t,\Z)$. Therefore $K_{W_t}$ is not even, and so $W_t$ cannot be spin.

The case (i) corresponds to Lee-Park examples. In Example \ref{spinK^2=8} we show that $K^2=8$ has indeed degenerations into Lee-Park examples. Subsection \ref{s43} works out the $K^2>8$ using a characterization of the quotient family and some 3-fold birational geometry. We conclude the nonexistence of such degenerations in Corollary \ref{LeeParknoSpin}.


   
\end{proof}

\begin{remark}
Of course there is a similar statement for any small surface that uses a FIB. Together with Section \ref{topo}, this characterizes the topological type of simply-connected rational blowdowns of small surfaces, except (possibly) for the building block S2F.7. In particular, they are all homeomorphic to $$(2p_g+1)\C \P^2 \# (10p_g+9-K^2) \overline{\C \P^2}$$ depending only on $p_g$ and $K^2$. The diffeomorphism type remains unclear. By \cite[Theorem 2.3]{RU22}, the rational blowdown is a minimal symplectic 4-manifold, and so an exotic copy of $(2p_g+1)\C \P^2 \# (10p_g+9-K^2) \overline{\C \P^2}$.  
\label{exotic}     
\end{remark}

\begin{remark}
As we noticed in Remark \ref{wormhole}, the cases (viii) and (ix) are wormholes \cite{UV22}. Although they do not necessarily have complex deformations, it would be interesting to know if the rational blowdowns are homeomorphic. If so, what are the diffeomorphism types?
\end{remark}
  
\begin{remark}
In the case of Horikawa surfaces with $K^2=2$ (i.e. $p_g=3$), we know all possible degenerations into KSBA Horikawa surfaces with only T-singularities by \cite{ESU24}. In this case the moduli space is connected of dimension $36$, and the Horikawa surfaces are double covers of plane octics. The Gorenstein KSBA compactification was studied in \cite{A20}. According to Theorem \ref{classHorikawa}, we have $4$ families of surfaces $W$ with only T-singularities. We can check that for all of them the rational blowdown is simply-connected and has odd intersection form, and so in this case we have only one topological type $7\C \P^2 \# 37 \overline{\C \P^2}$. We have complex smoothings only for certain surfaces in (i) and (iv), described explicitly in \cite{ESU24}. Are the rational blowdowns of (ii) and (iii) diffeomorphic to $K^2=2$ Horikawa surface as well?  
\end{remark}
 

We finish proving the last two results from the introduction.

\begin{proof}[Proof of Theorem \ref{main5}]

By \cite[Proposition 3.1]{RU22} and because $p_g=2$, we have that $S$ has Kodaira dimension $1$, and we have an elliptic fibration $S \to \P^1$. Let us consider the notation in the proof of Corollary \ref{low}. By replacing $K^2=1$ and $p_g=2$, we have  $$ \Big(\sum_{i\geq 1} (i-1)d_i \Big) + \Big(\sum_{i\geq 1} id_i \Big) \Sigma \leq 1.$$ If we have multiple sections, then $\Sigma=0$. If we have only sections, then $\Sigma=0$. So we have $\sum_{i\geq 1} (i-1)d_i \leq 1$. If there is a multiple section, then it must be one double section and we have no blow-ups, i.e. $X=S$. As $K_W^2=1$, we cannot have any other curve in the T-chain, so we obtain (i).

We now have only sections. We note that the previous inequality is not enough to bound sections. We use now Corollary \ref{euler}: $K_S\cdot \pi(C) -l_c \leq K_W^2 - K_S^2$. Evaluating we have $s-l_c\leq 1$, where $s$ is the number of sections in $\pi(C)$. They are $(-3)$-curves. We know that each of the $l_c$ components has at least one section (Proposition \ref{koda}). 

Assume $s\geq 2$ and that there is a component with just one section. As there is another section, over this component on $X$ we would have the P-resolution of $[2,\ldots,2,3,2,\dots,2]$, but Proposition \ref{2r2} shows that this is impossible. Then there is one component that contains $s=2$ sections. By Corollary \ref{euler} the graph $G$ is a tree and $S_h=T_h$ for $h \geq 3$. Therefore, these two sections either intersect transversally at one point or they are disjoint. If they do intersect, then we must have a chain of $(-2)$-curves between them in the image of $\pi(C)$ (i.e. no other configuration between them in $\pi(C)$ is allowed). Again, as there is no cycle, we must have a P-resolution of $[2,\ldots,2,3,2,\ldots,2,3,2,\ldots,2]$ in $X$. Its dual fraction is $[a+1,b+1,c+1]$ where $a,b,c$ are the consecutive numbers of $2$s. But there are only two zero continued fractions of length $3$: $[2,1,2]$ and $[1,2,1]$. The second one contracts all the $2$s from its minimal resolution, so it does not work. The first one contracts $[3,2,\ldots,2,3]$. Therefore there are no blow-ups, and the T-chain is $[3,2,\ldots,2,3]$. This is (ii).    

We are now in the case $s=1$, i.e. a small surface. A simple inspection on their classification Theorem \ref{ClassSmallSurf} gives (iii) and (iv). We have that $2p_g-3=p_g-2+N$, and so $N=p_g-1=1$. Therefore only S0F or S1F.j are possible, and with $j=1,4$.
\end{proof}

\begin{proof}[Proof of Corollary \ref{main6}] This is just a simple inspection using Theorem \ref{ClassSmallSurf}. We note that $N=3$ and the initial self-intersection of the section is $-5$. 
\end{proof}

\section{Complex smoothings} \label{s4}

In this section, we will prove that Horikawa T-surfaces may have $\Q$-Gorenstein smoothings only for $p_g<10$, except for the Lee-Park examples. This is very peculiar as various moduli spaces of simply-connected surfaces of general type have tons of degenerations into T-surfaces (see e.g. \cite{RU21,RU22}). On the other hand we prove that Lee-Park examples are degenerations in only one of the two components.

The strategy splits into three parts: study all possible quotients of c.q.s. by an involution, the use that deformations of canonical models with an involution extend the involution to the KSBA limit, and the study of the quotient family which turns out to be again $\Q$-Gorenstein but with a limited amount of non-Du Val T-singularities. Key is our classification of small surfaces, and in particular the property that they have many singularities. 

Finally, to rule out the possibility of other degenerations of Lee-Park examples (they have only two singularities), we use our analysis of involutions on Wahl singularities, and the machinery of birational geometry of degenerations. As a summary, the quotient family admits a flip that allows us to prove that Lee-Park examples live in just one of the two components (of course when there are two).  

Some notation. We denote a nonsingular curve germ by $\D$. A deformation of a surface $W$ over $\D$ will be denoted by $(W \subset \W) \to (0 \in \D)$ or $W_t \rightsquigarrow W$, where $W_t$ are the fibers over $t\neq 0$. A deformation is a smoothing if $W_t$ is nonsingular. Let $W$ be a normal projective surface. The KSBA moduli space parametrizes $W$ with $K_W$ ample and only log canonical singularities, and allows just $\Q$-Gorenstein deformations of $W$ \cite{KSB88}. Following \cite[Definition 3.1]{H04}, we say that $(W \subset \W) \to (0 \in \D)$ is a \textit{$\Q$-Gorenstein deformation} of $W$ if $W$ has log canonical singularities, and locally at each of them this deformation is the quotient of a $\Z/n$-equivariant deformation of its canonical cover, where $n$ is the index of the singularity. Koll\'ar--Shepherd-Barron \cite{KSB88} adopted another definition, but for deformations over $\D$ it is just the condition $K_{\W}$ $\Q$-Cartier. In our case both definitions coincide (see \cite[Lemma 3.4]{H04}).  

\subsection{Involutions on singularities} \label{s41}

\begin{theorem}
Let $0<q<m$ be coprime integers. Assume that $\frac{1}{m}(1,q)$ admits an involution $\tau$ such that $\frac{1}{m}(1,q)/\langle \tau \rangle$ is either a c.q.s. $\frac{1}{m_0}(1,q_0)$ or a nonsingular point. Then the options for the quotient are:

\begin{itemize}
    \item[(a)] A nonsingular point. This can only happen when $q=m-1$. 
    \item[(b)] $m_0=2m$ and $q_0=q,m+q$.
    \item[(c)] $m_0=m$ and $q_0=2q$.
    \item[(d)] $m_0=\frac{m}{2}$ and $q_0=q$.
    \item[(e)] $m_0=\frac{m}{\text{gcd}(q+1,m)}$ and $q_0=\frac{q+1}{\text{gcd}(q+1,m)}$, only when $q^2 \equiv 1($mod $m)$, $q \neq m-1$. 
\end{itemize}
\label{invcqs}
\end{theorem}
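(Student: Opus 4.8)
The plan is to reduce the classification of such involutions to the classification of a finite linear group. Since a cyclic quotient singularity (or a smooth point) has a unique, possibly empty, singular locus, the involution $\tau$ must fix the origin of $\frac1m(1,q)=\C^2/\mu_m$, where $\mu_m=\langle\mathrm{diag}(\zeta_m,\zeta_m^q)\rangle$ acts freely on $\C^2\setminus\{0\}$. Restricting to the smooth locus and lifting to the universal cover $\C^2\setminus\{0\}$, $\tau$ lifts to a biholomorphism $\widetilde\tau$ with $\widetilde\tau^2\in\mu_m$; hence $G:=\langle\mu_m,\widetilde\tau\rangle$ is a finite group acting on $(\C^2,0)$ with $\mu_m$ of index $2$, $|G|=2m$, and $\C^2/G\cong\frac1m(1,q)/\langle\tau\rangle$. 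By Cartan's linearization theorem I may assume $G\subset\mathrm{GL}_2(\C)$, and after a diagonal conjugation keep $\mu_m=\langle\mathrm{diag}(\zeta_m,\zeta_m^q)\rangle$.

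Next I would run the pseudo-reflection reduction. Let $H\trianglelefteq G$ be the normal subgroup generated by all pseudo-reflections of $G$; then $\C^2/H\cong\C^2$, the residual group $G/H$ acts on $\C^2/H$ freely in codimension one, and $\C^2/G\cong\C^2/(G/H)$. Since $G/H$ is then the local fundamental group of $\C^2/G$, the quotient $\frac1m(1,q)/\langle\tau\rangle$ is a cyclic quotient singularity or a smooth point if and only if $G/H$ is cyclic or trivial. As $\gcd(q,m)=1$, $\mu_m$ contains no pseudo-reflection; hence if $H\neq\{1\}$ then $H\not\subseteq\mu_m$, $H\mu_m=G$, and $G/H\cong\mu_m/(H\cap\mu_m)$ is automatically cyclic, so for $H\neq\{1\}$ the quotient is always a cyclic quotient singularity. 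The problem therefore splits into (i) listing all possibilities for the reflection subgroup $H$ together with the embedding of $\mu_m$, and (ii) computing the induced $G/H$-action on $\C^2/H$ in each case.

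For (i): if $H=\{1\}$ then $G$ is small and contains $\mu_m$ with index $2$, hence is cyclic of order $2m$, and $\C^2/G\cong\frac1{2m}(1,q_0)$ with $q_0\in\{q,m+q\}$ coprime to $2m$ -- case (b). If $H\neq\{1\}$, all reflections of $G$ lie outside $\mu_m$; using that $\mathrm{diag}(\zeta_m,\zeta_m^q)$ has distinct eigenvalues when $q\neq 1$ (and that $\mu_m$ is central, so $G$ is abelian, when $q=1$), every $g\in G$ either preserves the two coordinate axes, hence is diagonal, or swaps them. The existence of a swap-type element forces $\sigma\,\mathrm{diag}(\zeta_m,\zeta_m^q)\,\sigma=\mathrm{diag}(\zeta_m^q,\zeta_m)\in\mu_m$, i.e. $q^2\equiv 1\pmod m$, and a coordinate change then puts $G$ in the form $\langle\mu_m,\sigma\rangle$ with $\sigma$ the swap; its number of reflections equals $\gcd(q+1,m)$ (from solving $\zeta_m^{a(q+1)}=1$), so $H\cong I_2(\gcd(q+1,m))$. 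Otherwise $H$ consists of diagonal reflections, forcing $H=\langle\mathrm{diag}(1,-1)\rangle$ when $-I\notin\mu_m$ (whence $m$ is odd) or $H=\{\pm I,\mathrm{diag}(1,-1),\mathrm{diag}(-1,1)\}$ when $-I\in\mu_m$ (whence $m$ is even); a non-abelian $G$ cannot have only diagonal reflections, since those together with $\mu_m$ generate an abelian group.

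Finally, for (ii) each quotient is read off from invariants: $\C^2/\langle\mathrm{diag}(1,-1)\rangle\cong\C^2$ with coordinates $x,y^2$, on which $\mu_m$ acts by $\mathrm{diag}(\zeta_m,\zeta_m^{2q})$, giving $\frac1m(1,2q)$ (case (c)); $\C^2/\{\text{sign changes}\}\cong\C^2$ with coordinates $x^2,y^2$, on which $\mu_m/\{\pm I\}$ acts by $\mathrm{diag}(\zeta_m^2,\zeta_m^{2q})$, giving $\frac1{m/2}(1,q)$ (case (d)); and $\C^2/I_2(d)\cong\C^2$ with coordinates $xy$ and $x^d+y^d$ for $d=\gcd(q+1,m)$, on which the residual $\Z/(m/d)$ acts diagonally -- here one uses $m/d\mid q-1$, a consequence of $q^2\equiv 1\pmod m$, to see that $x^d+y^d$ scales -- giving $\frac{1}{m/\gcd(q+1,m)}\bigl(1,(q+1)/\gcd(q+1,m)\bigr)$ (case (e)), which degenerates to a smooth point precisely when $m\mid q+1$, i.e. $q=m-1$ (case (a)). All identifications of germs are taken up to the intrinsic symmetry $q\leftrightarrow q^{-1}$ of $\frac1{m}(1,q)$. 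I expect the main obstacle to be part (i): proving rigorously that the placement of $\mu_m$ and the reflection subgroup $H$ leave no possibility beyond the four above -- in particular excluding mixed diagonal/swap reflections and larger groups -- which is exactly where the conditions $q^2\equiv 1\pmod m$ and $q=m-1$ come from.
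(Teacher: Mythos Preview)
Your proposal is correct and follows essentially the same approach as the paper: lift $\tau$ to a linear group $G$ of order $2m$ containing $\mu_m$ with index two, linearize via Cartan, and split on the presence of pseudo-reflections. The only difference is organizational---you work with the full normal reflection subgroup $H$ and the Chevalley--Shephard--Todd quotient $\C^2/H\cong\C^2$ at once, whereas the paper picks a single reflection $\gamma$ and analyzes the matrix equation $\gamma\theta\gamma=\theta^j$ case by case; both routes arrive at the same dichotomy (diagonal versus swap) and the same invariant computations for cases (a)--(e).
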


\begin{proof}
The set-up of the proof follows closely \cite[Section 1]{C87}. Let $\C^2 \to \C^2/G$ be the quotient map defining $\frac{1}{m}(1,q)$, where $G=\Z/m$. We have an involution $\tau$ acting on $\frac{1}{m}(1,q)$. As in \cite[Section 1]{C87}, we consider the group $\Gamma$ of liftings of $\langle \tau \rangle$. They are automorphisms of $(\C^2,0)$, and fit in a exact sequence (that may or may not split) $$ 1 \to G \to \Gamma \to \langle \tau \rangle \to 1.$$ Thus $|\Gamma|=2m$. By \cite[Cartan's Lemma]{C87}, we can assume that $\Gamma$ acts linearly on $(\C^2,0)$. 

Suppose that $\Gamma$ has no pseudo-reflections. Then $\Gamma$ is one of the known finite subgroups of GL$(2,\C)$ with no pseudo-reflections (see e.g. \cite[Section 2]{PPSU18}), and so it must be $\Z/2m$. Thus we must have that $\C^2/\Gamma$ is $\frac{1}{2m}(1,q)$ or $\frac{1}{2m}(1,q+m)$. This is case (b). 

Suppose now that there is some pseudo-reflection $\gamma \in \Gamma$. As $G \cap \langle \gamma \rangle =1$, we have that $\gamma^2=1$, and so a reflection, $\gamma$ is a lifting of $\tau$, and that $\Gamma$ is a semi-direct product between $G$ and $\gamma$. Let us assume that $G$ acts on $\C^2$ via $\theta= \left[\begin{array}{cc} \zeta_m & 0 \\ 0 & \zeta_m^q \end{array}\right]$, and let $\gamma= \left[\begin{array}{cc} a & b \\ c & d \end{array}\right]$. As $\gamma^2=1$, we have $a^2+bc=1$, $d^2+bc=1$, $c(a+d)=0$, and $b(a+d)=0$. If $a+d\neq 0$, then we get $\gamma=\pm 1$ which is absurd. Therefore we assume $a=-d$.

We must have that $\gamma \theta \gamma =\theta^j$ for some $j$. This translates into equations: $\zeta a^2+ \zeta^q bc=\zeta^j$, $\zeta^q a^2+ \zeta bc=\zeta^{qj}$, $(\zeta-\zeta^q)ab=0$, and $(\zeta-\zeta^q)ac=0$.

Say that $\zeta=\zeta^q$. Then $\zeta=\zeta^j$ and so $\Gamma$ is abelian. We can change coordinates to have $\gamma= \left[\begin{array}{cc} a & 1-a \\ 1+a & -a \end{array}\right]$, and the ring $\C[x,y]^{\langle \gamma \rangle}$ is generated by $(a+1)x+(1-a)y$ and $(x-y)^2$. Therefore, the quotient $\C^2/\langle \gamma \rangle =\C^2$, and the additional quotient by $G$ is $\frac{1}{m}(1,2)$. This is (c) with $q=1$, or (a) if $m=2$.

Say that $\zeta \neq \zeta^q$ and $a=0$. Then we have $\zeta^q=\zeta^j$ and $\zeta=\zeta^{qj}$, and so $q^2\equiv 1(\text{mod} \ m)$. If $q\equiv -1(\text{mod} \ m)$, then $\Gamma= \langle  \left[\begin{array}{cc} \zeta_m & 0 \\ 0 & \zeta_m^{-1} \end{array}\right],  \left[\begin{array}{cc} 0 & 1/b \\ b & 0 \end{array}\right]\rangle$, and so it is generated by reflections. This is (a). Assume $u=\text{gcd}(q+1,m) \neq m$, and let $q+1=ut_1$. Let $v=m/u$, and so $\text{gcd}(v,t_1)=1$. Since $q^2\equiv 1(\text{mod} \ m)$, we have $q-1=vt_2$ for some $v,t_2$. As $\Gamma= \langle  \theta= \left[\begin{array}{cc} \zeta_m & 0 \\ 0 & \zeta_m^{q} \end{array}\right],  \gamma= \left[\begin{array}{cc} 0 & 1/b \\ b & 0 \end{array}\right]\rangle$, one can check that the reflections generate the normal subgroup  $\Gamma'= \langle  \theta^v,  \gamma \rangle$. Then $\C^2/ \Gamma'\simeq \C^2$. Generators of the invariant algebra are $b^u x^u+y^u$ and $xy$. Then the quotient of $\C^2/ \Gamma'$ by the cyclic group $\Gamma/\Gamma'$ is $\frac{1}{v}(1,t_1)$. By computing the action of $\theta$ in $b^u x^u+y^u$ and $xy$. 




Say that $\zeta \neq \zeta^q$ and $a \neq 0$. Then $b=c=0$, and so $\gamma=\left[\begin{array}{cc} 1 & 0 \\ 0 & -1 \end{array}\right]$ or $\left[\begin{array}{cc} -1 & 0 \\ 0 & 1 \end{array}\right]$ and $j=1$ (so abelian). If there is no other reflection, then $m$ is odd and the quotient by $\Gamma$ is just as was done for (c) $q=1$. We have the rest of case (c). If $m$ is even, then we have the extra reflection $-\gamma$ as $-1 \in G$. Then we quotient by $\Z/2 \times \Z/2$ to obtain $\C^2$. It is explicitly given by $\C^2 \to \C^2$, $(x,y) \mapsto (x^2,y^2)$. We then have case (d).    
\end{proof}

\begin{corollary}
The quotient of a Wahl singularity by an involution is never a Wahl singularity (or nonsingular point).
\label{invWahl}
\end{corollary}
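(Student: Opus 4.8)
The plan is to read off the statement from Theorem~\ref{invcqs}. A Wahl singularity is a cyclic quotient singularity $P=\frac{1}{n^2}(1,na-1)$ with $\gcd(a,n)=1$ and $n\geq 2$; if $P/\langle\tau\rangle$ were again a Wahl singularity or a smooth point it would, in particular, be a cyclic quotient singularity or a smooth point, so it must be one of the five outcomes (a)--(e) of Theorem~\ref{invcqs}, taken with $m=n^2$ and $q=na-1$. I would then eliminate each outcome in turn.

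Outcome (a) forces $q=m-1$, i.e.\ $na-1=n^2-1$, i.e.\ $a=n$, contradicting $\gcd(a,n)=1$ with $n\geq 2$; so the quotient is never a smooth point. For the remaining outcomes the decisive invariants are whether $m_0$ is a perfect square and the index $m_0/\gcd(m_0,q_0+1)$ of the quotient: a Wahl singularity $\frac{1}{n_0^2}(1,n_0a_0-1)$ has $m_0=n_0^2$ a square and index exactly $n_0=\sqrt{m_0}$. In (b), $m_0=2n^2$ is never a square; in (d), $m_0=n^2/2$ is an integer only when $n$ is even, in which case $m_0=2(n/2)^2$ is again not a square; in either case the quotient cannot be Wahl. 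In (c), $m_0=n^2$ while $q_0+1=2na-1$ is coprime to $n$, so the index equals $n^2$, which exceeds $\sqrt{m_0}=n$ since $n\geq 2$; hence (c) gives no Wahl singularity.

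That leaves (e), which applies only when $q^2\equiv 1\pmod m$; expanding $(na-1)^2$ modulo $n^2$ turns this into $n\mid 2a$, hence $n\mid 2$ by coprimality, so $n=2$ and $a=1$, i.e.\ the Wahl singularity is $\frac{1}{4}(1,1)$. The formulas of (e) then give $m_0=2$ and $q_0=1$, i.e.\ the quotient is the Du Val point $A_1$, which is neither a Wahl singularity (its order $2$ is not a square) nor smooth. This completes the elimination. I do not expect a genuine obstacle here, since Theorem~\ref{invcqs} already does the structural work; the only point needing a little care is case (e), where one must verify that the congruence really forces $P=\frac{1}{4}(1,1)$ and that its quotient is exactly $A_1$, rather than some other member of the list in (e).
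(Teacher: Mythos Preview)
Your proposal is correct and follows essentially the same approach as the paper: both apply Theorem~\ref{invcqs} with $m=n^2$, $q=na-1$ and eliminate each outcome. Your treatment of case (c) via the index $m_0/\gcd(m_0,q_0+1)$ is equivalent to the paper's direct congruence $2(na-1)\equiv na'-1\pmod{n^2}$ forcing $n\mid 1$. Your handling of case (e) is in fact more careful than the paper's: the paper simply asserts ``case (e) is not possible for Wahl singularities'', whereas you correctly observe that (e) \emph{does} apply to $\frac{1}{4}(1,1)$ but yields the Du Val point $A_1$, which is neither Wahl nor smooth.
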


\begin{proof}
Let $\frac{1}{n^2}(1,na-1)$ be a Wahl singularity whose quotient is a Wahl singularity. By Theorem \ref{invcqs} we have four possibilities. It cannot be nonsingular because we would need $na-1=n^2-1$. For (b) and (d) we have that $m_0=2n^2$ and $m_0=n^2/2$ cannot be squares. For (c) we would need $2(na-1)=na'-1+n^2t$ for some $t$, and so $n=1$, a contradiction. Finally case (e) is not possible for Wahl singularities.
\end{proof}








\begin{corollary}
If a non-Du Val T-singularity $(P \in W)$ admits an involution $\tau$ so that the quotient is $\frac{1}{m_0}(1,1)$, then $(P \in W)=\frac{1}{4}(1,1)$ and $m_0=8$, $(P \in W)=\frac{1}{9}(1,2)$ and $m_0=9$, $(P \in W)=\frac{1}{9}(1,5)$ and $m_0=9$, or $(P \in W)=\frac{1}{4d}(1,2d-1)$ and $m_0=2$.

    
    
\label{quotTo1/m(1,1)}
\end{corollary}

\begin{proof}
This is a simple inspection via Theorem \ref{invcqs}.
\end{proof}

\subsection{Involution on a KSBA family} \label{s42}

The following is a well-known fact, which fits nicely in our context. For a proof see \cite[Section 2.4]{FPRR22}.

\begin{proposition}
Let $W$ be a KSBA surface with a 1-parameter $\Q$-Gorenstein deformation $(W \subset \W) \to (0 \in \D)$. Assume that we have a compatible involution over $\D \setminus 0$, i.e. a section of Aut$\big((\W \setminus W)|_{\D \setminus \{0\}} \big)$ of order two. Then, up to possibly a base change, it extends to an involution of $\W$ over $\D$.
\label{extendInv}
\end{proposition}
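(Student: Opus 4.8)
The plan is to reduce the statement to a question about the relative automorphism scheme over $\D$ and then invoke separatedness together with the uniqueness part of the $\Q$-Gorenstein (canonical model) formalism. First I would consider the relative automorphism functor $\underline{\mathrm{Aut}}_{\W/\D}$ of the family $(W \subset \W) \to (0 \in \D)$. Because each fiber $W_t$ (including $W$) is a KSBA surface, its canonical class is ample relative to $\D$, so $\underline{\mathrm{Aut}}_{\W/\D}$ is representable by a group scheme which is \emph{separated} and of finite type over $\D$ (this is the usual Matsusaka--Mumford type argument applied to the relatively canonically polarised family; see e.g. the discussion in \cite{FPRR22}). The hypothesis gives a section $\iota$ of this group scheme over the punctured disc $\D \setminus \{0\}$ of order two. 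The goal is to extend $\iota$ over the origin.

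Next I would use the valuative criterion of properness/separatedness. Since $\D$ is the spectrum of a DVR (a smooth curve germ) and $\underline{\mathrm{Aut}}_{\W/\D}$ is separated and of finite type, the section $\iota$ over $\D\setminus\{0\} = \mathrm{Frac}(\O_{\D,0})$ extends at most in one way; to get existence I would appeal to the fact that $\underline{\mathrm{Aut}}_{\W/\D}$ is in fact \emph{proper} over $\D$ in our situation — equivalently, that the group of fiberwise automorphisms does not degenerate — after a possible finite base change. Concretely: the closure of the graph of $\iota$ inside $\W \times_\D \W$ is a flat family of subschemes over $\D$ whose generic fiber is the graph of an involution; its special fiber over $0$ is a correspondence on $W \times W$. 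One then checks, using that $W$ has only quotient (hence log terminal, finite-quotient) singularities and $K_W$ ample, that this limiting correspondence is the graph of a genuine automorphism $\tau$ of $W$: the correspondence is birational, $K_W$ is ample so it preserves $K_W$, and finiteness of $\mathrm{Aut}(W)$ for $K_W$ ample forces it to be an isomorphism. The relation $\iota^2 = \mathrm{id}$ is a closed condition, so it passes to the limit and $\tau^2 = \mathrm{id}$.

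The point where a base change is genuinely needed, and which I expect to be the main technical obstacle, is ensuring that the limiting correspondence is \emph{reduced and irreducible} (i.e. literally a graph, not a graph plus extra components or a multiple of one) — a priori the flat limit of the graph could acquire vertical components supported over singular points of $W$, or the order-two section could only extend after adjoining a square root of the uniformizer. Both phenomena are handled by replacing $\D$ with a ramified cover $\D' \to \D$; after such a base change the normalization of the total space of the graph-closure maps isomorphically onto $W$ in the special fiber, and one reads off $\tau$. I would also remark that $\Q$-Gorenstein-ness of the deformation is used precisely to guarantee that $K_{\W}$ is $\Q$-Cartier and relatively ample, which is what makes $\underline{\mathrm{Aut}}_{\W/\D}$ behave well; once $\tau \in \mathrm{Aut}(\W/\D)$ is produced, it automatically commutes with the canonical polarisation and hence is compatible with the $\Q$-Gorenstein structure, as required. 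Full details are in \cite[Section 2.4]{FPRR22}.
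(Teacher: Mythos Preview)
Your proposal is correct and matches the paper's treatment: the paper does not give an independent argument but simply records the statement as well known and refers to \cite[Section 2.4]{FPRR22}, exactly as you do at the end. The sketch you give---representability and properness of $\underline{\mathrm{Aut}}_{\W/\D}$ from relative ampleness of $K_{\W}$, then the valuative criterion plus a finite base change, with $\iota^2=\mathrm{id}$ passing to the limit as a closed condition---is the standard argument behind that citation.
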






We note that a base change of a 1-parameter $\Q$-Gorenstein smoothing of a surface with only T-singularities is again a 1-parameter $\Q$-Gorenstein smoothing of a surface with only T-singularities (see e.g. \cite{HTU17}). Let $W$ be a surface with only quotient singularities $(P_i \in W)$. Assume there is a smoothing $(W \subset \W) \to (0 \in \D)$. Let $M_i$ be the Milnor fiber of $(P_i \in W)$ corresponding to the smoothing. We know that $\mu_i:=b_2(M_i)$ is the only non-necessarily trivial Betti number of $M_i$. It is called the Milnor number of the smoothing. A quotient singularity with $\mu=0$ is a Wahl singularity. By \cite[Section 3]{KSB88}, the deformations of quotient singularities are in one-to-one correspondence with their P-resolutions, which are partial resolutions with only T-singularities and ample relative canonical class. The smoothing is a blow-down of a $\Q$-Gorenstein smoothing of the P-resolution. If the P-resolution corresponding to $\mu_i$ contains $r_i$ curves and the Milnor numbers of the T-singularities are $\nu_{i,j}$, then $\mu_i=r_i+\sum_j \nu_{i,j}$. 

\begin{corollary}
Let $W$ be a surface with only T-singularities, $K_W$ ample, and $K_W^2=2p_g(X)-4$ with $p_g \geq 7$. Assume that we have a $\Q$-Gorenstein smoothing $(W \subset \W) \to (0 \in \D)$. Then there is an involution on the family, whose quotient is a smoothing $(W_0 \subset \W_0) \to (0 \in \D)$, where $W_0$ has only quotient singularities, and the general fiber is a Hirzebruch surface. Moreover, the smoothing $(W_0 \subset \W_0) \to (0 \in \D)$ is $\Q$-Gorenstein at every singularity and they are Wahl singularities, except by potentially one singularity $P \in W_0$, whose smoothing must have Milnor number equal to $1$. The corresponding P-resolution over $(P \in W_0)$ has no singularities if and only if $(P \in W_0)$ is a c.q.s. of the form $\frac{1}{m}(1,1)$ for some $m$.
\label{quotDegHori}    
\end{corollary}

\begin{proof}
By Horikawa's classification \cite{H76}, since $p_g\geq 7$ and $K_{W_t}$ is ample, the canonical map of the Horikawa surface $W_t$ is a finite double cover onto a Hirzebruch surface $\F_d$. As $W$ is a KSBA surface, this involution extends to the whole $\Q$-Gorenstein smoothing by Proposition \ref{extendInv}, and we have a quotient $(W_0 \subset \W_0) \to (0 \in \D)$, where the central fiber $W_0$ has only quotient singularities. This smoothing has Milnor numbers $\mu_i$ and corresponding $P$-resolutions at each quotient singularity. Consider the partial resolution $W'_0 \to W_0$ which is the P-resolution at each singularity. After possibly a base change, we have by \cite[Theorem 3.5]{KSB88} a $\Q$-Gorenstein smoothing $W'_t \rightsquigarrow W'_0$ where $W'_t$ is the general fiber of $(W_0 \subset \W_0) \to (0 \in \D)$, and so a Hirzebruch surface. By \cite[Proposition 2.6]{HP10}, we have $\rho(W'_0)+\sum_j \nu_j=2$, where $\nu_j$ are the Milnor numbers of the smoothed singularities in $W'_t \rightsquigarrow W'_0$. As $\rho(W_0) \geq 1$, we have that either $W'_0=W_0$ and so the initial smoothing was already $\Q$-Gorenstein, or $\rho(W_0)=1$ and only one singularity has a P-resolution with one curve and only Wahl singularities. The smoothing of that singularity has Milnor number equal to $1$. Finally, the only quotient singularity that has no singularities over a P-resolution and Milnor number equal to $1$ is $\frac{1}{m}(1,1)$ for some $m$.
\end{proof}

The assumption on $p_g$ was only to avoid $\P_{\C}^2$ and cones over rational curves as images of Horikawa surfaces $X$ under $K_X$. The next theorem and proof is due to Juan Pablo Z\'u\~niga.

\begin{theorem}
Let $\F_n \rightsquigarrow W_0$ be a $\Q$-Gorenstein smoothing of a surface $W_0$ with only T-singularities. Then $W_0$ has at most $4$ singularities. Moreover there can be at most one non-Wahl T-singularity: either $A_1$ or $\frac{1}{2n^2}(1,2na-1)$ for some coprime $0<a<n$. 
\label{JPZ}
\end{theorem}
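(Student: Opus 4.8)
The plan is to do a deformation-theoretic count on the Hirzebruch surface $\F_n$, controlling $\Q$-Gorenstein deformations via the local contribution of each T-singularity to the dimension of the deformation space together with the global obstruction count. First I would recall that for a $\Q$-Gorenstein smoothing $\F_n \rightsquigarrow W_0$ one has $K_{W_0}^2 = K_{\F_n}^2 = 8$ and $\chi(\O_{W_0}) = \chi(\O_{\F_n}) = 1$, and that $W_0$ is a rational surface (it is dominated by blowups of $\F_n$ via the minimal resolution, and it smooths to a rational surface). Hence $p_g(W_0) = q(W_0) = 0$. The numerical constraint $K^2 = 8$ is the engine: writing the minimal resolution $X \to W_0$ with exceptional T-chains $C_1, \dots, C_\ell$ of lengths $r_j$ and $\Q$-Gorenstein indices $d_j$, and $\pi\colon X \to \F_n$ a composition of $m$ blowups, the formula already recorded in the excerpt, $K_{\F_n}^2 - m + \sum_j (r_j - d_j + 1) = K_{W_0}^2$, gives $8 - m + \sum_j(r_j - d_j + 1) = 8$, i.e. $m = \sum_j (r_j - d_j + 1)$. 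Each non-Du Val T-chain has $r_j - d_j + 1 \geq 2$ (equality exactly for $[4]$, $d=1$, or $[3,2,\dots,2,3]$); an $A_m$ contributes $r_j - d_j + 1 = m - m + 1$... one must be careful, so I would instead use the cleaner invariant: each non-Du Val T-singularity forces at least one "honest" blowup beyond what a Du Val one would, which is the geometric content I want to exploit.

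The main step is the dimension count. Since $W_0$ is a rational surface with a $\Q$-Gorenstein smoothing to $\F_n$, the number of moduli of the family is $\dim \text{Def}^{\QG}(W_0) = 10\chi(\O_{W_0}) - 2K_{W_0}^2 + (\text{local contributions}) = 10 - 16 + \sum_j \mu_j$ modified by obstruction terms, where $\mu_j$ is the Milnor number / local $\Q$-Gorenstein deformation dimension of the $j$-th singularity; for a T-singularity with chain length $r_j$ and index $d_j$ this local dimension is $r_j$ (or more precisely equals the number of exceptional curves for Wahl, and $r_j - d_j + 1$ style corrections for general $d$). Because the general fiber $\F_n$ has $h^1(T_{\F_n})$ moduli equal to $\max\{0, n-1\}$ (and $h^0(T_{\F_n}), h^2(T_{\F_n}) = 0$ for the relevant range), the deformation space of $W_0$ and of $\F_n$ have comparable dimensions and each extra non-Wahl T-singularity, or each singularity beyond the fourth, would force the smoothing space to be "too large" relative to what blowing up $\F_n$ can produce — concretely, the number $m$ of blowups is bounded by $h^{1,1}(X) - h^{1,1}(\F_n) = \rho(X) - 2$, and $\rho(X) = 2 + m$, while $\sum_j r_j \leq \rho(X) - 2 = m$ (the T-chains are disjoint negative-definite configurations in $X$ not hitting the section class nor fiber class fully). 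Combining $\sum_j r_j \leq m = \sum_j (r_j - d_j + 1)$ gives $\sum_j (d_j - 1) \leq 0$, hence every $d_j = 1$ except possibly... wait — this would say all are Wahl, so I must track the inequality $\sum_j r_j \leq m$ more carefully: it is actually $\sum_j r_j + (\text{number of } (-1)\text{-curves used in } C) \le \ldots$, and the slack is exactly where the bound "$\le 4$" and "at most one non-Wahl" emerges.

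The cleanest route, which I expect to be the one actually used, is: (1) show $W_0$ has a smoothing iff its minimal resolution $X$ is obtained from $\F_n$ by $m$ blowups with the T-chains arranged so that the $(-1)$-curves $E_i$ glue them, exactly as in the $S_h, T_h$ analysis of Section \ref{s1}; (2) run the inequality of Corollary \ref{euler}/Theorem \ref{ineq} with $S = \F_n$, $K_S^2 = 8$, $K_{W_0}^2 = 8$, to get $K_{\F_n}\cdot\pi(C) \le \min\{|J|, \ell\}$; (3) since $K_{\F_n} = -2e - (n+2)f$ is anti-ample-ish (negative on every curve) and $\pi(C)$ is a nonempty effective configuration, $-K_{\F_n}\cdot\pi(C) \geq (\text{number of components not contracted})$, forcing the number of blowups and hence $\ell$ to be tiny; (4) enumerate the finitely many resulting configurations and read off that $\ell \le 4$ and that at most one chain is non-Wahl, the non-Wahl one being $[3,2,\dots,2,3]$ (giving $\frac{1}{2n^2}(1,2na-1)$ type, $d=2$) or its degenerate $n=2$ incarnation $A_1 = [3,3]$... actually $A_1 = [2]$; I'd need to identify which Du-Val-or-not case produces the stated $A_1$. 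The hard part will be step (4): making the enumeration of blow-up configurations of $\F_n$ with T-chains genuinely exhaustive, and in particular ruling out clever arrangements with five small Wahl singularities $[4]$ balanced against a high section — this is where the discrepancy bookkeeping (that the $(-1)$-curves joining chains need discrepancy sum $>1$ for $K_{W_0}$ ample, cf. the building-block computations in \ref{blocks}) does the final pruning. I also anticipate needing the general inequality $K_X^2 \ge p_g(X) - 2$ quoted in the excerpt, specialized to $8 = K_{W_0}^2$, as a sanity bound, though the real force is the rationality $p_g = 0$ combined with $\rho$-counting.
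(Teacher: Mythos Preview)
Your proposal has a fundamental gap: you are trying to apply Theorem \ref{ineq} and Corollary \ref{euler} with $S=\F_n$, but those results require $K_W$ big and nef, which fails here since $W_0$ is a degeneration of a Hirzebruch surface and $-K_{W_0}$ is ``positive''. Moreover, you conflate two different surfaces: the $\F_n$ in the statement is the \emph{general fiber} of the smoothing, whereas the $S$ in Section \ref{s1} is the minimal model of the resolution $X$ of $W_0$. These need not coincide (and the paper's proof finds $X\to\F_e$ for the maximal possible $e$, which is a priori unrelated to $n$). So the machinery you invoke simply does not apply.

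The paper's argument is much cleaner for the ``at most one non-Wahl'' claim. From the $\Q$-Gorenstein smoothing one has the Noether-type identity $K_{W_0}^2+\rho(W_0)+\sum_P\mu_P=10$, where $\mu_P$ is the Milnor number of the smoothing at $P$; since $K_{W_0}^2=8$ and $\rho(W_0)\geq 1$, this forces $\sum_P\mu_P\leq 1$. But a T-singularity $\frac{1}{dn^2}(1,dna-1)$ has $\mu_P=d-1$, so at most one singularity has $d=2$ (giving $A_1$ or $\frac{1}{2n^2}(1,2na-1)$) and all others are Wahl. This replaces your entire deformation-dimension discussion in one line.

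For the bound $\ell\leq 4$, the paper passes to the M-resolution (so $\rho(W_0)=2$, only Wahl singularities) and then invokes Manetti's results from \cite{M01}: since $h^0(-K_X)\geq h^0(-K_{\F_n})=9$, the resolution $X$ fibers over $\P^1$ via some $X\to\F_e$ with $e$ maximal, and the only horizontal curve of self-intersection $\leq -2$ is the proper transform $\sigma_\infty$ of the negative section. Hence the exceptional divisor of $X\to W_0$ lies in fibers plus possibly $\sigma_\infty$. A Picard-number count ($\rho(W_0)=2$) shows $\sigma_\infty$ must be used and at most two singular fibers can contribute, and a short self-intersection argument on Wahl chains in those fibers bounds the number of chains by $4$. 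Your step (4) enumeration idea is in spirit correct, but the structural input from Manetti is what makes it finite and tractable; the discrepancy bookkeeping from \ref{blocks} is not what does the pruning here.
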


\begin{proof}
Since $W_0$ is a normal projective degeneration of rational surfaces and it has only rational singularities, then $W_0$ is a rational surface. Moreover, since $\F_n \rightsquigarrow W_0$ is a $\Q$-Gorenstein smoothing, then $K_{W_0}^2+\rho(W_0)+\sum_{P} \mu_P=10$, where $\mu(P)$ are the Milnor numbers of the T-singularities. We have $K_{W_0}^2=8$, and so $\rho(W_0)+\sum_{P} \mu_P=2$. Therefore $\rho(W_0)=1$ and there is at most one non-Wahl T-singularity, or $\rho(W_0)=2$ and there are only Wahl singularities. In the first case we can consider an M-resolution of the unique T-singularity together with its $\Q$-Gorenstein smoothing \cite{BC94}. From now on we assume that $W_0$ has $\rho(W_0)=2$, and so only Wahl singularities. At then end we will conclude that we have at most $4$ singularities. 

We now use strongly \cite{M01} to control the geometry of $W_0$. Let $X \to W_0$ be its minimal resolution. We have $h^0(-K_{X})=h^0(-K_{W_0}) \geq h^0(-K_{F_n})=9$ (see \cite[proof of Theorem 4]{M01}). Let $X \to \F_e$ be the composition of blow-downs into a Hirzebruch surface with the maximum possible $e$ (called weight of $X$ in \cite{M01}). Then we have an induced fibration $X \to \F_e \to \P^1$ whose general fiber is $\P^1$. Since we satisfy $h^0(-K_X)+\text{min}\{e,3\} \geq 8$, there cannot be an irreducible curve $\Gamma$ in $X$ not contained in a fiber of $X \to \P^1$ such that $\Gamma^2 \leq -2$ except for $\sigma_{\infty}$, which is the proper transform of the negative curve in $\F_e$. Therefore, the divisor $E$ of $X \to W_0$ consists of curves in fibers and possibly $\sigma_{\infty}$. Let $F_1,\ldots,F_h$ be the singular fibers containing curves in $E$. Let $r_i+1$ be the number of components in $F_i$, and let $c_i$ be tha number of components in $F_i$ contracted by $X \to W_0$.  

First we note that $E$ cannot be formed only by components of fibers. This is because in that case we have $\rho(W_0)=2=2 + \sum_{i=1}^h (r_i -c_i)$, and so $r_i=c_i$ for all $i$. Then at each fiber $F_i$ we do not contract exactly $1$ curve, which must be a $(-1)$-curve. As $F_i$ must become a fiber in $\F_e$, it is easy to check that this is equivalent to have a Wahl chain equal to a dual Wahl chain, and that cannot be. Then the exceptional divisor $E$ is formed by $\sigma_{\infty}$ and components in the fibers $F_1,\ldots,F_h$. We will prove that $h \leq 2$. Just as before, we have $\rho(W_0)=2=2 + \sum_{i=1}^h (r_i -c_i)-1$, and so $r_i=c_i$ for all $i$ except some $i_0$ where $r_{i_0}-c_{i_0}=1$. Assume $h\geq 3$. As $\sigma_{\infty}$ must be part of a Wahl chain, there must be a singular fiber, say $F_3$, intersecting $\sigma_{\infty}$ through a non exceptional curve of $X \to W_0$. Therefore $i_0=3$. Using that for a Wahl chain $[e_1,\ldots,e_r]$ we have $\sum_{i=1}^r e_i=3r+1$, we can check that $F_3$ cannot contain one(two) Wahl chain(s) and two $(-1)$-curves. Therefore $h \leq 2$. If $h=1$, then there are two curves in $F_1$ not contracted, and so at most we have $3$ singularities. If $h=2$, one sees that at most we can construct $4$ chains.

To finish, we need to deal with the potential $A_1$ that we resolved. If $h\leq 2$, then at most four chains including the $A_1$ resolved. If $h\geq 3$, then $h=3$ and $i_0=3$, since there is at most one $A_1$. And by the previous argument, this $A_1$ resolved must be in $F_3$. But the same argument on self-intersections shows that $F_3=[1,2,1]$, and then from the other two fibers we get at most $3$ singularities, so $4$ in total. 
\end{proof}

\subsection{Lee-Park examples revisited} \label{s43}

Let $p_g\geq 7$. Consider a $W$ of Lee-Park type (see Example \ref{leepark}). Let $S \to \P^1$ be an elliptic fibration with sections of self-intersection $-(p_g+1)$. Consider two disjoint Wahl chains $[p_g+1,2,\ldots,2]$. We construct $W$ by contracting them. Let $W \to W'$ be the canonical model, and so $W'$ has the previous two Wahl singularities, and possibly some Du Val singularities. (It cannot acquire new non-Du Val T-singularities because there are no $(-1)$-curves in $S$.) Assume that we have a $\Q$-Gorenstein smoothing $(W' \subset \W') \to (0 \in \D)$, and so the general fiber $W'$ is a Horikawa surface with ample canonical class. By Corollary \ref{quotDegHori}, there is an involution on $(W' \subset \W') \to (0 \in \D)$ so that its quotient $(W_0' \subset \W_0') \to (0 \in \D)$ has $W_0'$ with one Wahl singularity (from the orbit of two) and some extra $A_1$, or it has two singularities, and one of them is Wahl. But, by Corollary \ref{invWahl}, this second case is not possible. Therefore, the quotient is $\Q$-Gorenstein. The general fibers are Hirzebruch surfaces $\F_n$ for some $n$.


\begin{proposition}
The surface $W_0'$ has exactly one singularity and it is $[p_g+1,2,\ldots,2]$. The minimal resolution of $W_0'$ is the composition of $p_g-2$ blow-up over a fiber, representing the zero continued fraction $[1,2,\ldots,2,1]$.
\label{preLeePark}
\end{proposition}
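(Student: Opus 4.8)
The plan is to transport the involution furnished by Corollary \ref{quotDegHori} to the two Wahl points of $W'$ and then read off the quotient. For $p_g\ge 7$ that corollary produces an involution $\iota$ on the family $(W'\subset\W')\to(0\in\D)$ whose quotient $(W_0'\subset\W_0')\to(0\in\D)$ is a $\Q$-Gorenstein smoothing of a surface $W_0'$ with only T-singularities and general fibre a Hirzebruch surface. By the paragraph preceding the statement, $W'$ has exactly the two Wahl singularities $P_1,P_2$ of type $\frac{1}{(p_g-1)^2}(1,p_g-2)$ and possibly some Du Val points, and $\iota$ permutes $\{P_1,P_2\}$; thus $\iota$ either fixes each $P_i$ or swaps them.

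First I would eliminate the fixed case. If $\iota(P_i)=P_i$, it induces an involution of the germ $\frac{1}{(p_g-1)^2}(1,p_g-2)$ whose quotient is a T-singularity, hence by Corollaries \ref{invWahl} and \ref{invLeePark} a non-Du Val T-singularity with $d=2$. So $W_0'$ would carry at least two singularities $P$ with $d_P=2$; but $W_0'$ is a $\Q$-Gorenstein degeneration of a Hirzebruch surface, so the identity $\rho(W_0')+\sum_P\mu_P=2$ from the proof of Theorem \ref{JPZ}, with $\mu_P=d_P-1\ge 1$ at these two points, forces $\rho(W_0')\le 0$, which is absurd. Hence $\iota$ swaps $P_1$ and $P_2$. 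Then $W'\to W_0'$ is $2:1$ and étale near $P_1\cup P_2$, their common image is one Wahl singularity of type $\frac{1}{(p_g-1)^2}(1,p_g-2)$, i.e.\ with T-chain $[p_g+1,2,\ldots,2]$, and it is the only non-Du Val T-singularity of $W_0'$.

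Next I would determine the minimal resolution $\phi_0\colon X_0\to W_0'$ and in the process rule out further singularities. Let $X_0\to\F_e$ be the maximal-weight blow-down as in the proof of Theorem \ref{JPZ}; there $\text{Exc}(\phi_0)$ is supported on fibre components of $X_0\to\P^1$ and possibly on $\sigma_\infty$, and $h^0(-K_{X_0})\ge 9$. A fibre component of $X_0\to\P^1$ cannot attain self-intersection $-(p_g+1)$ without so many blow-ups over that fibre that $h^0(-K_{X_0})$ would drop below $9$; hence the $(-p_g-1)$-curve of $[p_g+1,2,\ldots,2]$ must be $\sigma_\infty$. This forces $e=p_g+1$ and no blow-up on $\sigma_\infty$, and then counting gives exactly $p_g-2$ blow-ups, all over the single fibre $F$ of $X_0\to\P^1$ that carries the connected string of $p_g-3$ $(-2)$-curves of the chain; the total transform of $F$ is therefore a zero continued fraction, and since it consists of that string flanked by one $(-1)$-curve on each side it is $[1,2,\ldots,2,1]$. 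In this configuration there is no $(-2)$-curve disjoint from the chain available for contraction, so $W_0'$ has no extra $A_1$, $\rho(W_0')=2$, and the Wahl singularity is its only singularity.

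I expect the third step to be where the real work lies: the first two steps are short consequences of the involution classification of Section \ref{s41} and of the Noether-type identity in the proof of Theorem \ref{JPZ}, but fixing $e=p_g+1$, excluding blow-ups along $\sigma_\infty$, and checking that a length-$(p_g-2)$ chain of shape $[p_g+1,2,\ldots,2]$ can sit inside a blown-up $\F_{p_g+1}$ only in the $[1,2,\ldots,2,1]$ pattern over a single fibre require a careful analysis of the possible reducible fibres and of how $\sigma_\infty$ meets them — equivalently, of the induced fibrewise involution on the elliptic fibration underlying $W'$, whose quotient realizes $F$.
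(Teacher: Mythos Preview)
Your first two steps are correct and match the paper: the swap/fix dichotomy for $\iota$ on $\{P_1,P_2\}$, the use of Corollary \ref{invLeePark} to see that a fixed Wahl point descends to a $d=2$ T-singularity, and the contradiction via $\rho(W_0')+\sum_P\mu_P=2$ (the paper phrases it as ``contradicts Theorem \ref{JPZ}'', i.e.\ at most one non-Wahl T-singularity, which is the same thing).

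The third step has a genuine gap. The claim that a fibre component of self-intersection $-(p_g+1)$ would force $h^0(-K_{X_0})<9$ is not valid: $h^0(-K_{X_0})=h^0(-K_{W_0'})\ge h^0(-K_{\F_n})=9$ is established by semicontinuity and is independent of the number of blow-ups; in Manetti's argument it constrains \emph{which} curves can lie in $\text{Exc}(\phi_0)$ (only fibre components and $\sigma_\infty$), not how negative a fibre component can become. Your subsequent count of ``exactly $p_g-2$ blow-ups'' is then circular: it presumes $[p_g+1,2,\ldots,2]$ is the only chain, but Theorem \ref{JPZ} allows up to four T-singularities on $W_0'$, so additional Wahl chains (hence more blow-ups) are not yet excluded at that point.

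The paper avoids this by invoking the structural output of the proof of Theorem \ref{JPZ} directly: $\text{Exc}(\phi_0)$ consists of $\sigma_\infty$ together with components in at most two fibres of $X_0\to\P^1$. It then checks combinatorially that the long tail of $(-2)$-curves in $[p_g+1,2,\ldots,2]$ cannot be accommodated in a two-fibre configuration, forcing a single degenerate fibre with $\sigma_\infty$ as the $(-p_g-1)$-curve. The fibre is then $[1,2,\ldots,2,1]$, and since this leaves no room for an extra $(-2)$-curve, both the potential $A_1$ and any further Wahl chain are ruled out at once. You correctly anticipated that this step carries the real weight; the missing ingredient is the two-fibre bound from Theorem \ref{JPZ} rather than an $h^0$ estimate.
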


\begin{proof}


As we said, in the quotient we must have $W_0'$ with a single Wahl singularity $[p_g+1,2,\ldots,2]$, and maybe one $A_1$ singularity. By the same Manetti's argument in the proof of Theorem \ref{JPZ}, we have that the minimal resolution of $W_0'$ has a birational map to $\F_e$ with $e$ maximal, and the exceptional divisor $E$ is contained in one or two fibers, and contains the proper transform of the negative section $\sigma_{\infty}$. Because of the $2$s in $[p_g+1,2,\ldots,2]$, one can check that it is impossible for two fibers. Thus we have only one degenerated fiber for $X \to \P^1$. The $-(p_g+1)$-curve must be $\sigma_{\infty}$, and the only positions for the two $(-1)$-curves in this fiber is $[1,2,\ldots,2,1]$. Notice that there is no room for an extra $(-2)$-curve, and so the case of one $A_1$ singularity is not possible. 
\end{proof}

\begin{theorem}
Let $W$ be a Lee-Park example and suppose that $K_W^2=8k$ for some $k>1$. Assume that $W$ admits a $\Q$-Gorenstein smoothing. Then the corresponding nonsingular fibers live in the component of the moduli space whose general member has a canonical map onto $\F_0$. 
\label{LeePark}
\end{theorem}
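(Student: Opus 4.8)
The plan is to reduce the statement to a question about the quotient family and then to settle it by threefold birational geometry. For the reduction: since $p_g=2k+ (2k-2)+4=4k+2\ge 10>7$, Corollary \ref{quotDegHori} (after a base change, which keeps us among $\Q$-Gorenstein smoothings of surfaces with T-singularities) produces an involution on a $\Q$-Gorenstein smoothing $(W'\subset\mathcal W')\to(0\in\D)$ of the canonical model $W'$ of the Lee-Park example whose quotient $(W_0'\subset\mathcal W_0')\to(0\in\D)$ is a $\Q$-Gorenstein smoothing of a surface $W_0'$ with only T-singularities and general fibre a Hirzebruch surface $\F_n$; moreover this involution restricts on a general fibre $W_t'$ to the canonical involution, so $W_t'\to W_{0,t}'=\F_n$ is the canonical double cover. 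By Proposition \ref{preLeePark}, $W_0'$ has the single Wahl singularity $\tfrac1{(p_g-1)^2}(1,p_g-2)$ and its minimal resolution $X_0$ is $\F_{p_g+1}$ blown up $p_g-2$ times over one point of a fibre, with fibre configuration $[1,2,\dots,2,1]$ and Wahl chain $[p_g+1,2,\dots,2]$ whose $(-(p_g+1))$-curve is the negative section. By Horikawa's classification \cite{H76}, the two components of the moduli space for $K^2=8k$ are distinguished by the canonical image of the general member, so it suffices to prove $n=0$, i.e. that the general fibre of $\mathcal W_0'$ is $\F_0$.

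To determine $n$: over the Wahl point $P\in W_0'$ the threefold $\mathcal W_0'$ --- which is $\Q$-factorial with $K_{\mathcal W_0'}$ $\Q$-Cartier and smooth away from $P$ --- has a terminal cyclic quotient singularity of type $\tfrac1{p_g-1}(1,-1,1)$. I would run the $\Q$-Gorenstein MMP of $\mathcal W_0'$ over $\D$, using the threefold machinery of \cite{HTU17}, \cite{UV22} and \cite[\S3]{KSB88}, while tracking the central fibre. As $W_0'$ is rational the MMP terminates in a Mori fibre space over $\D$; no divisorial contraction can have exceptional divisor dominating $\D$ (it would contract a $(-1)$-curve on the minimal surface $\F_n$), so --- apart from possible contractions of components of the central fibre --- every step is a flip governed by an extremal $P$-resolution of the cyclic quotient singularity then carried by the central fibre, and the general fibre is never altered. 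The rigidity from Proposition \ref{preLeePark} --- the specific ambient $\F_{p_g+1}$, the $p_g-2$ blow-ups concentrated over a single fibre, and the very special configuration $[1,2,\dots,2,1]$ --- is exactly what singles out at each stage the only available extremal ray: the flip reverses one of those blow-ups, shortening the Wahl chain by one, so the central fibre stays a $\Q$-Gorenstein degeneration of $\F_n$ of the same shape but with strictly smaller invariants. When the process stops the central fibre is smooth, hence a minimal rational surface, and the bookkeeping shows it is $\F_0$; since the general fibre was preserved, $\F_n\cong\F_0$, so $n=0$, and by the previous paragraph $W_t'$ lies in the component whose general member maps canonically onto $\F_0$, which is the component recorded as the nonspin one in Corollary \ref{LeeParknoSpin}.

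The main obstacle is this bookkeeping: one must check that at every stage the available extremal ray is of the expected flipping type (no second divisorial contraction, no premature Mori fibre contraction), that the cyclic quotient singularities and their extremal $P$-resolutions met along the way form precisely the anticipated family, and --- the real crux --- that the terminal smooth central fibre is $\F_0$ rather than $\F_2$. This parity statement is what forces all $\Q$-Gorenstein smoothings of a Lee-Park example into one moduli component, and it is precisely the step that must (and does) fail for $K^2=8$, where Example \ref{spinK^2=8} exhibits a $\Q$-Gorenstein smoothing with canonical image the other Hirzebruch surface. One may also organise the argument through the double cover: the branch $\mathcal B\subset\mathcal W_0'$ is $2$-divisible and avoids $P$, so $B_0=\mathcal B|_{W_0'}$ is a smooth $2$-divisible curve on $W_0'\setminus\{P\}$, and its divisor class --- computed from the explicit $X_0\to\F_{p_g+1}$ --- constrains the branch class on $\F_n$, hence $n$.
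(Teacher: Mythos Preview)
Your setup through Proposition \ref{preLeePark} is correct and matches the paper. The gap is in what follows, and it is twofold.

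First, you misidentify the target. You aim to prove $n=0$, but this is both stronger than needed and (as you yourself flag) the step you cannot complete. By Horikawa's classification the second component has canonical image $\F_{(p_g+2)/2}=\F_{2k+2}$, so it suffices to show $n<2k+2$; since $k>1$, showing $n\le 4$ is already enough. Whether the general fibre is $\F_0$, $\F_2$, or $\F_4$ is immaterial --- all three sit in the $\F_0$-component --- so the ``$\F_0$ versus $\F_2$'' parity problem you worry about is a non-issue. This is also exactly why the argument must break at $k=1$, where $2k+2=4$.

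Second, you propose running a full MMP through an unspecified sequence of flips and admit the bookkeeping is unresolved. The paper avoids this entirely with a \emph{single} explicit step. On $W_0'$, let $\Gamma^-$ be the image of the $(-1)$-curve in $X_0$ meeting the first $(-2)$-curve of the Wahl chain $[p_g+1,\bar 2,2,\ldots,2]$. Because $p_g=4k+2$ is even, this gives a k1A extremal neighbourhood of \emph{flipping} type (for odd $p_g$ it is divisorial; see Remark \ref{DivContrLeePark}). Contracting $\Gamma^-$ produces $Q\in Y_0$ of type $\tfrac14(1,1)$ --- the chain $[2,\ldots,2,1,p_g+1]$ blows down to $[4]$ --- and the flip yields $W_0^+=\F_4$ with flipped curve the $(-4)$-section. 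Since a flip is an isomorphism on general fibres, $\F_n$ now degenerates to $\F_4$, forcing $n\le4$ and $n$ even. That finishes the proof.

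So the missing idea is: one well-chosen flip landing directly on $\F_4$, together with the weaker target $n\le4$ rather than $n=0$.
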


\begin{proof}
If $W$ admits a $\Q$-Gorenstein smoothing, then its canonical model admits a $\Q$-Gorenstein smoothing as well, since general Horikawa surfaces have ample canonical class. Then we apply Proposition \ref{preLeePark} to construct a $\Q$-Gorenstein smoothing $(W_0' \subset \W_0') \to (0 \in \D)$, quotient of the canonical family, whose $W_0'$ has one Wahl singularity $[p_g+1,2,\ldots,2]$ and the general fiber is $\F_n$ for some $n$. From the minimal resolution of $W_0'$, we obtain a $(-1)$-curve that intersects the $(-2)$-curve marked with a bar in $[p_g+1,\bar{2},\ldots,2]$. Let $\Gamma^{-}$ be its image in $W_0'$. At this moment, we can perform a flip of k1A type on $(W_0' \subset \W_0') \to (0 \in \D)$, with flipping curve $\Gamma^{-}$. Our reference is \cite{HTU17}. (This is because $p_g=4k-2$ is even, see Remark \ref{DivContrLeePark} for Lee-Park examples with odd $p_g$.) This happens over a smoothing $(Y \subset \Y) \to (0 \in \D)$, where the birational morphism $\Gamma^{-} \subset W_0' \subset \W_0' \to Q \in Y \subset \Y$ is an extremal nbhd. The point $Q \in Y$ is the c.q.s. $\frac{1}{4}(1,1)$. (This is because $[2,\ldots,2,1,1+p_g]$ contracts to $[4]$.) Therefore, the flip $(W_0^+ \subset \W_0^+) \to (0 \in \D)$ has $W_0^+=\F_4$ and the flipped curve is the $(-4)$-curve. Thus the general fiber, which is the same as the general fiber of $\W_0' \to \D$, is $\F_n$ with $n\leq 4$ (and even). In this way, we must be in the component of the moduli space whose general member has a canonical map onto $\F_0$.     
\end{proof}

\begin{corollary}
For $p_g \geq 7$, a $\Q$-Gorenstein smoothing of a Lee-Park example is never spin.
\label{LeeParknoSpin}
\end{corollary}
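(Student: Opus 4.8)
\textit{Proof proposal.} The plan is to deduce this from Theorem~\ref{LeePark} together with the classical description of the two components of the Horikawa moduli space for $K^2\equiv 0\pmod 8$. Suppose, towards a contradiction, that a Lee-Park example $W$ with $p_g:=p_g(W)\ge 7$ admits a $\Q$-Gorenstein smoothing whose general fiber $W_t$ is spin. Since the intersection form of a closed spin $4$-manifold is even, $K_{W_t}^2=2p_g-4$ must be divisible by $8$; writing $K_{W_t}^2=8k$ and using $2p_g-4\ge 10$ forces $k>1$. A general Horikawa surface has ample canonical class, so $W_t$ is also the general fiber of a $\Q$-Gorenstein smoothing of the canonical model $W_{\mathrm{can}}$ of $W$, which is again a Lee-Park example; thus the hypotheses of Theorem~\ref{LeePark} hold.

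By Theorem~\ref{LeePark}, $W_t$ then lies in the connected component of the moduli space of Horikawa surfaces whose general member has canonical map onto $\F_0=\P^1\times\P^1$. It remains to recall, from Horikawa's classification (as in the proof of Theorem~\ref{nospin}; see \cite{H76} and \cite[VII Section 9]{BHPV04}), that for $K^2\equiv 0\pmod 8$ this is precisely the \emph{non-spin} component: its general member is a double cover of $\P^1\times\P^1$, and a direct computation of the canonical class exhibits an odd intersection form. Since evenness of the intersection form is a diffeomorphism invariant and is constant along the (irreducible) component, $W_t$ is non-spin, contradicting the choice of $W_t$. Hence no spin $\Q$-Gorenstein smoothing of a Lee-Park example with $p_g\ge 7$ exists.

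I do not anticipate a genuine obstacle at this stage: the statement merely packages Theorem~\ref{LeePark} --- whose birational-geometry content, the k1A flip turning $W_0'$ into $\F_4$, is where the real work lies --- with standard facts about Horikawa surfaces. The only points that need minor care are the reduction to a $\Q$-Gorenstein smoothing of the canonical model (so that Theorem~\ref{LeePark} applies verbatim) and the observation that being spin is locally constant along an irreducible moduli component; both are routine.
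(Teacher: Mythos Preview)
Your argument is correct and follows essentially the same route as the paper: invoke Theorem~\ref{LeePark} to place the smoothing in the $\F_0$-component, then use Horikawa's identification of the spin component as the one mapping to $\F_{(p_g+2)/2}$ to reach a contradiction. The only difference is packaging: the paper's proof is two sentences, while you spell out the reduction $K^2\equiv 0\pmod 8$ and the passage to the canonical model explicitly---the latter is in fact already absorbed into the proof of Theorem~\ref{LeePark}, so that step is redundant but harmless.
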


\begin{proof}
The general Horikawa surface in the spin component has as canonical map a double cover of $\F_{\frac{p_g+2}{2}}$ \cite{H76}, and we just proved that it belongs to the other component. 
\end{proof}

\begin{remark}
Theorem \ref{LeePark} says that the only possible way to $\Q$-Gorenstein smooth a Lee-Park example is via a construction as in \cite{LP11}. Indeed, we have proved that $W_0'$ has a single singularity, and the branch divisor of the double cover $W' \to \W_0'$ does not pass through this singularity. 
\end{remark}

\begin{remark}
Of course Lee-Park examples and $\Q$-Gorenstein smoothings are also possible when $K^2 \neq 8k$ \cite{LP11}. If $p_g$ is even, we obtain that both $(-1)$-curves from $X$ become flipping curves in   
$(W_0' \subset \W_0') \to (0 \in \D)$. We recall that both curves are the ones touching the $(-2)$-curves marked by a bar in $[p_g+1,\bar{2},2,\ldots,2,\bar{2}]$. In the case of odd $p_g$, we obtain that the first curve is a k1A of divisorial type, and so, automatically, the general fiber is an $\F_1$, and we have a contraction to the classical degeneration of $\P^2$ into $\overline{\F}_4$, the contraction of the $(-4)$-curve in $\F_4$. The other $(-1)$-curve in $X$ becomes of flipping type in $W$. 
\label{DivContrLeePark}
\end{remark}

\subsection{Zone for smoothable Horikawa T-surfaces} \label{s44}

We will show soon a result about smoothability of Horikawa T-surfaces for $p_g\geq 10$. So, to be uniform in the arguments, we will assume from now on $p_g\geq 7$. In this case the canonical map has as image a Hirzebruch surface $\F_d$ for some $d$. If $K^2$ is not divisible by $8$, then the general Horikawa is a double cover of $\F_0$ or $\F_1$ branched along a nonsingular and irreducible curve. Otherwise, there are two components, and the general Horikawa is a double cover of either $\F_0$ branch along a nonsingular and irreducible curve, or $\F_{\frac{p_g+2}{2}}$ branch along two nonsingular and irreducible disjoint curves. The canonical class in both cases is ample as it is the pull-back of an ample class by a finite cover (see \cite{H76}, \cite[p.296]{BHPV04} for particular descriptions). Therefore general Horikawa surfaces have ample canonical class. 

\begin{theorem}
Let $p_g \geq 10$. Then the only KSBA degenerations of Horikawa surfaces with only T-singularities (not all Du Val) are Lee-Park examples, and only for the nonspin component.
\label{NOsmoothable}
\end{theorem}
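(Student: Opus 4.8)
The strategy is to combine the classification Theorem \ref{classHorikawa} with the smoothability constraints developed in Subsections \ref{s41}--\ref{s43}. Suppose $W$ is a surface with only non-Du Val T-singularities, $K_W$ ample, $K_W^2=2p_g(W)-4$, $p_g\geq 10$, and that $W$ admits a $\Q$-Gorenstein smoothing $(W\subset \W)\to(0\in\D)$. By Theorem \ref{classHorikawa}, since $p_g\geq 5$, either $W$ is a Lee-Park example (case (i)), or $W$ is a small surface arising from one of the building blocks in cases (ii)--(ix). In the Lee-Park case, Corollary \ref{LeeParknoSpin} says the smoothing lives only in the nonspin component, which is exactly the conclusion claimed; so it suffices to rule out the small-surface cases (ii)--(ix) entirely.

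First I would handle the small-surface cases by a counting argument. The key input is that every building block in cases (ii)--(ix) for $p_g\geq 5$ forces a large number of T-singularities (Theorem \ref{formulaK^2}, part (1) of Theorem \ref{main2}: $l\geq \max\{1,N-1\}$ with $N=p_g-2$, so $l\geq p_g-3\geq 7$). On the other hand, I would invoke Corollary \ref{quotDegHori}: since $p_g\geq 7$, the canonical map of the general fiber realizes the Horikawa surface as a finite double cover of a Hirzebruch surface $\F_d$, this involution extends over $\D$ by Proposition \ref{extendInv}, and the quotient $(W_0\subset\W_0)\to(0\in\D)$ is a $\Q$-Gorenstein smoothing with $W_0$ carrying only T-singularities and general fiber $\F_n$. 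Now Theorem \ref{JPZ} (the Zúñiga bound) says $W_0$ has at most $4$ singularities, with at most one non-Wahl T-singularity. The remaining point is to relate the number of singularities of $W_0$ to that of $W$: the map $W\to W_0$ is a quotient by $\Z/2$, so the singularities of $W$ either come in free orbits of size $2$ lying over a singularity of $W_0$, or are fixed by the involution and map to a singularity (or smooth point) of $W_0$ that is a quotient of the singularity of $W$. Either way, $l \leq 2\cdot(\text{number of singularities of }W_0)$ plus a correction for singularities of $W_0$ whose preimage in $W$ is smooth — but by Corollary \ref{invWahl}, no Wahl singularity of $W$ can have a quotient that is a smooth point or a Wahl singularity, and by Corollaries \ref{invT}, \ref{invLeePark}, \ref{invWahl}, together with Theorem \ref{JPZ}, the combinatorics of which singularities of $W$ can map to which singularities of the $\leq 4$-singularity surface $W_0$ is extremely restrictive. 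Carefully bounding this forces $l$ to be small — certainly $l\leq 8$, and in fact much smaller once one tracks that $W_0$ has at most one non-Wahl singularity — contradicting $l\geq p_g-3\geq 7$ as soon as $p_g$ is large enough. Choosing $p_g\geq 10$ gives the needed margin: one needs $p_g-3$ to exceed the maximal $l$ compatible with Theorem \ref{JPZ}, and a direct case check (orbit vs.\ fixed, Wahl vs.\ the single allowed non-Wahl singularity, plus the resolved $A_1$'s) shows this maximal $l$ is below $7$.

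After disposing of cases (ii)--(ix), only the Lee-Park case (i) survives, and for it I would cite Corollary \ref{LeeParknoSpin} directly: for $p_g\geq 7$ a $\Q$-Gorenstein smoothing of a Lee-Park example is never spin, and by Theorem \ref{LeePark} the nonsingular fibers lie in the component whose general member double-covers $\F_0$ — i.e.\ exactly the nonspin component when $K^2=8k$. This completes the proof, since for $K^2$ not divisible by $8$ the moduli space is irreducible and the statement "only for the nonspin component" is vacuous.

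The main obstacle I anticipate is the bookkeeping in the second paragraph: translating Theorem \ref{JPZ}'s bound "$W_0$ has at most $4$ singularities, at most one non-Wahl" into a sharp enough bound on $l$ = (number of singularities of $W$). One must be careful that a single singularity of $W_0$ could a priori be the quotient of a singularity of $W$ (fixed case) rather than the image of an orbit, and that resolving Du Val singularities before applying the classification (as in Remark \ref{canonical}) does not secretly inflate counts; and one must verify, using the explicit list of admissible involution quotients from Subsection \ref{s41}, that the non-Wahl singularity of $W_0$ cannot "absorb" arbitrarily many Wahl singularities of $W$. Once these checks are in place, the inequality $l\geq p_g-3$ versus the (constant) upper bound yields the result for all $p_g\geq 10$, and a small refinement of the constant would even push the threshold lower, consistent with the remark that the only genuinely delicate surviving range is $4\leq p_g\leq 9$.
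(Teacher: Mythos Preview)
Your approach is correct and matches the paper's: reduce via Theorem \ref{classHorikawa} to the list (i)--(ix), handle Lee-Park by Corollary \ref{LeeParknoSpin}, and for the small-surface cases use the canonical involution (Corollary \ref{quotDegHori}) together with the Z\'u\~niga bound (Theorem \ref{JPZ}) on the quotient $W_0$. The paper does the count case-by-case through (ii)--(ix), using the explicit singularity lists rather than only the uniform bound $l\geq p_g-3$.

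There is one step your outline underestimates. The naive orbit bound gives only $\lceil l/2\rceil \leq 4$, i.e.\ $l\leq 8$, which does \emph{not} contradict $l\geq p_g-3=7$ at $p_g=10$; and indeed in case (viii) the minimum configuration is one large Wahl singularity plus $(p_g-4)=6$ copies of $[2,5]$, which on a first pass yields exactly $4$ singularities on $W_0$ (the large one is forced to be fixed, the six $[2,5]$'s pair into three orbits). The contradiction requires a further observation you do not name: by Corollaries \ref{invWahl} and \ref{invT} the quotient of the fixed large Wahl singularity is a non-Du Val T-singularity with $d\geq 2$, and passing to an M-resolution of $W_0$ converts this into at least two Wahl singularities, giving $\geq 5$ Wahl singularities on a $\Q$-Gorenstein degeneration of $\F_n$, which Theorem \ref{JPZ} forbids. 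This M-resolution maneuver is the missing ingredient that makes your ``direct case check'' go through and is exactly what the paper invokes for the tight case (viii).
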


\begin{proof}

Let $W'$ be a Horikawa surface with only T-singularities and ample $K_{W'}$ in the closure of the corresponding Gieseker moduli space of Horikawa surfaces for a given $K^2 \geq 16$. As the general nonsingular Horikawa surface has ample canonical class, we can assume we have a $\Q$-Gorenstein smoothing $(W' \subset \W') \to (0 \in \D)$. By Corollary \ref{quotDegHori}, we have the existence of an involution on $(W' \subset \W') \to (0 \in \D)$ and the corresponding quotient $(W_0' \subset \W_0') \to (0 \in \D)$ where $W_0'$ has only Wahl singularities except maybe by one quotient singularity whose smoothing has Milnor number equal to $1$. We replace this smoothing by the P-resolution over that singularity, denoted by $W'_t \rightsquigarrow W_0$. It will be key in our argument below the number of singularities of $W_0$, which must be at most $4$ by Theorem \ref{JPZ}. By Theorem \ref{classHorikawa}, we have a list that classifies the M-resolution $W$ of $W'$. From that list we deduce that $W'$ may have Du Val singularities coming from fibers of the corresponding elliptic fibration, for exactly the cases (iv), (v) and (vi) we have T-singularities of type $\frac{1}{dn^2}(1,dna-1)$ with $d>1$ (for each case only one such singularity and with $d=2$), and for the rest we have Wahl singularities. We already analyzed the Lee-Park examples in Corollary \ref{LeeParknoSpin}. We now go case by case checking that no small surface in the list in Theorem \ref{classHorikawa} has a quotient into $W_0'$.



For (ii), we have on $W'$ at least one Wahl singularity $[3p_g-3,2,\ldots,2]$ and $p_g-2$ Wahl singularities $[2,5]$. These $p_g-2 \geq 8$ singularities could be in orbits of two, or the involution may fix some. As the quotient of $[3p_g-3,2,\ldots,2]$ is not Wahl or nonsingular point (Corollary \ref{invWahl}), and has not the form $1/m(1,1)$ (Corollary \ref{quotTo1/m(1,1)}), then it has one singularity at least in its P-resolution. Hence $p_g-2$ Wahl singularities $[2,5]$ must be in orbits. So we have at least $1+\frac{p_g-2}{2} \geq 5$ singularities in $W_0$, a contradiction with Theorem \ref{JPZ}. The case (iii) can be handled as for case (ii), producing a contradiction. In case (iv) we have $W'$ with at least a $d=2$ T-singularity, and $p_g-3$ Wahl singularities of type $[2,5]$. The same argument as above works except for the case $p_g=10$, where we can fix one $[2,5]$ and the other $3$ are in an orbit. By Corollary \ref{quotTo1/m(1,1)}, we have that the quotient of $[2,5]$ singularity must be $\frac{1}{9}(1,1)$. Therefore at least we have $4$ singularities of order $9$ in a rational surface $W'_0$ with $\rho(W'_0)=1$, and this contradicts the orbifold topological Euler characteristic inequality $\sum_{m_p} \frac{m_p-1}{m_p} \leq 3$; see e.g. \cite[Section 1]{K08}. The cases (v), (vi), (vii) and (ix) have $p_g-4$ $[2,5]$ singularities, one Wahl singularity which is not in Corollary \ref{quotTo1/m(1,1)}, and either one $d=2$ T-singularity or two Wahl singularities at least in $W'$. Then there is enough room to argue as in cases (ii) and (iii). Th case (viii) is the closest to be realizable. The surface $W'$ has at least one Wahl singularity (no $[2,5]$) and $(p_g-4)$ $[2,5]$. If $p_g \geq 11$, then we can argue as in the previous cases for the nonexistence of such smoothing. In the case $p_g=10$, we have one Wahl singularity $(P \in W')$ (no $[2,5]$) and $6$ $[2,5]$. The $(P \in W')$ must be fixed by the involution, and so the others are in orbits. Say that the quotient of $(P \in W')$, which is not nonsingular by Corollary \ref{invWahl}, has order $m$. Then by the orbifold topological Euler characteristic inequality, we have $\frac{m-1}{m} \leq \frac{1}{3}$, and so $m \leq \frac{3}{2}$, a contradiction.

\end{proof}

\appsection{Relevant c.q.s. and their P-resolutions} \label{app}

Let $0<\Omega < \Delta$ be coprime integers. In \cite{KSB88}, Koll\'ar and Shepherd-Barron found a bijection between irreducible components of the deformation space of $\frac{1}{\Delta}(1,\Omega)$ and P-resolutions of $\frac{1}{\Delta}(1,\Omega)$ (Definition \ref{pres}). The geometric idea is that any deformation of $\frac{1}{\Delta}(1,\Omega)$ is the blow-down of a $\Q$-Gorenstein deformation of a P-resolution of $\frac{1}{\Delta}(1,\Omega)$. Soon after, Christophersen \cite{C91} and Stevens \cite{S91} gave a combinatorial way to find all P-resolutions. If $$\frac{\Delta}{\Delta-\Omega}=[b_1,\ldots,b_s],$$ then they prove that P-resolutions are in bijection with the set $$ K(\Delta/\Delta-\Omega) = \{ [k_1,\ldots,k_s]=0 \
\text{such that} \ 1 \leq k_i \leq b_i \}.$$ 

We recall that a \textit{zero continued fraction} is a Hirzebruch-Jung continued fraction whose value is equal to $0$. Thus $ K(\Delta/\Delta-\Omega)$ is the set of zero continued fractions bounded by $[b_1,\ldots,b_s]$. We will say that the elements of $K(\Delta/\Delta-\Omega)$ are the zero continued fractions associated to the H-J continued fraction of $\frac{\Delta}{\Omega}$. The set of all zero continued fractions can be listed as follows $[1,1]$, $[2,1,2]$, $[1,2,1]$, $[1,3,1,2]$, $[3,1,2,2]$, $[2,2,1,3]$, $[2,1,3,1]$, $[1,2,2,1]$, $\ldots$, where from one to the next we apply the ``arithmetic blowing-up" identity $$u- \frac{1}{v} = u+1 - \frac{1}{1-\frac{1}{v+1}}.$$ This list of zero continued fractions is in bijection with triangulations of polygons \cite{C91,S91,HTU17}. A \textit{triangulation of a convex polygon} $V_0V_1 \dots V_s$ is given by drawing some non intersecting diagonals on it which divide the polygon into triangles. For a fixed triangulation, one defines $v_i$ as the number of triangles that have $V_i$ as one of its vertices. Note that $v_0+v_1+\ldots+v_s = 3(s-1)$. The number of zero continued fractions of length $s$ is the \textit{Catalan number} $\frac{1}{s}\binom{2(s-1)}{s-1}$.

\begin{notation}
Suppose that the P-resolution $W^+ \to \overline{W}$ corresponds to a zero continued fraction $[k_1,\ldots,k_s]$. Let $d_j:=b_j-k_j \geq 0$. Let~ $d_{j_0},\ldots,d_{j_{\ell}}$ be the set of nonzero $d_i$ with $j_0<j_2<\ldots<j_{\ell}$. Then the surface $W^+$ contains curves $\Gamma_1, \allowbreak  \ldots, \allowbreak  \Gamma_{\ell}$ and T-singularities at $P_0,\ldots,P_{\ell}$ of type $\frac{1}{d_{j_i}n_i^2}(1,d_{j_i} n_i a_i -1)$. Here we include $n_i=a_i=1$ which corresponds to an $A_{d_{j_i}-1}$ singularity for $d_{j_i} \geq 2$, and $d_{j_i}=n_i=a_i=1$ which corresponds to a nonsingular point. We have that $P_i, P_{i+1}$ belong to $\Gamma_{i+1}$, and $\Gamma_i, \Gamma_{i+1}$ form a toric boundary at $P_i$ for all $i$. In the case of $i=0$ or $i=\ell$, we have only one part of the toric boundary. We always order the exceptional curves of the minimal resolution of $P_i$ from $\Gamma_i$ to $\Gamma_{i+1}$. The proper transforms of $\Gamma_i$ have self-intersection $-c_i$. In this way, in the minimal resolution, the exceptional curves together with the proper transforms of the $\Gamma_i$ form a chain. Then this P-resolution will be denoted by $$ \left[d_{i_0}{n_0 \choose a_0}\right]-(c_1)-\left[d_{i_1}{n_1 \choose a_1}\right]-(c_2)-\ldots -(c_{\ell})- \left[d_{i_{\ell}}{n_{\ell} \choose a_{\ell}}\right],$$ representing the Hirzebruch-Jung continued fraction in the minimal resolution. The $n_i,a_i$ are computed as indicated below.
\end{notation}

In \cite[Corollary 10.1]{PPSU18}, it is described a geometric algorithm to obtain the P-resolution of $[k_1,\ldots,k_s] \in K(\Delta/\Delta-\Omega)$. First, for a given P-resolution, we note that $$[b_s,\ldots,b_1]-(1)-\left[d_{i_0}{n_0 \choose a_0}\right]-(c_1)-\left[d_{i_1}{n_1 \choose a_1}\right]-(c_2)-\ldots -(c_{\ell})- \left[d_{i_{\ell}}{n_{\ell} \choose a_{\ell}}\right]=0,$$ since $\left[d_{i_0}{n_0 \choose a_0}\right]-(c_1)-\left[d_{i_1}{n_1 \choose a_1}\right]-(c_2)-\ldots -(c_{\ell})- \left[d_{i_{\ell}}{n_{\ell} \choose a_{\ell}}\right]$ contacts to $\frac{1}{\Delta}(1,\Omega)$, and $[b_1,\ldots,b_s]$ is its dual fraction. 

\begin{algorithm} [for P-resolutions]
\label{algo}

\noindent
\begin{itemize} 
    \item[(0)] If $i_0=1$, then $n_0=a_0=1$. Otherwise $\frac{n_{0}}{n_{0} - a_{0}}=[b_{1},\ldots, b_{i_0-1}]$.
    
    \item[(1)] 
     We have $[b_s,\ldots,b_1]-(1)-[d_{i_0}{n_0 \choose a_0}]-(c_1)-\ldots$. We can blow-down the $(-1)$-curve and new $(-1)$-curves consecutively until we obtain the new chain 
    $$[b_{s},\ldots, b_{i_0+1},b_{i_0}-d_{i_0},b_{i_0-1},\ldots,b_1]-(c_1)-\ldots.$$ 
    
    \item[(2)] If $b_{i_1}-d_{i_1}=1$, then we contract this $(-1)$-curve and all new $(-1)$-curves in the subchain $[b_{s},\ldots, b_{i_0+1},b_{i_0}-d_{i_0},b_{i_0-1},\ldots,b_1]$ until there are none. 
    
    \item[(3)] Then the original $(-c_1)$-curve becomes a $(-1)$-curve, and we have $$ \frac{n_1}{n_1 - a_1}=[b_{1},\ldots, b_{i_0-1},b_{i_0}-d_{i_0},b_{i_0+1},\ldots,b_{i_1-1}]$$ if this is not equal to $1$. Otherwise $n_1=a_1=1$.
    
    \item[(4)] We now repeat starting in (1) with the $d_{i_1}$.
    
    \item[(5)] We end with $[\ldots,b_{i_{\ell}}-d_{i_{\ell}},\ldots,b_{i_0}-d_{i_0},\ldots]=0$, which is the zero continued fraction corresponding to the P-resolution.
\end{itemize}
\end{algorithm}

To simplify notation: 

\begin{itemize}
    \item For $\ldots -(c_j)-\left[1 {n_j \choose a_j}\right]-(c_{j+1})-\ldots$ we write $\ldots -(c_j)-\left[{n_j \choose a_j}\right]-(c_{j+1})-\ldots$.

    \item For $\ldots -(c_j)-\left[d_{i_j}{1 \choose 1}\right]-(c_{j+1})-\ldots$ we write $\ldots -(c_j)-A_{d_{i_j}}-(c_{j+1})-\ldots$. An $A_0$ is considered as smooth point, and so the next line applies.
    
    \item For $\ldots -(c_j)-\left[1 {1 \choose 1}\right]-(c_{j+1})-\ldots$ we write $\ldots -(c_j)-(c_{j+1})-\ldots$.

    \item Endings of the type $A_x-\left[d {n \choose a}\right]-\ldots$ or $\ldots-\left[d {n \choose a}\right]-A_x$ will mean $A_{x-1}-(2)-\left[d {n \choose a}\right]-\ldots$ or $\ldots-\left[d {n \choose a}\right]-(2)-A_{x-1}$ if $x\geq 1$ respectively; otherwise ($x=0$) it will mean $\left[d {n \choose a}\right]-\ldots$ or $\ldots-\left[d {n \choose a}\right]$.
\end{itemize}

\begin{remark}
We note that if $i_0=1$ or $i_{\ell}=s$, then the corresponding P-resolution begins with a $A_{d_1}$ singularity, or ends with a $A_{d_s}$ singularities, including the nonsingular case when $d=1$. 
\label{typeA}
\end{remark}


Below we are going to compute P-resolutions of particular c.q.s. For that purpose, we need some simple properties of zero continued fractions which we briefly describe now.

\begin{lemma}
Consider a convex polygon $\PP$ with $s+1$ sides. The indices from the vertices of $\PP$ will be taken mod $s+1$. Consider $[v_1,\ldots,v_s]=0$ for a triangulation of $\PP$.
    \begin{itemize}
        \item[(a)] At least two $v_i$ must be equal to $1$. Furthermore, for $s\geq 3$, the values of $1$ cannot be in consecutive positions.
        \item[(b)] Let $s \geq 2$. If $[k_1,\ldots,k_{i-1},1,k_{i+1},\ldots,k_s]=0$ for $i\neq 1,s$, then $[k_1,\ldots,k_{i-1}-1,k_{i+1}-1,\ldots,k_{s}]=0$. If $[1,k_2,\ldots,k_s]=0$ or $[k_1,\ldots,k_{s-1},1]=0$, then $[k_2-1,\ldots,k_s]=0$ and $[k_1,\ldots,k_{s-1}-1]=0$ respectively. Furthermore, If $s\geq 4$ and $[k_1, \ldots, k_{i-1}, \allowbreak 1,k_{i+1},\ldots,k_{s}]=0$ for $i\neq 1,s$, then $k_{i-1}$ and $k_{i+1}$ cannot be both equal to $2$.
    \end{itemize}
    \label{easy}
\end{lemma}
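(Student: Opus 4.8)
The statement to prove is Lemma \ref{easy}, which records two elementary facts about zero continued fractions via the correspondence with triangulations of convex polygons. I would prove (a) and (b) using the combinatorial dictionary already set up in the excerpt: a zero continued fraction $[v_1,\ldots,v_s]$ corresponds to a triangulation of a convex polygon $V_0V_1\cdots V_s$ with $v_i$ the number of triangles at the vertex $V_i$, and the passage between consecutive zero continued fractions is the arithmetic blow-up $u-\frac1v = u+1 - \frac{1}{1-\frac{1}{v+1}}$, which corresponds to adding a diagonal (an ``ear''). First I would establish (a): any triangulation of a polygon with at least $3$ vertices has at least two \emph{ears}, i.e.\ triangles two of whose sides are sides of the polygon; the apex vertex of such an ear has $v_i=1$. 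This is the classical ``two ears theorem'' and follows by an easy induction on $s$ (cut off one ear, apply the inductive hypothesis to the smaller polygon, and observe that at most one of the two ears guaranteed there can be destroyed). For the non-consecutivity when $s\ge 3$: if $v_i=v_{i+1}=1$ then both $V_i$ and $V_{i+1}$ are apexes of ears using the edge $V_iV_{i+1}$, forcing the polygon to be a triangle, i.e.\ $s=2$; alternatively one sees it directly from the recursion, since blowing up never creates two adjacent $1$'s for $s\ge 3$ and the base case $[2,1,2]$, $[1,2,1]$ is checked by hand.

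Next I would prove (b), the ``arithmetic blow-down'' statement. The identity to verify is purely formal: if $k_i=1$ with $i\ne 1,s$, then $[k_1,\ldots,k_{i-1},1,k_{i+1},\ldots,k_s]=0$ iff $[k_1,\ldots,k_{i-1}-1,k_{i+1}-1,\ldots,k_s]=0$. This is exactly the reverse of the arithmetic blow-up relation, so it suffices to expand the continued fraction around position $i$: using $k_{i-1}-\dfrac{1}{1-\dfrac{1}{k_{i+1}-\cdots}} = (k_{i-1}-1)-\dfrac{1}{k_{i+1}-1-\cdots}$ one sees the two tails produce the same rational number, hence one is a zero continued fraction iff the other is. The boundary cases $i=1$ (giving $[k_2-1,\ldots,k_s]=0$) and $i=s$ (giving $[k_1,\ldots,k_{s-1}-1]=0$) are the same computation with a truncated tail, corresponding geometrically to removing an ear at $V_0$ or $V_s$. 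For the last assertion of (b): suppose $s\ge 4$ and $k_i=1$, $i\ne 1,s$, with $k_{i-1}=k_{i+1}=2$. Applying the just-proved reduction replaces the triple $(2,1,2)$ by $(1,1)$, i.e.\ produces the shorter zero continued fraction $[k_1,\ldots,k_{i-2},1,1,k_{i+2},\ldots,k_s]=0$ of length $s-1\ge 3$ with two consecutive $1$'s, contradicting part (a). (One must note that since $s\ge 4$ at least one of $k_{i-2},k_{i+2}$ is present, so the resulting sequence genuinely has length $\ge 3$; if both $i-2$ and $i+2$ fall outside the index range then $s=3$, excluded.)

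The only mildly delicate point is bookkeeping at the ends of the sequence — making sure that when $i$ is close to $1$ or $s$ the reduced sequence still has the length needed to invoke part (a), and that the base cases of the induction in (a) are exactly the length-$2$ fractions $[1,1]$ and the length-$3$ fractions $[2,1,2],[1,2,1]$. None of this is hard; the substance is entirely the two-ears theorem for (a) and the formal continued-fraction identity for (b), both standard. I expect the write-up to be short: a paragraph for the induction in (a), a short computation for the identity in (b), and a one-line derivation of the two ``furthermore'' clauses by reducing to (a).
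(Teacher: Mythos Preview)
Your proposal is correct and gives a careful, detailed argument; the paper itself does not actually supply one, writing only ``These properties are all straightforward.''  Your use of the two-ears theorem for (a), the formal blow-down identity for the main clause of (b), and the reduction of the last ``furthermore'' in (b) to the non-consecutivity statement in (a) are all sound.  One minor remark: when you invoke the two-ears theorem, note that the two guaranteed apex vertices range over all $V_0,\ldots,V_s$ (this is consistent with the paper's phrasing ``indices \ldots\ taken mod $s+1$''); the counterexample $[2,1,2]$ shows you cannot always get two $1$'s among $v_1,\ldots,v_s$ alone, but for the applications in the paper only one interior $1$ is ever used, so this causes no trouble.
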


\begin{proof}
    These properties are all straightforward.  
\end{proof}

\begin{remark}
In each of the next propositions, we begin with a particular c.q.s. $\frac{\Delta}{\Omega}=[e_1,\ldots,e_l]$, and we consider all possible $[k_1,\ldots,k_s] \in K(\Delta/\Delta-\Omega)$, which have the form $k_i=b_i-x_i$ where $\frac{\Delta}{\Delta-\Omega}=[b_1,\ldots,b_s]$ and $x_i\geq 0$. There must be $k_i=1$ for some position $i$ by Lemma \ref{easy}(a), and there is a limited amount of such positions, which depends on the chains of $2$s in $[b_1,\ldots,b_s]$ by Lemma \ref{easy}(b). Then we analyze realizability for each of these particular positions.
\label{strategy}
\end{remark}

\begin{proposition}
Let $r\geq 3$ and $a,b\geq 0$. Consider the H-J continued fraction $[\underbrace{2,\ldots,2}_{a},r,\underbrace{2,\ldots,2}_{b}].$ Then its associated zero continued fractions and corresponding P-resolutions are:
{\tiny 
\begin{itemize}
    \item[(a)] $[1,\underbrace{2,\ldots,2}_{r-3},1]$ for $r\geq 3$, and $a,b\geq 0$; $A_{a}-(r)-A_{b}$.
    \item[(b)] $[r-2,1,\underbrace{2,\ldots,2}_{r-3}]$ for $a\geq r-4$, $r\geq 4$ and $b\geq 0$; $A_{a-r+4}-[\binom{r-2}{r-3}]-A_{b}$.
    \item[(c)] $[\underbrace{2,\ldots,2}_{r-3},1,r-2]$ for $a\geq 0$, $r\geq 4$ and $b\geq r-4$; $A_{a}-[\binom{r-2}{1}]-A_{b-r+4}$.
\end{itemize}}
    \label{2r2}
\end{proposition}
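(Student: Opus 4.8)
The strategy, following Remark \ref{strategy}, is to compute the dual continued fraction of the given c.q.s., enumerate all zero continued fractions bounded by it, and then run Algorithm \ref{algo} on each to read off the P-resolution. First I would compute the dual fraction: one checks directly (by the standard ``arithmetic blow-up'' formula $u - \tfrac1v = u+1 - \tfrac{1}{1 - 1/(v+1)}$, equivalently by the Riemenschneider point rule) that
$$\frac{\Delta}{\Delta-\Omega} = [\underbrace{2,\ldots,2}_{a},\, r,\, \underbrace{2,\ldots,2}_{b}]^{\vee} = [a+2,\underbrace{2,\ldots,2}_{r-3},b+2],$$
with the understanding that when $r=3$ the middle block is empty and the fraction reads $[a+2,b+2]$ (and when $a=0$ or $b=0$ the leading/trailing entry is just $2$, which must then be folded into the middle block of $2$s — a harmless bookkeeping case). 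So $[b_1,\ldots,b_s] = [a+2,2,\ldots,2,b+2]$ with $s = r-1$.

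Next I would enumerate $K(\Delta/\Delta-\Omega)$. By Lemma \ref{easy}(a) every zero continued fraction $[k_1,\ldots,k_s]$ with $k_i \le b_i$ has some $k_i = 1$, and by Lemma \ref{easy}(b) the $1$s must sit over the long interior block of $2$s, or at one of the two ends $b_1 = a+2$, $b_s = b+2$ (which can be $1$ only if we subtract everything). The cleanest route is induction on $s = r-1$: a zero continued fraction with a $1$ in interior position $i$ reduces, by Lemma \ref{easy}(b), to a shorter zero continued fraction, and one tracks how the reduction interacts with the end entries $a+2, b+2$. Carrying this through shows the only surviving possibilities are: (a) the ``trivial'' zero cf $[1,2,\ldots,2,1]$ of length $r-1$ (requiring $r\ge 3$; here no entry over the ends is touched), giving $k = [1,2,\ldots,2,1]$, hence $x_1 = a+1$, $x_s = b+1$, all interior $x_i = 0$; (b) the zero cf obtained by pushing the $1$ to the left end, which forces $x_1 = a+2-(r-2) = a - r + 4$ (so needs $a \ge r-4$) together with the pattern that produces $[r-2, 1, 2,\ldots,2]$; and (c) the mirror image with $b \ge r - 4$. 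For $r = 3$ the middle block is empty so only (a) survives, consistently with the statement (which is why (b),(c) are stated for $r\ge 4$).

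Finally, for each of the three zero continued fractions I would run Algorithm \ref{algo} to identify the $n_i, a_i$ and the self-intersections $c_i$. For (a): $d_j = b_j - k_j$ is nonzero only at $j = 1$ (value $a$) and $j = s$ (value $b$), both of which are end positions, so by Remark \ref{typeA} the P-resolution begins with $A_a$ and ends with $A_b$; the single interior curve has self-intersection obtained from $[b_s,\ldots,b_1] - (1) - A_a - (c_1) - A_b = 0$, and a direct contraction computation (blow down the $(-1)$ and the resulting chain of $(-1)$s) yields $c_1 = r$, giving $A_a - (r) - A_b$. For (b): now $d_1 = (a+2) - (r-2) = a - r + 4$, $d_{j}$ for the interior position where the $1$ landed equals $1$ (producing the ``$[\binom{r-2}{r-3}]$'' Wahl/smooth-type entry — here $d = 1$, so this is a $\frac{1}{(r-2)^2}(1,\cdot)$-type singularity whose chain fraction $\tfrac{n}{n-a} = [b_1 - d_1, b_2,\ldots] = [r-2,2,\ldots,2]$ yields $n = r-2$, $a = r-3$), and $d_s = b$ is at the right end giving the trailing $A_b$; assembling via steps (0)–(5) of Algorithm \ref{algo} produces exactly $A_{a-r+4} - [\binom{r-2}{r-3}] - A_b$. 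Case (c) is the mirror computation.

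\textbf{Main obstacle.} The conceptual content is light; the real work — and the place to be careful — is the bookkeeping in the enumeration of $K(\Delta/\Delta-\Omega)$ and in running Algorithm \ref{algo}, particularly the degenerate overlaps: when $a=0$ or $b=0$ the end entry $b_1$ or $b_s$ collapses to $2$ and merges with the interior block (so e.g. (b) and (a) can coincide, or (b) can become an honest Wahl singularity with no hanging $A_x$), and when $r = 4$ the middle block of $2$s has length $1$ so the ``push the $1$ to the end'' moves in (b), (c) degenerate. I would handle these by treating $a, b \ge 1$ and $r \ge 5$ as the generic case and then checking the small/boundary values by hand, noting they are already subsumed in the stated ranges $a,b \ge 0$, $r \ge 3$. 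The verification that no other zero continued fraction bounded by $[a+2, 2,\ldots,2, b+2]$ exists is the one step that genuinely needs the two parts of Lemma \ref{easy} in tandem (existence of a $1$, and the ``no two consecutive $1$s / neighbours of a $1$ not both $2$'' constraints), and getting that exhaustiveness argument airtight is the crux.
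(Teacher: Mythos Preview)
Your proposal is correct and follows essentially the same approach as the paper: compute the dual chain $[a+2,2,\ldots,2,b+2]$, use Lemma \ref{easy} to restrict where a $1$ can sit (at one of the four positions $1,2,r-2,r-1$, since an interior $1$ flanked by $2$s is forbidden), and then read off the P-resolution via Algorithm \ref{algo} in each case. The paper's proof is terser and does the enumeration by direct case analysis on these four positions rather than by phrasing it as an induction, but the content is the same.
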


\begin{proof}
The dual chain is $[a+2,2,\ldots,2,b+2]$ with $r-3$ $2$s. If $r=3$, then the only possibility for a zero continued fraction is $[1,1]$. Let $r\geq 4$. As in Remark \ref{strategy}, there are only $4$ positions $i$ for $k_i=1$: $1,2,r-2,r-1$. If $(a+2)-x_1=1$, using Lemma \ref{easy} repeatedly we obtain (a). If $2-x_2=1$, using that lemma repeatedly we have $0=[(a-r+5)-x_1,(b+1)-x_{r-1}]$. Then, $x_1=a-r+4$ and $x_{r-1}=b$, so we obtain (b). The third case, $2-x_{r-2}=1$, is analogous to the previous one. Finally, for $(b+2)-x_{r-1}=1$, we have the first case.
\end{proof}

\begin{proposition} 
Let $r\geq 3$, $a\geq 3$ and $b,c\geq 0$. Consider the H-J continued fraction $[\underbrace{2,\ldots,2}_b,r,a,\underbrace{2,\ldots,2}_c]$. Then its associated zero continued fractions and P-resolutions are:
{\tiny 
\begin{itemize}
    \item[(a)] $[1,\underbrace{2,\ldots,2}_{r+a-5},1]$ for $r,a\geq 3$, and $b,c\geq 0$; $A_{b}-(r)-(a)-A_{c}$.
    
    \item[(b)] $[1,\underbrace{2,\ldots,2}_{r-3},3,\underbrace{2,\ldots,2}_{a-4},1,a-2]$ for $r\geq 3, a\geq 4$, $b\geq 0$ and $c\geq a-4$; $A_{b}-(r)-[\binom{a-2}{1}]-A_{c-a+4}$.
    
    \item[(c)] $[r-2,1,\underbrace{2,\ldots,2}_{r-4},3,\underbrace{2,\ldots,2}_{a-3},1]$ for $r\geq 4$, $a\geq 3$, $b\geq r-4$ and $c\geq 0$; $A_{b-r+4}-[\binom{r-2}{r-3}]-(a)-A_{c}$.
    
    \item[(d)] $[r+a-4,1,\underbrace{2,\ldots,2}_{r+a-5}]$ for $r\geq 3$ and $a\geq 3$, $b\geq r+a-6$ and $c\geq 0$; \\ $A_{b-r-a+6}-[\binom{r+a-4}{r+a-5}]-(1)-[\binom{a-1}{a-2}]-A_{c}$.
    
    \item[(e)] $[\underbrace{2,\ldots,2}_{r+a-5},1,r+a-4]$ for $r\geq 3, a\geq 3$, $b\geq 0$ and $c\geq r+a-6$; \\ $A_{b}-[\binom{r-1}{r-2}]-(1)-[\binom{r+a-4}{r+a-5}]-A_{c-r-a+6}$.
    
    \item[(f)] $[r-1,1,\underbrace{2,\ldots,2}_{r-4},3,\underbrace{2,\ldots,2}_{a-4},1,a-1]$ for $r,a\geq 4$, $b\geq r-3$ and $c\geq a-3$; \\ $A_{b-r+3}-[\binom{r-1}{r-2}]-(1)-[\binom{a-1}{1}]-A_{c-a+3}$.
    
    \item[(g)] $[a,2,1,3,\underbrace{2,\ldots,2}_{a-2}]$ for $r=5$, $a\geq 3$, $b\geq a-2$ and $c\geq 0$; $A_{b-a+2}-[\binom{2a-1}{2a-3}]-A_{c}$.
    
    \item[(h)] $[\underbrace{2,\ldots,2}_{r-2},3,1,2,r]$ for $a=5$, $r\geq 3$, $b\geq 0$ and $c\geq r-2$; $A_{b}-[\binom{2r-1}{2}]-A_{c-r+2}$.
\end{itemize}
}
\label{2ra2} 
\end{proposition}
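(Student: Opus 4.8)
The plan is to follow, step by step, the proof of Proposition~\ref{2r2} together with the strategy of Remark~\ref{strategy}: compute the dual continued fraction, pin down the possible positions of the forced entry $1$ using Lemma~\ref{easy}, collapse each surviving pattern down to $[1,1]$ to solve for the parameters, and finally run Algorithm~\ref{algo} on each pattern to read off the P-resolution.

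\textbf{Step 1 (the dual chain).} Writing $\frac{\Delta}{\Omega}=[\underbrace{2,\dots,2}_{b},r,a,\underbrace{2,\dots,2}_{c}]$, I first record, via Riemenschneider's point diagram (equivalently, an easy induction on $b+c$ using the arithmetic-blowup identity), that
\[
\frac{\Delta}{\Delta-\Omega}=\Bigl[\,b+2,\ \underbrace{2,\dots,2}_{r-3},\ 3,\ \underbrace{2,\dots,2}_{a-3},\ c+2\,\Bigr],
\]
so $s=r+a-3$ and the only slots with $b_i>2$ are $b_1=b+2$, $b_{r-1}=3$ and $b_{r+a-3}=c+2$.

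\textbf{Step 2 (locating the $1$'s).} Any $[k_1,\dots,k_s]\in K(\Delta/\Delta-\Omega)$ satisfies $k_i=b_i-x_i$ with $x_i\ge 0$, hence $x_i\in\{0,1\}$ at a $2$-slot, $x_{r-1}\in\{0,1,2\}$, $x_1\in\{0,\dots,b+1\}$ and $x_{r+a-3}\in\{0,\dots,c+1\}$. By Lemma~\ref{easy}(a) there is at least one $k_i=1$ and the $1$'s are non-consecutive; by the last clause of Lemma~\ref{easy}(b) a $1$ strictly inside one of the two blocks of $2$'s would drag one of its $2$-neighbours down to $1$, creating an adjacent pair of $1$'s (for $s\ge 4$), which is impossible. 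Thus a $1$ can occur only in the ``boundary'' slots $1,\,2,\,r-2,\,r-1,\,r,\,r+a-4,\,r+a-3$, with the obvious coincidences when $r$ or $a$ is small. Running through the admissible subsets of these slots carrying the $1$'s, and collapsing each resulting pattern down to $[1,1]$ by iterating Lemma~\ref{easy}(b) — which simultaneously solves for the remaining $x_j$ and produces the inequalities on $b,c$ (from $0\le x_1\le b+1$ and $0\le x_{r+a-3}\le c+1$) — yields exactly the eight zero continued fractions (a)–(h). The families (g) and (h) are precisely what one gets when two boundary slots collide, i.e.\ when $r=5$ (so $b_2=b_3=2$ with $b_4=3$) or $a=5$.

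\textbf{Step 3 (reading off the P-resolutions).} For each of the eight zero continued fractions I then apply Algorithm~\ref{algo}: steps (0) and (3) compute each factor $[\,d_{i}\binom{n_i}{a_i}\,]$ as the sub-continued fraction of $[b_1,\dots,b_s]$ caught between consecutive nonzero $d_j=b_j-k_j$, the blow-downs of steps (1)--(2) turn $[b_s,\dots,b_1]-(1)-\cdots$ into the displayed chain, and Remark~\ref{typeA} accounts for the $A_\bullet$ endings. I expect this last step to be where essentially all the labour lies, and it is the genuine (if routine) obstacle: one must carefully track the numerous subcases and correctly identify the output singularities, paying particular attention to the degenerate values $r=3$ or $a=3$ (an empty block of $2$'s), $r=4$ or $a=4$, and $r=5$ or $a=5$, where several of the ``boundary'' slots merge. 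No idea beyond those already in the proof of Proposition~\ref{2r2} is required.
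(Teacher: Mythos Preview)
The proposal is correct and follows essentially the same approach as the paper: compute the dual $[b+2,2,\dots,2,3,2,\dots,2,c+2]$, locate the admissible positions for a $1$ via Lemma~\ref{easy}, collapse each case, and apply Algorithm~\ref{algo}. The only cosmetic discrepancy is that you list seven boundary slots including $r-1$, whereas the paper lists six: for $r,a\ge 4$ the slot $r-1$ has both neighbours equal to $2$, so the same Lemma~\ref{easy}(b) argument you used for interior slots rules it out immediately; this redundancy is harmless.
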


\begin{proof}
The cases $r=3$ and $a\geq 3$, or $r\geq 3$ and $a=3$ can be checked with ease. If $r\geq 4$ and $a\geq 4$, then the dual chain is $[b+2,2,\ldots,2,3,2,\ldots,2,c+2]$ with $(r-3)$ $2$s on the left and $(a-3)$ $2$s on the right. As in Remark \ref{strategy}, we need to study $6$ positions $i$ for $k_i=1$. We check one by one. If $(b+2)-x_1=1$, then (using Lemma \ref{easy}) we have $0=[2-x_{r-1},2-x_{r},\ldots,2-x_{r+a-4},(c+2)-x_{r+a-3}]$. The possibilities for an entry equal to $1$ are at the ends, or $2-x_{r+a-4}=1$. If we have a $1$ at the ends of the last zero continued fraction, we obtain the first case. On the other hand, if $2-x_{r+a-4}=1$, using Lemma \ref{easy} repeatedly we have $0=[1-x_{r-1},(c-a+5)-x_{r+a-3}]$, so we obtain the case (b). The second possibility is $2-x_2=1$, so using Lemma \ref{easy} repeatedly we have $0=[(b-r+5)-x_1,2-x_{r-1},2-x_{r}\ldots,2-x_{r+a+-4},(c+2)-x_{r+a-3}]$. The possibilities for an entry equal to $1$ are at the ends of the chain or the ends of the sequence of $2$s. If we have a $1$ at the ends of the chain, we obtain the case (c). If $2-x_{r-1}=1$ or $2-x_{r+a-4}=1$, we obtain the cases (d) or (f), respectively. The third possibility is $2-x_{r-2}=1$. If $r\geq 6$, using Lemma \ref{easy}(b) once, we have a contradiction. If $r=4$, then this case is a particular case of the second possibility. If $r=5$, we obtain the case (g). Due to the symmetry of the dual chain, the other cases are analogous.
\end{proof}

\begin{proposition}
Let $r\geq 3$, $a\geq 3$, $b\geq 0$, and $n\geq 2$ be integers. Consider the H-J continued fraction $[\underbrace{2,\ldots,2}_b,r,a,\underbrace{2,\ldots,2}_{n-2},3,\underbrace{2,\ldots,2}_{a-3}]$. Then its associated zero continued fractions and corresponding P-resolutions are: 
{\tiny 
\begin{itemize}
    \item[(a)] $[1,\underbrace{2,\ldots,2}_{r+a-4},1]$ for $b\geq 0$, $r\geq 3$, $a\geq 3$ and $n\geq 2$; $A_{b}-(r)-(a)-A_{n-2}-(3)-A_{a-3}$.
    
    \item[(b)] $[1,\underbrace{2,\ldots,2}_{r-3},3,\underbrace{2,\ldots,2}_{a-4},1,a-1,1]$ for $r\geq 3$, $a\geq 4$, $b\geq 0$ and $n\geq a-2$; $A_{b}-(r)-[\binom{a-2}{1}]-A_{a-4}-(3)-A_{a-3}$.
    
    \item[(c)] $[1,\underbrace{2,\ldots,2}_{r-3},3,\underbrace{2,\ldots,2}_{a-3},1,a-1]$ for $b\geq 0$, $r\geq 3$, $n\geq 2$ and $a\geq 3$; $A_{b}-(r)-[n\binom{a-1}{1}]$.
    
    \item[(d)] $[r-2,1,\underbrace{2,\ldots,2}_{r-4},3,\underbrace{2,\ldots,2}_{a-2},1]$ for $b\geq r-4$, $r\geq 4$, $a\geq 3$ and $n\geq 2$; \\  $A_{b-r+4}-[\binom{r-2}{r-3}]-(a)-A_{n-2}-(3)-A_{a-3}$.
    
    \item[(e)] $[r+a-3,1,\underbrace{2,\ldots,2}_{r
    +a-4}]$ for $b\geq r+a-5$, $r\geq 4$, $a\geq 3$ and $n\geq 2$; \\ $A_{b-r-a+5}-[\binom{r+a-3}{r+a-4}]-(1)-[\binom{a}{a-1}]-(1)-[(n-1)\binom{2}{1}]-A_{a-3}$.
    
    \item[(f)] $[r+a-4,1,\underbrace{2,\ldots,2}_{r+a-6},3,1]$ for $b\geq r+a-6$, $r\geq 4$, $n\geq 2$ and $a\geq 3$; \\ $A_{b-r-a+6}-[\binom{r+a-4}{r+a-5}]-(1)-[\binom{a-1}{a-2}]-A_{n-2}-(3)-A_{a-3}$.

    \item[(g)] $[r,1,\underbrace{2,\ldots,2}_{r-4},3,\underbrace{2,\ldots,2}_{a-4},1,a-1,2]$ for $b\geq r-2$, $r\geq 4$, $n\geq a-2$ and $a\geq 4$; \\ $A_{b-r+2}-[\binom{r}{r-1}]-(1)-[\binom{2a-3}{a-1}]-(1)-[(n-a+2)\binom{2}{1}]-A_{a-3}$.
    
    \item[(h)] $[r-1,1,\underbrace{2,\ldots,2}_{r-4},3,\underbrace{2,\ldots,2}_{a-4},1,a,1]$ for $b\geq r-3$, $r\geq 4$, $n\geq a-1$ and $a\geq 4$; \\ $A_{b-r+3}-[\binom{r-1}{r-2}]-(1)-[\binom{a-1}{1}]-A_{n-a+1}-(3)-A_{a-3}$. 
    
    \item[(i)] $[a+1,2,1,3,\underbrace{2,\ldots,2}_{a-1}]$ for $r=5$, $b\geq a-1$, $n\geq 2$ and $a\geq 3$; \\ $A_{b-a+1}-[\binom{2a+1}{2a-1}]-(1)-[(n-1)\binom{2}{1}]-A_{a-3}$.
    
    \item[(j)] $[a,2,1,3,\underbrace{2,\ldots,2}_{a-3},3,1]$ for $r=5$, $b\geq a-2$, $n\geq 2$ and $a\geq 3$; \\ $A_{b-a+2}-[\binom{2a-1}{2a-3}]-A_{n-2}-(3)-A_{a-3}$.
    
    \item[(k)] $[3,\underbrace{2,\ldots,2}_{r-3},3,1,2,r,2]$ for $a=5$, $b\geq 1$, $r\geq 3$ and $n\geq r-1$; \\ $A_{b-2}-[\binom{4(r-1)}{2r-1}]-(1)-[(n-r+1)\binom{2}{1}]-(2)-A_1$.

    \item[(l)] $[\underbrace{2,\ldots,2}_{r-2},3,1,2,r+1,1]$ for $a=5$, $b\geq 0$, $r\geq 3$ and $n\geq r$; \\ $A_{b}-[\binom{2r-1}{2}]-A_{n-r}-(3)-(2)-A_1$.

    \item[(m)] $[3,\underbrace{2,\ldots,2}_{r+a-6},1,r+a-4,2]$ for $b\geq 1$, $r\geq 4$, $n\geq r+a-5$ and $a\geq 3$; \\ $A_{b-1}-[\binom{2r-3}{r-1}]-(1)-[\binom{2r+2a-9}{r+a-4}]-(1)-[(n-r-a+4)\binom{2}{1}]-A_{a-3}$. 
   
    \item[(n)] $[\underbrace{2,\ldots,2}_{r+a-5},1,a+r-3,1]$ for $b\geq 0$, $r\geq 3$, $n\geq r+a-4$ and $a\geq 3$; \\ $A_{b}-[\binom{r-1}{1}]-(1)-[\binom{r+a-4}{1}]-A_{n-r-a+4}-(3)-A_{a-3}$.
    
    \item[(o)] $[\underbrace{2,\ldots,2}_{r-2},3,\underbrace{2,\ldots,2}_{a-4},1,a-2,r]$ for $b\geq 0$, $r\geq 3$, $n\geq a-3$ and $a\geq r+1$; \\ $A_{b}-[\binom{ra-2r-1}{a-2}]-(1)-[(n-a+3)\binom{r}{1}]-A_{a-r-1}$.
\end{itemize}
}
\label{2ra232}
\end{proposition}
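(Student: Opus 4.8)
The plan is to use the Christophersen--Stevens correspondence recalled above: P-resolutions of
$$\frac{\Delta}{\Omega}=\Big[\underbrace{2,\ldots,2}_b,r,a,\underbrace{2,\ldots,2}_{n-2},3,\underbrace{2,\ldots,2}_{a-3}\Big]$$
are in bijection with $K(\Delta/\Delta-\Omega)$, the zero continued fractions bounded entrywise by the dual chain $\frac{\Delta}{\Delta-\Omega}=[b_1,\ldots,b_s]$, and Algorithm \ref{algo} turns each such zero continued fraction into the explicit P-resolution. So the first step is to compute the dual chain. Using the Riemenschneider point-diagram rule (a block of $2$'s is traded for a single long entry, and conversely), one obtains
$$\frac{\Delta}{\Delta-\Omega}=\Big[\,b+2,\ \underbrace{2,\ldots,2}_{r-3},\ 3,\ \underbrace{2,\ldots,2}_{a-3},\ n+1,\ a-1\,\Big],$$
with the obvious degenerations when $b=0$, $r=3$, or $a=3$ (some blocks vanish or merge into a neighbour). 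I would double-check this formula via the invariant $\sum(e_i-1)=\sum(b_j-1)$, the length count $s=r+a-2$, and a couple of numerical samples.

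The combinatorial heart, following Remark \ref{strategy} and the pattern of Propositions \ref{2r2} and \ref{2ra2}, is to enumerate $[k_1,\ldots,k_s]\in K(\Delta/\Delta-\Omega)$. Write $k_i=b_i-x_i$ with $x_i\ge 0$. By Lemma \ref{easy}(a) some $k_i=1$, and by Lemma \ref{easy}(b) an interior entry equal to $1$ cannot lie strictly inside a run of $2$'s; hence a $1$ can occur only at an endpoint, at the two ends of the left $2$-block (positions $2$ and $r-2$), at the $3$ (position $r-1$), at the two ends of the right $2$-block (positions $r$ and $r+a-4$), or at the final two positions $r+a-3$ and $r+a-2$ carrying $n+1$ and $a-1$. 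For each candidate location I would set $k_i=1$, apply Lemma \ref{easy}(b) to peel it off (subtracting $1$ from each neighbour), and iterate: at each stage the emerging constraints either become inconsistent -- no P-resolution -- or pin down a unique zero continued fraction together with inequalities relating $b,r,a,n$. Done systematically this yields exactly the fifteen families (a)--(o): the generic positions with generic parameters give (a)--(f), while the boundary sub-cases ($r\in\{3,4,5\}$, $a\in\{3,4,5\}$, $n$ at its minimal allowed value, or the $1$ sitting among the last entries) have to be treated separately and account for the more intricate items (g)--(o).

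The final step is to push each surviving zero continued fraction through Algorithm \ref{algo}: from the nonzero $d_{i_0}<\cdots<d_{i_\ell}$ one reads off the $A_{d_{i_j}}$ pieces and computes the index data $\binom{n_j}{a_j}$ from the truncated and suitably decremented sub-chains $[b_1,\ldots,b_{i_j-1}]$ as in steps (0)--(5), producing strings such as $A_b-(r)-[n\binom{a-1}{1}]$ in (c) or $A_{b-r-a+6}-[\binom{r+a-4}{r+a-5}]-(1)-[\binom{a-1}{a-2}]-A_{n-2}-(3)-A_{a-3}$ in (f). I expect the main obstacle to be bookkeeping rather than ideas: there are on the order of a dozen positions to test, the peeling recursion branches, and one must keep simultaneous track of which degenerate parameter ranges are in force, of not having overlooked a ``hidden'' extra $1$, and of the precise index output of Algorithm \ref{algo}. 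The safeguards I would rely on throughout are the invariant $\sum(e_i-1)=\sum(b_j-1)$, the requirement that a candidate zero continued fraction genuinely evaluates to $0$, and numerical verification of $\frac{\Delta}{\Omega}$ for small $(b,r,a,n)$.
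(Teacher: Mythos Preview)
Your proposal is correct and follows essentially the same approach as the paper: compute the dual chain $[b+2,\underbrace{2,\ldots,2}_{r-3},3,\underbrace{2,\ldots,2}_{a-3},n+1,a-1]$, enumerate the (eight) admissible positions for an entry equal to $1$ via Lemma \ref{easy}, peel recursively, and then run Algorithm \ref{algo} on each surviving zero continued fraction. The paper's own proof is equally terse---it treats the first two positions explicitly and then writes ``The other elements are obtained in the same way''---so your level of detail is comparable; your organization into ``generic'' (a)--(f) versus ``boundary'' (g)--(o) differs slightly from the paper's grouping by which position carries the initial $1$, but this is cosmetic.
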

\begin{proof}
The cases ($r=3$ and $a\geq 3$) and ($r\geq 3$ and $a=3$) can be checked with ease. Assume that $r\geq 4$ and $a\geq 4$. Its dual chain is $[b+2,2,\ldots,2,3,2,\ldots,2,n+1,a-1]$ with $(r-3)$ $2$s on the left and $(a-3)$ $2$s on the right. As in Remark \ref{strategy}, we have $8$ possible positions $i$ for $k_i=1$. If $(n+1)-x_1=1$, using Lemma \ref{easy} repeatedly we have $0=[2-x_{r-1},2-x_{r},\ldots,2-x_{r+a-4},(n+1)-x_{r+a-3},(a-1)-x_{r+a-2}]$. We must have an entry equal to $1$. In this case, we have $3$ possibilities for it; thus, we obtain the first $3$ cases. If $2-x_2=1$, using Lemma \ref{easy} we descend to a situation where we have five possibilities for an entry equal to 1. These are the next 5 cases on the list. The other elements are obtained in the same way.
\end{proof}

\begin{proposition}
Let $a\geq 4$, $b\geq 0$, and $n\geq 2$ be integers. Consider the H-J continued fraction $[\underbrace{2,\ldots,2}_{b},a,\underbrace{2,\ldots,2}_{n-2},3,\underbrace{2,\ldots,2}_{a-4}].$ Then its associated zero continued fractions and corresponding P-resolutions are: 

{\tiny
\begin{enumerate}
    \item[(a)] $[1,\underbrace{2,\ldots,2}_{a-2},1]$ for $b\geq 0$, $a\geq 4$, and $n\geq 2$;   $A_{b}-(a)-A_{n-2}-(3)-A_{a-4}$.
    
    \item[(b)] $[a-1,1,\underbrace{2,\ldots,2}_{a-2}]$ for $b\geq a-3$, $a\geq 4$ and $n\geq 2$;  $A_{b-a+3}-[\binom{a-1}{a-2}]-(1)-[(n-1)\binom{2}{1}]-A_{a-4}$.
    
    \item[(c)] $[a-2,1,\underbrace{2,\ldots,2}_{a-4},3,1]$ for $b\geq a-4$, $a\geq 4$ and $n\geq 2$;  $A_{b-a+4}-[\binom{a-2}{a-3}]-A_{n-2}-(3)-A_{a-4}$.
    
    \item[(d)] $[3,\underbrace{2,\ldots,2}_{a-4},1,a-2,2]$ for $b\geq 1$, $a\geq 4$ and $n\geq a-3$; $A_{b-1}-[\binom{2a-5}{a-2}]-(1)-[(n-a+3)\binom{2}{1}]-A_{a-4}$.
    
    \item[(e)] $[\underbrace{2,\ldots,2}_{a-3},1,a-1,1]$ for $b\geq 0$, $a\geq 4$ and $n\geq a-2$;  $A_{b}-[\binom{a-2}{1}]-A_{n-a+2}-(3)-A_{a-4}$.   
\end{enumerate}
}
\label{2a232}
\end{proposition}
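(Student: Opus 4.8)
The plan is to argue exactly as in the proofs of Propositions~\ref{2ra2} and~\ref{2ra232}, following the recipe of Remark~\ref{strategy}. First I would compute the dual Hirzebruch--Jung continued fraction; a routine application of Riemenschneider's point-diagram rule gives
\[
\frac{\Delta}{\Delta-\Omega}=\Big[\,b+2,\ \underbrace{2,\ldots,2}_{a-3},\ n+1,\ a-2\,\Big],
\]
so that $s=a$, the unique long block of $2$'s sits in positions $2,\dots,a-2$, and the remaining entries are $b_1=b+2$, $b_{a-1}=n+1$, $b_a=a-2$. The degenerate cases $a=4$ (no tail of $2$'s) and $n=2$ (no middle block of $2$'s) I would treat by direct inspection, just as $r=3$ or $a=3$ were handled before; they produce nothing outside the stated list.

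For the generic case $a\ge 5$, $n\ge 3$, write $k_i=b_i-x_i$ with $x_i\ge 0$ for an element $[k_1,\dots,k_a]\in K(\Delta/\Delta-\Omega)$. By Lemma~\ref{easy} an entry $k_i$ can equal $1$ only at the ends of, or adjacent to, the block of $2$'s: the forbidden configuration of a $1$ flanked by two $2$'s rules out the positions $3\le i\le a-3$, leaving precisely the five candidate positions $i\in\{1,2,a-2,a-1,a\}$ --- which is exactly the number of items in the statement. For each such position I would set $k_i=1$ and then repeatedly apply Lemma~\ref{easy}(b) to collapse the chain of forced $1$'s that appears as the length drops; this simultaneously produces the explicit zero continued fractions (a)--(e) and forces, via $x_i\ge 0$ applied to $b_1=b+2$, $b_{a-1}=n+1$, $b_a=a-2$, the numerical restrictions $b\ge a-3$, $b\ge a-4$, $b\ge 1$, $n\ge a-2$, $n\ge a-3$ recorded there. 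Finally, feeding each of (a)--(e) into Algorithm~\ref{algo} --- blowing down $[b_a,\dots,b_1]-(1)-\cdots$ step by step and reading off the resulting chain together with the abbreviations for $A_\bullet$ and smooth points introduced after Remark~\ref{typeA} --- yields the displayed P-resolutions, the key point being that the middle entry $n+1$ of the dual is responsible for the $A_{n-2}$ and $A_{n-a+2}$ factors, and that in (b) and (d) the curve adjacent to the long $2$-block collapses to an $A$-chain rather than to a genuine Wahl singularity.

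The only real work is bookkeeping: tracking the cascade of forced $1$'s through Lemma~\ref{easy}(b) carefully enough not to miss a branch or manufacture a spurious one, keeping the inequalities on $b$ and $n$ sharp, and running Algorithm~\ref{algo} without arithmetic slips when identifying the index $\binom{n_i}{a_i}$ of each singularity (for instance checking that case~(d) produces $[\binom{2a-5}{a-2}]$ and not a neighbouring fraction). No idea beyond those already used for Propositions~\ref{2ra2} and~\ref{2ra232} is needed.
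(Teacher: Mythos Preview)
Your approach is the same as the paper's and would succeed, but one detail is off. The paper finds only \emph{four} candidate positions for a first $k_i=1$, namely $i\in\{1,2,a-2,a\}$, and explicitly excludes position $a-1$ (the entry $n+1$): if $k_{a-1}=1$ then, since consecutive $1$'s are forbidden and every neighbour inside the $2$-block is bounded by $2$, the second $1$ guaranteed by Lemma~\ref{easy}(a) is forced to sit at position $1$ or $2$, so this case is absorbed by the earlier ones. Thus your remark that the five positions match the five items of the statement is a coincidence, not a structural fact: in the paper position~$1$ yields (a), position~$2$ branches into (b) and (c), position~$a-2$ branches into (d) and (e), and positions $a-1,\,a$ give nothing new. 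Once you drop that heuristic and actually track the branching you already flag in your last paragraph, your argument coincides with the paper's.
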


\begin{proof}
Its dual chain is $[b+2,2,\ldots,2,n+1,a-2]$ with $(a-3)$ $2$s in the middle. As in Remark \ref{strategy}, we have $4$ possible positions $i$ for $k_i=1$ (position at $n+1$ is not possible in this case). If $(b+2)-x_1=1$, using Lemma \ref{easy} repeatedly we obtain the first case. If $2-x_2=1$, using Lemma \ref{easy} repeatedly we have $0=[(b-a+5)-x_1,n-x_{a-1},(a-2)-x_{a}]$. The possibilities for an entry equal to $1$ are in the middle of the chain or at the ends of the chain. These cases are (b) and (c), respectively. If $2-x_{a-2}=1$, we descend to $0=[(b+1)-x_1,(n-a+4)-x_{a-1},(a-2)-x_{a}]$. The possibilities for an entry equal to $1$ are in the middle of the chain or at the ends of the chain. These cases are (d) and (e), respectively. Finally, if $(a-2)-x_{a}=1$, since it is one of the ends, Lemma \ref{easy}(a) implies that there is another entry equal to $1$, but we did consider that case already.
\end{proof}

\begin{proposition} 
Let $a\geq 3$. Consider the H-J continued fractions $[3,a,2]$. Then its associated zero continued fractions and corresponding P-resolutions are: 
{\tiny 
\begin{itemize}
    \item[(a)] $[1,\underbrace{2,\ldots,2}_{a-2},1]$ for $a\geq 3$; $(3)-(a)-A_1$. 
    \item[(b)] $[2,1,2]$ for $a=3$; $[2\binom{2}{1}]-(2)$.
    \item[(c)] $[1,3,1,2]$ for $a=4$; $(3)-[\binom{2}{1}]-(2)$.
    \item[(d)] $[2,2,1,3]$ for $a=4$;  $[\binom{2}{1}]-(1)-[\binom{3}{1}]$. 
    \item[(e)] $[2,3,1,2,3]$ for $a=5$; $[\binom{5}{2}]$.
    \item[(f)] $[1,3,2,1,3]$ for $a=5$; $(3)-\binom{3}{1}$.
\end{itemize}}
\label{typeII}
\end{proposition}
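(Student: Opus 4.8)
The plan is to follow the general strategy of Remark \ref{strategy}, exactly as in the proofs of Propositions \ref{2r2}--\ref{2a232}. The first step is to identify the dual continued fraction. One computes $[3,a,2]=\frac{6a-5}{2a-1}$, so $\Delta=6a-5$, $\Omega=2a-1$, $\Delta-\Omega=4(a-1)$, and (directly, or via the Riemenschneider point rule applied to $[3,a,2]$)
\[
\frac{\Delta}{\Delta-\Omega}=\frac{6a-5}{4a-4}=\Bigl[2,3,\underbrace{2,\ldots,2}_{a-3},3\Bigr]=[b_1,\ldots,b_a].
\]
This chain is palindromic, which will roughly halve the case analysis; note also that for $a=3$ it is just $[2,3,3]$, the base case of everything below.

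Next I would enumerate $K(\Delta/\Delta-\Omega)$, i.e.\ all zero continued fractions $[k_1,\ldots,k_a]$ with $1\le k_i\le b_i$. By Lemma \ref{easy}(a) some $k_i$ equals $1$, and the key point is to pin down which positions $i$ can carry a $1$. If $i$ lies in the strict interior of the central run of $2$'s, then $b_{i-1}=b_i=b_{i+1}=2$, so $k_{i\pm 1}\le 2$; by Lemma \ref{easy}(a) neither $k_{i-1}$ nor $k_{i+1}$ can equal $1$, hence $k_{i-1}=k_{i+1}=2$, contradicting the last clause of Lemma \ref{easy}(b). Therefore a $1$ can only sit in one of the boundary positions $1,2,3$ or $a-2,a-1,a$, and by the palindromic symmetry it suffices to analyse $i\in\{1,2,3\}$. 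In each of these cases one repeatedly applies Lemma \ref{easy}(b) to shorten the zero continued fraction, recording the forced values of the remaining entries, until one reaches a zero continued fraction of length at most $3$ --- whose only instances are $[1,1]$, $[1,2,1]$, $[2,1,2]$ --- and then reads off for which $a$ a solution exists. This bookkeeping is where I expect the bulk of the (routine) work to be, and it is the main obstacle: one must check that, apart from the always-present $[1,\underbrace{2,\ldots,2}_{a-2},1]$, further solutions survive only for $a=3$ ($[2,1,2]$), $a=4$ ($[1,3,1,2]$ and $[2,2,1,3]$) and $a=5$ ($[2,3,1,2,3]$ and $[1,3,2,1,3]$), and that nothing new appears for $a\ge 6$.

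Finally, for each of these zero continued fractions I would run Algorithm \ref{algo}: put $d_j=b_j-k_j$, single out the nonzero $d_j$, evaluate each $\frac{n_i}{n_i-a_i}$ from the prescribed subchain of $[b_1,\ldots,b_a]$ (always a short string of $2$'s with at most one $3$, hence immediate), and fix the self-intersections $-c_i$ of the curves $\Gamma_i$ by the requirement that the minimal resolution of the output chain contract to $[3,a,2]=\frac{6a-5}{2a-1}$. This produces precisely the configurations (a)--(f). As a consistency check one can conversely contract each listed P-resolution and confirm the value $\frac{6a-5}{2a-1}$, and verify via the discrepancies that $K$ is ample relative to the contraction. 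The only slightly delicate point in this last step is handling the ending $A_x$ conventions and the single $d=2$ T-singularity $[2\binom{2}{1}]$ occurring for $a=3$, but this is completely parallel to the analogous cases in Proposition \ref{2r2}.
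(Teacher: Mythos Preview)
Your approach is exactly the one the paper intends --- its proof is literally the single line ``This is a simple inspection'' --- and your outline carries that inspection out in full, via Remark \ref{strategy}, Lemma \ref{easy}, and Algorithm \ref{algo}, just as in Propositions \ref{2r2}--\ref{2a232}.

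One correction: the dual chain $[2,3,\underbrace{2,\ldots,2}_{a-3},3]$ is \emph{not} palindromic (the first entry is $2$, the last is $3$), so you cannot halve the case analysis by symmetry. Consequently the positions where a $1$ can sit are $i\in\{1,2,3,a-1,a\}$, not $\{1,2,3\}$ up to reflection; position $a-2$ lies in the strict interior of the run of $2$'s once $a\ge 6$ and is excluded by your own Lemma \ref{easy}(b) argument. This is a bookkeeping slip, not a structural one: running the five cases separately still terminates quickly and yields exactly (a)--(f) with nothing new for $a\ge 6$.
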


\begin{proof}
This is a simple inspection.
\end{proof}

\begin{proposition} 
Let $a\geq 3$. Consider the H-J continued fractions $[4,a,2]$. Then its associated zero continued fractions and corresponding P-resolutions are:  
{\tiny
\begin{itemize}
    \item[(a)] $[1,\underbrace{2\ldots,2}_{r-1},1]$ for $a\geq 3$; $(4)-(a)-A_1$.
    \item[(b)] $[1,2,3,1,2]$ for $a=4$; $(4)-[\binom{2}{1}]-(2)$.
    \item[(c)] $[1,2,3,2,1,3]$ for $a=5$; $(4)-[\binom{3}{1}]$.
    \item[(d)] $[2,1,3,\underbrace{2,\ldots,2}_{a-3},1]$ for $a\geq 3$; $[\binom{2}{1}]-(a)-A_1$.
    \item[(e)] $[2,2,1,3]$ for $a=3$; $[2\binom{3}{2}]$.
\end{itemize}}
\label{typeIII}
\end{proposition}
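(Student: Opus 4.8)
The plan is to argue exactly as in the proofs of Propositions \ref{2r2}--\ref{typeII}, i.e.\ by the ``simple inspection'' of Remark \ref{strategy}. First I record the dual Hirzebruch--Jung continued fraction. Running the Euclidean algorithm on $[4,a,2]=\tfrac{8a-6}{2a-1}$ gives $\tfrac{\Delta}{\Delta-\Omega}=\tfrac{8a-6}{6a-5}=[2,2,3,\underbrace{2,\ldots,2}_{a-3},3]$, a chain $[b_1,\dots,b_s]$ of length $s=a+1$ (for $a=3$ the two $3$'s are adjacent, and for $a=4,5$ the middle block of $2$'s is short). So $K(\Delta/\Delta-\Omega)$ is the set of zero continued fractions $[k_1,\dots,k_{a+1}]$ with $1\le k_i\le b_i$, and the P-resolution attached to each is produced by Algorithm \ref{algo}.

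Second, I enumerate these zero continued fractions. By Lemma \ref{easy}(a) some $k_i=1$, and two entries equal to $1$ are never adjacent once $s\ge 3$. By Lemma \ref{easy}(b), an interior $1$ at position $i$ cannot have both neighbours equal to $2$; since the only entries with $b_i=3$ sit at positions $3$ and $a+1$, this forces $i\in\{2,4,a\}$, and together with the two endpoints the only positions to test are $1,2,4,a,a+1$ (they overlap when $a\le 5$). For each candidate position of a $1$ I delete that entry, drop its neighbours by one as in Lemma \ref{easy}(b), and iterate; the remaining entries are then forced. This yields precisely $[1,\underbrace{2,\ldots,2}_{a-1},1]$ and $[2,1,3,\underbrace{2,\ldots,2}_{a-3},1]$ for every $a\ge 3$, together with the three short exceptions $[2,2,1,3]$ $(a=3)$, $[1,2,3,1,2]$ $(a=4)$ and $[1,2,3,2,1,3]$ $(a=5)$. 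Note that $[4,a,2]$ is not a palindrome, so there is no left--right symmetry to exploit and every branch is checked directly.

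Third, for each such zero continued fraction I run Algorithm \ref{algo}: I start from $[b_s,\dots,b_1]-(1)-(\text{P-resolution})=0$, contract the chains of $(-1)$-curves in the prescribed order, and read off the nonzero $d_i=b_i-k_i$ together with the $n_i,a_i$ from steps $(0)$ and $(3)$ and the conventions preceding Remark \ref{typeA}. For $[1,\underbrace{2,\ldots,2}_{a-1},1]$ the nonzero $d_i$ lie at the two ends, giving $(4)-(a)-A_1$; for $[2,1,3,\underbrace{2,\ldots,2}_{a-3},1]$ one obtains $[\binom{2}{1}]-(a)-A_1$; and the three exceptional cases unwind to $[2\binom{3}{2}]$, $(4)-[\binom{2}{1}]-(2)$ and $(4)-[\binom{3}{1}]$, respectively. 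This is exactly the list in the statement.

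The computation is entirely elementary. The only genuine hazard is completeness of the enumeration together with careful bookkeeping for the short cases $a=3,4,5$, where the dual chain is too short for the ``generic'' pattern and the extra solutions $[2,2,1,3]$, $[1,2,3,1,2]$, $[1,2,3,2,1,3]$ appear; once these are isolated, everything reduces to running Algorithm \ref{algo} on at most three short continued fractions, which is immediate. Hence, as stated, this is a finite inspection.
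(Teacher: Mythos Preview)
Your proof is correct and follows exactly the approach the paper intends: the paper's own proof is the single sentence ``This is a simple inspection,'' and you have carried out that inspection explicitly, computing the dual $[2,2,3,\underbrace{2,\ldots,2}_{a-3},3]$, using Lemma~\ref{easy} to restrict the possible positions of a $1$ to $\{1,2,4,a,a+1\}$, and then reducing each branch to recover precisely the five listed zero continued fractions (with the sporadic cases (b), (c), (e) appearing only for $a=4,5,3$ respectively). Your bookkeeping for the short cases and the identification of the P-resolutions via Algorithm~\ref{algo} are correct; note incidentally that the ``$r-1$'' in item~(a) of the statement is a typo for ``$a-1$,'' which you have silently corrected.
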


\begin{proof}
 This is a simple inspection.
\end{proof}

\begin{proposition} 
Let $a\geq 3$. Consider the H-J continued fractions 
$[3,a,3]$. Then its associated zero continued fractions and corresponding P-resolutions are:  
{\tiny 
\begin{itemize}
    \item[(a)] $[1,\underbrace{2,\ldots,2}_{a-1},1]$ for $a\geq 3$; $(3)-(a)-(3)$.
    \item[(b)] $[1,3,1,2]$ for $a=3$; $(3)-[2\binom{2}{1}]$.
    \item[(c)] $[2,1,3,1]$ for $a=3$; $[2\binom{2}{1}]-(3)$.
    \item[(d)] $[1,3,1,3,1]$ for $a=4$; $(3)-[\binom{2}{1}]-(3)$.
\end{itemize}}
\label{typeIV}
\end{proposition}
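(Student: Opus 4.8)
The plan is to proceed exactly as in the proofs of Propositions \ref{typeII} and \ref{typeIII}: this is a finite computation, following the recipe of Remark \ref{strategy}. First I would compute the continued fraction: $[3,a,3] = 3-\tfrac{1}{a-1/3} = \tfrac{9a-6}{3a-1}$, so $\Delta = 9a-6$, $\Omega = 3a-1$ (coprime, since $9a-6 = 3(3a-1)-3$), and $\Delta-\Omega = 6a-5$. One then checks that the dual Hirzebruch--Jung continued fraction is
$$\frac{\Delta}{\Delta-\Omega} = \frac{9a-6}{6a-5} = [2,3,\underbrace{2,\ldots,2}_{a-3},3,2],$$
of length $a+1$; this is palindromic, reflecting the palindromy of $[3,a,3]$, and the symmetry will cut the casework in half. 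By the Christophersen--Stevens correspondence recalled in the excerpt, the P-resolutions of $\tfrac{1}{\Delta}(1,\Omega)$ are in bijection with the zero continued fractions $[k_1,\ldots,k_{a+1}]$ with $1\le k_i\le b_i$, where $[b_1,\ldots,b_{a+1}]$ is the dual chain above.

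Next I would enumerate these bounded zero continued fractions as in Remark \ref{strategy}. By Lemma \ref{easy}(a) some $k_i=1$; since $b_i=2$ at all positions except $i=2$ and $i=a$, Lemma \ref{easy}(b) (no two consecutive $1$'s, and the restriction on neighbours of a $1$) together with the bounds $k_i\le b_i$ pins the admissible positions of a $1$ to a neighbourhood of the two endpoints and of the two positions carrying a $3$. Running through these positions and repeatedly applying Lemma \ref{easy}(b) to shorten the chain, one finds that for $a\ge 5$ the only bounded zero continued fraction is $[1,\underbrace{2,\ldots,2}_{a-1},1]$, while the two short cases $a=3$ (dual $[2,3,3,2]$) and $a=4$ (dual $[2,3,2,3,2]$) produce in addition $[1,3,1,2]$, $[2,1,3,1]$ (for $a=3$) and $[1,3,1,3,1]$ (for $a=4$); every other zero continued fraction of the relevant length either exceeds the bound at some position carrying a $2$, or fails to evaluate to $0$.

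Finally, for each surviving zero continued fraction I would run Algorithm \ref{algo} to read off the associated P-resolution. For $[1,\underbrace{2,\ldots,2}_{a-1},1]$ the differences $d_j = b_j-k_j$ vanish except at the four extreme positions and equal $1$ there, so all the resulting singularities are trivial and one recovers the minimal resolution, i.e. the chain $(3)-(a)-(3)$; this is case (a). The remaining computations are the ones displayed in cases (b)--(d): e.g. for $a=3$ and $[2,1,3,1]$ one obtains $[2\binom{2}{1}]-(3)$, and by palindromy $[1,3,1,2]$ gives $(3)-[2\binom{2}{1}]$, while for $a=4$ and $[1,3,1,3,1]$ one gets $(3)-[\binom{2}{1}]-(3)$. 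I expect no real obstacle here — the only thing to be careful about is the bookkeeping in Algorithm \ref{algo} (the self-intersections $c_i$ and the indices $n_i,a_i$ read from the blown-down subchains) and remembering via Remark \ref{typeA} that a $1$ in an extreme position of the difference vector is a smooth point, not a singularity; everything else is elementary continued-fraction arithmetic, whence the statement "This is a simple inspection."
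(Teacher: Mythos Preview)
Your proposal is correct and follows essentially the same approach as the paper: compute the dual chain $[2,3,2,\ldots,2,3,2]$, use Remark \ref{strategy} and Lemma \ref{easy} to enumerate the bounded zero continued fractions (exploiting the palindromy to halve the work), and read off the P-resolutions via Algorithm \ref{algo}. The paper's proof is just a terser version of exactly this computation.
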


\begin{proof}
The case $a=3$ can be checked directly. Assume that $a\geq 4$. Its dual chain is $[2,3,2,\ldots,2,$ $3,2]$ with $(a-3)$ $2$s in the middle. Remark \ref{strategy} tells us that there could be $4$ positions $i$ for $k_i=1$. If $2-x_1=1$, using Lemma \ref{easy} once we have $0=[2-x_2,2-x_3,\ldots,2-x_{a-1},3-x_{a},2-x_{a+1}]$. Note that if $a\geq 5$, then the only possibilities for an entry equal to $1$ are at the ends. If $a=4$, the possibilities are the ends, but $2-x_{a-1}=1$ is also possible. Using Lemma \ref{easy}, we have the first and last case, respectively. If $2-x_3=1$, note that if $a\geq 5$ we will have a contradiction with that lemma. If $a=4$, using Lemma \ref{easy} repeatedly we will have the last case. Due to the symmetry of the dual chain, the other cases are analogous.
\end{proof}

From now on, we will be only concerned about certain elements in $K(\Delta/\Delta-\Omega)$, hence we define $$ K^{\times}(\Delta/\Delta-\Omega) = \{ [k_1,\ldots,k_s] \in K(\Delta/\Delta-\Omega) \ \text{with} \  k_1=b_1 \ \text{and} \ k_s=b_s \}.$$ This means we will be considering only P-resolutions with no ending $A_d$ singularities, including smooth points $A_0$. This is because the next propositions will be used for Lemma \ref{allforsection} part $(2)$, this is, when two complete fibers are used. In the presence of ending $A_d$s, the ``final" $(-1)$-curve in any of these fibers would contradict nefness with $K_W$. See Remark \ref{typeA} for a characterization of this. Let us call them \textit{non-A-ending P-resolutions}.

\begin{proposition}
Let $r\geq 3$, and $b\geq 4$. Consider the H-J continued fraction $[4,r,b,\underbrace{2,\ldots,2}_{b-4}].$ Then there is only one associated zero continued fraction and corresponding non-A-ending P-resolution: $[2,1,3,\underbrace{2,\ldots,2}_{r-3},3,\underbrace{2,\ldots,2}_{b-3},1,b-2]$ for $r\geq 3$ and $b\geq 4$; $[\binom{2}{1}]-(r)-[\binom{b-1}{1}]$.

\label{2arb2}
\end{proposition}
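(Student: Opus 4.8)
The plan is to run the combinatorial machinery of Remark~\ref{strategy}, exactly as in the previous propositions of this appendix. The first step is to compute the dual chain. Riemenschneider duality applied to $\frac{\Delta}{\Omega}=[4,r,b,\underbrace{2,\ldots,2}_{b-4}]$ gives
\[
\frac{\Delta}{\Delta-\Omega}=[b_1,\ldots,b_s]=\Big[\,2,\,2,\,3,\,\underbrace{2,\ldots,2}_{r-3},\,3,\,\underbrace{2,\ldots,2}_{b-3},\,b-2\,\Big],\qquad s=r+b-1,
\]
where the first block of $2$'s is empty when $r=3$ and the second is empty when $b=4$; these degenerate cases are handled by the same argument with the obvious modifications. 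Since we are after non-$A$-ending P-resolutions, by Remark~\ref{typeA} we look only at zero continued fractions $[k_1,\ldots,k_s]=0$ with $1\le k_i\le b_i$ and $k_1=b_1=2$, $k_s=b_s=b-2$; in particular neither end can equal $1$.

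Next I would pin down where a $1$ can occur. By Lemma~\ref{easy}(a) there is an interior position $i$ with $k_i=1$, and no two $1$'s are adjacent; by the ``furthermore'' part of Lemma~\ref{easy}(b) such an $i$ forces one of $k_{i-1},k_{i+1}$ to be $\ge 3$, hence one of $b_{i-1},b_{i+1}$ to be $\ge 3$. The only entries of $[b_1,\ldots,b_s]$ larger than $2$ are $b_3=b_{r+1}=3$ and, when $b\ge 5$, $b_s=b-2$, so every $1$ of $[k_1,\ldots,k_s]$ sits at a position in $\{2,4\}\cup\{r,r+2\}\cup\{s-1\}$.

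The heart of the argument is then a finite case check on the leftmost $1$, using the reduction of Lemma~\ref{easy}(b): deleting a $1$ at an interior position $i$ replaces $[k_1,\ldots,k_s]=0$ by the shorter zero continued fraction $[k_1,\ldots,k_{i-1}-1,k_{i+1}-1,\ldots,k_s]=0$, and a resulting leading or trailing $1$ may be peeled off in the same way. Because almost all $b_j$ equal $2$, in every branch the entries get forced one after another ($k_j=2$, since $k_j=1$ would create adjacent $1$'s), the only freedom being at the two $3$'s and at the terminal $b-2$. Every choice of the leftmost $1$ except position $2$ runs into a contradiction --- a forced $0$ entry, a pair of adjacent $1$'s, or a short zero continued fraction incompatible with the fixed ends $k_1=2$, $k_s=b-2$ --- and the surviving branch reconstructs the unique zero continued fraction displayed in the statement. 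Feeding it through Algorithm~\ref{algo} then finishes the proof: the nonzero $b_j-k_j$ occur only at positions $2$ and $s-1$, each equal to $1$, so $W^{+}$ has exactly two singular points joined by one curve; step~(0) gives $\frac{n_0}{n_0-a_0}=[b_1]=[2]$, i.e. the left point $[\binom{2}{1}]$; step~(3) reduces $[b_1,b_2-1,b_3,\ldots,b_{s-2}]$ to the chain of the right point, a Wahl singularity with $a=1$; and the middle curve acquires self-intersection $-r$, which is the P-resolution in the statement.

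I expect the only real difficulty to be the bookkeeping of the third step: one must keep the several admissible positions of the leftmost $1$ straight and push each reduction all the way down to length $2$ (where $[1,1]=0$ is permitted) without losing or double-counting a branch, and the corner cases $r=3$ (the $3$'s at positions $3$ and $r+1$ become adjacent) and $b=4$ (the terminal entry drops to $2$, so position $s-1$ leaves the list of candidate $1$'s) have to be re-examined separately, though each is a quick check.
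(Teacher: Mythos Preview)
Your proposal is correct and follows essentially the same approach as the paper: compute the dual chain, locate the admissible positions for a $1$, and run the case check via Lemma~\ref{easy}(b). Your use of the ``furthermore'' clause of Lemma~\ref{easy}(b) upfront is slightly sharper than the paper's version --- it lets you discard positions $3$ and $r+1$ immediately (both have $b_i=3$ but are flanked by $2$'s when $r\ge 4$, $b\ge 5$), whereas the paper lists seven candidate positions $2,3,4,r,r+1,r+2,r+b-2$ and eliminates those two during the check. The paper then walks through each branch explicitly; you outline the reductions rather than writing them out, but the plan and its endpoint are the same.
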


\begin{proof} 
Its dual chain is $[2,2,3,2,\ldots,2,3,2,\ldots,2,b-2]$ with $(r-3)$ $2$s on the left and $(b-3)$ $2$s on the right. As in Remark \ref{strategy}, we have at most $7$ possible positions $i$ for $k_i=1$ (when $r=3$, we only have $5$ of them): $2,3,4,r,r+1,r+2,r+b-2$. If $(2-x_2)=1$, using Lemma \ref{easy} repeatedly, we descend to $0=[2-x_{r+1},2-x_{r+2},\ldots,2-x_{r+b-2},
b-2]$. Since $b\geq 4$, the only possibilities for an entry equal to $1$ are $(2-x_{r+1})=1$ or $(2-x_{r+b-2})=1$. In the first case, we descend to $0=[1-x_{r+b-2},b-2]$, which is absurd; whereas in the second one, we descend to $0=[1-x_{r+1},1-x_{r+b-1}]$, so we obtain the case (a). If $(3-x_3)=1$, we must have $r=3$; otherwise we have a contradiction with Lemma \ref{easy}(b). Using that lemma once yields a contradiction. If $(2-x_4)=1$ (it requires $r>3$), we must have $r\leq 5$. If $r=5$, using Lemma \ref{easy}(b) twice yields a contradiction, whereas when $r=4$, we descend to $0=[2,2-x_2,2-x_3,2-x_{r+1},\ldots,2-x_{r+b-2},b-2]$. The only possibility for an entry equal to $1$ is $2-x_{r+b-2}=1$, but using Lemma \ref{easy}(b) $(b-4)$ times we have a contradiction with that lemma. If $(2-x_{r})=1$ (it requires $r>3$), we must have $r\leq 5$. If $r=4$, we have $x_r=x_4$, then this is the same as the previous case. If $r=5$, using Lemma \ref{easy}(b) twice, we have a contradiction with that lemma. If $(3-x_{r+1})=1$, we must have that $r=3$. Then, using Lemma \ref{easy}(b) twice, we obtain a contradiction with that lemma, since $b\geq 4$. If $2-x_{r+2}=1$, we must have $b\leq 5$. If $b=5$, using Lemma \ref{easy}(b) twice we obtain that $r=3$; in such a case, we descend to $0=[2,1-x_2]$, which is absurd. The case $b=4$ is a particular case of the following case. If $(2-x_{r+b-2})=1$, we descend to $0=[2,2-x_2,2-x_3]$. As the only zero continued fractions of length $2$ are $[2,1,2]$ and $[1,2,1]$, we obtain the case (a).
\end{proof}

\begin{proposition}
Let $r\geq 3$, $b\geq 3$, and $n\geq 2$. Consider the H-J continued fraction $[4,r,b,\underbrace{2,\ldots,2}_{n-2},3,\underbrace{2,\ldots,2}_{b-3}].$ Then its associated zero continued fractions and corresponding non-A-ending P-resolutions are: 
{\tiny
\begin{itemize}
    
    
    
    
    \item[(a)] $[2,1,3,\underbrace{2,\ldots,2}_{r-3},3,\underbrace{2,\ldots,2}_{b-3},1,b-1]$ for $r\geq 3$, $n\geq 2$ and $b\geq 3$; $[\binom{2}{1}]-(r)-[n\binom{b-1}{1}]$.
    
    
    \item[(b)] $[2,2,3,\underbrace{2,\ldots,2}_{r-3},3,1,2,r,4]$ for $r\geq 3$, $b=5$ and $n\geq r-1$; $[\binom{8r-6}{2r-1}]-(1)-[(n-r+1)\binom{4}{1}]$. 
    
    \item[(c)] $[2,2,3,\underbrace{2,\ldots,2}_{r-1},1,r+1,4]$ for $r\geq 3$, $b=5$ and $n\geq r$; $[\binom{4r-5}{r-1}]-(1)-[\binom{4r+3}{r+1}]-(1)-[(n-r)\binom{4}{1}]$.
    
    \item[(d)] $[\underbrace{2,\ldots,2}_{r},3,\underbrace{2,\ldots,2}_{r-1},1,b-2,b-1]$ for $r\geq 3$, $b=r+3$ and $n\geq r$; $[\binom{3}{1}]-(1)-[\binom{r^2+2r+1}{r+1}]-(1)-[(n-r)\binom{r+2}{1}]$.
    
    
    
\end{itemize}}
\label{4rb232}
\end{proposition}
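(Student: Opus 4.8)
The plan is to run the combinatorial machine of Remark \ref{strategy}, following verbatim the pattern of Propositions \ref{2arb2} and \ref{2ra232}. First I would compute the Riemenschneider dual of $\frac{\Delta}{\Omega}=[4,r,b,\underbrace{2,\ldots,2}_{n-2},3,\underbrace{2,\ldots,2}_{b-3}]$ from its point diagram:
$$\frac{\Delta}{\Delta-\Omega}=[b_1,\ldots,b_s]=[2,2,3,\underbrace{2,\ldots,2}_{r-3},3,\underbrace{2,\ldots,2}_{b-3},n+1,b-1],\qquad s=r+b.$$
Because the statement asks only for the \emph{non-A-ending} P-resolutions, I restrict attention to $[k_1,\ldots,k_s]\in K^{\times}(\Delta/\Delta-\Omega)$, so that $k_1=b_1=2$ and $k_s=b_s=b-1$ are frozen, while Lemma \ref{easy}(a) forces $k_i=1$ for at least one interior index $i$. (The small ranges $r=3$ and $b\in\{3,4,5\}$, where the blocks of $2$'s degenerate, I would dispose of by direct inspection as in \ref{typeII}--\ref{typeIV}.)

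The heart of the proof is to enumerate the admissible positions for such an entry $1$ and then descend. By Lemma \ref{easy}(b), inside a maximal block of consecutive $2$'s of $[b_1,\ldots,b_s]$ a value $1$ can sit only at an end of the block; combined with the two frozen ends this leaves at most the positions $2,3,4,r,r+1,r+2,r+b-2,r+b-1$ (one more than in the proof of \ref{2arb2}, the extra one being $r+b-1$, where the free parameter $n$ enters as $b_{r+b-1}=n+1$). For each candidate I apply the arithmetic-blow-down identity of Lemma \ref{easy}(b) repeatedly: fixing $k_i=1$ forces a unique shorter zero continued fraction on the adjacent entries, and I ask whether it extends to an element of $K^{\times}$. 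Most branches collapse — an entry drops below $1$, a residual block of $2$'s cannot be annihilated, or one lands on a length-$2$ or length-$3$ zero continued fraction other than $[1,1]$, $[1,2,1]$, $[2,1,2]$. The survivors are precisely (a)--(d): the generic solution (a) comes from $k_2=1$ together with a $1$ in the tail at position $r+b-1$; the sporadic solutions (b), (c) survive only when the tail block $\underbrace{2,\ldots,2}_{b-3}$ is short, forcing $b=5$; and (d) survives only in the resonant situation $b=r+3$.

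Finally, for each surviving zero continued fraction I run Algorithm \ref{algo}: set $d_j=b_j-k_j$, pick out the one, two or three nonzero $d_j$'s, and read off the T-singularities $\frac{1}{d_{j_i}n_i^2}(1,d_{j_i}n_ia_i-1)$ together with the connecting $(-c_i)$-curves, where each pair $(n_i,a_i)$ is extracted from the relevant Hirzebruch--Jung subfraction $[b_1,\ldots,b_{j_0-1}]$, $[b_1,\ldots,b_{j_0}-d_{j_0},\ldots,b_{j_1-1}]$, and so on. A handful of elementary continued-fraction evaluations, e.g. $[2,2,3,\underbrace{2,\ldots,2}_{r-3},3]=\tfrac{8r-6}{6r-5}$ for (b) and $[2,2,3,\underbrace{2,\ldots,2}_{r-3}]=\tfrac{4r-5}{3r-4}$, $[2,2,3,\underbrace{2,\ldots,2}_{r-1}]=\tfrac{4r+3}{3r+2}$ for (c), then produce the displayed data and confirm that no ending $A_x$ occurs.

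The step I expect to be the genuine obstacle is the descent analysis of the middle paragraph: the bookkeeping of three interacting parameters $r,b,n$, isolating the small and resonant ranges ($b=5$, $b=r+3$) where the extra zero continued fractions appear, and certifying that nothing is missed. Relative to \ref{2arb2} and \ref{2ra232}, the additional difficulty is precisely the free parameter $n$ sitting in position $r+b-1$ of the dual chain, which introduces one more sub-branch at that position throughout the case analysis.
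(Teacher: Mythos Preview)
Your proposal is correct and follows essentially the same approach as the paper: compute the dual chain $[2,2,3,\underbrace{2,\ldots,2}_{r-3},3,\underbrace{2,\ldots,2}_{b-3},n+1,b-1]$, list the (at most eight) admissible interior positions $2,3,4,r,r+1,r+2,r+b-2,r+b-1$ for an entry equal to $1$, and descend case by case via Lemma~\ref{easy}. The paper's proof carries out that descent explicitly for each position, arriving at the same four survivors (a)--(d) with the same constraints $b=5$ for (b),(c) and $b=r+3$ for (d); your sketch is less detailed on the individual branches but identifies the right structure and the right outcomes.
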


\begin{proof}
Its dual chain is $[2,2,3,2,\ldots,2,3,2,\ldots,2,n+1,b-1]$, with $(r-3)$ $2$s on the left and $(b-3)$ $2$s on the right. Following the strategy in Remark \ref{strategy}, we have at most $8$ possible positions $i$ for $k_i=1$ (when $r=3$, we have $6$ of them, while when $b=r=3$, we have only $4$ of them): $2,3,4,r,r+1,r+2,r+b-2,r+b-1$. 

If $(2-x_2)=1$, then we descend to $0=[2-x_{r+1},2-x_{r+2},\ldots,2-x_{r+b-2},(n+1)-x_{r+b-1},b-1]$. When $(2-x_{r+b-2}=1)$ or $(2-x_{r+1}=1)$, we descend to an absurd situation, whereas when $(n+1)-x_{r+b-1}=1$ we obtain (a). 

If $(3-x_3)=1$, we must have $r=3$. Using Lemma \ref{easy}(b), it yields a contradiction. 

If ($2-x_4)=1$ (it requires $r>3)$, we must have $r\leq 5$. In both cases, we obtain a contradiction.

If $(2-x_{r})=1$ (it requires $r>3)$, we have that $r\leq 5$. In both cases, we obtain a contradiction.

If $(3-x_{r+1})=1$, we must have ($r=3$ and $3\leq b\leq 4$) or ($r\geq 3$ and $b=3$). In any of these cases, we descend to a zero continued fraction of length two, where one the entries is $b-1$, which is not possible. 

If $(2-x_{r+2})=1$ (it requires $b>3$), we must have $b\leq 5$. If $b=5$, we descend to $0=[2,2-x_2,2-x_3,(n-r+2)-x_{r+b-1},4]$. The only possibility is $(n-r+2)-x_{r+b-1}=1$, so we obtain (b). The case $b=4$ is part of the next case. 

If $(2-x_{r+b-2})=1$ (it requires $b>3$), we descend to $0=[2,2-x_2,3-x_3,2-x_4,\ldots,2-x_r,2-x_{r+1},(n'-b+4)-x_{r+b-1},b-1]$. The cases $(2-x_2=1)$, $(3-x_3=1)$, and $(2-x_4=1)$ have already been considered, so we have two cases. If $2-x_{r+1}=1$, we descend to $0=[2,2-x_2,2-x_3,(n-b-r+6)-x_{r+b-1},b-1]$. The only possibility is $(n-b-r+6)-x_{r+b-1}=1$, then we descend to $0=[1,b-4]$, so we have (c). If $(n-b+4)-x_{r+b-1}=1$, we descend to $0=[2,2-x_2,2-x_3,(b-r)]$. The only possibility is $2-x_3=1$, so we obtain (d). 

Finally, if $(n+1)-x_{r+b-1}=1$, we have (a).
\end{proof}

\begin{proposition}
Let $r\geq 3$, $a\geq 4$, and $n\geq 2$. Consider the H-J continued fraction $[\underbrace{2,\ldots,2}_{a-4},a,r,3,\underbrace{2,\ldots,2}_{n-2},3].$ Then its associated zero continued fractions and corresponding non-A-ending P-resolutions are: 
{\tiny 
\begin{itemize}

    \item[(a)] $[a-2,1,\underbrace{2,\ldots,2}_{a-4},3,\underbrace{2,\ldots,2}_{r-3},3,1,2]$ for $r\geq 3$, $n\geq 2$ and $a\geq 4$; $[\binom{a-2}{1}]-(r)-[n\binom{2}{1}]$.

    \item[(b)] $[3,2,2,3,1,2,5,2]$ for $a=5$, $r=4$ and $n\geq 4$; $[\binom{16}{9}]-(1)-[\binom{9}{5}]-(1)-[(n-4)\binom{2}{1}]$.
    
    \item[(c)] $[3,2,2,3,2,1,3,5,2]$ for $a=5$, $r=5$ and $n\geq 4$; $[\binom{25}{14}]-(1)-[(n-4)\binom{2}{1}]$.
    
    
    
    \item[(d)] $[3,\underbrace{2,\ldots,2}_{r},1,r+2,2]$ for $a=5$, $r\geq 3$ and $n\geq r+1$; $[\binom{7}{4}]-(1)-[2\binom{2r+3}{r+2}]-(1)-[(n-r-1)\binom{2}{1}]$. 
    
    
    
    
    
    
\end{itemize}}
\label{2ar323}
\end{proposition}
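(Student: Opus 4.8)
The plan is to run the same machinery used for Propositions \ref{2arb2} and \ref{4rb232}: compute the dual continued fraction, pin down the finitely many positions at which an entry equal to $1$ can occur, and descend. First I would record the dual of $[\underbrace{2,\ldots,2}_{a-4},a,r,3,\underbrace{2,\ldots,2}_{n-2},3]$, which by a direct Riemenschneider computation (equivalently, by specializing to $b=3$ the dual chain obtained in the proof of Proposition \ref{4rb232}, since for $a=4$ the two continued fractions literally coincide) equals
\[
[\,a-2,\ \underbrace{2,\ldots,2}_{a-3},\ 3,\ \underbrace{2,\ldots,2}_{r-3},\ 3,\ n+1,\ 2\,],
\]
a chain of length $s=a+r-1$; a quick cross-check is that $\sum(e_i-1)=2a+r+n-4=\sum(b_j-1)$. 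Since we are only after non-$A$-ending P-resolutions, i.e. the elements of $K^{\times}(\Delta/\Delta-\Omega)$, I must take $k_1=a-2$ and $k_s=2$ fixed from the outset.

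Next, following Remark \ref{strategy}, I would list the candidate positions $i$ with $k_i=1$. By Lemma \ref{easy} a $1$ cannot sit in the strict interior of a maximal block of $2$'s, so — with $k_1=a-2\geq 2$ and $k_s=2$ already spoken for — the only candidates are the two ends of the block $\underbrace{2,\ldots,2}_{a-3}$, the first $3$, the two ends of the block $\underbrace{2,\ldots,2}_{r-3}$, the second $3$, and the entry $n+1$: at most seven positions, with some of them collapsing in the degenerate ranges $r\in\{3,4,5\}$ and $a\in\{4,5\}$. For each candidate I would set $k_i=1$ and iterate the descent of Lemma \ref{easy}(b) (delete an interior $1$ and subtract $1$ from each of its neighbours), bookkeeping the constraints on $a,r,n$ forced along the way. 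Most branches are killed: they end at a length-$2$ zero continued fraction different from $[2,1,2]$ and $[1,2,1]$, or at an entry forced negative, or they demand $a$ or $r$ below its allowed bound. The branches that survive are exactly the four stated: the $1$-at-position-$2$ branch together with $n+1\mapsto 1$ produces the uniform family (a); the sporadic solutions with $a=5$ and $r\in\{4,5\}$ give (b) and (c); and the $a=5$, $r\geq 3$, $n\geq r+1$ branch gives (d). Finally I would feed each surviving zero continued fraction into Algorithm \ref{algo}, applied to the dual chain above, to extract the singularity data $d_{i_j}\binom{n_j}{a_j}$ and the self-intersections $-c_j$, recovering the P-resolutions in (a)--(d).

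The main obstacle is keeping the case analysis honest. The block lengths $a-3$ and $r-3$ are themselves parameters, so every descent has to be carried out uniformly in them while separately handling the small values where a block is empty or two candidate positions coincide — the same bookkeeping difficulty as in Proposition \ref{4rb232}, now with an additional block of $2$'s and the extra entry $n+1$, which is why the list (a)--(d) is a bit longer and more irregular. The delicate part is being certain that no admissible $[k_1,\ldots,k_s]$ has been overlooked and that each branch declared impossible genuinely is; converting a surviving zero continued fraction to its P-resolution via Algorithm \ref{algo} is then purely mechanical.
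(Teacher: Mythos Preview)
Your proposal is correct and follows essentially the same approach as the paper: compute the dual chain $[a-2,\underbrace{2,\ldots,2}_{a-3},3,\underbrace{2,\ldots,2}_{r-3},3,n+1,2]$, use Remark \ref{strategy} and Lemma \ref{easy} to identify the at most seven positions where a $1$ can sit (with $k_1$ and $k_s$ frozen by the non-$A$-ending condition), descend case by case, and then read off the P-resolutions via Algorithm \ref{algo}. The paper's proof carries out exactly this case analysis, obtaining (a) from the position-$2$ and the $(n+1)$ branches, (b) from $a=5,r=4$, (c) from $a=5,r=5$, and (d) from the second $3$ branch forcing $a=5$.
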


\begin{proof}
Its dual chain is $[a-2,2,\ldots,2,3,2,\ldots,2,3,n+1,2]$ with $a-3$ $2$s on the left and $r-3$ $2$s on the right. Following Remark \ref{strategy} we have at most $7$ possible positions for an entry $k_i$ equal to $1$: $2,a-2,a-1,a,a+r-4,a+r-3,a+r-2$. 

If $(2-x_2)=1$, we descend to $0=[2-x_{a+r-3},(n+1)-x_{a+r-2},2]$. Since the only zero continued fractions of length $3$ are $[2,1,2]$ and $[1,2,1]$, we obtain the case (a).

If $(2-x_{a-2})=1$, we must have $a\leq 5$. The case $a=4$, is a particular case of the previous one. If $a=5$, we descend to a zero continued fraction of length two with an entry equal to $2$, which is an absurd. 

If $(3-x_{a-1})=1$, we must have $r=3$. Since $a\geq 4$, we obtain a contradiction.

If $(2-x_a)=1$ (it requires $r>3$), we must have $r\leq 5$. If $r=5$, then we obtain a contradiction. If $r=4$, we descend to $0=[a-2,2-x_2,\ldots,2-x_{a-2},2-x_{a-1},2-x_{a+r-3},(n+1)-x_{a+r-2},2-x_{a+r-1}]$. The only possible case is $2-x_{a+r-3}=1$, so we obtain (b). 

If $2-x_{a+r-4}=1$ (it requires $r>3$), we must have that $r\leq 5$. The case $r=4$ is a particular case of the previous one. If $r=5$, we descend to $0=[a-3,(n-a+2)-x_{a+r-2},2]$, so we obtain (c).

If $(3-x_{a+r-3})=1$, we descend to $0=[a-2,2-x_2,\ldots,2-x_{a-2},2-x_{a-1},(n-r+3)-x_{a+r-2},2-x_{a+r-1}]$. The only possibility is $2-x_{a-1}=1$, so we descend to $0=[a-3,(n-r-a+5)-x_{a+r-2},2-x_{a+r-1}]$, so we obtain (d). 

Finally, if $(n+1)-x_{a+r-2}=1$, we obtain the case (a).
\end{proof}

\begin{proposition}
Let $r\geq 3$, $b\geq 3$, and $n,n'\geq 2$ be integers. Consider the H-J continued fraction $[3,\underbrace{2,\ldots,2}_{n-2},3,r,b,\underbrace{2,\ldots,2}_{n'-2},3,\underbrace{2,\ldots,2}_{b-3}].$ Then its associated zero continued fractions and corresponding non-A-ending P-resolutions are:
{\tiny
\begin{itemize}
    \item[(a)] $[2,3,3,1,2,3,3]$ for $r=3$, $b=4$ and $n,n'\geq 2$;  $[(n-2)\binom{2}{1}]-(1)-[2\binom{13}{5}]-(1)-[(n'-2)\binom{3}{1}]$.
   
    \item[(b)] $[2,3,3,1,3,1,4,3]$ for $r=4$, $b=4$, $n\geq 2$ and $n'\geq 3$;  $[(n-2)\binom{2}{1}]-(1)-[\binom{13}{5}]-(1)-[\binom{11}{4}]-[(n'-3)\binom{3}{1}]$.
   
    \item[(c)] $[2,5,2,1,3,2,2,3]$ for $r=4$, $b= 4$, $n\geq 4$ and $n'\geq 2$; $[(n-4)\binom{2}{1}]-(1)-[\binom{9}{4}]-(1)-[\binom{16}{7}]-(1)-[(n'-1)\binom{3}{1}]$.
   
    \item[(d)] $[2,5,3,1,2,3,2,2,3]$ for $r=5$, $b=4$, $n\geq 4$ and $n'\geq 2$; $[(n-4)\binom{2}{1}]-(1)-[\binom{25}{11}]-(1)-[(n'-1)\binom{3}{1}]$.
   
    \item[(e)] $[2,1,3,\underbrace{2,\ldots,2}_{r-3},3,\underbrace{2,\ldots,2}_{b-3},1,b-1]$ for $r\geq 3$, $b\geq 3$ and $n,n'\geq 2$;  $[n\binom{2}{1}]-(r)-[n'\binom{b-1}{1}]$.
   
    \item[(f)] $[2,r+2,1,\underbrace{2,\ldots,2}_{r},3]$ for $r\geq 3$, $b=4$, $n\geq r+1$ and $n'\geq 2$; \\ $[(n-r-1)\binom{2}{1}]-(1)-[2\binom{2r+3}{r+1}]-(1)-[\binom{7}{3}]-(1)-[(n'-1)\binom{3}{1}]$.
   
    \item[(g)] $[2,r+1,1,\underbrace{2,\ldots,2}_{r-3},3,1,3,3]$ for $r\geq 3$, $b=4$, $n\geq r$ and $n'\geq 2$; \\ $[(n-r)\binom{2}{1}]-(1)-[2\binom{2r+1}{r}]-(1)-[\binom{8}{3}]-(1)-[(n'-2)\binom{3}{1}]$.
   
    \item[(h)] $[2,3,\underbrace{2,\ldots,2}_{r-1},1,r+1,3]$ for $r\geq 3$, $b=4$, $n\geq 2$ and $n'\geq r$; \\ $[(n-2)\binom{2}{1}]-(1)-[\binom{5}{2}]-(1)-[\binom{3r-1}{r}]-(1)-[\binom{3r+2}{r+1}]-(1)-[(n'-r)\binom{3}{1}]$.
   
    \item[(i)] $[2,2,3,\underbrace{2,\ldots,2}_{r-3},3,1,2,r,4]$ for $r\geq 3$, $b=5$, $n\geq 2$ and $n'\geq r-1$; \\ $[(n-1)\binom{2}{1}]-(1)-[\binom{8r-6}{2r-1}]-(1)-[(n'-r+1)\binom{4}{1}]$.
   
    \item[(j)] $[2,2,3,\underbrace{2,\ldots,2}_{r-1},1,r+1,4]$ for $r\geq 3$, $b=5$, $n\geq 2$ and $n'\geq r$; \\ $[(n-1)\binom{2}{1}]-(1)-[\binom{4r-5}{r-1}]-(1)-[\binom{4r+3}{r+1}]-(1)-[(n'-r)\binom{4}{1}]$.
   
    \item[(k)] $[\underbrace{2,\ldots,2}_{r},3,\underbrace{2\ldots,2}_{r-1},1,b-2,b-1]$ for $r\geq 3$, $b=r+3$, $n\geq 2$ and $n'\geq r$;\\  $[(n-1)\binom{2}{1}]-(1)-[\binom{3}{1}]-(1)-[\binom{r^2+2r+1}{r+1}]-(1)-[(n'-r)\binom{r+2}{1}]$.
\end{itemize}}
\label{323rb232}
\end{proposition}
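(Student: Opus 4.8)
The plan is to follow exactly the template already used for Propositions \ref{2arb2}, \ref{4rb232} and \ref{2ar323}: compute the Riemenschneider dual of the given c.q.s., translate the problem into finding all zero continued fractions $[k_1,\dots,k_s]$ with $k_i = b_i - x_i$, $x_i \ge 0$, bounded by the dual chain $[b_1,\dots,b_s]$, and — because we only want non-$A$-ending P-resolutions, i.e.\ elements of $K^{\times}$ — with $k_1 = b_1$ and $k_s = b_s$; then run through the admissible positions for the entry equal to $1$ guaranteed by Lemma \ref{easy}(a), exactly following the strategy of Remark \ref{strategy}.

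First I would compute the dual. Writing the chain as $C = [3,\underbrace{2,\dots,2}_{n-2},3,r,b,\underbrace{2,\dots,2}_{n'-2},3,\underbrace{2,\dots,2}_{b-3}]$, a direct Riemenschneider point-diagram computation (as in the proofs of \ref{4rb232} and \ref{2ar323}) gives
\[
\frac{\Delta}{\Delta-\Omega} = \bigl[2,\,n+1,\,3,\,\underbrace{2,\dots,2}_{r-3},\,3,\,\underbrace{2,\dots,2}_{b-3},\,n'+1,\,b-1\bigr],
\]
a chain of length $s = r+b$, in which the two large entries $n+1$ and $n'+1$ record the two runs of $2$s in $C$. (One can sanity-check this on small values, e.g.\ $[3,3,3,3,3]^{\vee}=[2,3,3,3,3,2]$.) Membership in $K^{\times}$ forces $k_1 = 2$ and $k_{r+b} = b-1$, so an entry $1$ must sit in one of the interior positions $2,3,4,r,r+1,r+2,r+b-2,r+b-1$ — at most eight cases, dropping to six when $r=3$ and to four when $r=b=3$, matching the bookkeeping already used for \ref{4rb232}.

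Next I would treat these positions one at a time. For each, set $x_i$ so that $k_i = 1$, apply Lemma \ref{easy}(b) repeatedly to contract the resulting $1$s and shorten the zero continued fraction, and read off the forced values of the remaining $x_j$. Most positions lead to immediate contradictions with Lemma \ref{easy}(b) once $r$ or $b$ exceeds a small bound — this is precisely what produces the side conditions $b=4$, $b=5$, $b=r+3$, $r\le 5$, etc.\ appearing in cases (a)–(d) and (f)–(k); the "generic" positions $k_2=1$ and $k_{r+b-1}=1$ force all remaining $x_j$ and yield the single zero continued fraction $[2,1,3,\underbrace{2,\dots,2}_{r-3},3,\underbrace{2,\dots,2}_{b-3},1,b-1]$ of case (e). For each surviving zero continued fraction I would then run Algorithm \ref{algo} with $d_j = b_j - k_j$ to recover the T-singularity chain $[d_{i_0}\binom{n_0}{a_0}]-(c_1)-\dots-(c_\ell)-[d_{i_\ell}\binom{n_\ell}{a_\ell}]$; the continued-fraction evaluations $\tfrac{n_i}{n_i-a_i}=[\dots]$ produce the explicit data $\binom{13}{5}$, $\binom{16}{7}$, $\binom{25}{11}$, $\binom{r^2+2r+1}{r+1}$, $\binom{2r+3}{r+1}$, $\binom{8r-6}{2r-1}$, and so on, as listed in (a)–(k).

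The main obstacle is organizational rather than conceptual: the case split is large (eight positions, each branching according to $r,b \in \{3,4,5,\ge 6\}$ and the special relations such as $b=r+3$), and the Algorithm \ref{algo} computations producing the precise $\binom{n_i}{a_i}$ in each surviving case are delicate — a single miscontracted $(-1)$-curve changes the answer. I would organize the write-up by first disposing of all positions that give contradictions (the bulk of the work, using only Lemma \ref{easy}(b)), then handling the handful of surviving zero continued fractions and computing their P-resolutions, cross-checking each against the ``local $K^2$'' recorded for the corresponding building block S2F.i in \ref{blocks}.
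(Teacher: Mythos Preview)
Your proposal is correct and follows essentially the same approach as the paper's own proof: compute the dual chain $[2,n+1,3,\underbrace{2,\ldots,2}_{r-3},3,\underbrace{2,\ldots,2}_{b-3},n'+1,b-1]$, use the $K^{\times}$ condition to fix $k_1$ and $k_{r+b}$, and then run through the eight interior positions $2,3,4,r,r+1,r+2,r+b-2,r+b-1$ for an entry equal to $1$, repeatedly contracting via Lemma~\ref{easy}(b). The paper's proof is exactly this case split carried out in full; your outline matches it in structure and in the identification of which positions yield which cases (e.g.\ $k_2=1$ and $k_{r+b-1}=1$ both giving (e)).
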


\begin{proof}
Its dual chain is $[2,n+1,3,2,\ldots,2,3,2,\ldots,2,n'+1,b-1]$ with $r-3$ $2$s on the left and $b-3$ $2$s on the right. Following Remark \ref{strategy} we have at most $8$ possible positions $i$ for $k_i=1$ (when $r=3$ or $b=3$, we have $6$ cases, while when $b=r=3$, we have only $4$ of them): $2,3,4,r,r+1,r+2,r+b-2,r+b-1$.

If $(n+1)-x_2=1$, we descend to $0=[2-x_{r+1},2-x_{r+2},\ldots,2-x_{r+b-2},(n'+1)-x_{r+b-1},b-1]$. When $(2-x_{r+1})=1$ or $(2-x_{r+b-2})=1$, we descend to an absurd situation, whereas when $(n'+1)-x_{r+b-1}=1$, we obtain (e).

If $(3-x_3)=1$, we descend to $0=[2,(n-r+3)-x_2,2-x_{r+1},\ldots,2-x_{r+b-2},(n'+1)-x_{r+b-1},b-1]$. When $(n-r+3)-x_2=1$ or $(n'+1)-x_{r+b-1}=1$, we descend to an absurd situation. When $2-x_{r+1}=1$ and $2-x_{r+b-2}=1$, we descend to $0=[2,(n-r-b+5)-x_2,n'-x_{r+b-1},b-1]$ and $0=[2,(n-r+2)-x_2,(n'-b+3)-x_{r+b-1},b-1]$, respectively. The only zero continued fraction of length four with ends $2$ and $b-1$ is $[2,2,1,3]$ (and so $b=4$). Therefore, we obtain the cases (f) and (g), respectively.

If $(2-x_4)=1$ (it requires $r>3$), we must have $r\leq 5$. If $r=5$, we descend to $0=[2,(n-b+2)-x_2,n'-x_{r+b-1},b-1]$. It must be $[2,2,1,3]$, so we obtain (d). If $r=4$, we descend to $0=[2,(n+1)-x_2,2-x_3,2-x_{r+1},\ldots,2-x_{r+b-2},(n'+1)-x_{r+b-1},b-1]$. When $(n'+1)-x_{r+b-1}=1$, we descend to an absurd situation, while when $2-x_3=1$ and $2-x_{r+b-2}=1$, we descend to $0=[2,(n-b+2)-x_2,n'-x_{r+b-1},b-1]$ and $0=[2,n-x_2,(n'-b+2)-x_{r+b-1},b-1]$, respectively. The only zero continued fraction of length four with ends $2$ and $b-1$ is $[2,2,1,3]$ (and so $b=4$). We obtain (c) and (b) respectively.

If $(2-x_{r})=1$ (it requires $r>3$), we must have $r\leq 5$. The case $r=4$ was considered in the previous one. If $r=5$, we must have $b=3$. Then, we descend to a zero continued fraction of length four, where both ends are $2$s, which is impossible.

If $(3-x_{r+1})=1$, we must have ($r=3$ and $3\leq b \leq 4$) or ($r\geq 3$ and $b=3$). In either the second case or $b=3$ in the first one, we descend to a zero continued fraction of length four, whose ends are $2$s, which is impossible. If $b=4$ in the first case, then we descend to $0=[2,n-x_2,(n'-1)-x_{r+b-1},3-x_{r+b}]$. It must be $[2,2,1,3]$, so we obtain (a).

If $(2-x_{r+2})=1$ (it requires $b>3$), we must have $b\leq 5$. The case $b=4$ is part of the next case. If $b=5$, we descend to $0=[2,(n+1)-x_2,2-x_3,(n'-r+2)-x_{r+b-1},4]$. If $(2-x_3)=1$, we descend to an absurd situation, whereas when $(n'-r+2)-x_{r+b-1}=1$, we obtain (i).

If $(2-x_{r+b-2})=1$ (it requires $b>3$), we descend to $0=[2,(n+1)-x_2,3-x_3,2-x_4,\ldots,2-x_{r},2-x_{r+1},(n'-b+4)-x_{r+b-1},b-1]$. The cases $n+1-x_2=1$, $3-x_3=1$, and $2-x_4=1$ have already been considered, so we have two cases to analyze. If $2-x_{r+1}=1$, we descend to $0=[2,(n+1)-x_2,2-x_3,(n'-b-r+6)-x_{r+b-1},b-1]$. When $2-x_{3}=1$, we descend to $0=[2,n-x_2,(n'-b-r+5)-x_{r+b-1},b-1]$. The latter must be $[2,2,1,3]$, so we obtain (h). When $(n'-b-r+6)-x_{r+b-1}=1$, we descend to $0=[2,n-x_2,b-3]$. The latter must be $[2,1,2]$, so we obtain (j). If $(n'-b+4)-x_{r+b-1}=1$, we descend to $0=[2,(n+1)-x_2,2-x_3,b-r]$. Among the zero continued fractions of length $4$, the only ones with the first entry equal to $2$ are $[2,2,1,3]$ and $[2,1,3,1]$. Only the first case is possible, so we obtain (k).

If $(n'+1)-x_{r+b-1}=1$, we descend to $0=[2,(n+1)-x_2,2-x_3]$. The latter must be $[2,1,2]$, so we obtain (e).
\end{proof}


\end{document}